\documentclass[10pt]{amsart}
\usepackage{amssymb,amsfonts,lineno,amsthm,amscd,stmaryrd}
\usepackage[leqno]{amsmath}
\usepackage[numbers]{natbib}

\usepackage{tikz}


\pdfpagewidth 8.5in
\pdfpageheight 11in


\theoremstyle{plain}
\newtheorem{thm}{Theorem}
\newtheorem{cor}{Corollary}
\newtheorem{lem}[cor]{Lemma}
\newtheorem{prop}[cor]{Proposition}
\theoremstyle{definition}

\newtheorem{remark}[cor]{Remark}


\numberwithin{cor}{section}
\numberwithin{equation}{section}


\DeclareMathOperator{\trace}{trace}

\DeclareMathOperator{\dist}{dist}
\DeclareMathOperator{\USC}{USC}
\DeclareMathOperator{\LSC}{LSC}



\newcommand{\C}{\ensuremath{\mathbb{C}}}
\newcommand{\R}{\ensuremath{\mathbb{R}}}
\newcommand{\rn}{\R^n}

\newcommand{\iden}{\ensuremath{I}}
\newcommand{\ep}{\varepsilon}
\newcommand{\Sy}{\ensuremath{\mathcal{S}_n}}
\newcommand{\sph}{\ensuremath{S^{n-1}}}
\newcommand{\co}[1]{{\mathcal{C}}_{#1}}
\newcommand{\coo}{{\mathcal{C}}_{\omega}}
\newcommand{\cob}[1]{\overline{\mathcal{C}}_{#1}\!\setminus \!\{0\}}
\newcommand{\cobo}{{\overline{\mathcal{C}}_{\omega}\!\setminus \!\{0\}}}
\newcommand{\hc}[1]{H_{#1}(\omega)}
\newcommand{\puccisub}[2]{\mathcal{P}^-_{#1,#2}}
\newcommand{\Puccisub}[2]{\mathcal{P}^+_{#1,#2}}
\newcommand{\PucciSub}{\Puccisub{\elp}{\Elp}}
\newcommand{\pucciSub}{\puccisub{\elp}{\Elp}}
\newcommand{\pucci}{\pucciSub}
\newcommand{\Pucci}{\PucciSub}
\newcommand{\Elp}{\Lambda}
\newcommand{\elp}{\lambda}

\newcommand{\pl}{Phragm\`en-Lindel\"of }



\begin{document}
\title[Singular solutions of fully nonlinear elliptic equations]{Singular solutions of fully nonlinear elliptic equations and applications}
\author[S. N. Armstrong]{Scott N. Armstrong}
\address{Department of Mathematics\\ The University of Chicago\\ 5734 S. University Avenue
Chicago, Illinois 60637.}
\email{armstrong@math.uchicago.edu}
\author[B. Sirakov]{Boyan Sirakov}
\address{UFR SEGMI, Universit\'e Paris 10\\
92001 Nanterre Cedex, France \\
and CAMS, EHESS \\
54 bd Raspail \\
75270 Paris Cedex 06, France}
\email{sirakov@ehess.fr}
\author[C. K. Smart]{Charles K. Smart}
\address{Courant Institute of Mathematical Sciences \\ 251 Mercer Street \\New York, NY 10012.}
\email{csmart@cims.nyu.edu}
\date{\today}
\keywords{fully nonlinear elliptic equation, singular solution, isolated boundary singularity, \pl principle}
\subjclass[2010]{Primary 35J60, 35J25.}

\begin{abstract}
We study the properties of solutions of fully nonlinear, positively homogeneous elliptic equations near boundary points of Lipschitz domains at which the solution may be singular. We show that these equations have two positive solutions in each cone of $\rn$, and the solutions are unique in an appropriate sense. We introduce a new method for analyzing the behavior of solutions near certain Lipschitz boundary points, which permits us to classify isolated boundary singularities of solutions which are bounded from either above or below. We also obtain a sharp Phragm\'en-Lindel\"of result as well as a principle of positive singularities in certain Lipschitz domains.
\end{abstract}

\maketitle

\section{Introduction and main results}

We study singular solutions of fully nonlinear, homogeneous, uniformly elliptic equations, such as
\begin{equation} \label{eq0}
F(D^2u) = 0,
\end{equation}
where $F$ is a Bellman-Issacs operator, that is, an inf-sup of linear uniformly elliptic operators. This paper is the second in a series which began in \cite{ASS}. In the latter work, we classified singular solutions of \eqref{eq0} in $\R^n\!\setminus \! \{ 0 \}$ (that are bounded on one side both at the origin and at infinity), and characterized the behavior of solutions of \eqref{eq0} at isolated singular points as well as their asymptotic behavior at infinity.

\medskip

Here we are interested in understanding how solutions of \eqref{eq0}, subject to Dirichlet boundary conditions, behave at isolated singularities on the boundary, as well as in the behavior ``at infinity" in unbounded domains which are not exterior domains (such as a half-space). We  prove a \emph{principle of positive singularities} for \eqref{eq0} in certain bounded Lipschitz domains, the first of its kind for fully nonlinear elliptic equations. Along the way, we discover new, \emph{sharp} maximum principles of \pl type, some of which are new even for linear equations, and we (sharply) generalize Hopf's lemma to domains with corners.

We proceed by first studying \eqref{eq0} in a cone of $\rn$ and classifying its solutions which do not change sign and are bounded either at the origin or at infinity. It turns out that for each conical domain, there are \emph{four} such solutions (up to normalization). These solutions are homogeneous: two have a positive order of homogeneity and the other two are of negative order. As we show, the orders of homogeneity of these four solutions provide a great deal of information about the general behavior of solutions of \eqref{eq0} near isolated singular boundary points and at infinity.

\medskip

Prior to this paper, the literature on singular solutions and domains for fully nonlinear equations is limited to the work of Miller \cite{Mi}, who long ago studied Pucci's extremal equations in symmetric cones via ODE methods. On the other hand, there is a vast literature of analogous results for linear and quasilinear equations (a good starting point to this theory is the recent book by Borsuk and Kondratiev \cite{BK}, and the references there). Our methods are completely different from the energy methods typically employed in these works, since fully nonlinear equations may be studied only by maximum principle arguments.

\medskip

We will consider a more general equation than \eqref{eq0}, namely
\begin{equation} \label{eq}
F(D^2u, Du, x)=0,
\end{equation}
where $F$ is a uniformly elliptic, positively homogeneous operator such that \eqref{eq} is invariant under dilations. The precise hypotheses on $F$ are \eqref{ellip}, \eqref{homogen}, \eqref{regxh}, and  \eqref{scaling} stated in Section~\ref{prelim}, which of course includes \eqref{eq0}. An example of such an equation is
\begin{equation*}
\Pucci(D^2u) + \mu|x|^{-1} |Du| = 0,
\end{equation*}
where $\mu \in \R$ and $\Pucci$ is the Pucci extremal operator (defined in the next section). We consider operators with gradient dependence not only for maximal generality, but mostly since doing so allows us to shorten many of our arguments by exploiting the fact that the class of these operators is invariant under the {inversion} $x\mapsto |x|^{-2} x$.

\medskip

Throughout this paper, we take $\omega$ to be a proper $C^2$-smooth subdomain of the unit sphere $S^{n-1}$ of $\rn$ in dimension $n\ge2$, and we define the conical domain
\begin{equation*}
\mathcal{C}_\omega : = \{ x \in \R^n : {|x|^{-1}}{x} \in \omega \}.
\end{equation*}
All differential inequalities in this article are to be understood in the viscosity sense.

\subsection{Existence and uniqueness of singular solutions in cones}

The first theorem gives the existence of positive singular solutions of \eqref{eq} in $\mathcal{C}_\omega$.

\begin{thm}\label{fundycones}
There exist unique constants
\begin{equation*}
\alpha^- < 0 < \alpha^+,
\end{equation*}
 depending on $F$ and $\omega$, for which the boundary value problem
\begin{equation}\label{eigpb}
\left\{ \begin{aligned}
& F(D^2\Psi,D\Psi,x) = 0 & \mbox{in} & \ \co\omega, \\
& \Psi = 0 & \mbox{on} & \ \partial \mathcal{C}_\omega\setminus \{ 0 \},
\end{aligned} \right.
\end{equation}
has two positive solutions $\Psi^+ \in C(\cobo)$ and $\Psi^- \in C(\overline{\mathcal{C}}_\omega)$ such that
\begin{equation*}
\Psi^\pm(x) = t^{\alpha^\pm} \Psi^\pm(tx) \quad \mbox{for all} \quad t > 0, \ x\in \R^n.
\end{equation*}
The numbers $\alpha^\pm$ are characterized by \eqref{ap} and \eqref{am} below.
\end{thm}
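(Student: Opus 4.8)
The plan is to construct $\Psi^+$ as a homogeneous solution by building the exponent $\alpha^+$ through a generalized eigenvalue problem on the cross-section $\omega$, in the spirit of Berestycki--Nirenberg--Varadhan and the ODE analysis of Miller \cite{Mi}, but carried out at the level of the PDE via maximum-principle/comparison arguments rather than energy methods. Concretely, for each $\alpha \in \R$ consider the "spherical operator" obtained by substituting the ansatz $u(x) = |x|^{\alpha} v(x/|x|)$ into \eqref{eq}; by the scaling hypothesis \eqref{scaling} and homogeneity \eqref{homogen} this produces a fully nonlinear, uniformly elliptic operator $L_\alpha$ acting on functions $v$ on $\omega$ (with a zero-order term depending on $\alpha$), and the Dirichlet problem \eqref{eigpb} for a homogeneous $u$ of degree $\alpha$ is equivalent to $L_\alpha v = 0$ in $\omega$, $v = 0$ on $\partial\omega$, $v>0$. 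One then defines
\begin{equation*}
\alpha^+ := \inf\bigl\{ \alpha > 0 : \text{there exists } v \in C(\overline\omega),\ v>0 \text{ in } \omega,\ L_\alpha v \le 0,\ v = 0 \text{ on }\partial\omega \bigr\}
\end{equation*}
(this is formula \eqref{ap}), and symmetrically $\alpha^-$ with the opposite sign conditions and supersolutions, giving \eqref{am}. The monotonicity of the zero-order part of $L_\alpha$ in $\alpha$ is what makes this infimum a well-defined single number; one checks the set is nonempty (large $\alpha$ works, using an explicit radial barrier or the smallest eigenvalue of a linear operator trapped between the Pucci bounds $\pucci,\Pucci$) and bounded below by $0$ (for small $\alpha>0$ one gets a strict subsolution, precluding membership), so $0 < \alpha^+ < \infty$.

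Next I would show that at the critical value $\alpha = \alpha^+$ the problem $L_{\alpha^+} v = 0$, $v=0$ on $\partial\omega$ actually has a positive solution. This is the standard "the principal eigenvalue is achieved" step: take a minimizing sequence of exponents $\alpha_k \downarrow \alpha^+$ with supersolutions $v_k$, normalize them (say $v_k(x_0) = 1$ at a fixed interior point, or $\sup v_k = 1$), and use interior Krylov--Safonov/Evans--Krylov-type estimates together with boundary regularity (here $\partial\omega$ is $C^2$, so one has boundary Hölder/Lipschitz estimates and a boundary Harnack or at least barrier control) to extract a locally uniform limit $v = \Psi^+(\,\cdot/|\cdot|\,)$; the limit is a nonnegative solution, and it is strictly positive in $\omega$ by the strong maximum principle, with the correct vanishing on $\partial\omega$. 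Passing back, $\Psi^+(x) = |x|^{\alpha^+} v(x/|x|)$ solves \eqref{eigpb} and has the stated homogeneity, and it lies in $C(\cobo)$ (it is generally singular at $0$ since $\alpha^+$ need not be $\ge 0$... in fact $\alpha^+>0$, so $\Psi^+$ extends continuously by $0$ to the origin — the subtlety of which closure appears is precisely why $\Psi^+$ is stated on $\cobo$ and $\Psi^-$ on $\overline{\mathcal C}_\omega$, reflecting that $\alpha^-<0$ makes $\Psi^-$ blow up at $0$ but vanish at infinity). For $\Psi^-$, rather than redo the construction, I would invoke the inversion $x \mapsto |x|^{-2}x$ mentioned in the introduction: it maps the class of admissible operators to itself and interchanges behavior at $0$ and at $\infty$, sending a homogeneous solution of degree $\alpha$ to one of degree $-(n-2)-\alpha$ (or the appropriate shifted exponent), so $\Psi^-$ and $\alpha^-$ are obtained from the $\Psi^+$-construction applied to the inverted operator.

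Finally, uniqueness of the constants $\alpha^\pm$ — and of the solutions in the appropriate normalized sense — follows from a comparison argument: if $\alpha_1 < \alpha_2$ both admitted positive solutions vanishing on $\partial\omega$, then comparing $c\,\Psi_{\alpha_1}$ with $\Psi_{\alpha_2}$ on an annulus $\{r \le |x| \le R\}$ and sliding the constant $c$, using the strict monotonicity in $\alpha$ of the zero-order term of $L_\alpha$ to get a strict sign somewhere, would contradict the maximum principle; this is the usual argument that forces the principal eigenvalue to be unique. I expect the main obstacle to be the boundary regularity and the justification that the limiting $v$ genuinely vanishes on $\partial\omega$ without degenerating to the zero function — i.e., controlling $v_k$ uniformly up to $\partial\omega$ and near $0$ so that the normalization survives the limit. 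Because $\omega$ is only $C^2$ (and the cone has a Lipschitz — indeed conical — singularity at the origin, plus possible corners where $\partial\omega$ meets along the cone), one needs barriers at the vertex and along $\partial\mathcal C_\omega$; the homogeneity of the problem is what rescues this, since one can always reduce to the unit annulus by scaling and patch. A secondary technical point is verifying carefully that the substitution $u = |x|^\alpha v$ does land in the hypothesis class \eqref{ellip}--\eqref{scaling} on $\omega$ with the claimed monotone dependence on $\alpha$; this is a direct computation once the structure conditions are written out.
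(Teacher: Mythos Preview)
Your overall outline---reduce to a spherical problem depending on a parameter $\alpha$, identify $\alpha^+$ as a critical value, use the inversion to get $\alpha^-$, and deduce uniqueness by a sliding comparison---is in the right spirit, but there are two concrete problems and one genuine methodological divergence from the paper that you should be aware of.

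First, your formula for $\alpha^+$ is not \eqref{ap}. In the paper, $\alpha^+$ is a \emph{supremum} over $\alpha>0$ for which there exists a positive $u\in H_\alpha(\omega)$ with $F(D^2u,Du,x)\ge 0$ in $\mathcal C_\omega$, with \emph{no} boundary condition imposed on $\partial\omega$. Your version is an infimum, and you additionally require $v=0$ on $\partial\omega$. These are different admissible sets, and the monotonicity considerations you cite do not obviously reconcile them. The paper's definition is chosen precisely so that the ``bending'' argument (Lemma~\ref{strictbelow}) gives, for every $0<\alpha<\alpha^+$, a \emph{strict} positive supersolution in $H_\alpha$, which in turn yields a maximum principle for subsolutions in $H_\alpha$ (Corollary~\ref{eigmp}); this is the mechanism that later forces the limiting exponent to equal $\alpha^+$.

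Second, and more seriously, your existence step has a gap: the uniform limit of a normalized sequence of supersolutions $v_k$ of $L_{\alpha_k}$ is, a priori, only a supersolution of $L_{\alpha^+}$, not a solution. You assert ``the limit is a nonnegative solution'' without justification. Upgrading a supersolution at the critical parameter to an actual solution is exactly the hard part, and it is here that the paper's proof takes a different route. Rather than limits of supersolutions, the paper solves, for each $(\alpha,v)$, a genuine Dirichlet problem on $\mathcal C_\omega$ (Lemma~\ref{alphbet}) and sets up a compact nonlinear map $A:[0,\infty)\times K\to K$ on the cone $K$ of nonnegative functions. It then applies the Rabinowitz global bifurcation/Leray--Schauder alternative (Proposition~\ref{rabinowitz}) to produce an unbounded connected set of fixed points of $A$. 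The definition of $\alpha^+$ confines this set to $[0,\alpha^+]\times K$, so unboundedness forces $\|u_j\|\to\infty$ along some $(\alpha_j,u_j)$ with $\alpha_j\to\bar\alpha\in(0,\alpha^+]$. Because the $u_j$ are \emph{solutions} of approximate equations, the normalized limit $v$ is a \emph{solution} of \eqref{gold}, and Corollary~\ref{eigmp} then rules out $\bar\alpha<\alpha^+$. This topological device is what your compactness sketch is missing.

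Finally, a minor but real error: the inversion used here is $u^*(y)=u(y/|y|^2)$, which maps $H_\alpha$ to $H_{-\alpha}$ (so $\alpha^-(F,\omega)=-\alpha^+(F^*,\omega)$, see \eqref{dualalph}), not to degree $-(n-2)-\alpha$. The latter would be the Kelvin transform, which the paper explicitly avoids; the point is that the class of operators \eqref{ellip}--\eqref{scaling} is closed under this simpler inversion.
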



This theorem was proved in  \cite{Mi} in the special case that $F$ is a Pucci  extremal operator and $\omega$ is symmetric with respect to some diameter of the unit sphere of $\rn$. This situation is much simpler,  one may exploit the rotational invariance to reduce the problem to an ODE and then use a shooting argument to obtain smooth solutions.

\medskip

The next theorem states that $\Psi^+$ and $\Psi^-$ are unique in an appropriate sense. This result is new even for the Pucci extremal equations $\mathcal{P}^\pm_{\lambda,\Lambda}(D^2u) = 0$.

\begin{thm} \label{uniq}
Assume that $u\in C(\cobo)$ is nonnegative and satisfies
$$F(D^2u,Du,x) = 0 \quad \mbox{ in } \; \co\omega, $$ and $ u = 0$ on $\partial \mathcal{C}_\omega\setminus \{ 0 \}$.
Then
\begin{enumerate}
\item[(i)]  $\lim_{|x|\to \infty} |x|^{\alpha^-} u(x) = 0$ implies that $u \equiv t\Psi^+$ for some $t \geq 0$; and
\item[(ii)] $\lim_{|x|\to 0} |x|^{\alpha^+} u(x) = 0$ implies that $u \equiv t\Psi^-$ for some $t\geq 0$.
\end{enumerate}
In particular, if $u$ is bounded in $\{ |x| > 1 \}$, then $u$ is a nonnegative multiple of $\Psi^+$, while if $u$ is bounded near the origin, then $u$ is a nonnegative multiple of $\Psi^-$.
\end{thm}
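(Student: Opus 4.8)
The plan is to prove (i) in full, deduce (ii) from it via the inversion $\iota(x)=x/|x|^2$, and observe that the final two assertions are then immediate: if $u$ is bounded on $\{|x|>1\}$ then $|x|^{\alpha^-}u(x)\to 0$ as $|x|\to\infty$ since $\alpha^-<0$, and if $u$ is bounded near the origin then $|x|^{\alpha^+}u(x)\to 0$ as $x\to 0$ since $\alpha^+>0$. For the reduction, let $\tilde F$ be the operator obtained from $F$ by conjugation with $\iota$; by the dilation invariance of the class, $\tilde F$ again satisfies \eqref{ellip}, \eqref{homogen}, \eqref{regxh}, \eqref{scaling}, the cone $\coo$ and $\partial\coo\setminus\{0\}$ are preserved, $0$ and $\infty$ are interchanged, and one checks $\alpha^\pm_{\tilde F}=-\alpha^\mp_F$ with $\Psi^+_{\tilde F}$ proportional to $\Psi^-_F\circ\iota$. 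Hence if $u$ satisfies the hypotheses of (ii) for $F$, then $v:=u\circ\iota$ satisfies those of (i) for $\tilde F$, so $v=t\Psi^+_{\tilde F}$, and applying $\iota$ once more gives $u=t'\Psi^-_F$.

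So fix $u$ as in (i); we may assume $u\not\equiv 0$, hence $u>0$ in $\coo$ by the strong maximum principle. The whole proof hinges on the two-sided estimate $c\,\Psi^+\le u\le C\,\Psi^+$ in $\coo$ for some $0<c\le C$. Granting it, put $\tau_*:=\inf_{\coo}u/\Psi^+\in[c,C]$, so $w:=u-\tau_*\Psi^+\ge 0$ lies in the Pucci class (being a difference of two solutions of \eqref{eq}, $\pucci(D^2w,Dw)\le 0\le\Pucci(D^2w,Dw)$). If $w$ vanishes at an interior point of $\coo$, the strong minimum principle gives $w\equiv 0$, i.e.\ $u\equiv\tau_*\Psi^+$. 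If the infimum is attained only on $\partial\coo\setminus\{0\}$, Hopf's lemma applied to $w$ along the $C^2$ boundary contradicts the vanishing of $w/\Psi^+$ there; and if it is attained only as $|x|\to\infty$ or $|x|\to 0$, a blow-down, resp.\ blow-up — a subsequential local-uniform limit of dilations of a Pucci-class function is again one — produces a solution whose infimum of the ratio to $\Psi^+$ is attained at an interior point, forcing that limit to be a multiple of $\Psi^+$ and hence $u/\Psi^+$ to have a constant limit at that end; matching the two ends (via a Phragm\'en--Lindel\"of comparison with a barrier of order slightly below $-\alpha^+$, supplied by \eqref{ap}) again yields $w\equiv 0$. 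Everything thus comes down to the two-sided bound.

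The lower bound is routine. On $\partial B_1\cap\coo$ both $u$ and $\Psi^+$ are positive solutions vanishing on the same boundary portion, so $u\ge c_0\Psi^+$ there with $c_0:=\inf_{\partial B_1\cap\coo}u/\Psi^+>0$ (by the strong maximum principle and Hopf's lemma up to the $C^2$ boundary); comparing $u$ with $c_0\Psi^+-\delta\Phi$ on $\coo\cap\{|x|>1\}$, where $\Phi$ is a positive supersolution of order slightly above that of $\Psi^-$ furnished by \eqref{am} (so $\Phi$ dominates at infinity), and letting $\delta\downarrow 0$ gives $u\ge c_0\Psi^+$ on $\{|x|>1\}$; a blow-up at the origin together with Theorem~\ref{fundycones} handles $\{|x|\le 1\}$. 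The upper bound is the crux and is where the hypothesis enters. I would introduce $g(R):=\sup_{\partial B_R\cap\coo}u/\Psi^+$ and $h(R):=\inf_{\partial B_R\cap\coo}u/\Psi^+$. The boundary Harnack inequality for \eqref{eq} on the shells $\coo\cap\{R/2<|x|<2R\}$, rescaled to unit size, gives $g\le C_0 h$ with $C_0$ independent of $R$, and comparing $u$ with $\max\{g(R_1),g(R_2)\}\Psi^+$ on truncated cones shows that $t\mapsto g(e^t)$ has no interior maximum, so its limits and growth exponents at $\pm\infty$ exist. Writing $\Psi^\pm(x)=|x|^{-\alpha^\pm}\psi^\pm(x/|x|)$ with $\psi^\pm>0$ in $\omega$, the hypothesis $|x|^{\alpha^-}u(x)\to 0$ together with the boundary Harnack inequality for the pair $u,\Psi^-$ yields $\sup_{\partial B_R\cap\coo}u/\Psi^-\to 0$, hence $R^{\alpha^--\alpha^+}g(R)\to 0$: $g$ grows strictly more slowly than $R^{\alpha^+-\alpha^-}$. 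To get $g$ bounded near $\infty$ I would argue by contradiction. If $g(R)\to\infty$, choose (using the quasiconvexity of $t\mapsto\log g(e^t)$) a sequence $R_k\to\infty$ along which $g(R_k\rho)/g(R_k)\to\rho^{\gamma}$ for every $\rho>0$; the dilations $u_k(x):=R_k^{\alpha^+}u(R_kx)/g(R_k)$ solve \eqref{eq} and, by the interior and boundary regularity and Harnack estimates and the stability of viscosity solutions, converge locally uniformly on $\cobo$ to a positive solution $u_\infty$ of \eqref{eq}, vanishing on $\partial\coo\setminus\{0\}$ and homogeneous of order $\gamma-\alpha^+$. By Theorem~\ref{fundycones} that order is $-\alpha^+$ or $-\alpha^-$, i.e.\ $\gamma=0$ or $\gamma=\alpha^+-\alpha^-$; the latter would make $u_\infty$ a positive multiple of $\Psi^-$ and hence keep $\sup_{\partial B_R\cap\coo}u/\Psi^-$ bounded away from $0$, contrary to the above, so $\gamma=0$ and $g$ is bounded near $\infty$. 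The analogous blow-up at the origin bounds $g$ near $0$ (there the order $-\alpha^-$ alternative is excluded since it would make $u$ bounded near $0$, whence $u=t\Psi^-$ by part (ii), contradicting $|x|^{\alpha^-}u(x)\to 0$), and the absence of interior maxima of $g$ then bounds it on all of $(0,\infty)$; with $h\ge g/C_0$ this is the desired estimate.

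The step I expect to be genuinely hard is this last one — turning the soft fact that $u$ grows strictly more slowly than $\Psi^-$ at infinity into the rigid conclusion that $u$ is controlled by $\Psi^+$, a Phragm\'en--Lindel\"of effect in the cone. Two points demand real care: verifying that the rescaled limits are \emph{exactly} homogeneous rather than merely trapped between multiples of a homogeneous profile (this combines the $C^{1,\alpha}$ and Harnack compactness, the comparison-generated quasiconvexity of $\log g$, and the precise choice of the dilation factor), and excluding intermediate behaviour, which relies on Theorem~\ref{fundycones} (there is no positive homogeneous solution of order strictly between $-\alpha^+$ and $-\alpha^-$) and, for the barriers of order slightly off $-\alpha^\pm$ used in the comparisons, on the characterizations \eqref{ap} and \eqref{am} of $\alpha^\pm$.
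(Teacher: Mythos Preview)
Your overall architecture (reduce (ii) to (i) by inversion, establish a two-sided bound $c\Psi^+\le u\le C\Psi^+$, then squeeze the ratio to a constant via a blow-up and the strong maximum principle) is the same as the paper's, and several of your ingredients---the boundary Harnack inequality $Q^+\le C_0 q^+$, the no-interior-maximum property of $g(R)=\sup_{\partial B_R}u/\Psi^+$, and a limiting argument along dilations---are exactly the tools the paper uses. The final ``squeeze'' step is also essentially what the paper does (via Lemma~\ref{snap} rather than Hopf directly).

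There is, however, a genuine gap in your upper-bound argument. You want to exclude the possibility that the blow-down limit $u_\infty$ is a multiple of $\Psi^-$ (equivalently, that $\gamma=\alpha^+-\alpha^-$) by arguing that this would force $\sup_{\partial B_R}u/\Psi^-$ to stay bounded away from~$0$. But the normalization $u_k(x)=R_k^{\alpha^+}u(R_kx)/g(R_k)$ introduces exactly the factor $R_k^{\alpha^+-\alpha^-}/g(R_k)$ between $u_k/\Psi^-$ and $u(R_k\cdot)/\Psi^-(R_k\cdot)$, and this factor can diverge while the latter tends to~$0$; for instance $g(R)=R^{\alpha^+-\alpha^-}/\log R$ satisfies both $g(R)=o(R^{\alpha^+-\alpha^-})$ and $g(R\rho)/g(R)\to\rho^{\alpha^+-\alpha^-}$. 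So the hypothesis $|x|^{\alpha^-}u(x)\to 0$ by itself does not rule out this slowly-varying scenario, and your contradiction does not close. (Related minor issues: you have only shown $g$ has no interior maximum, not that $\log g(e^t)$ is quasiconvex, so the existence of a single exponent $\gamma$ along the chosen sequence also needs more argument.)

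The paper closes this gap with a different idea: it first proves a Phragm\`en--Lindel\"of statement (Theorem~\ref{PLloc} in the special case $\Omega=\coo$), whose content here is precisely that $|x|^{\alpha^-}u(x)\to 0$ together with $u=0$ on $\partial\coo$ forces $u(x)\to 0$ as $|x|\to\infty$. Once $u\to 0$ at infinity, simple comparison with $Q^+(r)\Psi^+$ on $\coo\setminus B_r$ shows that $r\mapsto Q^+(r)$ is nonincreasing and $r\mapsto q^+(r)$ is nondecreasing (Lemmas~\ref{PLorange} and \ref{PLpurple}); combined with the boundary Harnack inequality $Q^+\le Cq^+$, this immediately gives the uniform two-sided bound without any blow-down or regular-variation reasoning. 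The blow-up at the origin and Lemma~\ref{snap} then finish the proof as you outlined. In short, the missing ingredient is the qualitative upgrade from ``$u$ grows slower than $\Psi^-$'' to ``$u$ is bounded (indeed tends to zero)'' at infinity, and the paper obtains it by a direct barrier/maximum-principle argument rather than by analyzing possible scaling limits.
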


Applying Theorem \ref{fundycones} to the operator
\begin{equation} \label{Ftilde}
\widetilde{F}(M,p, x) := -F(-M,-p, x),
\end{equation}
we see that \eqref{eigpb} has two unique \emph{negative} homogeneous solutions in $\coo$ as well.

\medskip

The proof of Theorem~\ref{fundycones} is inspired by the arguments in \cite{ASS} which have roots in the maximum principle approach to the principal eigenvalue theory. The idea is to write down an ``optimization" formula for $\alpha^+$ and $\alpha^-$, and then to use topological methods to find solutions.

To prove Theorem~\ref{uniq}, we use the global Harnack inequality, certain monotonicity properties implied by the comparison principle, and a blow-up argument.

\subsection{Behavior of solutions near ``singular" boundary points}\label{sect12}

In this subsection we show that the singular solutions $\Psi^\pm$ characterize the boundary singularities of solutions of the Dirichlet problem which are bounded on one side, and deduce a  Picard-Bouligand principle, as well as a generalized Hopf lemma for fully nonlinear equations.  Since we strive for clarity and readability, here we do not state our most general results, in particular with respect to the class of operators to which the following theorems apply. These can be found in  Section~\ref{ibsec1}.

In what follows, $\Omega\subset\rn$ is a domain such that $0\in \partial \Omega$ and $\partial \Omega$ has a conical-type corner at~$0$. Specifically, we assume that
\begin{equation} \label{zetaiden}
\left\{\begin{aligned}
& \mbox{there exists a} \  C^2\mbox{-diffeomorphism} \  \ \zeta: B_1 \to B_1 \ \ \mbox{such that} \\
& \zeta\left( \Omega \cap B_1 \right) = \co\omega \cap B_1, \quad \zeta(0) = 0 \quad \mbox{and} \quad D\zeta(0) = \iden.
\end{aligned} \right.
\end{equation}
Here $\iden$ is the identity matrix, we write $\zeta = (\zeta^1,\ldots,\zeta^n)$ and $D\zeta(x)$ denotes the matrix with entries $(\zeta_{x_j}^i(x))$. Observe that if $\partial \Omega$ is $C^2$ near $0$, then necessarily \eqref{zetaiden} holds and the cone $\co\omega$ is a half space.
 We emphasize that the general conical domain $\mathcal{C}_\omega$ is not the novelty here: both Theorems~\ref{ibs0} and \ref{poisson}, below, are new for smooth domains.

\begin{thm}\label{ibs0}
Assume $\Omega$ satisfies \eqref{zetaiden}. Let $u \in C\!\left( \bar\Omega  \!\setminus \! \{ 0 \} \right)$ be bounded below and
\begin{equation}\label{isol}
\left\{ \begin{aligned}
& F(D^2u,Du,x) = 0 & \mbox{in} & \ \Omega, \\
& u = 0 & \mbox{on} & \ \partial\Omega \cap B_1 \setminus \{ 0 \}.
\end{aligned} \right.
\end{equation}
Then precisely one of the following two alternatives holds:
\begin{enumerate}
\item[(i)] $u$ can be continuously extended to $\bar\Omega $ by setting $u(0)= 0$.
\item[(ii)] There exists $t> 0$ such that
\begin{equation*}
\lim_{\Omega \ni x\to 0}\frac{u(x)}{\Psi^+(\zeta(x))} =t.
\end{equation*}
\end{enumerate}
\end{thm}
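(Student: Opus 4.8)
The plan is to transport the problem to the model cone via the diffeomorphism $\zeta$, then run a blow-up/comparison argument against the homogeneous solution $\Psi^+$, invoking Theorem~\ref{uniq} to identify the limit. First I would set $v := u \circ \zeta^{-1}$ on $\co\omega \cap B_1$. Because $D\zeta(0) = \iden$ and $\zeta$ is $C^2$, the change of variables produces a new operator $\widehat{F}(D^2v, Dv, x)$ which still satisfies the structural hypotheses (uniform ellipticity with slightly worse constants, the required regularity in $x$) on a possibly smaller ball $B_r$, and which agrees with $F$ to first order at the origin; crucially $v$ is a viscosity solution of $\widehat F(D^2v,Dv,x)=0$ in $\co\omega \cap B_r$ with $v = 0$ on $\partial\co\omega \cap B_r \setminus \{0\}$ and $v$ is bounded below. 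So without loss of generality one may work with $v$ on a conical domain, at the cost of carrying the $x$-dependence; the constant $t$ in alternative (ii) and the statement with $\Psi^+(\zeta(x))$ are then exactly matched because $\Psi^+$ is continuous and $\zeta$ is a homeomorphism fixing $0$.

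Next I would establish the dichotomy. Replacing $v$ by $v + \|v^-\|_\infty$ (harmless since adding a bounded-below quantity only affects lower-order terms in a controlled way — or, cleaner, by first using the comparison principle against a suitable barrier to show $v$ is in fact bounded near $0$ on one side and then reducing to $v \ge 0$), one may assume $v \ge 0$. Consider the rescalings $v_\rho(x) := v(\rho x)/m(\rho)$ where $m(\rho) := \sup_{|x| = \rho,\, x\in\co\omega} v(x)$ (or an $L^\infty$ average on the sphere of radius $\rho$, adjusted so that $\co\omega$ meets it). The global/boundary Harnack inequality up to the flat part of the cone boundary — valid since solutions vanish on $\partial\co\omega$ — gives two-sided control: $v_\rho$ is bounded above and below away from $0$ by harmonic-measure-type estimates, so $\{v_\rho\}$ is precompact in $C_{loc}(\cobo)$ by interior and boundary $C^\alpha$ estimates. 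Any subsequential limit $w$ solves $\widehat F_0(D^2w, Dw, 0) = 0$ — the limiting operator, by the scaling hypothesis \eqref{scaling} this is the dilation-invariant $F$ — in $\co\omega$, vanishes on $\partial\co\omega \setminus\{0\}$, is nonnegative, and is normalized so $\sup_{|x|=1} w = 1$; in particular $w \not\equiv 0$.

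To identify $w$ I would use Theorem~\ref{uniq}. The point is to show that every such blow-up limit is a multiple of $\Psi^+$, which forces the ratio $v(x)/\Psi^+(x)$ to have a limit. Here is where the two alternatives split: if $v$ is ``small" at $0$ in the sense that $m(\rho)/\rho^{\alpha^+} \to \infty$ fails — more precisely, if $\liminf_{\rho\to0} m(\rho)/\rho^{\alpha^+} < \infty$ — then a barrier comparison with $C|x|^{\alpha^+}$ on annuli (using that $\alpha^+$ is the homogeneity of the subsolution-type object $\Psi^+$, via \eqref{ap}) forces $v(x) \le C|x|^{\alpha^+} \to 0$, giving alternative (i) and the continuous extension. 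Otherwise $m(\rho) \gtrsim \rho^{\alpha^+}$, and combining with the upper barrier $m(\rho) \lesssim \rho^{\alpha^+}$ one gets $m(\rho) \asymp \rho^{\alpha^+}$; then the blow-up limit $w$ satisfies $\lim_{|x|\to\infty}|x|^{\alpha^-}w(x) = 0$ (since $w \lesssim |x|^{\alpha^+}$ and $\alpha^+ > \alpha^-$... wait, one needs $w$ to not grow too fast, which the Harnack-plus-barrier argument supplies), so Theorem~\ref{uniq}(i) gives $w = c\Psi^+$; normalization gives $c = (\sup_{|x|=1}\Psi^+)^{-1}$, so the limit is unique, independent of the subsequence. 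A standard argument (if two subsequences gave the same limit $c\Psi^+$, then $v(\rho x)/m(\rho) \to c\Psi^+$ along the full sequence, and then a monotonicity/squeezing argument on the ratio $v/\Psi^+$, using the comparison principle between $v$ and $(t\pm\epsilon)\Psi^+$ on small balls $B_\delta$) upgrades subsequential convergence to the genuine limit $v(x)/\Psi^+(x) \to t$ with $t = c m(\rho)/\rho^{\alpha^+} \cdot (\cdots) > 0$.

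The main obstacle I expect is the final uniqueness-of-the-limit step: precompactness of $\{v_\rho\}$ plus Theorem~\ref{uniq} only gives convergence along subsequences, and promoting this to $\lim_{x\to 0} v(x)/\Psi^+(x) = t$ requires an extra comparison-principle argument showing the oscillation of $v/\Psi^+$ on $\{|x| = \rho\}$ decays as $\rho \to 0$ — essentially a Harnack-type improvement-of-oscillation iteration adapted to the ratio with the (non-constant, degenerate-at-the-boundary) weight $\Psi^+$. A secondary technical point is the careful bookkeeping in the change of variables: verifying that $\widehat F$ genuinely satisfies the hypotheses and that its blow-up is the original dilation-invariant $F$, which is exactly the reason hypothesis \eqref{scaling} is imposed and why the authors remark that gradient-dependent operators are convenient (the class is inversion-invariant, so barriers like $|x|^{\alpha^\pm}$ and the passage between behavior at $0$ and at $\infty$ are symmetric).
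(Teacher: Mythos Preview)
Your skeleton matches the paper's: flatten via $\zeta$, rescale, use boundary Harnack and compactness, and identify the limit through Theorem~\ref{uniq}. But you have misidentified the main obstacle. After the change of variables, $v=u\circ\zeta^{-1}$ solves a \emph{perturbed} operator $\widehat F$ (your $H$), while $\Psi^+$ solves the unperturbed $F$. The comparison principle does not apply between a solution of $\widehat F=0$ and $t\Psi^+$, so the monotonicity of the ratios $r\mapsto \sup_{E(\omega,r,2r)}v/\Psi^+$ and $\inf_{E(\omega,r,2r)}v/\Psi^+$ (which drives the proof of Theorem~\ref{uniq}) \emph{fails}. Consequently your ``upper barrier $m(\rho)\lesssim \rho^{\alpha^+}$'' is asserted without proof, and without it the blow-up limit $w$ has no a priori growth control at infinity, so Theorem~\ref{uniq}(i) cannot be invoked to identify $w$ with a multiple of $\Psi^+$. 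The paper devotes all of Section~\ref{SCDE} to a \emph{continuous dependence estimate} (Proposition~\ref{CDE}): one solves the Dirichlet problem for the rescaled perturbed operator $G_r$ with boundary data close to $\Psi^+$, and shows the resulting $\psi_r$ satisfies $|\psi_r/\Psi^+-1|\le Cr^\beta$ up to the lateral boundary. Since $\psi_r$ and $v_r$ solve the \emph{same} equation, the comparison principle applies between them, and transferring back to $\Psi^+$ yields an \emph{almost-monotonicity} inequality $q^+(2r)\ge(1-r^\beta)q^+(r)$ (Lemma~\ref{qnearlymo}) and its analogue for $Q^+$ (Lemma~\ref{Qnearlymo}). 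These iterate to give boundedness of $Q^+(r)$ and existence of the limits, after which the blow-up argument you describe does finish the proof.

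Two smaller points. Replacing $v$ by $v+\|v^-\|_\infty$ destroys the zero boundary condition on $\partial\co\omega$, which is essential for the boundary Harnack inequality; the paper instead handles possible sign changes via Lemma~\ref{pullup}, which shows that if $Q^+(r)\ge\sigma r^{\alpha^+}$ then $u>0$ on the annulus, so either $u>0$ near $0$ or $Q^+(r)$ is already small. Also, your sign conventions are off: $\Psi^+$ is $(-\alpha^+)$-homogeneous with $\alpha^+>0$, so $\Psi^+\sim|x|^{-\alpha^+}\to\infty$ at the origin; alternative (ii) describes blow-up, not decay, and the relevant quantity to bound is $Q^+(r)=\sup_{E(\omega,r,2r)}u/\Psi^+$, not $m(\rho)/\rho^{\alpha^+}$.
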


\begin{remark} \label{remk}
In the case $u$ is positive near $0$ and (i) occurs, we can show using the same methods that there exists $t> 0$ such that
$
[u(y)]/[\Psi^-(\zeta(y))] \to  t
$ as $y\to 0$.
\end{remark}

An extension of the previous remark is the following generalized Hopf lemma.

\begin{thm}\label{exthopf} Let $\Omega$ be a domain such that $0\in \partial \Omega$ and \eqref{zetaiden} holds. If
\begin{equation*}
F(D^2u, Du, x) \ge 0 \quad \mbox{in} \quad \Omega, \qquad
 u \ge 0 \quad \mbox{in} \quad \Omega,\quad \mbox{and} \quad u(0) = 0,
\end{equation*}
 then either $u\equiv 0$ in $\Omega$ or $u>0$ in $\Omega$ and
$
\liminf_{x\to 0, x\in \Omega} \frac{u(x)}{\Psi^-(\zeta(x))}>0.
$
In particular, for any $ \omega^\prime\subset\subset\omega$ and $e\in \omega^\prime$ we have
\begin{equation} \label{hopfineq}
u(te)\ge ct^{-\alpha^-}\quad \mbox{as}\  t \to 0,
\end{equation}
where $c>0$ depends on $\lambda,\Lambda,\mu,\omega^\prime, \mbox{dist}(\omega^\prime,\partial \omega)$.
\end{thm}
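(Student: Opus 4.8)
The plan is to deduce the generalized Hopf lemma from Theorem~\ref{ibs0} applied to the operator $\widetilde F$ of \eqref{Ftilde}, together with a barrier construction at the vertex. First I would observe that the subsolution property $F(D^2u,Du,x)\ge 0$ together with $u\ge 0$ and $u(0)=0$ puts us in a position to run a strong maximum principle argument in the interior: either $u\equiv 0$, or $u>0$ throughout $\Omega$. So assume $u>0$ in $\Omega$. The core of the argument is to bound $u$ from below near $0$ by a small multiple of $\Psi^-(\zeta(\cdot))$. Since $u$ is only a subsolution, we cannot apply Theorem~\ref{uniq} directly; instead I would fix a smooth subdomain and compare $u$ with the solution $v$ of the Dirichlet problem \eqref{isol} in $\Omega\cap B_r$ having boundary data equal to $u$ on $\partial(\Omega\cap B_r)$ away from $0$ and $0$ at the corner — but this requires knowing $v$ is bounded below, which follows since $v\ge 0$ by comparison (using that $0$ is a constant subsolution). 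Then $u\ge v$ in $\Omega\cap B_r$ by comparison, and $v$ falls under Theorem~\ref{ibs0} applied to a shrunk domain still satisfying \eqref{zetaiden}.

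The delicate point is to rule out alternative (i) of Theorem~\ref{ibs0} for $v$, i.e.\ to show $v$ does \emph{not} extend continuously by $v(0)=0$, so that alternative (ii) holds and gives $v(x)/\Psi^+(\zeta(x))\to t>0$ — wait, this is the wrong homogeneity. The right move is instead to apply Theorem~\ref{ibs0} to the function $w:=v$ viewed through the inversion, or more cleanly: use Remark~\ref{remk}, which asserts that when $v$ is positive near $0$ and alternative (i) holds, one still has $v(y)/\Psi^-(\zeta(y))\to t$ for some $t>0$; combined with alternative (ii), which forces $v$ to blow up like $\Psi^+$ and hence certainly dominates $\Psi^-$ near $0$ since $\alpha^+ > 0 > \alpha^-$ and $\Psi^\pm$ vanish on the lateral boundary at comparable rates. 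Either way, $\liminf_{x\to 0} v(x)/\Psi^-(\zeta(x)) > 0$. Since $u\ge v$, the same holds for $u$, giving the first assertion. The only thing to check for the dichotomy is that the limit $t$ in Remark~\ref{remk} is strictly positive, which is where positivity of $v$ (inherited from $u>0$) is used: $t=0$ would force $v\equiv 0$ by the uniqueness part, contradicting $v>0$.

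For the explicit lower bound \eqref{hopfineq}: fix $\omega'\subset\subset\omega$ and $e\in\omega'$. From $\liminf_{x\to 0} u(x)/\Psi^-(\zeta(x))\ge c_0>0$ and the fact that $D\zeta(0)=\iden$ so that $\zeta(te)=te+o(t)$ and $\Psi^-$ is continuous and positive on $\overline{\mathcal C}_\omega$ away from $0$, I would use the homogeneity $\Psi^-(x)=|x|^{\alpha^-}\Psi^-(x/|x|)$ together with $\inf_{\omega'}\Psi^-(\cdot/|\cdot|) > 0$ (a positive number depending on $\omega'$ and $\dist(\omega',\partial\omega)$ via the boundary Harnack / Hopf estimate for $\Psi^-$ itself) to conclude $u(te)\ge c\,t^{\alpha^-}$... but $\alpha^- < 0$ means $t^{\alpha^-}\to\infty$, consistent with a singularity, whereas the stated bound is $t^{-\alpha^-}$ which is the correct reading of a \emph{decay}; I would reconcile this by noting the statement \eqref{hopfineq} as written uses $t^{-\alpha^-}$ with $\alpha^-<0$, so $-\alpha^->0$ and $t^{-\alpha^-}\to 0$, i.e.\ the claim is merely that $u(te)$ does not decay \emph{faster} than this rate — which is exactly what the liminf bound, after inserting the explicit homogeneity and size of $\Psi^-$ on $\omega'$, delivers.

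The main obstacle I anticipate is the reduction from the subsolution $u$ to a genuine solution $v$ to which Theorem~\ref{ibs0} and Remark~\ref{remk} apply: one must construct $v$ on a domain satisfying \eqref{zetaiden} (a small ball around $0$ works, since \eqref{zetaiden} is local and scale-invariant after rescaling $\zeta$), verify the comparison $u\ge v\ge 0$ carefully near the corner where $u$ is only continuous and potentially singular, and extract the strictly positive constant. A secondary technical point is making the dependence of $c$ in \eqref{hopfineq} on $\lambda,\Lambda,\mu,\omega',\dist(\omega',\partial\omega)$ explicit, which requires tracking the constant in the interior Harnack inequality and in the Hopf estimate for $\Psi^-$ on the smooth cross-section $\omega$.
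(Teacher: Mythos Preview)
Your proposal contains a workable idea but differs from the paper's approach, and the exposition has some confusions worth flagging. First, terminology: in the paper's sign convention $F(D^2u,Du,x)\ge 0$ makes $u$ a \emph{supersolution}, not a subsolution (note $\Pucci(\iden)<0$). This matters for the direction of every comparison you invoke. Second, your construction of $v$ is ambiguously specified: to apply Theorem~\ref{ibs0} and Remark~\ref{remk} you need $v=0$ on $\partial\Omega\cap B_r\setminus\{0\}$, not $v=u$ there; the correct Dirichlet data is $v=0$ on the lateral boundary and $v=u$ only on $\Omega\cap\partial B_r$. With that fix, comparison gives $u\ge v\ge 0$, $v$ is bounded so alternative~(ii) of Theorem~\ref{ibs0} is impossible, alternative~(i) plus $v>0$ triggers Remark~\ref{remk}, and you conclude. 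Your detour through ``applying Theorem~\ref{ibs0} to $\widetilde F$'' and the discussion of $v$ blowing up like $\Psi^+$ are both red herrings.

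The paper takes a more direct route that avoids the reduction to a solution $v$ altogether. It works with the supersolution $u$ itself: after flattening the boundary it sets $u_r(x)=r^{\alpha^-}u(rx)$ and $q^-(r)=\inf_{E(\omega,1/2,1)}u_r/\Psi^-$, then compares $u_r$ with the solution $\psi_r$ of the perturbed equation $G_r=0$ furnished by a modified form of Proposition~\ref{CDE} (with $\Psi^-$ in place of $\Psi^+$ and the annulus $E(\omega,r^\kappa,1)$ in place of $E(\omega,1,r^{-\kappa})$). The comparison principle yields $u_r\ge q^-(r)\psi_r$, and the ratio estimate \eqref{ratioestt} gives the recursion $q^-(r/2)\ge (1-r^\beta)q^-(r)$; iterating as in Lemma~\ref{qnearlymo} produces $q^-(s)\ge c_0 q^-(r_0)>0$ uniformly in $s$, the positivity of $q^-(r_0)$ coming from the classical Hopf lemma on the smooth part of $\partial E(\omega,r_0/4,2r_0)$.

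The practical difference is this: your approach packages the hard work inside Remark~\ref{remk}, but in the paper Remark~\ref{remk} is itself established by precisely the same continuous-dependence-plus-iteration machinery sketched above (see Section~\ref{behdir}). So you are relocating rather than circumventing the core argument. Your route is legitimate as a reduction, and has the virtue of isolating the supersolution-to-solution step cleanly; the paper's route is more self-contained and makes the almost-monotonicity of $q^-$ explicit, which is what ultimately controls the constant~$c$ in~\eqref{hopfineq}.
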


The lower bound \eqref{hopfineq} is sharp, as the existence of $\Psi^-$ shows. If $F=F(D^2u)$ and $\Omega$ is smooth at the origin, it is obvious that after a rotation we have $\Psi^-(x) = x_n$ and  $-\alpha^-=1$, so in this case \eqref{hopfineq} is the conclusion of the usual Hopf lemma.

\medskip

Theorem \ref{ibs0} easily implies the following \emph{Picard-Bouligand principle} (or a \emph{principle of positive singularities}) which is a solvability and uniqueness theorem for fully nonlinear PDE in bounded domains. For linear equations such results, as well as discussion and references, can be found in Kemper \cite{Ke}, Ancona \cite{An}, Pinchover \cite{Pi}.

\begin{thm}\label{poisson}
Let $\Omega$ be a bounded Lipschitz domain such that $0\in \partial \Omega$ and \eqref{zetaiden} holds. Then the set of nonnegative solutions of the Dirichlet problem
\begin{equation} \label{dirpoisson}
\left\{ \begin{aligned}
& F(D^2u,Du,x) = 0 & \mbox{in} & \ \Omega, \\
& u = 0 & \mbox{on} & \ \partial\Omega \setminus \{ 0 \}.
\end{aligned} \right.
\end{equation}
is the half-line $\{ t u_0 : t \geq 0 \}$ for some positive solution $u_0 > 0$ of \eqref{dirpoisson}.
\end{thm}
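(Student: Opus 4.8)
The plan is to reduce the global statement to the local behavior at $0$ captured by Theorem~\ref{ibs0}. First I would establish existence of at least one nonzero nonnegative solution. Since $\Omega$ is a bounded Lipschitz domain and $0 \in \partial\Omega$ is the only ``bad'' point, I would solve the Dirichlet problem for \eqref{eq} on the punctured domain by exhausting $\Omega$ with domains $\Omega_k$ that peel away a small ball $B_{1/k}$ around $0$, prescribing boundary data that equals $0$ on $\partial\Omega\setminus\{0\}$ and equals something like the singular profile $\Psi^+(\zeta(\cdot))$ on $\partial B_{1/k}\cap\Omega$ (or simply a large constant), using Perron's method together with the uniform ellipticity and the barrier at each Lipschitz boundary point away from $0$ to get solvability of each approximate problem. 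By the comparison principle the sequence is monotone after suitable normalization, and the global Harnack inequality prevents the limit from degenerating to $0$ in the interior; the limit $u_0$ is then a positive solution of \eqref{dirpoisson} which is genuinely singular at $0$ (it blows up there like $\Psi^+\circ\zeta$). That $u_0 > 0$ throughout $\Omega$ follows from the strong maximum principle. This gives the half-line $\{tu_0 : t \ge 0\} \subseteq$ (solution set).

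For the reverse inclusion, let $u \ge 0$ be any solution of \eqref{dirpoisson}. Apply Theorem~\ref{ibs0} on $\Omega \cap B_1$: either $u$ extends continuously to $0$ with $u(0) = 0$ — in which case $u$ is a nonnegative solution of $F(D^2u,Du,x)=0$ in $\Omega$, continuous and vanishing on all of $\partial\Omega$, so by the (classical) comparison principle $u \equiv 0 = 0\cdot u_0$ — or else there is $t > 0$ with $u(x)/\Psi^+(\zeta(x)) \to t$ as $x \to 0$. In the latter case, since $u_0$ itself falls under alternative (ii) with some constant $t_0 > 0$, the function $w := u - (t/t_0)u_0$ satisfies $w(x)/\Psi^+(\zeta(x)) \to 0$ as $x \to 0$, i.e. $w$ is a solution (of the two-sided inequality $\pucci(D^2w,Dw)\le 0\le\Pucci(D^2w,Dw)$, so that comparison applies to $w$ and $-w$ simultaneously) which is $o(\Psi^+\circ\zeta)$ at the singularity and vanishes on $\partial\Omega\setminus\{0\}$. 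The key point is then that such a $w$ must be identically zero: the removable-singularity half of the dichotomy in Theorem~\ref{ibs0}, applied to the sub- and supersolution parts of $w$, forces $w$ to extend continuously by $w(0) = 0$, after which comparison on the full domain $\Omega$ gives $w \equiv 0$, hence $u = (t/t_0) u_0$.

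The main obstacle is the step where I pass from ``$u/(\Psi^+\circ\zeta)$ has a finite limit'' to ``the difference $w = u - (t/t_0)u_0$ is removable at $0$.'' Theorem~\ref{ibs0} as stated applies to solutions of $F(D^2u,Du,x)=0$, not directly to the difference of two solutions of a fully nonlinear equation, which only solves the Pucci inequalities $\pucci(D^2 w, Dw) \le 0 \le \Pucci(D^2 w, Dw)$. So I would either invoke the version in Section~\ref{ibsec1} that is stated for the Pucci extremal operators (the remark after Theorem~\ref{uniq} indicates the results apply at that level of generality), or run the barrier/blow-up argument of Theorem~\ref{ibs0} directly for $w$: since $w = o(\Psi^+\circ\zeta)$, for every $\ep > 0$ the function $\ep\,\Psi^+(\zeta(\cdot))$ dominates $w$ near $0$ and is a supersolution of the relevant Pucci inequality (using \eqref{zetaiden} and the $C^2$-closeness of $\zeta$ to the identity to control the error terms produced by the diffeomorphism), so comparison on $\Omega\cap B_r$ gives $w \le \ep\,\Psi^+(\zeta(\cdot))$ there; letting $\ep \to 0$ and doing the same for $-w$ yields $w \to 0$ at $0$, and then global comparison finishes. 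A secondary technical point, to be handled by the standard construction of Lipschitz barriers, is the solvability of the approximating Dirichlet problems used to produce $u_0$; this is routine given uniform ellipticity and is presumably already available from the analysis in Section~\ref{ibsec1}.
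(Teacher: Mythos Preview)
Your existence construction is essentially the paper's: solve approximate Dirichlet problems on $\Omega\setminus B_r$, normalize at an interior point, and pass to the limit via Harnack and H\"older estimates; the limit $u_0$ is positive by the strong maximum principle and falls under alternative (ii) of Theorem~\ref{ibs0} by the maximum principle on the bounded domain $\Omega$.

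The uniqueness argument, however, takes a detour that creates exactly the difficulty you flag, and your proposed fixes do not quite close it. Once you form $w=u-(t/t_0)u_0$, you lose the equation $F=0$ and are left only with the two Pucci inequalities. Theorem~\ref{ibs0} (and its generalization Theorem~\ref{ibs}) is stated for \emph{solutions} of a single equation, so it does not apply to $w$. Your barrier idea using $\ep\,\Psi^+(\zeta(\cdot))$ runs into the perturbation issue you mention, and the alternative barrier $\ep\,u_0$ fails too: $u_0$ solves $F=0$, hence satisfies $F^-(D^2u_0,Du_0,x)\le 0$, which is the \emph{wrong} sign to serve as a supersolution of $F^-=0$ to dominate the $F^-$-subsolution $w$. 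Since $\alpha^+(F^-,\omega)\le\alpha^+(F,\omega)$ in general, the growth information $w=o(\Psi^+_F\circ\zeta)$ is also too weak to invoke Theorem~\ref{PLloc} for $F^-$.

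The paper avoids all of this by never forming $w$. From $u/\Psi^+(\zeta)\to s$ and $u_0/\Psi^+(\zeta)\to t_0$ one has $u/u_0\to t:=s/t_0$ as $x\to 0$. Hence for each $\ep>0$ there is $r>0$ with
\[
(t-\ep)u_0\le u\le (t+\ep)u_0\quad\text{on }\Omega\cap\bar B_r,
\]
while $u=(t\pm\ep)u_0=0$ on $\partial\Omega\setminus B_r$. Both $u$ and $(t\pm\ep)u_0$ are solutions of $F=0$ (by the positive homogeneity \eqref{homogen}), so the comparison principle for $F$ on $\Omega\setminus B_r$ gives $(t-\ep)u_0\le u\le (t+\ep)u_0$ there; sending $r\to 0$ and then $\ep\to 0$ yields $u\equiv t u_0$. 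The point is that positive multiples of solutions are again solutions, so one can compare $u$ with scalar multiples of $u_0$ directly, without ever leaving the equation $F=0$.
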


By adopting the established terminology of the theory of harmonic functions, and its extensions to more general equations, we could restate Theorem \ref{poisson} by saying that {\it the $F$-Martin boundary of $\Omega$ coincides with $\partial \Omega$ provided any point $x_0\in\partial \Omega$ is such that a neighborhood of $x_0$ in $\partial \Omega$ is $C^2$-diffeomorphic to a cone in $\rn$}.
\medskip

While some of the ideas for the proof of Theorem~\ref{ibs0} are already present in  \cite{ASS} as well as in the earlier work of Labutin~\cite{L1}, the analysis here is much more challenging. The main difficulty comes when one smooths the boundary using the isomorphism $\zeta$ and discovers that the resulting equation is perturbed in its dependence on the second derivatives. Since $C^2$ estimates for solutions of general fully nonlinear equations are unavailable, to get a continuous dependence result we must first revisit the uniqueness machinery for viscosity solutions of second-order equations \cite{UG,IL}. We then combine these techniques with the global Harnack inequality for quotients and the global gradient H\"older estimates, to deduce a continuous dependence estimate in a somewhat unusual form. This estimate is then used to prove some almost-monotonicity properties of minima and maxima of ratios of solutions over annuli. A more detailed overview of the strategy for proving Theorem~\ref{ibs0} is given in Section~\ref{proofstrategy}.

\subsection{Maximum principles of \pl type}\label{PLsec}
To provide context for our next results, let us recall two very classical theorems of Phragm\`en and Lindel\"of. They proved, for any holomorphic function $f:\C \to \C$ which is bounded in the angle between two straight lines,
\begin{enumerate}
\item[(a)] if $|f|\le 1$ on these lines, then $|f|\le 1$ in the whole angle; and
\item[(b)] if $f(z)\to a$ as $|z|\to \infty$ along these lines then $f(z)\to a$ as $|z|\to \infty$, uniformly in the whole angle.
\end{enumerate}
These theorems may be formulated in terms of harmonic functions by applying the Cauchy-Riemann theorem to $\log f(z)$.

\medskip

Extensions and refinements of the statement (a) for subsolutions of linear elliptic equations in unbounded domains is the subject of the classical papers of Gilbarg \cite{G}, Hopf \cite{H}, and Serrin \cite{Se}. We prove sharp extensions of the latter results to fully nonlinear equations in Theorems~\ref{PL} and Proposition \ref{PLNH}, below.

Moreover, we prove a stronger result which, to our knowledge, is new even for general linear equations.  Theorem \ref{PLloc} below, which we prove by a blow-up argument, is more general than both (a) and (b) above, and unites these into a single statement. Roughly, it asserts that a subsolution having algebraic growth at a boundary point (resp. infinity) with a rate less than $\alpha^+$ (resp.~$\alpha^-$) must have its fastest growth along the boundary of the domain.
\medskip

In what follows we  denote $\Omega^*:= \{{|x|^{-2}}x : x \in \Omega\}$, the {\it inversion} of $\Omega$.

\begin{thm}\label{PLloc}
Suppose $\Omega$ satisfies \eqref{zetaiden} and $u$ satisfies the inequality
\begin{equation}\label{inequa1}
F(D^2u, Du, x) \leq 0 \; \mbox{ in }\;  \Omega\qquad (\mbox{resp. }\; \Omega^*);
 \end{equation}
Assume that
\begin{equation}\label{condformploc}
\limsup_{x\to 0, x\in \Omega} |x|^{\alpha^+}{u(x)} \le0, \qquad   (\mbox{resp. }\; \limsup_{x\to \infty, x\in \Omega^*} |x|^{\alpha^-}{u(x)} \le0).
\end{equation}
Then
\begin{equation*}
\limsup_{x\to 0, x\in \Omega}u(x)\le \limsup_{x\to 0, x\in \partial\Omega}u^+(x),\quad   (\mbox{resp. }\;\limsup_{x\to \infty, x\in {\Omega}^*}u(x)\le \limsup_{x\to \infty, x\in \partial\Omega^*}u^+(x))
 \end{equation*}
\end{thm}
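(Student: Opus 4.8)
We sketch our strategy.

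\emph{Reduction.} The two assertions are exchanged by the inversion $x\mapsto|x|^{-2}x$, which carries $\Omega$ onto $\Omega^*$, swaps the origin with infinity and the roles of $\alpha^+$ and $\alpha^-$, and preserves the class of admissible operators (as recorded in the Introduction); hence it is enough to prove the first, near-origin, statement. Since $F$ is independent of $u$, we may subtract from $u$ the constant $M:=\limsup_{x\to0,\,x\in\partial\Omega}u^+(x)$---there is nothing to prove if $M=+\infty$---and, because $|x|^{\alpha^+}\to0$ as $x\to0$, this leaves \eqref{condformploc} intact while arranging $\limsup_{x\to0,\,x\in\partial\Omega}u^+=0$. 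Composing with the diffeomorphism $\zeta$ of \eqref{zetaiden} we may further assume $\Omega=\coo$, at the price of replacing $F$ by an operator $\widetilde F$ of the same ellipticity class; since $\zeta\in C^2$ and $D\zeta(0)=\iden$, the distortion present in $\widetilde F$ is of lower order near $0$ and disappears under rescaling. All the hypotheses and the desired conclusion transfer to $\widetilde u:=u\circ\zeta^{-1}$, so it remains to show: if $\widetilde u$ is a subsolution of $\widetilde F$ in $\coo$ with $\limsup_{x\to0,\,x\in\partial\coo}\widetilde u^+=0$ and $\limsup_{x\to0,\,x\in\coo}|x|^{\alpha^+}\widetilde u\le0$, then $\limsup_{x\to0,\,x\in\coo}\widetilde u\le0$.

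\emph{Blow-up.} Assume the contrary, so $\ell:=\limsup_{x\to0,\,x\in\coo}\widetilde u(x)>0$ (possibly $+\infty$). We blow up at the origin: choose radii $s_j\downarrow0$ and constants $\kappa_j>0$ and set $v_j(y):=\kappa_j^{-1}\widetilde u(s_jy)$, defined on $\coo\cap B_{1/s_j}$. By the homogeneity \eqref{homogen} and the dilation invariance \eqref{scaling}, each $v_j$ is a viscosity subsolution of an operator $\widetilde F_j$ of the same class, and $\widetilde F_j\to F$ locally uniformly (the $\zeta$-distortion vanishes since $s_jy\to0$, while $F$ itself is unchanged by \eqref{scaling}). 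The constants must be calibrated so that, simultaneously, (a) the $v_j$ are bounded above on compact subsets of $\coo$, which \eqref{condformploc} yields once $\kappa_j$ is of order $s_j^{-\alpha^+}M(s_j)$, where $M(\rho):=\sup_{0<|x|\le\rho}|x|^{\alpha^+}\widetilde u^+(x)$ is finite and tends to $0$; and (b) the limit is nondegenerate, $\sup\{v_j^+(y):\tfrac12\le|y|\le2\}\gtrsim1$, which forces $s_j$ to lie near a radius where $|x|^{\alpha^+}\widetilde u^+$ nearly attains its running supremum. (If $\ell<\infty$, then $\widetilde u$ is automatically bounded above near $0$ and one simply takes $\kappa_j\equiv1$.) Passing to the upper half-relaxed limit $u_\infty$ of the $v_j$ and invoking the stability of viscosity subsolutions, $u_\infty$ is an upper semicontinuous subsolution of $F$ in the full cone $\coo$, bounded above on compact sets, with $u_\infty\le0$ on $\partial\coo\setminus\{0\}$ (the boundary bound propagates into a neighborhood by boundary H\"older and barrier estimates), strictly positive at some interior point of $\coo$, and---after the careful choice of scales---with growth at the origin strictly slower than $|x|^{-\alpha^+}$ and with decay as $|x|\to\infty$, both inherited from \eqref{condformploc}.

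\emph{Conclusion on the cone.} Now $u_\infty$ lives on the genuine cone $\coo$ and solves the genuinely dilation-invariant equation $F$, so the homogeneous solutions $\Psi^+$ and $\Psi^-$ furnished by Theorem~\ref{fundycones} are available as exact barriers. One then runs the classical Phragm\'en--Lindel\"of argument in the cone: $u_\infty^+$ is a nonnegative subsolution vanishing on $\partial\coo\setminus\{0\}$ with $\limsup_{x\to0}|x|^{\alpha^+}u_\infty^+=0$ and $u_\infty^+(x)\to0$ as $|x|\to\infty$, so comparing it on each truncated cone $\coo\cap(B_R\setminus B_\varepsilon)$ with the $F$-solution carrying its boundary values---whose size tends to $0$ as $\varepsilon\to0$ and $R\to\infty$, precisely because of these two growth conditions and with $\Psi^+$, $\Psi^-$ as comparison functions---forces $u_\infty^+\equiv0$. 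This contradicts the strict positivity of $u_\infty$ at an interior point, and the theorem follows.

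\emph{The main obstacle.} The heart of the matter is the blow-up step, in particular the choice of the scales $s_j$. One must reconcile boundedness above on compacts ($\kappa_j$ large, comparable to $s_j^{-\alpha^+}M(s_j)$) with nondegeneracy at unit scale ($\kappa_j$ no larger than the size of $\widetilde u^+$ near scale $s_j$), and, crucially, one must arrange that the limit inherits growth at the origin that is \emph{strictly} subcritical: a mere $O(|x|^{-\alpha^+})$ bound would not contradict the existence of $\Psi^+$. This forces $s_j$ to be taken near the argmax of $|x|^{\alpha^+}\widetilde u^+$ over the blow-up window, and one must confront the degenerate case in which these argmaxes accumulate at the outer edge of the window---equivalently, in which the running supremum $M(\rho)$ decays too slowly as $\rho\to0$---where either the missing decay must be extracted in the limit or the situation ruled out by a separate comparison. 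By contrast, the convergence $\widetilde F_j\to F$ and the stability it feeds into, the rescaling of $\zeta$ and its convergence to the identity (using $\zeta\in C^2$ and $D\zeta(0)=\iden$), and the boundary estimates that carry $u_\infty\le0$ to the entire punctured lateral boundary are all routine.
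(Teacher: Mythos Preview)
There is a genuine gap in the ``Conclusion on the cone'' step: the claim that the blow-up limit $u_\infty$ satisfies $u_\infty^+(y)\to 0$ as $|y|\to\infty$ is not justified, and in fact fails in the most basic case. If $\ell:=\limsup_{x\to0}\widetilde u<\infty$ and you take $\kappa_j\equiv1$, then $v_j(y)=\widetilde u(s_jy)$; for \emph{any} fixed $y\in\coo$, even with $|y|$ arbitrarily large, one has $s_jy\to0$, so the half-relaxed upper limit $u_\infty(y)$ may equal $\ell$. The blow-up therefore produces only a bounded subsolution in $\coo$ with $u_\infty\le0$ on $\partial\coo\setminus\{0\}$ and $u_\infty(y_0)>0$ at some $|y_0|=1$; it carries no decay at infinity. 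Consequently the comparison on $\coo\cap(B_R\setminus B_\varepsilon)$ has boundary data of size $\approx\ell$ on $|y|=R$ and only recovers $u_\infty\le\ell$, which is no contradiction. (The alternative scaling $\kappa_j=s_j^{-\alpha^+}M(s_j)$ in the unbounded case runs into the same wall: $M(s_jR)/M(s_j)\ge1$ since $M$ is nondecreasing, so again no decay at $|y|=R$.) You flag this as ``the main obstacle'' but do not resolve it; and note that even granting both endpoint conditions, the step ``$u_\infty^+\equiv0$'' is essentially Theorem~\ref{PLloc} in the cone, so the argument is circular.

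The paper avoids this trap by a different mechanism. In the cone case it first proves, using $\Psi^+$ and the boundary Lemma~\ref{snap}, that $r\mapsto M(r):=\sup_{\coo\cap\partial B_r}u$ is nondecreasing near $0$; this monotonicity is the missing ingredient your blow-up cannot supply. Only then does the paper blow up, and it blows up not $u$ itself but the \emph{solutions} $\tilde u_r$ of Dirichlet problems with boundary data $u(r\cdot)$; the limit is a genuine solution attaining an interior maximum, and the strong maximum principle (which does not apply to a mere USC subsolution like your $u_\infty$) gives the contradiction. For general $\Omega$ the paper does not blow up at all: it uses the continuous dependence estimate (Proposition~\ref{CDE}) to build barriers $\psi_s$ that solve the rescaled equation $G_s=0$ and satisfy $\psi_s\ge\tfrac12\Psi^+$, and then repeats the cone argument verbatim with $\psi_s$ in place of $\Psi^+$.
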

Similarly to the theorems in the previous subsection, Theorem \ref{PLloc} is valid and will be proved for subsolutions of more general equations, which ``blow up" to  \eqref{inequa1} at the origin (resp. at infinity), in the sense of Section~\ref{ibsec1}.

\medskip

Theorem \ref{PLloc} implies the following extended \pl maximum principle. Here we set $\mathcal{D}= \Omega\cup\widetilde\Omega^*$ where $\Omega$ and $\widetilde\Omega$ are bounded domains satisfying $0\in \partial \Omega\cap\partial \widetilde\Omega$ and such that \eqref{zetaiden} holds for both $\Omega$ and $\widetilde\Omega$, with possibly different $\omega$, $\widetilde\omega$. We denote $\alpha^+:= \alpha^+(F,\Omega)$, $\alpha^-:= \alpha^-(F,\widetilde\Omega)$.

\begin{thm}\label{PL}
Suppose that $\mathcal{D}^\prime\subseteq \mathcal{D}$ is a domain and $u$ is such that
\begin{equation*}
F(D^2u, Du, x) \leq 0 \; \mbox{ in }\;  \mathcal{D}^\prime, \qquad
 u \leq 0  \; \mbox{ on }\;  \partial \mathcal{D}^\prime \setminus \{ 0 \}, \;\mbox{ and}\\
 \end{equation*}
\begin{equation}\label{condformp}
\left\{
\begin{array}{lcl}
\lim_{x\to0} |x|^{\alpha^+}{u(x)} =0, & \mbox{if}&  0\in \partial \mathcal{D}^\prime\\ \lim_{|x|\to \infty} |x|^{\alpha^-}{u(x)}  =0& \mbox{if } &\mathcal{D}^\prime\mbox{ is unbounded}.
\end{array}
\right.
\end{equation}
Then $u \leq 0$ in $\mathcal{D}^\prime$.
\end{thm}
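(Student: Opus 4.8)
The plan is to derive Theorem~\ref{PL} from Theorem~\ref{PLloc} together with the comparison principle on bounded domains. The point is that, given $u\le0$ on $\partial\mathcal{D}'\setminus\{0\}$, the only places where $u$ could fail to be nonpositive are the origin (when $0\in\partial\mathcal{D}'$) and infinity (when $\mathcal{D}'$ is unbounded); Theorem~\ref{PLloc} controls $u$ there, after which a compactness argument on a large spherical shell finishes the proof. To set up, fix $R>1$ with $\Omega\cup\widetilde\Omega\subseteq B_R$; then $\widetilde\Omega^*\subseteq\{\,|x|>1/R\,\}$ and $\Omega\subseteq B_R$, so $\mathcal{D}\cap B_{1/R}=\Omega\cap B_{1/R}$ and $\mathcal{D}\setminus\overline{B_R}=\widetilde\Omega^*\setminus\overline{B_R}$; in particular $\mathcal{D}'$ lies in $\Omega$ near $0$ and in $\widetilde\Omega^*$ near infinity.

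First I would control $u$ near the origin (this step is vacuous unless $0\in\partial\mathcal{D}'$). Define $v$ on $\Omega\cap B_{1/R}$ by $v:=u^+$ on $\mathcal{D}'\cap B_{1/R}$ and $v:=0$ on $(\Omega\cap B_{1/R})\setminus\mathcal{D}'$. Then $v$ is a viscosity subsolution of \eqref{inequa1} on $\Omega\cap B_{1/R}$: on $\mathcal{D}'\cap B_{1/R}$ it is the maximum of the subsolution $u$ and the solution $0$ (note $F(0,0,x)=0$ by positive homogeneity of $F$); off $\overline{\mathcal{D}'}$ it equals $0$; at $x_0\in\partial\mathcal{D}'\cap(\Omega\cap B_{1/R})$ one has $v(x_0)=0$, and since $v\ge0$ any $C^2$ function $\phi$ touching $v$ from above at $x_0$ has $D\phi(x_0)=0$ and $D^2\phi(x_0)\ge0$, whence $F(D^2\phi(x_0),D\phi(x_0),x_0)\le F(0,0,x_0)=0$ by ellipticity; finally $v$ is upper semicontinuous because $u\le0$ on $\partial\mathcal{D}'\setminus\{0\}$. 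Moreover hypothesis \eqref{condformp} gives $\limsup_{x\to0,\,x\in\Omega}|x|^{\alpha^+}v(x)\le0$, and, since $\mathcal{D}'\subseteq\Omega$ near $0$ and $u\le0$ on $\partial\mathcal{D}'\setminus\{0\}$, we have $\limsup_{\Omega\ni x\to y}v(x)\le0$ for all $y\in\partial\Omega$ near $0$. Hence Theorem~\ref{PLloc}, applied to $v$ near the origin, gives $\limsup_{x\to0,\,x\in\mathcal{D}'}u(x)\le\limsup_{x\to0,\,x\in\Omega}v(x)\le0$. When $\mathcal{D}'$ is unbounded, the same construction with $\widetilde\Omega$ in place of $\Omega$, the exponent $\alpha^-=\alpha^-(F,\widetilde\Omega)$, and the ``resp.'' alternative of Theorem~\ref{PLloc} (for $\widetilde\Omega^*$, via the inversion) applied near infinity gives $\limsup_{x\to\infty,\,x\in\mathcal{D}'}u(x)\le0$.

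Now I would conclude. Fix $\epsilon>0$. By the previous step---and trivially in the degenerate cases $0\notin\partial\mathcal{D}'$ or $\mathcal{D}'$ bounded---there exist $0<r<\rho$ with $u\le\epsilon$ on $\mathcal{D}'\cap\bigl(\overline{B_r}\cup(\R^n\setminus B_\rho)\bigr)$. Set $\mathcal{D}'':=\mathcal{D}'\cap\{\,r<|x|<\rho\,\}$, a bounded open set whose closure is compact and avoids $0$. For every $y\in\partial\mathcal{D}''$ we have $\limsup_{\mathcal{D}''\ni x\to y}u(x)\le\epsilon$: either $|y|\in\{r,\rho\}$, where this follows from the choice of $r,\rho$, or $y\in\partial\mathcal{D}'\setminus\{0\}$, where $u\le0$. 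Since the constant $\epsilon$ is a supersolution (indeed a solution) of $F=0$, the comparison principle on the bounded domain $\mathcal{D}''$ gives $u\le\epsilon$ in $\mathcal{D}''$, hence $u\le\epsilon$ throughout $\mathcal{D}'$. Letting $\epsilon\downarrow0$ yields $u\le0$ in $\mathcal{D}'$.

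The main obstacle is the first step: one must verify that the extension-by-zero $v$ is a bona fide viscosity subsolution across $\partial\mathcal{D}'$---which rests on ellipticity and $F(0,0,\cdot)=0$ at the ``new'' boundary points, where $v$ sits at a local minimum---and that the growth condition and boundary data of $u$ transfer to $v$ in exactly the form required by Theorem~\ref{PLloc}; one must also take care to pair the origin with $\alpha^+(F,\Omega)$ and infinity with $\alpha^-(F,\widetilde\Omega)$. Everything after this reduction is routine compactness and comparison.
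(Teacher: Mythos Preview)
Your proposal is correct and follows essentially the same route as the paper: replace $u$ by $u^+$, extend by zero to the ambient domain, invoke Theorem~\ref{PLloc} at the origin and at infinity to get $\limsup u\le0$ there, and finish with the maximum principle on a bounded annular slice. The paper compresses the first step to a single sentence (``replacing $u$ by $u^+$ and then defining $u$ to be zero in $\mathcal{D}\setminus\mathcal{D}'$''), whereas you supply the viscosity verification that the extension is a subsolution across $\partial\mathcal{D}'$; your local-minimum argument using $F(0,0,x)=0$ and $\Pucci(M)\le0$ for $M\ge0$ is exactly what is needed there.
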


In addition to its sharpness with respect of the growth of $u$, \eqref{condformp} permits $u$ to have singularities a priori at both the origin and infinity. This is a stronger statement in comparison to Phragm\`en-Lindel\"of principles in the literature, and requires a different proof. In fact, we do not actually know of a previous result that is comparable, even for linear equations.

\medskip

As we will see below (Proposition \ref{PLNH}), even stronger results can be obtained if we have a domain with only one singularity, together with information on the solution on the whole boundary of the domain. This is the situation considered in  previous works on maximum principles of \pl type.
 We remark that in the recent years there have been a number of papers on such maximum principles for  fully nonlinear equations. In particular, it was shown by Miller \cite{Mi} (for classical solutions and symmetric cones) and by Capuzzo-Dolcetta and Vitolo \cite{CDV} (for viscosity solutions and more general domains) that there exists a constant $\alpha=\alpha(n,\mu,\Omega)>0$ such that a maximum principle holds at infinity for solutions of $\pucci(D^2u) - (\mu/|x|)|Du|\le 0$ in a conical-type domain $\Omega$, provided $u$ is  $O(|x|^\alpha)$ at infinity (see also \cite{Vi,ARV,Pu}). 

\medskip

Finally, we give another corollary of Theorem~\ref{PLloc} which is an extension of the second theorem of Phragm\`en and Lindel\"of we quoted above. The only previous result of this type of which we are aware is the work by Friedman \cite{Fr}, who studied a class of linear equations in some cones.

\begin{thm}\label{PL2}
 Suppose that $\mathcal{D}^\prime\subseteq \mathcal{D}$ is a domain and $u$ satisfies the inequalities
\begin{equation*}
F(D^2u, Du, x) \leq 0\leq  \widetilde{F}(D^2u, Du, x)\quad \mbox{ in }\;   \mathcal{D}^\prime,
\end{equation*} where $\widetilde{F}$ is the dual operator of $F$, defined in \eqref{Ftilde}, and \eqref{condformp} holds.
Then
$$
\lim_{x\to0, x\in \partial \Omega} {u(x)} =a \quad \mbox{implies}\quad \lim_{x\to 0, x\in \Omega} {u(x)} =a \quad (\mbox{if}\; 0\in \partial \mathcal{D}^\prime),
$$
and
$$\lim_{|x|\to\infty, x\in \partial \Omega^*} {u(x)} =a \quad \mbox{implies}  \quad \lim_{x\to \infty, x\in \Omega^*} {u(x)} =a \quad (\mbox{if} \; \mathcal{D}^\prime \ \mbox{is unbounded}).
$$
\end{thm}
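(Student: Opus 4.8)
The plan is to reduce the two-sided convergence statement to two applications of the one-sided Phragm\'en--Lindel\"of maximum principle of Theorem~\ref{PL}, one for $u-a$ and one for $a-u$. Treat the case $0\in\partial\mathcal{D}'$; the case at infinity follows by the same argument applied to the inverted domain, since the hypotheses on $F$ are invariant under inversion (as emphasized in the introduction) and $\widetilde F$ is dual to $F$. First I would set $\ell^+:=\limsup_{x\to 0,\,x\in\Omega}u(x)$ and $\ell^-:=\liminf_{x\to 0,\,x\in\Omega}u(x)$, and note that it suffices to prove $\ell^+\le a\le\ell^-$. Fix $\varepsilon>0$. By the boundary hypothesis $\lim_{x\to 0,\,x\in\partial\Omega}u(x)=a$, there is a small $r>0$ so that $u\le a+\varepsilon$ on $\partial\Omega\cap B_r$ and $u\ge a-\varepsilon$ there.

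The core step is to run Theorem~\ref{PL} on the truncated domain $\mathcal{D}'':=\mathcal{D}'\cap B_r$ (which still has the singular point $0$ on its boundary, satisfies \eqref{zetaiden} locally, and is bounded). For the upper bound, consider $v:=u-(a+\varepsilon)$, so that $F(D^2v,Dv,x)=F(D^2u,Du,x)\le 0$ in $\mathcal{D}''$ by the homogeneity/translation structure of the equation, and $v\le 0$ on the portion of $\partial\mathcal{D}''$ lying in $\partial\Omega$. On the remaining piece of $\partial\mathcal{D}''$, namely $\mathcal{D}'\cap\partial B_r$, we have $v\le M-(a+\varepsilon)$ for $M:=\sup_{\mathcal{D}'\cap\partial B_r}u$; this is a finite constant but not necessarily $\le 0$, so we cannot apply Theorem~\ref{PL} to $v$ directly. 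Here is where $\Psi^+$ enters: subtract a multiple of the singular solution. Set $w:=u-(a+\varepsilon)-C\,\Psi^+(\zeta(x))$ with $C$ chosen so that $C\,\Psi^+\ge M-(a+\varepsilon)$ on the compact set $\mathcal{D}'\cap\partial B_r$ (possible since $\Psi^+>0$ there). Since $\Psi^+(\zeta(\cdot))$ is a positive supersolution of the (perturbed) equation near $0$ — this is exactly the kind of fact established in the analysis surrounding Theorems~\ref{ibs0}–\ref{exthopf} — we get $F(D^2w,Dw,x)\le 0$, $w\le 0$ on $\partial\mathcal{D}''\setminus\{0\}$, and the growth condition \eqref{condformp}: indeed $|x|^{\alpha^+}u(x)\to 0$ by hypothesis, $|x|^{\alpha^+}(a+\varepsilon)\to 0$, and $|x|^{\alpha^+}\Psi^+(\zeta(x))$ is bounded (it tends to a positive limit along rays) — actually one needs $|x|^{\alpha^+}\Psi^+(\zeta(x))\to 0$, which holds because $\Psi^+$ is homogeneous of order $\alpha^+$ and $\zeta(x)/|x|\to$ a point where... wait, $\Psi^+$ homogeneous of order $\alpha^+$ means $|x|^{\alpha^+}\Psi^+(x)\to\Psi^+(x/|x|)$, which need not vanish; so instead I would absorb the $\Psi^+$ term differently, e.g. apply Theorem~\ref{PL} to $u-(a+\varepsilon)$ on $\mathcal{D}'\cap(B_r\setminus B_\rho)$ for small $\rho$ and let $\rho\to 0$ using \eqref{condformp} to control the inner boundary, then let $r$ stay fixed and handle $\partial B_r$ by the constant-subtraction trick above. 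Either way, Theorem~\ref{PL} yields $u\le a+\varepsilon+C\Psi^+(\zeta(x))$ in $\mathcal{D}''$, hence $\ell^+\le a+\varepsilon$; letting $\varepsilon\to 0$ gives $\ell^+\le a$. For the lower bound, repeat with $\widetilde u:=(a-\varepsilon)-u$, which satisfies $\widetilde F$... rather, use $\widetilde F(D^2u,Du,x)\ge 0$, equivalently $F(D^2(-u),D(-u),x)\le 0$, so $-u$ is an $F$-subsolution; run the same argument on $(a-\varepsilon)-u$ and the dual singular solution $\Psi^-$ from the remark following Theorem~\ref{uniq}. This gives $\ell^-\ge a-\varepsilon$, hence $\ell^-\ge a$, and therefore $\lim_{x\to 0,\,x\in\Omega}u(x)=a$.

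The main obstacle is the growth bookkeeping at the singular point: one must verify that after subtracting the constants $a\pm\varepsilon$ and the correction terms built from $\Psi^{\pm}$, the competitor still satisfies the delicate two-sided growth condition \eqref{condformp} required by Theorem~\ref{PL}, while simultaneously dominating $u$ on the ``outer'' boundary $\mathcal{D}'\cap\partial B_r$. Since $\Psi^+$ grows like $|x|^{\alpha^+}$ near $0$ (which is the borderline rate forbidden by \eqref{condformp}), one cannot simply add $C\Psi^+$ globally; the clean way is to work on the annular region $\mathcal{D}'\cap(B_r\setminus B_\rho)$, use the hypothesis $|x|^{\alpha^+}u(x)\to 0$ to absorb $u$ into an arbitrarily small multiple $\delta\,|x|^{\alpha^+}\ge\delta\,c\,\Psi^+(\zeta(x))$ of the singular barrier on the inner sphere $\partial B_\rho$, apply the ordinary comparison principle on this bounded annulus (no singularity present), and then pass to the limit $\rho\to 0$ followed by $\varepsilon\to 0$. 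The rest is a routine combination of the comparison principle with the positivity and (near-)homogeneity of $\Psi^{\pm}\!\circ\zeta$ established earlier in the paper.
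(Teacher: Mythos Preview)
Your core observation---that $\widetilde F(D^2u,Du,x)\ge 0$ is equivalent to $F(D^2(-u),D(-u),x)\le 0$, so that both $u-a$ and $-(u-a)$ are $F$-subsolutions---is exactly the paper's key step. The difference is in which maximum principle you feed this into. The paper applies Theorem~\ref{PLloc} directly: that theorem already asserts
\[
\limsup_{x\to 0,\,x\in\Omega}u(x)\le \limsup_{x\to 0,\,x\in\partial\Omega}u^+(x),
\]
with no hypothesis on the non-singular part of the boundary. Applying it once to $u-a$ and once to $-(u-a)$ gives $\limsup_{\Omega}(u-a)\le 0$ and $\limsup_{\Omega}(a-u)\le 0$, hence $\lim_{\Omega} u=a$. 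That is the entire proof.

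You instead route through Theorem~\ref{PL}, which requires $u\le 0$ on \emph{all} of $\partial\mathcal D'\setminus\{0\}$. This forces you to manufacture a barrier on the outer sphere $\partial B_r$, and you then spend the bulk of your argument wrestling with that barrier. As you yourself notice, subtracting $C\Psi^+(\zeta(\cdot))$ violates the growth condition \eqref{condformp} because $|x|^{\alpha^+}\Psi^+(\zeta(x))$ has a nonzero limit along rays. Your fallback---work on the annulus $B_r\setminus B_\rho$, dominate $u$ by $\delta\Psi^+$ on $\partial B_\rho$, let $\rho\to 0$---is precisely the proof of Theorem~\ref{PLloc} itself (compare Section~\ref{maxpri}), and it is incomplete as you state it: near the corner $\partial\omega\cap\partial B_\rho$ the barrier $\Psi^+$ vanishes while you have no a~priori bound on $u$ there, so $u\le \delta\Psi^+$ fails uniformly on $\partial B_\rho$. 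Closing this gap requires exactly the Lemma~\ref{snap} argument used in the paper's proof of Theorem~\ref{PLloc}. In short, you are re-deriving Theorem~\ref{PLloc} inside your proof rather than citing it; replace your appeal to Theorem~\ref{PL} by Theorem~\ref{PLloc} and the whole discussion of $\Psi^\pm$, annuli, and outer boundaries disappears.
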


\subsection{Organization of the article}
In the next section, we state the notation and some preliminary results we need. In Section~\ref{alphadefs} we define $\alpha^+$ and $\alpha^-$, and we construct $\Psi^+$ and $\Psi^-$ in Section~\ref{singsing}. Theorem~\ref{uniq} and a special case of Theorem~\ref{PLloc} are proved in Section~\ref{maxpri}. A key ingredient in the proof of Theorem~\ref{ibs0}, a continuous dependence estimate, is proved in Section~\ref{SCDE}, and Theorems~\ref{ibs0}-\ref{poisson} are proved in Section~\ref{ibsec}. The \pl principles are proved in Section~\ref{sectpoisson}.

\section{Preliminaries} \label{prelim}

\subsection{Notation and hypotheses}
Throughout this article, the symbols $C$ and $c$ always denote positive constants which may vary in each occurrence but depend only on the appropriate quantities in the context in which they appear.  If $K \subseteq \R^n$, we write $\Omega \subset\subset K$ if $\Omega$ is a domain and its closure $\bar\Omega$ is a subset of $K$. We denote the Euclidean length of a vector $x\in \R^n$ by $|x|$, and the unit sphere is written $\sph := \{ x \in \R^n : |x|=1 \}$. The open ball centered at $x$ with radius $r>0$ is denoted by $B_r(x)$, and we set $B_r : = B_r(0)$. The sets $\USC(\Omega)$, $\LSC(\Omega)$ and $C(\Omega)$ are, respectively, the set of upper semicontinuous, lower semicontinuous, and continuous functions in $\Omega$. The set of $n$-by-$n$ symmetric matrices is written $\Sy$, and $\iden\in \Sy$ is the identity matrix. We write $M\geq N$ if the matrix $M-N$ is nonnegative definite. For $a,b\in \R^n$, the symmetric tensor product $a\otimes b$ if the matrix with entries $\frac{1}{2}(a_ib_j + b_ia_j)$. If $X$ is a square matrix, we denote the transpose of $X$ by $X^t$.

Given $0 < \lambda \leq \Lambda$, the Pucci extremal operators $\Pucci$ and $\pucci$ are the nonlinear functions $\Sy \to \R$ defined by
\begin{equation*}
\PucciSub(M) := \sup_{A\in \llbracket\elp,\Elp\rrbracket} \left[ - \trace(AM) \right] \quad \mbox{and} \quad \pucciSub (M) := \inf_{A\in \llbracket\elp,\Elp\rrbracket} \left[ - \trace(AM) \right],
\end{equation*}
where $\llbracket\lambda,\Lambda\rrbracket$ is the subset of $\Sy$ consisting of matrices $A$ for which $\lambda \iden \leq A \leq \Lambda \iden$. When performing calculations it is useful to keep in mind that $\Pucci$ and $\pucci$ may also be expressed as
\begin{equation} \label{puccform}
\Pucci(M) = -\lambda \sum_{\mu_j > 0} \mu_j - \Lambda \sum_{\mu_j < 0} \mu_j \quad \mbox{and} \quad \pucci(M) = -\Lambda \sum_{\mu_j > 0} \mu_j - \lambda \sum_{\mu_j < 0} \mu_j,
\end{equation}
where $\mu_1, \ldots, \mu_n$ are the eigenvalues of $M$. These operators satisfy the inequalities
\begin{multline} \label{puccineq}
\pucci(M) + \pucci(N) \leq \pucci(M+N) \leq \pucci(M) + \Pucci(N) \\
\leq \Pucci(M+N) \leq \Pucci(M) + \Pucci(N).
\end{multline}
We also observe that $\Pucci$ and $\pucci$ are rotationally invariant, so that if $U$ is an orthogonal matrix, then $\mathcal{P}^\pm_{\lambda,\Lambda} (U^t M U ) = \mathcal{P}^\pm_{\lambda,\Lambda} (M)$.

For $\pi \subseteq \sph$, we denote by $\co\pi $ the cone-like domain $\co\pi : = \left\{ t y : y \in \pi, \ t > 0   \right\}$. Annular sections of $\co\pi$ are written
\begin{equation*}
E(\pi,r,R) : = \co\pi \cap \left( B_R \!\setminus\! B_r\right), \quad 0 \leq r < R.
\end{equation*}
Throughout, we reserve the symbol $\omega$ to denote a $C^2$-smooth subdomain of $\sph$. We denote spaces of continuous, homogeneous functions on $\co\omega$ by
\begin{equation*}
\hc{\alpha} : = \left\{ u \in C(\co\omega) : u(x) = t^{\alpha} u(tx) \ \mbox{for all}\ x\in \co\omega, \ t>0 \right\},
\end{equation*}
for each number $\alpha \in \R\! \setminus\! \{ 0 \}$. Observe that each function $u\in \hc{\alpha}$ is $-\alpha$-homogeneous and thus determined by its values on $\omega$.

\medskip

Let us state our primary assumptions on the nonlinear operator $F=F(M,p,x)$, which we take to be a continuous function
\begin{equation*}
F:\Sy\times \R^n \times  ( \cob\omega  ) \to \R
\end{equation*}
which is uniformly elliptic, in the sense that for some constants $0 < \elp \leq \Elp$ and $\mu\geq 0$, and every $M,N\in \Sy$, $p,q\in \R^n$, and $x\in \cob\omega$ we have
\begin{multline}\label{ellip}
\pucci(M-N) - \mu |x|^{-1} |p-q| \leq F(M,p,x) - F(N,q,x) \\ \leq \Pucci(M-N) + \mu |x|^{-1} |p-q|.\end{multline}
We assume $F$ is positively 1-homogeneous in $(M,p)$:
\begin{equation} \label{homogen}
F(tM,tp,x)  =  tF(M,p,x) \quad \mbox{for} \ t\geq 0, \ (M,p,x) \in \Sy\times \R^n \times  ( \cob\omega  ).
\end{equation}
In order to apply comparison results (found for example in \cite{JS}) we require $F$ to possess some regularity in $x$; specifically, for some constants $K >0$ and $\theta \in ( \tfrac12,1]$,
\begin{equation} \label{regxh}
\left| F(M,0,x) - F(M,0,y) \right| \leq K\! \left( 1 + |M| \right) |x-y|^\theta \quad \mbox{for all} \ M \in \Sy, \ x,y \in \omega.
\end{equation}
Finally, in order to prove that $F$ possesses homogeneous singular solutions in cones we need to  assume that $F$ is invariant under dilations:
\begin{equation} \label{scaling}
F(r^2M,rp,x) = r^2F(M,p,rx) \quad \mbox{for} \ r >  0, \ (M,p,x) \in \Sy\times \R^n \times  ( \cob\omega  ).
\end{equation}

Note that if $F = F(M,x)$ is independent of $p$ and \eqref{homogen} holds, then  \eqref{scaling} is equivalent to $F(M,x) = F(M,x/|x|)$. Obviously if $F=F(M)$, then  \eqref{ellip}-\eqref{scaling} reduce to uniform ellipticity and positive 1-homogeneity.
Examples of operators which satisfy \eqref{ellip}-\eqref{scaling} are the extremal operators
\begin{equation}\label{extr}
F^\pm(D^2u, Du, x) := \mathcal{P}^\pm_{\elp,\Elp}(D^2u) \pm\mu \frac{1}{|x|} |Du|.
\end{equation}

The scaling relation~\eqref{scaling} ensures that $u$ is a solution of
$
F(D^2u,Du,x) = f(x)$ in $\Omega \subseteq \co\omega$
if and only if the function $u_r(x): = u(rx)$ is a solution of the equation
\begin{equation*}
F(D^2u_r,Du_r,x) = r^2f(rx) \quad \mbox{in} \  \Omega_r:=\left\{ x: rx \in \Omega \right\}
\end{equation*}

 As we will see below, our proofs are made much shorter by the fact that the class of operators satisfying \eqref{ellip}-\eqref{scaling} is invariant with respect to inversions in $\rn\!\setminus\!\{0\}$. We mention in passing that it will be quite clear from our arguments that our results from \cite{ASS} (where we considered only the case $F=F(M)$) extend to operators which satisfy \eqref{ellip}-\eqref{scaling}.

\subsection{The inverted operator} \label{invop}

Given an operator $F=F(M,p,x)$, we define the \emph{inversion} $F^*$ of $F$ by
\begin{multline} \label{Fstar}
F^*(M,p,y) : = \\ F\!\left(\, J(y) M J(y) - 2|y|^{-2} \left( (y\cdot p) J(y) + y\otimes J(y)p + y\otimes p \right),\, J(y) p,\, y \right),
\end{multline}
where the matrix $J = J(y) : = \iden - 2 |y|^{-2} y\otimes y$ is symmetric and orthogonal. The operator $F^*$ has the property that if $u$ is a (sub/super)solution of the equation
\begin{equation*}
F(D^2u(x), Du(x), x) = f(x) \quad \mbox{in} \ \Omega \subseteq \co\omega,
\end{equation*}
then the inverted function
\begin{equation} \label{ustar}
u^*(y) = u(x),\qquad x={|y|^{-2}}{y},
\end{equation}
is a (sub/super)solution of the equation
\begin{equation*}
F^*(D^2u^*(y), Du^*(y) , y) = |y|^{-4} f^*( y) \quad \mbox{in} \ \Omega^* : = \left\{ y\in \co\omega : |y|^{-2} y \in \Omega \right\}.
\end{equation*}
This is easy to check for smooth $u$, and so it holds in the viscosity sense as well since we can perform nearly the same calculation with smooth test functions.

Obviously, $\co\omega^*=\co\omega$, and  $u\in \hc{\alpha}$ if and only if $u^* \in \hc{-\alpha}$. The usefulness of the inverted operator $F^*$ comes from the fact that if  $F$ satisfies \eqref{ellip}-\eqref{scaling}, then $F^*$ does as well (possibly with a larger value of $\mu$ in \eqref{ellip}). For example, we have
\begin{align*}
\lefteqn{F^*(M,p,y) - F^*(N,q,y)} \quad & \\
& \leq \Pucci\left( J(M-N)J - 2|y|^{-2} (2y\otimes (p-q) + y\cdot (p-q) \iden \right.\\
& \qquad \left. + 8|y|^{-4} (y\cdot (p-q))y\otimes y \right) + \mu|y|^{-1}  |Jp-Jq| \\
& \leq \Pucci(M-N) + 2|y|^{-2} \, \Pucci\left( 4|y|^{-2} y\cdot (p-q) y\otimes y - 2y\otimes (p-q)  \right.\\
& \qquad \left. - y\cdot (p-q) \iden \right) +  \mu|y|^{-1}  |p-q| \\
& \leq \Pucci(M-N) + |y|^{-1} \left( 2((n-1)\Lambda - \lambda)+\mu \right) |p-q|.
\end{align*}
The other side of the ellipticity condition is verified in a similar fashion. It is routine to check the scaling and homogeneity relations. The condition \eqref{regxh} is immediate. Finally, a long but routine calculation confirms that we have the duality property
\begin{equation*}
F^{**} = F.
\end{equation*}
In general, $F^*$ has gradient dependence even if $F$ does not.

\medskip

Finally, we remark that $(-\Delta)^*u = -\Delta u +2(2-n) |y|^{-2} y\cdot Du$, and that the function $u^*$ in \eqref{ustar} is the Kelvin transform of $u$ only if the dimension $n=2$. The properties of the inverted operator $F^*$ we need are not related to the Kelvin transform, which is to be expected since this transform does not possess special properties vis-\`a-vis a general nonlinear operator $F$.

\subsection{Several known results}
In this section we state some results for viscosity solutions of uniformly elliptic equations which are well-known or essentially known. We refer to \cite{UG,CC} for an introduction to the theory of viscosity solutions, including basic definitions.

We denote, for constants $0< \lambda \leq \Lambda$ and $\mu,\nu\geq 0$, the extremal operators
\begin{equation*}
\mathcal{L}^+[u]:=\Pucci(D^2 u)+\mu|Du|+\nu |u|\quad \mbox{and} \quad \mathcal{L}^-[u]:=\pucci(D^2 u)-\mu|Du|-\nu |u|.
\end{equation*}

A crucial ingredient in several of our proofs is the following global Harnack inequality for quotients of positive solutions. For strong solutions of general linear equations in nondivergence form this result goes back to the work of Bauman \cite{Ba}. In Appendix~\ref{BHQproof} we give an outline of the proof for viscosity solutions of fully nonlinear equations, since we could not find a reference in the literature.

\begin{prop}[Global Harnack inequality]\label{bhq}
Assume  $\Omega$ is a bounded domain and $\Sigma$ is an open, $C^{2}$ subset of the boundary $\partial \Omega$. Suppose that $u,v \in C(\Omega\cup\Sigma)$ are both positive solutions of the inequalities \eqref{harcon1} such that $u= 0 = v$ on $\Sigma$. Then for each $\Omega' \subset \subset \Omega \cup \Sigma$, we have the estimate
\begin{equation}\label{harres1}
\sup_{\Omega'} \frac{u}{v} \leq C \inf_{\Omega'} \frac{u}{v}.
\end{equation}
The constant $C>1$ depends only on $n$, $\lambda$, $\Lambda$, $\mu$, $\nu$, the curvature of $\Sigma$, a lower bound for $\dist(\Omega',\partial \Omega\!\setminus \! \Sigma)$, and the diameter of $\Omega$ measured in the path distance.
\end{prop}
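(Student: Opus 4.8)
The plan is to establish the global Harnack inequality for quotients by combining the interior Harnack inequality for fully nonlinear equations with a boundary version that exploits the $C^2$-regularity of $\Sigma$, then to chain the resulting local estimates along a Harnack chain inside $\Omega \cup \Sigma$. I would argue in three stages. First, I would cover $\overline{\Omega'}$ by finitely many balls, distinguishing interior balls (contained in $\Omega$, at a definite distance from $\partial\Omega$) from boundary balls (centered at points of $\Sigma$). On each interior ball $B$ with $2B \subset \Omega$, the standard interior Harnack inequality of Krylov--Safonov type (valid for viscosity subsolutions of $\mathcal{L}^+$ and supersolutions of $\mathcal{L}^-$, as in \cite{CC}) gives $\sup_B u \le C\inf_B u$ and the same for $v$; combining these yields $\sup_B (u/v) \le C \inf_B (u/v)$. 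This is the routine part.

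The heart of the argument is the boundary estimate near $\Sigma$. Near a point $x_0 \in \Sigma$, flatten the boundary using the $C^2$-diffeomorphism (whose norms are controlled by the curvature of $\Sigma$), which perturbs the operator in its $x$-dependence but keeps it uniformly elliptic with the same $\lambda,\Lambda$ and a controlled $\mu,\nu$; this is where the hypothesis $\Sigma \in C^2$ and regularity assumption \eqref{regxh} are used. For the flattened problem one has: (a) a boundary Lipschitz (indeed $C^{1,\gamma}$) estimate together with the Hopf-type lower bound, giving $c\, d(x) \le u(x) \le C\, d(x)$ in a boundary neighborhood, where $d(x) = \dist(x,\Sigma)$, since $u$ vanishes on $\Sigma$ and is a positive solution; and (b) the boundary Harnack inequality, proved by a Carleson-estimate argument plus an oscillation-decay iteration for the quotient $u/v$ up to the flat boundary. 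Quotients of the form $u/v$ where both vanish linearly on a flat piece satisfy, after the change of variables, a degenerate-elliptic equation to which an oscillation lemma applies; alternatively one can run the classical Bauman-type comparison argument \cite{Ba} directly with viscosity barriers. Either way one obtains $\sup_{B\cap\Omega}(u/v) \le C\inf_{B\cap\Omega}(u/v)$ for small boundary balls $B$ centered on $\Sigma$, with $C$ depending only on $n,\lambda,\Lambda,\mu,\nu$ and the curvature of $\Sigma$.

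Finally I would patch the local estimates together. Fix points $y_1, y_2 \in \Omega'$. Since $\Omega'\subset\subset \Omega\cup\Sigma$ and this set is connected, join $y_1$ to $y_2$ by a path inside $\Omega \cup \Sigma$ staying at distance $\ge \delta$ from $\partial\Omega\setminus\Sigma$, where $\delta$ is the assumed lower bound; cover the path by $N$ overlapping balls of the two types above, each with $2B \subset \Omega\cup\Sigma$ away from $\partial\Omega\setminus\Sigma$. The number $N$ is controlled by the diameter of $\Omega$ in the path distance and by $\delta$. Iterating the local inequalities along consecutive overlapping balls gives $(u/v)(y_1) \le C^N (u/v)(y_2)$, hence $\sup_{\Omega'}(u/v) \le C^N \inf_{\Omega'}(u/v)$, which is \eqref{harres1} with the claimed dependence of the constant. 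The main obstacle is step two, the boundary Harnack inequality for quotients up to a curved $C^2$ portion of the boundary in the viscosity setting: one must be careful that flattening does not destroy ellipticity constants, that the Hopf lemma and boundary gradient bounds hold for merely viscosity solutions of the perturbed equation, and that the oscillation-decay mechanism for $u/v$ genuinely closes — this is precisely why the authors defer a full proof to Appendix~\ref{BHQproof} and only sketch it.
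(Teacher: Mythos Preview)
Your strategy matches the paper's: interior Harnack plus a boundary Harnack inequality near $\Sigma$, then chaining. The paper's Appendix~\ref{BHQproof} carries out the boundary step (Proposition~\ref{bhq1}) on a flat cube via the barrier/comparison route you list as the ``Bauman-type'' alternative: after a Carleson estimate $u \le Cu(z_0)$ on the cube, two auxiliary solutions $w_1,w_2$ of the extremal Dirichlet problems with carefully chosen boundary data are compared to $u$ and $v$ via the comparison principle, and Hopf's lemma together with the boundary Lipschitz estimate give $w_2 \le Cw_1$ in the half-cube, yielding $u/v \le C\,u(z_0)/v(z_0)$. The other route you sketch --- writing a degenerate-elliptic equation for the quotient $u/v$ and iterating an oscillation lemma --- does not go through directly for fully nonlinear equations, since $u/v$ need not satisfy any useful PDE when $F$ is nonlinear; the comparison-with-barriers approach is the one that actually closes here. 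Two minor corrections: the hypothesis \eqref{regxh} plays no role in this proposition, since the inequalities \eqref{harcon1} involve only the extremal operators $\mathcal{L}^\pm$ with constant structural coefficients, and straightening a $C^2$ boundary keeps those structure constants under control without any H\"older condition in $x$; and the final patching in the paper uses a decomposition of $\Omega'$ into an interior piece $\Omega'' = \Omega' \cap \{\dist(\cdot,\partial\Omega) > r_0\}$ and a boundary layer, rather than a ball cover, but this is equivalent to your Harnack-chain argument.
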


We will also use the following global H\"older gradient estimate (see Winter~\cite[Theorem 3.1]{W} and Swiech \cite{Swi}). We state it for an operator $G:\Sy\times\R^n\times \R \times\bar\Omega \to \R$ 
 which is continuous and  satisfies the structural conditions
\begin{multline}\label{unifellip}
\pucci(M-N) - \mu|p-q| - \nu|z-w| \leq G(M,p,z,x) - G(N,q,w,x) \\ \leq \Pucci(M-N) + \mu|p-q| + \nu|z-w|,
\end{multline}
and, for some $\mu,\nu,\beta_0\ge 0$,
\begin{equation} \label{regx}
|G(M,0,0,x) - G(M,0,0,y)| \leq \beta_0 |M|  \quad \mbox{for every} \ x,y\in \bar\Omega.
\end{equation}

\begin{prop}[{Global gradient H\"older estimate}] \label{ggh}
Suppose $G$ is continuous and satisfies \eqref{unifellip} and \eqref{regx} above. Suppose that $\Omega$ is a bounded domain and $\Sigma$ is an open, $C^2$ portion of the boundary $\partial \Omega$. Suppose that $u$ satisfies
\begin{equation*}
\left\{ \begin{aligned}
& G(D^2u,Du,u,x) = f &  \mbox{in} & \ \Omega, \\
& u = \varphi & \mbox{on} & \ \Sigma,
\end{aligned} \right.
\end{equation*}
for some $\varphi \in C^{1,\gamma}(\Sigma)$, $f\in L^p(\Omega)$, $p>n$. Then for every $\Omega' \subset\subset \Omega \cup \Sigma$, we have the estimate
\begin{equation*}
\| u \|_{C^{1,\gamma}(\Omega')} \leq C\left( \| u \|_{L^\infty(\Omega)} + \| f \|_{L^\infty(\Omega)}  + \|\varphi\|_{C^{1,\gamma}(\Sigma)}\right)
\end{equation*}
where $C$ depends on $n$, $\lambda$, $\Lambda$, $\mu$, $\nu$, $\beta_0$, the diameter of $\Omega$, the curvature of $\Sigma$, and in the case $\Sigma \neq \partial\Omega$, a lower bound for $\dist(\Omega',\partial \Omega\!\setminus \! \Sigma)$.
\end{prop}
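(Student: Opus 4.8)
The plan is to obtain Proposition~\ref{ggh} by assembling two facts that are already available in the literature --- the interior gradient H\"older estimate for $L^p$-viscosity solutions of uniformly elliptic equations (\cite{Swi}, which rests on the $C^{1,\alpha}$ theory of \cite{CC} and requires no convexity of the operator) and the boundary gradient H\"older estimate of Winter (\cite[Theorem 3.1]{W}) --- and then patching these local estimates together over $\Omega'$ by a finite covering argument. Since the estimate is purely local in character, the only real work is to organize the covering so that the resulting constant depends only on the quantities listed in the statement.

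I would first fix $\gamma$ to be the smaller of the two H\"older exponents furnished by the interior and boundary estimates; both are positive and depend only on $n,\lambda,\Lambda$, the boundary exponent being additionally restricted by the $C^{1,\gamma}$ regularity of $\varphi$ and of $\Sigma$. Set $\rho:=\tfrac14\dist(\Omega',\partial\Omega\setminus\Sigma)$ if $\Sigma\neq\partial\Omega$, and $\rho:=\tfrac14\diam\Omega$ otherwise. The compact set $\overline{\Omega'}\subseteq\Omega\cup\Sigma$ is then covered by balls of two kinds. First, for $x_0\in\overline{\Omega'}$ with $\dist(x_0,\partial\Omega)\geq\rho$ one has $B_\rho(x_0)\subseteq\Omega$, and the interior estimate bounds $\|u\|_{C^{1,\gamma}(B_{\rho/2}(x_0))}$ by $C(\|u\|_{L^\infty(\Omega)}+\|f\|_{L^p(\Omega)})$. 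Second, for $x_0\in\overline{\Omega'}$ with $\dist(x_0,\partial\Omega)<\rho$, pick a nearest point $x_0'\in\partial\Omega$; then $\dist(x_0',\partial\Omega\setminus\Sigma)\geq 3\rho$, so $B_{r_0}(x_0')\cap\partial\Omega\subseteq\Sigma$ for some $r_0>0$ depending only on $\rho$ and the curvature of $\Sigma$, and after a rigid motion and --- if needed --- a $C^2$ flattening $\Phi$ of $\Sigma\cap B_{r_0}(x_0')$, Winter's estimate bounds $\|u\|_{C^{1,\gamma}(\Omega\cap B_{r_0/2}(x_0'))}$ by $C(\|u\|_{L^\infty(\Omega)}+\|f\|_{L^p(\Omega)}+\|\varphi\|_{C^{1,\gamma}(\Sigma)})$.

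Here one must check that the structural hypotheses are stable under the flattening $\Phi$, which is a routine chain-rule computation: the second-order coefficient becomes of the form $D\Phi^t\,M\,D\Phi$ and the error term is linear in the gradient, so \eqref{unifellip} survives with $\mu,\nu$ enlarged in terms of $\|\Phi\|_{C^2}$, and \eqref{regx} survives with $\beta_0$ enlarged in terms of $\beta_0$, $\|\Phi\|_{C^2}$ and $\diam\Omega$. By compactness finitely many of these balls cover $\overline{\Omega'}$, their number and radii being controlled by $n$, $\diam\Omega$, the curvature of $\Sigma$, and the lower bound for $\dist(\Omega',\partial\Omega\setminus\Sigma)$; summing the local bounds, and using that a function which is $C^{1,\gamma}$ with controlled norm on each member of a finite cover of the connected open set $\Omega'$ is $C^{1,\gamma}$ on $\Omega'$ with norm bounded by the sum times a geometric factor, one obtains the asserted estimate (which, since $\|f\|_{L^p(\Omega)}\leq|\Omega|^{1/p}\|f\|_{L^\infty(\Omega)}$, is in fact slightly stronger than stated).

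The point I expect to require real care --- and which I regard as the main obstacle --- is that the boundary ingredient must be a genuinely \emph{local} estimate at $x_0'$: it must control $\|u\|_{C^{1,\gamma}}$ near $x_0'$ using only $\|u\|_{L^\infty}$ on a slightly larger half-ball, the right-hand side, and the data on $\Sigma$, with \emph{no} information about $u$ on the artificial spherical portion $\partial B_{r_0}(x_0')\cap\Omega$. This is the form in which Winter's estimate, like Krylov's boundary estimate on which it is modelled, is established, so there is no need to multiply $u$ by a cutoff --- which would introduce first-order terms $\nabla\chi\cdot\nabla u$ that are not yet under control and would make the argument circular. A harmless preliminary reduction, if one wishes, is to eliminate the dependence of $G$ on the $z$ slot: by \eqref{unifellip} a viscosity sub/supersolution of $G(D^2u,Du,u,x)=f$ is a viscosity sub/supersolution of $G(D^2u,Du,0,x)=f\pm\nu\|u\|_{L^\infty(\Omega)}$, which adds only $\nu|\Omega|^{1/p}\|u\|_{L^\infty(\Omega)}$ to $\|f\|_{L^p(\Omega)}$, so one may quote the $z$-free versions of the interior and boundary estimates.
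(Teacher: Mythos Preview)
The paper does not prove this proposition; it simply records it as a known result and cites Winter~\cite[Theorem 3.1]{W} and \'Swiech~\cite{Swi}. Your proposal is a correct and careful elaboration of how the statement follows from those two ingredients: the interior $C^{1,\gamma}$ estimate and Winter's local boundary $C^{1,\gamma}$ estimate, patched by a finite covering of $\overline{\Omega'}$. Your treatment of the two subtleties --- that the boundary estimate is genuinely local (so no cutoff is needed) and that the $z$-dependence can be frozen at the cost of adding $\nu\|u\|_{L^\infty}$ to the right-hand side --- is accurate, and the stability of \eqref{unifellip}--\eqref{regx} under a $C^2$ boundary flattening is indeed routine. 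There is nothing to compare here beyond noting that what you have written is precisely the standard assembly the paper takes for granted by citation.
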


Our construction of singular solutions relies on the following result of Rabinowitz~\cite{R2}, which is a generalization of the Leray-Schauder alternative. A nice proof can also be found for example in \cite{C}.

\begin{prop}\label{rabinowitz}
Let $X$ be a real Banach space, $K\subseteq X$ a convex cone, and $A:[0,\infty) \times K \to K$ a compact and continuous map such that $A(0,u) = 0$ for every $u\in K$. Then there exists an unbounded connected set $S \subseteq [0,\infty)\times K$ with $(0,0)\in S$, such that $A(\alpha,u) = u$ for every $(\alpha,u) \in S$.
\end{prop}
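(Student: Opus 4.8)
The plan is to argue by contradiction, combining Leray--Schauder degree theory with the classical separation lemma of Whyburn for compact metric spaces. Let $\Sigma:=\{(\alpha,u)\in[0,\infty)\times K:A(\alpha,u)=u\}$ be the solution set. Since $A$ is continuous $\Sigma$ is closed, since $A(0,\cdot)\equiv 0$ we have $(0,0)\in\Sigma$, and --- crucially --- since $A$ is compact, for each $R,\rho>0$ the slice $\Sigma\cap([0,R]\times\overline{B_\rho})$ is compact: any solution $u$ there lies in the relatively compact set $A([0,R]\times\overline{B_\rho})$. Let $C$ be the connected component of $(0,0)$ in $\Sigma$ and suppose, for contradiction, that $C$ is bounded, say $C\subseteq[0,R_0)\times B_{\rho_0}$; by the above, $C$ is then compact.

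The first step is to trap $C$ in a bounded open set on whose boundary $A$ has no fixed point. Work inside the compact metric space $\mathcal K:=\Sigma\cap([0,R_0+1]\times\overline{B_{\rho_0+1}})$, of which $C$ is still a connected component, and let $\mathcal K_\partial\subseteq\mathcal K$ be the closed set of points lying outside the open box $[0,R_0+1)\times B_{\rho_0+1}$; it is disjoint from $C$. No connected subset of $\mathcal K$ can meet both $C$ and $\mathcal K_\partial$, for its union with $C$ would be a connected subset of $\mathcal K$ strictly larger than the component $C$. Hence Whyburn's lemma furnishes a partition $\mathcal K=\mathcal K_1\sqcup\mathcal K_2$ into disjoint compact sets with $C\subseteq\mathcal K_1$ and $\mathcal K_\partial\subseteq\mathcal K_2$. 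Since $\mathcal K_1$ is compact, disjoint from $\mathcal K_2$, and contained in the open box, a sufficiently thin tubular neighbourhood $\mathcal O$ of $\mathcal K_1$ in $[0,\infty)\times X$ satisfies $\mathcal K_1\subseteq\mathcal O$, $\overline{\mathcal O}\subseteq[0,R_0+1)\times B_{\rho_0+1}$, and $\overline{\mathcal O}\cap\mathcal K_2=\emptyset$. Any point of $\Sigma\cap\overline{\mathcal O}$ then lies in $\mathcal K_1\subseteq\mathcal O$, so $\Sigma\cap\partial\mathcal O=\emptyset$, i.e. $A(\alpha,u)\ne u$ on $\partial\mathcal O$.

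Now the degree computation. Assuming $K$ closed (as it is in the applications), Dugundji's extension theorem extends $A$ to a continuous $\widetilde A:[0,\infty)\times X\to K$, still compact on bounded sets. Since $\widetilde A$ takes values in $K$, any solution of $u=\widetilde A(\alpha,u)$ lies in $K$ and hence coincides with a solution of the original equation; in particular at $\alpha=0$ it forces $u=A(0,u)=0$. For $\alpha\ge0$ set $\mathcal O_\alpha:=\{u\in X:(\alpha,u)\in\mathcal O\}$, a bounded open subset of $X$ that is empty once $\alpha\ge R_0+1$; since $\partial\mathcal O_\alpha\subseteq(\partial\mathcal O)_\alpha$, the field $u-\widetilde A(\alpha,u)$ is zero-free on $\partial\mathcal O_\alpha$, so $d(\alpha):=\deg\big(I-\widetilde A(\alpha,\cdot),\mathcal O_\alpha,0\big)$ is well defined. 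By the generalized homotopy invariance of the Leray--Schauder degree, applied to the compact field $(\alpha,u)\mapsto u-\widetilde A(\alpha,u)$ on $\overline{\mathcal O}\subseteq[0,R_0+1]\times X$ (zero-free on $\partial\mathcal O$), the integer $d(\alpha)$ does not depend on $\alpha\in[0,R_0+1]$. But $d(R_0+1)=\deg(\,\cdot\,,\emptyset,0)=0$, whereas at $\alpha=0$ excision reduces $d(0)$ to a degree on a small ball $B_\epsilon(0)\subseteq\mathcal O_0$, and the homotopy $u\mapsto u-t\widetilde A(0,u)$, $t\in[0,1]$, is zero-free on $\partial B_\epsilon$ (a zero would lie in $K$ and thus be $0$), so $d(0)=\deg(I,B_\epsilon,0)=1$. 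The contradiction $0=d(R_0+1)=d(0)=1$ forces $C$ to be unbounded, and $S:=C$ satisfies all the assertions.

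I expect the main obstacle to be the bookkeeping rather than any deep idea: one must verify that $\Sigma$ is locally compact (this is precisely where compactness of $A$ enters, and it underpins both the Whyburn step and the well-definedness of the degrees), that fixed points of the Dugundji extension remain in the cone --- so that the boundary condition $A\ne\mathrm{id}$ on $\partial\mathcal O$ transfers to the slices $\mathcal O_\alpha$ and so that $d(0)=1$ --- and that the generalized homotopy invariance of the Leray--Schauder degree is indeed available for the non-cylindrical open set $\mathcal O$ with possibly empty slices. Each of these is routine but easy to mishandle.
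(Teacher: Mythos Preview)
The paper does not supply its own proof of this proposition; it merely records the statement and cites Rabinowitz~\cite{R2} and~\cite{C} for the argument. Your proof is the standard one found in those references: assume the component $C$ of $(0,0)$ in the solution set is bounded, use Whyburn's separation lemma to isolate it inside a bounded open set $\mathcal O$ whose boundary is free of solutions, and then run the generalized homotopy invariance of the Leray--Schauder degree along the slices $\mathcal O_\alpha$ to derive the contradiction $1=d(0)=d(R_0+1)=0$. The details you give --- local compactness of $\Sigma$ from compactness of $A$, the Dugundji extension into $K$ so that fixed points of $\widetilde A$ stay in the cone (whence $d(0)=1$ via the linear homotopy), and the inclusion $\partial\mathcal O_\alpha\subseteq(\partial\mathcal O)_\alpha$ --- are all correct and are exactly the points one must check. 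Your caveat that $K$ be closed is the only extra hypothesis beyond the bare statement, and it is harmless for the applications in the paper.
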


\subsection{Some maximum principles}

We will frequently use the following lemma, which  is a technical tool  enabling us to handle some issues involving the conical boundaries. It asserts that a supersolution, which is positive on a compact subset of an annular slice  $E\subset\mathcal{C}_\omega$, vanishes on the sides of the cone and is not too small near the top and bottom of $E$, must be nonnegative on a smaller annular slice; see Figure~\ref{snapfig}.

\begin{lem} \label{snap}
Assume $\omega' \subset\subset \omega$, $a \geq 0$, $\ep \in \R$, and $u \in \LSC(\bar E(\omega,\tfrac12,4))$ satisfy
\begin{equation*}
\left\{ \begin{aligned}
&\mathcal{L}^+[u] \geq 0 & \mbox{in} & \ E(\omega, \tfrac{1}{2} ,4) \!\setminus\! E(\omega',1,2),  \\
& u \geq a & \mbox{in} & \ E(\omega',1,2),\\
& u \geq 0 & \mbox{on} & \ \partial \co{\omega} \cap (B_4\!\setminus \!B_{1/2}),\\
& u \geq -\ep & \mbox{on} & \ \co\omega \cap \partial \left( B_4\!\setminus\! B_{1/2}\right)
\end{aligned} \right.
\end{equation*}
(see Figure~\ref{snapfig}). Then there exists $\ep_0 = \ep_0(n,\omega,\omega',\elp,\Elp,\mu,\nu)> 0$ such that
\begin{equation*}
\ep \leq \ep_0 a \quad \mbox{implies that} \quad  u \geq 0 \quad \mbox{in} \ E(\omega, 1 ,2).
\end{equation*}
\end{lem}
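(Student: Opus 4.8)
The plan is to deduce the lemma from the comparison principle, after constructing an explicit subsolution on the \emph{collar} $G := E(\omega,\tfrac12,4)\setminus\overline{E(\omega',1,2)}$. By lower semicontinuity of $u$, the hypotheses give $u\ge a$ on $\overline{E(\omega',1,2)}$, $u\ge 0$ on $\partial\co\omega\cap(\overline B_4\setminus B_{1/2})$, and $u\ge-\ep$ on $\co\omega\cap(\partial B_{1/2}\cup\partial B_4)$; these three disjoint pieces make up $\partial G$. Since $u\ge a\ge 0$ already on $\overline{E(\omega',1,2)}$, it suffices to show $u\ge 0$ on $E(\omega,1,2)\setminus E(\omega',1,2)\subseteq\overline G$, and for this it is enough to exhibit $w\in C(\overline G)$ with $\mathcal{L}^+[w]\le 0$ in $G$, with $w\le u$ on $\partial G$, and with $w\ge 0$ on $E(\omega,1,2)\setminus E(\omega',1,2)$; the comparison principle for the extremal operator $\mathcal{L}^+$ then yields $u\ge w$ in $\overline G$. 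Because of the zeroth-order term $\nu|\cdot|$, which carries the unfavorable sign on $\{w<0\}$, this comparison step is run after the standard reduction handling such a term (exhausting $G$ by subdomains on which the relevant principal eigenvalue is positive, or a weighted change of unknown); for $\nu=0$ it is just the comparison principle of \cite{CC}, and this is why $\ep_0$ is allowed to depend on $\nu$.

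I would take $w := a\phi-\ep\psi$, where $\phi$ solves $\mathcal{L}^+[\phi]=0$ in $G$ with boundary value $1$ on $\partial E(\omega',1,2)$ and $0$ on the other two pieces of $\partial G$, and $\psi$ solves $\mathcal{L}^-[\psi]=0$ in $G$ with boundary value $1$ on $\co\omega\cap(\partial B_{1/2}\cup\partial B_4)$ and $0$ on the other two pieces. Both exist by Perron's method (the corners where the sides of the cone meet the bounding spheres are treated with exterior-cone barriers, available since $\omega$ is $C^2$; the slight discontinuity of the boundary data of $\psi$ at these corners is harmless, since the semicontinuous envelopes satisfy the needed inequalities on $\partial G$), and both are nonnegative and strictly positive in $G$ by the strong maximum principle. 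The algebraic point is that $w$ is a subsolution: using positive $1$-homogeneity of $\mathcal{L}^\pm$, the subadditivity $\Pucci(A+B)\le\Pucci(A)+\Pucci(B)$, the identity $\Pucci(-B)=-\pucci(B)$, and $|a\phi-\ep\psi|\le a\phi+\ep\psi$, a routine computation (valid in the viscosity sense, and in fact pointwise since $\phi,\psi$ are classical solutions in $G$) gives
\[
\mathcal{L}^+[a\phi-\ep\psi]\ \le\ a\,\mathcal{L}^+[\phi]+\ep\bigl(-\pucci(D^2\psi)+\mu|D\psi|+\nu\psi\bigr)\ =\ a\,\mathcal{L}^+[\phi]-\ep\,\mathcal{L}^-[\psi]\ =\ 0 .
\]

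It remains to check the boundary inequalities and the nonnegativity of $w$ on $E(\omega,1,2)\setminus E(\omega',1,2)$. On $\partial E(\omega',1,2)$, $w=a-\ep\psi\le a\le u$; on $\partial\co\omega\cap(\overline B_4\setminus B_{1/2})$, $w=0\le u$; on $\co\omega\cap(\partial B_{1/2}\cup\partial B_4)$, $w=-\ep\le u$. For the nonnegativity, on any compact subset of $E(\omega,1,2)\setminus\overline{E(\omega',1,2)}$ staying a fixed distance from $\partial\co\omega$, a Harnack chain emanating from $E(\omega',1,2)$ gives $\phi\ge\delta_0$ for some $\delta_0=\delta_0(n,\omega,\omega',\elp,\Elp,\mu,\nu)>0$, while $\psi\le C_1$ for a constant of the same type, so $w\ge a\delta_0-C_1\ep\ge 0$ once $\ep\le(\delta_0/C_1)\,a$. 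On the complementary neighborhood of $\partial\co\omega\cap(\overline B_2\setminus B_1)$ — a $C^2$ portion of $\partial G$ on which both $\phi$ and $\psi$ vanish — the global Harnack inequality for quotients (Proposition~\ref{bhq}) applied to the pair $(\psi,\phi)$ gives $\psi\le C_2\phi$ there, hence $w=a\phi-\ep\psi\ge(a-C_2\ep)\phi\ge 0$ once $\ep\le a/C_2$. Taking $\ep_0:=\min(\delta_0/C_1,\,1/C_2)$ finishes the proof.

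The step I expect to be the main obstacle is the nonnegativity of $w$ near the lateral boundary $\partial\co\omega$: one needs the two barriers, which both vanish on the $C^2$ sides of the cone, to vanish there at comparable rates, and this is precisely the content of the boundary Harnack inequality — the only genuinely conical ingredient here — whose proof relies on the $C^2$-regularity of $\omega$ and requires care at the corners where the sides of the cone meet the bounding spheres. The secondary nuisance, already flagged, is running the comparison principle for $\mathcal{L}^+$ with its (possibly adverse) zeroth-order term, which is the source of the dependence of $\ep_0$ on $\nu$.
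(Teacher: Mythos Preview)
Your proposal is correct and follows essentially the same route as the paper's proof: you build the same pair of auxiliary Dirichlet solutions (your $\phi,\psi$ are the paper's $v_+,v_-$), form the same barrier $w=a\phi-\ep\psi$, check $w\le u$ on $\partial G$, and apply the comparison principle. The only substantive difference is how you verify $\phi\ge c\,\psi$ on $E(\omega,1,2)\setminus E(\omega',1,2)$: the paper invokes Hopf's lemma (for the lower bound on $\phi$) together with Lipschitz estimates (for the upper bound on $\psi$) near the lateral $C^2$ boundary, whereas you appeal to Proposition~\ref{bhq}. Both arguments yield the same inequality, and there is no circularity since the proof of Proposition~\ref{bhq} in Appendix~\ref{BHQproof} does not use Lemma~\ref{snap}. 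Your parenthetical that $\phi,\psi$ are classical is in fact justified by Evans--Krylov, since $\mathcal{L}^+$ is convex and $\mathcal{L}^-$ concave in the Hessian, so the pointwise subsolution computation for $w$ is legitimate; your flagging of the zeroth-order term in the comparison step is appropriate and is exactly why $\ep_0$ depends on $\nu$.
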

\begin{proof}
We denote $\Omega : = E(\omega,\textstyle\frac{1}{2},4)  \setminus E(\omega', 1,2 )$.
Let $v_+$ and $v_-$ be the solutions of the Dirichlet problems
\begin{equation*}
\left\{ \begin{aligned}
& \mathcal L^\pm(D^2 v_\pm) = 0 & \mbox{in} & \ \Omega,\\
& v_\pm = g_\pm & \mbox{on} & \ \partial \Omega,
\end{aligned} \right.
\end{equation*}
where $g_+ = g_- = 0$ on the lateral sides $\partial \mathcal C_\omega \cap (B_4\setminus B_{1/2})$ of the outer part of the boundary of $\Omega$, $g_+= 1$ and $g_- = 0$ on the inner boundary $\partial E(\omega',1,2)$, and finally $g_+=0$ and $g_- = 1$ on the top and bottom parts $\mathcal C_\omega \cap \partial\! \left( B_4 \cap B_{1/2} \right)$ of the outer boundary. Elliptic estimates and Hopf's lemma imply that, if $\ep_0 = \ep_0(n,\omega,\omega',\elp,\Elp,\mu,\nu)> 0$  is sufficiently small,  then
\begin{equation*}
v_+ > \ep_0 v_- \quad \mbox{in} \ \Omega \cap \left( B_2 \setminus B_1\right) = \left( \mathcal{C}_\omega \setminus \mathcal C_{\omega'} \right) \cap \left( B_2 \setminus B_1 \right).
\end{equation*}
Set $v: = a v_+ - \ep v_-$, and observe that $u \geq v$ on $\partial \Omega$. Therefore, by the comparison principle, $u \geq v$ in $\Omega$. In particular, $u > 0$ in $\left( \mathcal{C}_\omega \setminus \mathcal C_{\omega'} \right) \cap \left( B_2 \setminus B_1 \right)$ if $\ep<\ep_0a$. Since we have $u \geq a > 0$ in $E(\omega',1,2) = \mathcal C_{\omega'} \cap \left( B_2 \setminus B_1 \right)$, we obtain $u\geq 0$ in $E(\omega,1)$.
\end{proof}
\begin{figure}
\begin{tikzpicture}[>=stealth]
\pgftransformscale{1}
\fill[lightgray] (.4,-.3) arc (-36.8699:36.8699:0.5) -- (3.2,2.4) arc (36.8699:-36.8699:4) -- cycle;
\draw[very thick] (0,0) -- (4,3);
\draw[very thick] (0,0) -- (4,-3);
\draw[thick] (.4,-.3) node[anchor=north]{{$\!\!\!\!\!\!\tfrac12$}} arc (-36.8699:36.8699:0.5);
\draw[thick] (0.96592583,-0.25881905) -- (1.93185165,-0.51763809);
\draw[thick] (0.96592583,0.25881905) -- (1.93185165,0.51763809);
\fill[white] (0.96592583,-0.25881905) -- (1.93185165,-0.51763809) arc (-15:15:2) -- (0.96592583,0.25881905) arc (15:-15:1) -- cycle;
\draw (.8,-.6) node[anchor=north]{{$\!\!\!\!\!1$}} arc (-36.8699:36.8699:1);
\draw (1.6,-1.2) node[anchor=north]{{$\!\!\!\!\!2$}} arc (-36.8699:36.8699:2);
\draw[thick] (3.2,-2.4) node[anchor=north]{$\!\!\!\!\!4$} arc (-36.8699:36.8699:4);
\draw[thick,->] (1,1.6)node[anchor=south]{$E(\omega',1,2)$} -- (1.35, 0);
\draw[thick,->] (4.5,2)node[anchor=west]{$E(\omega,\tfrac12,4)$} -- (3.4, 1);
\draw (4.15, -2.7) node[anchor=south]{$\mathcal{C}_\omega$};
\end{tikzpicture}
\caption{The function $u$ in Lemma~\ref{snap} is a supersolution in the grey region $\Omega$ and positive in the white region illustrated above.}
\label{snapfig}
\end{figure}
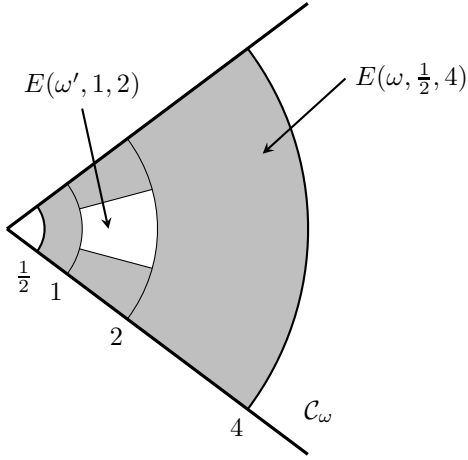

We now give a comparison principle for spaces of homogeneous functions in $\mathcal{C}_\omega$, which is analogous to some results in the principal eigenvalue theory asserting the simplicity of the principal eigenvalue. To prove a result like this, it is typical to use a small domain maximum principle to deal with the behavior close to the boundary.  For this purpose, we use instead Lemma~\ref{snap}.

\begin{prop} \label{hcp}
Suppose that $\alpha > 0$, and $f\in \hc{\alpha+2}$ is nonnegative. Suppose that $u,v\in \hc{\alpha}$ satisfy the differential inequalities
\begin{equation}
F(D^2u,Du,x) \leq f \leq F(D^2v,Dv,x)  \quad \mbox{in} \ \co{\omega},
\end{equation}
as well as
\begin{equation*}
u\leq 0 \ \mbox{on} \ \partial \co{\omega} \setminus \{ 0 \} \qquad \mbox{and} \qquad v > 0 \ \mbox{in} \ \co{\omega}.
\end{equation*}
Then either $u \leq v$ in $\co{\omega}$ or else there exists $t> 1$ such that $u \equiv tv$.
\end{prop}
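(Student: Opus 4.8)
The plan is to run a sliding (scaling) argument. Since $u$ and $v$ are both $-\alpha$-homogeneous and $v>0$ in $\co\omega$, the ratio $u/v$ is a $0$-homogeneous function, hence determined by its values on $\omega$; because $u\le 0$ on $\partial\co\omega\setminus\{0\}$ and $v>0$ inside, the supremum of $u/v$ over $\co\omega$ is attained, possibly on the lateral boundary where it is $\le 0$. Set
\[
t^* := \inf\{ t>0 : tv \ge u \ \text{in}\ \co\omega \}.
\]
If $t^*\le 1$ then $u\le v$ and we are done, so assume $t^*>1$. The goal is to show $u\equiv t^*v$. By homogeneity and continuity, $w:=t^*v-u\ge 0$ in $\co\omega$, $w=0$ (or $\ge 0$) on $\partial\co\omega\setminus\{0\}$, and there is a point of $\omega$ (equivalently a ray) along which $w$ touches $0$; the issue is to upgrade this contact point into the conclusion $w\equiv 0$.

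First I would record the differential inequality satisfied by $w$. Using the homogeneity \eqref{homogen} of $F$ in $(M,p)$ and the fact that $t^*>1$, together with the ellipticity bounds \eqref{ellip}, we get
\[
\mathcal L^+[w] = \Pucci(D^2 w) + \mu|x|^{-1}|Dw| \ \ge\ F(D^2(t^*v), D(t^*v), x) - F(D^2u, Du, x) \ \ge\ t^* f - f \ = (t^*-1) f \ \ge 0
\]
in $\co\omega$, i.e. $w$ is a nonnegative supersolution of the extremal operator $\mathcal L^+$ vanishing on the lateral boundary. Now I would argue by contradiction: suppose $w\not\equiv 0$. The strong maximum principle (applied on annular slices, away from $0$ and $\infty$, where $w$ is a genuine $\mathcal L^+$-supersolution) forces $w>0$ throughout the interior $\co\omega$; in particular $w\ge a>0$ on the compact middle slice $E(\omega',1,2)$ for suitable $\omega'\subset\subset\omega$ and $a>0$. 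By $(-\alpha)$-homogeneity of $w$, the same lower bound $w\ge c\,a$ holds on every slice $E(\omega',r,2r)$ with a constant depending only on $\alpha$; choosing $t^*$ slightly smaller — i.e. replacing $t^*$ by $t^*-\delta$ — decreases $t^*v - u$ by only $\delta v$, which on the bounded slice $E(\omega,\tfrac12,4)$ is at most $\delta\cdot(\sup_\omega v)\cdot C$, a quantity we can make $\le \ep_0\, (ca)$ for small $\delta$. Then Lemma~\ref{snap}, applied to the supersolution $(t^*-\delta)v - u$ on $E(\omega,\tfrac12,4)$ — which is $\ge ca>0$ on the inner slice, $\ge 0$ on the lateral boundary, and $\ge -\delta C(\sup_\omega v)$ on the top and bottom — yields $(t^*-\delta)v - u\ge 0$ on $E(\omega,1,2)$, hence by homogeneity on all of $\co\omega$. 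This contradicts the minimality of $t^*$. Therefore $w\equiv 0$, i.e. $u\equiv t^*v$ with $t^*>1$.

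The main obstacle is the behavior near the cone vertex $0$ and near infinity: $w$ need not be a viscosity supersolution at $0$, and the "interior" strong maximum principle and Harnack-type arguments must be applied on annular regions $E(\omega,r,R)$ with $0<r<R<\infty$, then propagated across scales using the exact $(-\alpha)$-homogeneity of $w$. The role of Lemma~\ref{snap} is precisely to control the competition between the positive lower bound on the interior slice and the (small, negative) error on the top/bottom caps of an annular slice, thereby circumventing the lack of a small-domain maximum principle near the conical corner; the homogeneity is what lets a single application on one annulus do the job globally. One should also check the degenerate possibility that the sup of $u/v$ is attained only on the lateral boundary (where it is $\le 0$), but in that case $t^*\le 0\le 1$ and the first alternative holds trivially.
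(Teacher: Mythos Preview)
Your approach is the same as the paper's, and the contradiction step via Lemma~\ref{snap} is carried out correctly. However, there is a genuine gap in your justification that $t^*$ is \emph{finite}. You assert that the supremum of $u/v$ over $\co\omega$ is attained because the ratio is $0$-homogeneous, ``possibly on the lateral boundary where it is $\le 0$.'' This is not justified: we only know $v>0$ in the \emph{open} cone $\co\omega$, and nothing prevents $v$ from vanishing on $\partial\omega$. Since the boundary condition on $u$ only says $\limsup_{x\to\partial\co\omega} u(x)\le 0$, it is entirely possible that $u(x)\to 0^+$ along a sequence approaching $\partial\omega$ more slowly than $v(x)\to 0^+$, making $u/v$ unbounded above. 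So the claim that $\sup_{\co\omega} u/v<\infty$ requires proof.

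The paper handles this by a separate, preliminary application of Lemma~\ref{snap}: one fixes $\omega'\subset\subset\omega$, sets $a:=\tfrac12\inf_{E(\omega',1,2)} v>0$, chooses $k$ so large that $u\le ak$ on $E(\omega',\tfrac12,4)$, and observes that $z:=kv-u$ is an $\mathcal L^+$-supersolution on $E(\omega,\tfrac12,4)$ which is $\ge 0$ on the lateral boundary, $\ge ak$ on the inner slice $E(\omega',1,2)$, and bounded below by a fixed constant $-C_0$ on the top and bottom caps. Taking $k>C_0/(\ep_0 a)$ and invoking Lemma~\ref{snap} gives $z\ge 0$ on $E(\omega,1,2)$, hence on all of $\co\omega$ by homogeneity. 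This is exactly the same mechanism you use later for the improvement step, and it is needed here as well; once you insert it, your proof coincides with the paper's.
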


\begin{proof}
We claim that $u \leq kv$ in $\co{\omega}$ for sufficiently large $k>0$. Choose $\omega'\subset\subset \omega$, and put $a: = \frac{1}{2}\inf_{E(\omega',1,2)} v > 0$. Let $C_0$ be such that $u\ge -C_0$ in $E(\omega,\textstyle\frac12,4)$, and  let $k> 1$ be so large that $u \leq ak$ in $E(\omega',\frac{1}{2},4)$. Define $z: = kv - u$, and observe that
\begin{equation*}
\Pucci(D^2z) + 2\mu|Dz| \geq (k-1)f\geq 0 \quad \mbox{in} \ E(\omega,\textstyle\frac12,4),
\end{equation*}
$z\geq 0$ on $\partial \co\omega \cap (B_4\setminus B_{1/2})$, $z\geq ak$ on $E(\omega',1,2)$, and $z\geq -C_0$ in $E(\omega,\frac12,4)$. Hence $z\geq 0$ in $E(\omega,1,2)$ by Lemma~\ref{snap}, provided we choose $k>C_0(\ep_0a)^{-1}$, where $\ep_0> 0$ is as in Lemma~\ref{snap}. By homogeneity, $z\geq 0$ in $C_\omega$, and the claim is proved.

We have established that the quantity
\begin{equation*}
t : = \inf\left\{ s > 1 : u \leq sv \ \mbox{in} \ \co\omega \right\}
\end{equation*}
is finite. If $t=1$, then we have the first alternative in the conclusion of the proposition, and hence nothing more to show. Suppose instead that $t>1$, and set $w: = tv - u \geq 0$. We must show that $w\equiv 0$. As before, $w$ satisfies
\begin{equation*}
\Pucci(D^2w) + 2\mu|Dw|  \geq 0 \quad \mbox{in} \ E(\omega,\textstyle\frac12,4).
\end{equation*}
If $w\not\equiv 0$, then the strong maximum principle implies $w > 0$ in $\co{\omega}$. In this case, we repeat the argument from the first paragraph. Select $\omega' \subset\subset \omega$ and redefine $a: = \frac{1}{2}\inf_{E(\omega',1,2)} w$. For sufficiently small $0 < \delta < t-1$, the function $\widetilde w:= w - \delta v$ satisfies $\widetilde w \geq 0$ on $\co{\omega'}$ and $\widetilde w \geq a$ on $E(\omega',1,2)$. Choosing $\delta > 0$ smaller still, we may assume that $\widetilde w \geq -\ep_0a$ in $E(\omega,\frac{1}{2},4)$, so  Lemma~\ref{snap} applies and we conclude that $\widetilde w \geq 0$ in $E(\omega,1,2)$. By homogeneity, this implies that $\widetilde w\geq 0$ in $\co{\omega}$. That is, $u \leq (t-\delta) v$, a contradiction to the definition of $t$.
\end{proof}

\section{Definition of $\alpha^+(F,\omega)$ and  $\alpha^-(F,\omega)$} \label{alphadefs}

Define the quantities
\begin{multline} \label{ap}
\alpha^+(F,\omega) : = \sup\left\{ \alpha > 0 :  \ \mbox{there exists} \ u \in \hc{\alpha} \right. \\
\left. \mbox{such that} \ F(D^2u,Du,x) \geq 0 \ \mbox{and} \ u>0 \ \mbox{in} \ \co{\omega} \right\}
\end{multline}
and
\begin{multline} \label{am}
\alpha^-(F,\omega) : = \inf\left\{ \alpha < 0 :  \ \mbox{there exists} \ u \in \hc{\alpha} \right. \\
\left. \mbox{such that} \ F(D^2u,Du,x) \geq 0 \ \mbox{and} \ u>0 \ \mbox{in} \ \co{\omega} \right\}.
\end{multline}
Note these definitions bear some resemblance to the definition of the scaling exponents given in (2.4) of \cite{ASS}, but there are important differences.

Immediate from the definition above is the monotonicity property
\begin{equation} \label{alphmon}
\alpha^+(F,\omega) \leq \alpha^+(F,\omega') \quad \mbox{provided} \ \emptyset \neq \omega' \subseteq\omega \subset \sph.
\end{equation}
It follows from the properties of $F^*$ and the fact that $u\in \hc{\alpha}$ if and only if $u^* \in \hc{-\alpha}$, that we have
\begin{equation} \label{dualalph}
\alpha^-(F,\omega) = -\alpha^+(F^*,\omega).
\end{equation}
Now the reader can see why the inverted operator $F^*$ is useful: it reduces the length of our arguments by half. Thus while we state most of our results for $\alpha^+(F,\omega)$, we easily obtain analogues for $\alpha^-(F,\omega)$ by applying them to the dual operator $F^*$ and using~\eqref{dualalph}.

We must show that the admissible set in \eqref{ap} is nonempty and that $\alpha^\pm(F,\omega)$ are finite. In fact, we obtain positive upper and lower bounds for $\alpha^+(F,\omega)$ in terms of the width of $\omega$ and its compliment. These are summarized in the following two lemmas.

\begin{lem}\label{alphlowbnd}
Suppose that $\omega \subseteq \pi: = \left\{ x \in \sph : x\cdot \xi < \sigma \right\}$, for some $\xi \in \sph$ and $ \sigma < 1$. Then there is a constant $c> 0$ depending only on $n$, $\elp$, $\Elp$, $\mu$ and a lower bound for $1-\sigma$, such that
\begin{equation} \label{alphlowbndeq}
\alpha^+(F,\omega) \geq c.
\end{equation}
\end{lem}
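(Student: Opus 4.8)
The plan is to exhibit an explicit subsolution in $\hc{\alpha}$ that is positive in $\co\omega$ for a suitable small $\alpha>0$, which by definition \eqref{ap} immediately yields $\alpha^+(F,\omega)\ge\alpha$. Since $\omega\subseteq\pi=\{x\in\sph:x\cdot\xi<\sigma\}$, it suffices to build such a function on the larger cone $\co\pi$: any $u>0$ in $\co\pi$ with $F(D^2u,Du,x)\ge0$ there restricts to a competitor on $\co\omega$, and in fact by \eqref{alphmon} it is cleanest to prove the bound for $\omega=\pi$ directly. The natural ansatz is the ``linear-type'' function
\begin{equation*}
u(x) = |x|^{-\alpha}\bigl(\sigma - \tfrac{x}{|x|}\cdot\xi\bigr) = |x|^{-\alpha-1}\bigl(\sigma|x| - x\cdot\xi\bigr),
\end{equation*}
which lies in $\hc{\alpha}$, vanishes exactly on $\partial\co\pi\setminus\{0\}$, and is positive in $\co\pi$ since $1-\sigma>0$ gives a strictly positive floor for $\sigma - x\cdot\xi/|x|$ away from the boundary — more precisely $u(x)\ge c(1-\sigma)|x|^{-\alpha}$ is not needed, only $u>0$.

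First I would compute $D^2u$ and $Du$ for this explicit $u$ on the cone and feed them into the one-sided ellipticity bound from \eqref{ellip}: it suffices to show
\begin{equation*}
\pucci(D^2u) - \mu|x|^{-1}|Du| \;\ge\; F(0,0,x) = 0 \quad\text{in } \co\pi,
\end{equation*}
i.e.\ that $u$ is a subsolution of the extremal operator $\mathcal L^-$ with $\nu=0$. By homogeneity it is enough to check this on the unit sphere, or equivalently to exploit the dilation-invariance \eqref{scaling} and reduce to $|x|=1$. Writing $r=|x|$, $e=x/|x|$, $t=\sigma - e\cdot\xi\in(0,1-(-1))$, a direct computation shows $Du$ and $D^2u$ decompose into a ``radial part,'' which after applying the Pucci formula \eqref{puccform} contributes a term of the form $\alpha(\alpha+1-c_n)r^{-\alpha-2}t$ (with $c_n$ depending on $n,\lambda,\Lambda$), plus a ``spherical/mixed part'' whose size is controlled by $Cr^{-\alpha-2}$ with $C=C(n,\lambda,\Lambda,\mu)$, plus a term from the factor $(\sigma - e\cdot\xi)$ being a first-order spherical harmonic. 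The key point is that as $\alpha\to0^+$ the dangerous terms are $O(\alpha)$ while the lowest-order good term (coming from the positive multiple of $t>0$, whose floor is $1-\sigma$) is $O(1)$; hence for $\alpha$ small enough, depending only on $n,\lambda,\Lambda,\mu$ and a lower bound for $1-\sigma$, the inequality $\mathcal L^-[u]\ge0$ holds throughout $\co\pi$.

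The main obstacle is the computation itself: one must carefully organize $D^2u$ for $u(x)=r^{-\alpha-1}(\sigma r - x\cdot\xi)$ so that the $\alpha$-dependence of each eigenvalue contribution is transparent, and then apply \eqref{puccform} to read off $\pucci(D^2u)$ — the subtlety is that the sign pattern of the eigenvalues of $D^2u$ can depend on position in the cone, so one cannot simply compute the trace against a fixed matrix; instead one uses the variational form $\pucci(M)=\inf_{A\in\llbracket\lambda,\Lambda\rrbracket}[-\trace(AM)]$ and makes a convenient (possibly suboptimal) choice of $A$, e.g.\ $A=\lambda\iden$ or a perturbation thereof, which already gives a lower bound of the required structure $\alpha(\alpha + \text{const})r^{-\alpha-2}t - C\alpha r^{-\alpha-2} \ge 0$ for small $\alpha$. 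Alternatively, and more cleanly, one can avoid the cone geometry altogether by checking the candidate after first translating: note that $\sigma r - x\cdot\xi>0$ on $\co\pi$ and that the half-space case (where $u$ is affine in a single variable, $\alpha^-=-1$) is exactly the $\alpha\to$ endpoint, so a perturbative argument starting from the known half-space supersolution $x_n$ and its powers, combined with \eqref{puccineq} to absorb the lower-order $\mu|Du|$ and curvature terms, produces the estimate with the stated dependence. Once $\mathcal L^-[u]\ge0$ and $u>0$ in $\co\pi\supseteq\co\omega$ are in hand, \eqref{ellip} gives $F(D^2u,Du,x)\ge\mathcal L^-[u]\ge0$, so $u$ is admissible in \eqref{ap} and $\alpha^+(F,\omega)\ge\alpha=:c$, completing the proof.
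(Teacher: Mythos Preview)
Your ansatz $u(x)=|x|^{-\alpha}\bigl(\sigma-|x|^{-1}x\cdot\xi\bigr)$ does not work: the key claim that ``as $\alpha\to 0^+$ the dangerous terms are $O(\alpha)$ while the lowest-order good term is $O(1)$'' is false. Take $\xi=e_n$, set $\alpha=0$, and compute on $|x|=1$: writing $e_n=x_nx+e_n^\perp$ one finds that $D^2u_0$ has eigenvalues $\tfrac12\bigl(x_n\pm\sqrt{4-3x_n^2}\bigr)$ in the plane $\mathrm{span}(x,e_n)$ and $x_n$ with multiplicity $n-2$. Hence at $x_n=0$ (which lies in $\co\pi$ whenever $\sigma>0$),
\[
\pucci(D^2u_0)=-\Lambda\cdot 1-\lambda\cdot(-1)=\lambda-\Lambda\le 0,\qquad |Du_0|=1,
\]
so $\pucci(D^2u_0)-\mu|x|^{-1}|Du_0|=\lambda-\Lambda-\mu<0$ as soon as $\lambda<\Lambda$ or $\mu>0$. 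The same computation shows $\pucci(D^2u_0)\to -(\Lambda-\lambda)$ as $x_n\to 0^-$, so the inequality fails near the lateral boundary even when $\sigma\le 0$. Adding $O(\alpha)$ terms cannot rescue a strictly negative $O(1)$ quantity. (Your side remark that $t=\sigma-e\cdot\xi$ has ``floor $1-\sigma$'' is also wrong: $t$ ranges over $(0,1+\sigma]$ and is not bounded away from zero.)

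The paper fixes exactly this defect by replacing the affine spherical factor with an exponential one,
\[
\varphi(x)=|x|^{-\alpha}\bigl(e^{\kappa}-e^{\kappa x_n/|x|}\bigr),
\]
and first taking $\kappa$ large. The point is that $D^2\varphi$ then contains a term $+\lambda\kappa^2|x|^{-\alpha-4}(|x|^2-x_n^2)e^{\kappa x_n/|x|}$ which, for large $\kappa$, dominates the $(\Lambda-\lambda)$ and $\mu$ contributions that kill your linear ansatz; only after $\kappa$ is fixed does one take $\alpha$ small. Your ``alternative'' sketch invoking the half-space solution $x_n$ conflates $\alpha^-$ with $\alpha^+$ and does not supply a concrete candidate either.
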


\begin{lem} \label{alphuppbnd}
Suppose that $\omega \supseteq \pi: = \left\{ x \in \sph : x\cdot \xi > \sigma \right\}$, for some $\xi \in \sph$ and $ \sigma < 1$. Then there is a constant $C> 0$ depending only on $n$, $\elp$, $\Elp$, $\mu$, and a lower bound for $1-\sigma$, such that
\begin{equation}  \label{alphuppbndeq}
\alpha^+(F,\omega) \leq C.
\end{equation}
\end{lem}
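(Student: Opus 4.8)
The plan is to prove Lemma~\ref{alphuppbnd} by constructing an explicit \emph{supersolution}-type barrier on the smaller symmetric cone $\co\pi$ and combining it with the monotonicity property~\eqref{alphmon}. Since $\omega \supseteq \pi$, the monotonicity \eqref{alphmon} gives $\alpha^+(F,\omega) \leq \alpha^+(F,\pi)$, so it suffices to bound $\alpha^+(F,\pi)$ for the spherical cap $\pi = \{x\cdot\xi > \sigma\}$. The idea is then: if $\alpha^+(F,\pi)$ were very large, the definition \eqref{ap} would provide, for some huge $\alpha$, a function $u\in\hc{\alpha}$ with $F(D^2u,Du,x)\geq 0$ and $u>0$ in $\co\pi$; I want to contradict this by exhibiting a strict \emph{subsolution} of comparable homogeneity that forces $u$ to vanish somewhere inside, or more directly by a maximum-principle comparison on an annular slice $E(\pi,1,2)$.

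Concretely, I would look for an explicit radial-times-angular function of the form $\phi(x) = |x|^{-\alpha} \psi(x/|x|)$ with $\psi$ a simple profile on the cap (for the symmetric cap, a natural choice is a power of $(x\cdot\xi - \sigma|x|)$, or $\cos$ of the polar angle shifted appropriately), and compute $F(D^2\phi, D\phi, x)$. Using the ellipticity bound \eqref{ellip}, I can estimate $F(D^2\phi,D\phi,x)$ from below and above by Pucci operators plus the gradient term $\mu|x|^{-1}|D\phi|$; because $\phi$ is $-\alpha$-homogeneous, the radial second derivative contributes a term of size $\sim\alpha^2 |x|^{-\alpha-2}$ with a definite sign, which for $\alpha$ large dominates all the angular contributions (which are $O(\alpha)|x|^{-\alpha-2}$). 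Thus for $\alpha$ larger than some $C=C(n,\elp,\Elp,\mu,1-\sigma)$ one can arrange $\Pucci(D^2\phi) + \mu|x|^{-1}|D\phi| < 0$ in $\co\pi$ while $\phi>0$ there and $\phi$ is bounded near $\partial\co\pi$. This makes $\phi$ a positive strict supersolution of the extremal operator, hence (after normalizing so that $\phi \geq u$ on $\co\pi\cap\partial(B_2\setminus B_1)$ and on the lateral sides, which is possible since $u$ vanishes on the sides and both functions are homogeneous) Lemma~\ref{snap} or the comparison principle of Proposition~\ref{hcp} applied with $f=0$ yields $u\leq \phi$, and iterating/scaling shows $u$ must decay faster than its own homogeneity permits unless $\alpha \leq C$, a contradiction.

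Alternatively — and this is probably the cleaner route to write — I would avoid contradiction and argue directly: exhibit the barrier $\phi$ on $\co\pi$ with $F(D^2\phi,D\phi,x)\le 0$, $\phi>0$, of homogeneity exponent $\beta$ chosen as large as the computation allows, and then use Proposition~\ref{hcp} (with $u$ the hypothetical competitor from \eqref{ap}, $v=\phi$, $f=0$) to conclude $u\le t\phi$ for all competitors; since $u\in\hc\alpha$ and $\phi\in\hc\beta$ and they cannot be proportional unless $\alpha=\beta$, the sign of $u-t\phi$ near $0$ versus $\infty$ forces $\alpha\le\beta$. Then $\alpha^+(F,\pi)\le\beta\le C$. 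Finally, \eqref{alphmon} transfers the bound to $\omega$. By the way, this is exactly dual to Lemma~\ref{alphlowbnd}, whose proof presumably builds a subsolution barrier on a half-space-like cone; one could in fact deduce Lemma~\ref{alphuppbnd} from Lemma~\ref{alphlowbnd} applied to a related operator, but since $\alpha^+$ and $\alpha^-$ sit on opposite sides of $0$, the cleanest is the inversion identity \eqref{dualalph}: $\alpha^+(F,\omega)\le C$ should follow from a lower bound $\alpha^-(F^*,\omega)\ge -C$, i.e. from Lemma~\ref{alphlowbnd}-type reasoning applied to $F^*$ — though the geometry ($\omega$ containing rather than being contained in a cap) is what distinguishes the two, so a direct barrier is safest.

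The main obstacle I anticipate is the barrier construction itself: one must choose the angular profile $\psi$ on the cap so that the full operator — including the first-order term $\mu|x|^{-1}|D\phi|$ and the worst-case ellipticity spread between $\elp$ and $\Elp$ — is controlled, and verify that the "bad" angular terms really are lower order in $\alpha$ than the dominant radial term $\sim\alpha^2$. Near the lateral boundary $\partial\co\pi$ the profile degenerates, so some care is needed to ensure the supersolution inequality holds up to the boundary (or to invoke Lemma~\ref{snap}, which is designed precisely to handle the behavior near the conical sides and lets one get away with only an inequality on a smaller slice). Once the barrier is in hand, the comparison step via Proposition~\ref{hcp} and the transfer via \eqref{alphmon} are routine.
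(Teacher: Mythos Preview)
Your overall strategy matches the paper's: reduce to the cap $\pi$ via \eqref{alphmon}, construct an explicit homogeneous barrier on $\co\pi$, and invoke Proposition~\ref{hcp} to reach a contradiction. There are, however, two genuine gaps.

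First, you have the roles in Proposition~\ref{hcp} reversed. The competitor $u$ from \eqref{ap} satisfies $F(D^2u,Du,x)\geq 0$ and $u>0$, so it belongs in the \emph{$v$-slot} of Proposition~\ref{hcp}; your barrier $\phi$, satisfying $\Pucci(D^2\phi)+\mu|x|^{-1}|D\phi|<0$ and hence $F(D^2\phi,D\phi,x)<0$, is a strict \emph{subsolution} and goes in the $u$-slot. (You correctly say ``strict subsolution'' in your first paragraph but then call it a supersolution and swap the slots in the third.) With the correct assignment, applying Proposition~\ref{hcp} to $\phi$ and $sv$ for each $s>0$ gives either $\phi\le sv$ for all $s$, whence $\phi\le 0$ (contradiction), or $\phi\equiv tv$, making $v$ a strict subsolution (contradiction). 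This is exactly the paper's argument.

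Second, and more substantively, your heuristic ``the radial $\alpha^2$ term dominates the angular $O(\alpha)$ terms'' breaks down near the lateral boundary $\partial\co\pi$. If $\phi=|x|^{-\alpha}\psi(x/|x|)$ with $\psi$ vanishing on $\partial\pi$, then the dominant radial contribution to $D^2\phi$, namely $\alpha(\alpha+2)|x|^{-\alpha-4}\psi\, x\otimes x$, \emph{also vanishes} at the boundary, while the angular Hessian of $\psi$ does not. Hence near $\partial\co\pi$ the sign of $\Pucci(D^2\phi)$ is controlled by the angular part, not by $\alpha$, and a linearly vanishing profile will not give the strict inequality up to the boundary. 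The paper resolves this by taking $\varphi=\tfrac12 w^2$ with $w(x)=|x|^{-\alpha}\bigl((x_n/|x|)^2-\sigma^2\bigr)$: then $D^2\varphi=wD^2w+Dw\otimes Dw$, and near $\partial\co\pi$ the rank-one positive term $Dw\otimes Dw$ dominates, yielding $\Pucci(Dw\otimes Dw)=-\lambda|Dw|^2<0$ there. This squared structure is the key device, and your proposal does not supply anything comparable. Invoking Lemma~\ref{snap} does not rescue the argument, since Proposition~\ref{hcp} requires the differential inequality to hold in all of $\co\pi$, not merely away from the sides.
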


These two lemmas can be deduced from the results of Miller \cite{Mi} mentioned above. Since the proofs in \cite{Mi} rely on some rather involved ODE techniques, we give simpler proofs of Lemmas~\ref{alphlowbnd} and \ref{alphuppbnd} by exhibiting an explicit subsolution and supersolution of $F=0$ in $\mathcal{C}_\omega$, whose form is of interest in itself. Recalling that
$F^-\le F\le F^+$ with $F^\pm$ defined in \eqref{extr}, we check that the function
\begin{equation}\label{super1}
v(x) : = |x|^{-\alpha} \left( \exp(\kappa) - \exp\left( \kappa |x|^{-1} x_n \right) \right),
\end{equation}
is a solution of $F^-(D^2v, Dv, x)\ge 0$ in $\co\pi$ with $\pi : = \left\{ x \in \sph : x_n < \sigma \right\}$ provided $\alpha>0$ is sufficiently small and $\kappa$ is sufficiently large, which in light of \eqref{ap} gives Lemma \ref{alphlowbnd}. Likewise, the function
\begin{equation}\label{sub1}
w(x) : =\frac{1}{2}\left( |x|^{-\alpha-2} x_n^2 - \sigma^2 |x|^{-\alpha}\right)^2
\end{equation}
is a solution of $F^+(D^2w, Dw, x)\le 0$ in $\co\pi$ with $\pi : = \left\{ x \in \sph : x_n > \sigma \right\}$ provided $\alpha>0$ is sufficiently large, which implies Lemma \ref{alphuppbnd} thanks to the definition of $\alpha^+(F,\omega)$ and Proposition \ref{hcp}. The computations are performed in Appendix~\ref{hell}, and from these we obtain explicit constants in \eqref{alphlowbndeq} and \eqref{alphuppbndeq}.

\section{Singular solutions in a conical domain} \label{singsing}

We proceed with the construction of the singular solutions of \eqref{eq} in $\co{\omega}$. We assume throughout this section that $F$ satisfies the conditions \eqref{ellip}-\eqref{scaling}. We often do not display the dependence of $\alpha^+$ on $F$ and $\omega$.

\begin{lem} \label{strictbelow}
Suppose that $0 < \alpha < \alpha^+$. Then there exists $u \in \hc{\alpha}$ satisfying
\begin{equation}\label{equio}
F(D^2u,Du,x) \geq c |x|^{-2} u \quad \mbox{in} \ \co{\omega} \qquad \mbox{and} \qquad u> 0 \quad \mbox{in} \ \co{\omega},
\end{equation}
where $c: = \frac{1}{2}\lambda\alpha(\alpha^+-\alpha)$.
\end{lem}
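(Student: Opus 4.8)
The plan is to extract $u$ from the approximation scheme defining $\alpha^+$. By the definition \eqref{ap} of $\alpha^+(F,\omega)$, for any $\alpha$ strictly between $0$ and $\alpha^+$ there is some exponent $\beta$ with $\alpha<\beta<\alpha^+$ admitting a function $w\in\hc{\beta}$ with $F(D^2w,Dw,x)\ge 0$ and $w>0$ in $\co\omega$. (If the supremum is attained by a sequence $\beta_k\uparrow\alpha^+$, pick any $\beta_k>\alpha$.) The homogeneous function $w$ has order of homogeneity $-\beta$, so applying the Pucci lower bound \eqref{ellip} to $F(D^2w,Dw,x)-F(0,0,x)$ and using $F(0,0,x)=0$ (which follows from positive $1$-homogeneity \eqref{homogen}) together with the homogeneity of $w$, I expect to compute $\pucci(D^2w) - \mu|x|^{-1}|Dw| \le F(D^2w,Dw,x)$; but this only gives $\pucci(D^2w)-\mu|x|^{-1}|Dw|\ge$ something nonpositive, which is not yet the sharp $c|x|^{-2}w$ bound. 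So the real idea must be slightly different.

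The better approach is to build $u$ as a small perturbation of $w$ via a power-type modification, or more cleanly, to observe that the \emph{gap} between $\beta$ and $\alpha$ produces the strict inequality. Concretely, I would try $u:=w$ already has homogeneity order $-\beta<-\alpha$, and I want $u\in\hc{\alpha}$, so instead consider $\widetilde u(x):=|x|^{\alpha-\beta}w(x)\cdot(\text{angular factor})$ — but this destroys the subsolution property in general. The cleanest route: in the computations of Appendix~\ref{hell} (invoked just above for Lemmas~\ref{alphlowbnd}--\ref{alphuppbnd}) one sees explicitly that for the model subsolutions, lowering the homogeneity exponent strictly below the critical one produces a strictly positive right-hand side of exactly the order $|x|^{-2}u$ with a constant proportional to $\lambda\alpha(\alpha^{+}-\alpha)$. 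So the plan is: take the $w\in\hc{\beta}$ from \eqref{ap} with $\beta$ close to $\alpha^+$, and set $u(x):=w(x)$ if $\beta=\alpha$ is allowed, but since we need exactly $\hc{\alpha}$, instead run the construction that produced $w$ with exponent $\alpha$ in place of $\beta$ and keep track of the error: the subsolution inequality $F(D^2w,Dw,x)\ge 0$ for $\beta$-homogeneous $w$ becomes, for the $\alpha$-homogeneous rescaling, an inequality with a remainder term. Using \eqref{ellip} to compare the $\alpha$- and $\beta$-homogeneous cases and the fact that the "radial part" of $\pucci(D^2(\cdot))$ acting on $|x|^{-\gamma}$ contributes $\lambda\gamma(\gamma+1)$-type terms while the tangential part is controlled, the difference is $\tfrac12\lambda\alpha(\alpha^+-\alpha)|x|^{-2}u$ after choosing $\beta$ close enough to $\alpha^+$.

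I would structure the write-up as: (1) fix $\beta\in(\alpha,\alpha^+)$ and $w\in\hc{\beta}$ with $F(D^2w,Dw,x)\ge0$, $w>0$; (2) by compactness on $\omega$, normalize and use interior gradient/Hölder estimates (Proposition~\ref{ggh}) plus the Harnack inequality (Proposition~\ref{bhq}) to control $w$ and $Dw$ on $\omega$ in terms of $\inf_\omega w$; (3) perform the explicit radial computation showing that replacing homogeneity order $-\beta$ by $-\alpha$ in a suitable ansatz built from $w$ yields $F(D^2u,Du,x)\ge c|x|^{-2}u$ with $c=\tfrac12\lambda\alpha(\alpha^+-\alpha)$, using \eqref{ellip} and \eqref{puccineq} to split the Hessian into radial and spherical parts. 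The main obstacle is step (3): one does not have a smooth $w$, so the computation must be done in the viscosity sense with test functions, and one must carefully isolate the strictly-signed radial contribution $\lambda\alpha(\alpha^+-\alpha)$ from the (possibly unfavorable) angular terms — this is exactly where the slack $\alpha^+-\alpha>0$ and the freedom to push $\beta\to\alpha^+$ are consumed. I expect the actual paper handles this by a clean abstract argument (perhaps taking $u$ to be an appropriate infimum/limit of homogeneous subsolutions and invoking Proposition~\ref{hcp}), which would be shorter than the brute-force perturbation; if so, the key point to verify is that the limiting object still lies in $\hc{\alpha}$ and retains the strict lower bound, which follows from stability of viscosity subsolutions under uniform limits.
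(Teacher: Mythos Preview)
Your proposal has a genuine gap: none of the three routes you sketch actually produces a function in $\hc{\alpha}$ satisfying the strict inequality, and you never hit on the simple device the paper uses. The multiplicative ansatz $\widetilde u=|x|^{\alpha-\beta}w$ fails, as you note; ``running the construction with exponent $\alpha$'' is circular, since the construction is nothing more than the definition \eqref{ap}; and the radial/angular splitting cannot work because you have no control whatsoever on the angular Hessian of $w$---the admissible set in \eqref{ap} gives you only the sign of $F(D^2w,Dw,x)$, not any quantitative bound on pieces of $D^2w$.

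The missing idea is a \emph{power transform}. Having chosen $\beta\in[\tfrac12(\alpha+\alpha^+),\alpha^+)$ and $v\in\hc{\beta}$ with $v>0$ and $F(D^2v,Dv,x)\ge 0$, set
\[
u:=v^{1/\tau},\qquad \tau:=\beta/\alpha>1.
\]
Since $v$ is $(-\beta)$-homogeneous, $u$ is $(-\alpha)$-homogeneous, i.e.\ $u\in\hc{\alpha}$. Formally $Dv=\tau u^{\tau-1}Du$ and $D^2v=\tau u^{\tau-1}\bigl(D^2u+(\tau-1)u^{-1}Du\otimes Du\bigr)$, so by the homogeneity of $F$ and \eqref{ellip},
\[
0\le (\tau u^{\tau-1})^{-1}F(D^2v,Dv,x)\le F(D^2u,Du,x)+(\tau-1)u^{-1}\Pucci(Du\otimes Du).
\]
Since $Du\otimes Du$ is rank one with eigenvalue $|Du|^2$, $\Pucci(Du\otimes Du)=-\lambda|Du|^2$, hence
\[
F(D^2u,Du,x)\ge \lambda(\tau-1)u^{-1}|Du|^2.
\]
Now Euler's relation for the $(-\alpha)$-homogeneous function $u$ gives $x\cdot Du=-\alpha u$, so $|Du|\ge \alpha|x|^{-1}u$; combined with $\tau-1\ge(\alpha^+-\alpha)/(2\alpha)$ this yields exactly $F(D^2u,Du,x)\ge \tfrac12\lambda\alpha(\alpha^+-\alpha)|x|^{-2}u$. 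The viscosity justification is routine (perform the same chain-rule computation on smooth test functions). Note that every ingredient you were hoping to exploit---the gap $\alpha^+-\alpha$, the freedom to push $\beta$ toward $\alpha^+$, and a radial derivative bound---enters here, but through this nonlinear change of unknown rather than a linear perturbation.
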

\begin{proof}
According to the definition of $\alpha^+$, we can find $\frac{1}{2} (\alpha^++\alpha)\leq \beta \leq \alpha^+$ and a function $v \in \hc{\beta}$ such that $v> 0$ in $\co{\omega}$ and
$
F(D^2v,Dv,x) \geq 0$ in $\co{\omega}$.
We now bend $v$ by defining
\begin{equation*}
u(x) : = \left( v(x) \right)^{1/\tau}, \quad \mbox{for} \quad \tau : = \frac{\beta}{\alpha}> 1 + \frac{\alpha^+-\alpha}{2\alpha},
\end{equation*}
Formally we have
\begin{equation*}
Dv = \tau u^{\tau-1} Du, \quad D^2v= \tau  u^{\tau-1} \left( D^2u + (\tau-1) u^{-1} Du\otimes Du\right),
\end{equation*}
and thus in $\co{\omega}$ we have, by \eqref{ellip},
$$
F(D^2u,Du,x) + (\tau-1)u^{-1}\Pucci(Du\otimes Du)\ge (\tau u^{\tau-1})^{-1}F(D^2v, Dv,x)\ge 0
$$
and thus \eqref{puccform} implies
$$
F(D^2u,Du,x) \ge \lambda(\tau-1)u^{-1}|Du|^2.
$$
From the homogeneity of $u$ it follows that $|Du(x)| \geq \alpha|x|^{-1} u(x)$. Using also that $\tau -1 \geq (2\alpha)^{-1} (\alpha^+-\alpha)$, we get \eqref{equio}.
Our calculation above is merely formal, but it can be justified in the viscosity sense by doing a similar calculation with smooth test functions. This is done for instance in the proof of \cite[Lemma 3.3]{ASS}. We thereby obtain the lemma.
\end{proof}

A consequence of Lemma~\ref{strictbelow} and Proposition~\ref{hcp} is the following maximum principle for subsolutions in $\hc\alpha$.

\begin{cor} \label{eigmp}
Suppose that $0 < \alpha < \alpha^+$ and $u\in \hc{\alpha}$ satisfies the inequality
\begin{equation*}
F(D^2u,Du,x) \leq 0 \quad \mbox{in} \ \co\omega,
\end{equation*}
and $u \leq 0$ on $\partial \co\omega \setminus \{ 0 \}$. Then $u\leq 0$ in $\co\omega$.
\end{cor}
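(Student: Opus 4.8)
The plan is to compare $u$ against a strict subsolution constructed in Lemma~\ref{strictbelow}, and then invoke the homogeneous comparison principle of Proposition~\ref{hcp}. First I would fix $\alpha$ with $0<\alpha<\alpha^+$ and apply Lemma~\ref{strictbelow} to obtain a function $w\in\hc{\alpha}$ with $w>0$ in $\co\omega$ and $F(D^2w,Dw,x)\ge c|x|^{-2}w$ in $\co\omega$, where $c=\tfrac12\lambda\alpha(\alpha^+-\alpha)>0$. In particular $F(D^2w,Dw,x)\ge 0$, so $w$ is a strict positive supersolution in the required class. The idea is to show that $u\le tw$ in $\co\omega$ for some $t>0$, and then drive $t$ down to zero.

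Next I would set $f\equiv 0$ (which lies in $\hc{\alpha+2}$ and is nonnegative), and observe that the hypotheses of Proposition~\ref{hcp} are met with the pair $(u,w)$: indeed $F(D^2u,Du,x)\le 0 = f \le F(D^2w,Dw,x)$ in $\co\omega$, $u\le 0$ on $\partial\co\omega\setminus\{0\}$, and $w>0$ in $\co\omega$. Proposition~\ref{hcp} then yields the dichotomy: either $u\le w$ in $\co\omega$, or there exists $t>1$ with $u\equiv tw$. The second alternative is impossible, because if $u\equiv tw$ with $t>0$ then $F(D^2u,Du,x) = tF(D^2w,Dw,x) \ge tc|x|^{-2}w > 0$ in $\co\omega$ by the positive homogeneity \eqref{homogen} of $F$, contradicting $F(D^2u,Du,x)\le 0$. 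Hence we must have $u\le w$ in $\co\omega$.

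Finally, I would exploit the scale invariance of the problem. For any $\delta>0$, the function $\delta w$ again lies in $\hc\alpha$, is positive in $\co\omega$, and satisfies $F(D^2(\delta w),D(\delta w),x) = \delta F(D^2w,Dw,x)\ge 0$; so running the same argument with $w$ replaced by $\delta w$ gives $u\le\delta w$ in $\co\omega$ for every $\delta>0$. Letting $\delta\to 0^+$ yields $u\le 0$ in $\co\omega$, as claimed.

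The only subtlety — and the step I expect to require the most care — is confirming that the homogeneous function $\delta w$ still satisfies the strict-inequality hypothesis that rules out the equality branch of Proposition~\ref{hcp}; this is where the positive $1$-homogeneity \eqref{homogen} of $F$ is essential, since it guarantees that scalar multiples of $w$ remain strict supersolutions and that $u\equiv tw$ would force the strict sign $F(D^2u,Du,x)>0$. Everything else is a direct application of the two results just established.
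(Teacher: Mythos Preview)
Your proposal is correct and follows essentially the same route as the paper's own proof: invoke Lemma~\ref{strictbelow} to produce a strictly positive $w\in\hc\alpha$ with $F(D^2w,Dw,x)\ge c|x|^{-2}w>0$, apply Proposition~\ref{hcp} to $(u,\delta w)$ for each $\delta>0$, rule out the equality branch via the positive homogeneity \eqref{homogen} (since $u\equiv tw$ would force $F(D^2u,Du,x)>0$), and send $\delta\to 0$. The paper compresses your two-stage argument into a single sentence (``apply Proposition~\ref{hcp} to $u$ and any positive multiple of $v$''), but the logic is identical.
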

\begin{proof}
According to Lemma~\ref{strictbelow} we can find a function $v\in \hc{\alpha}$ such that $v>0$ and
\begin{equation*}
F(D^2v,Dv,x) \geq c|x|^{-2} v \quad \mbox{in} \ \co\omega,
\end{equation*}
for some $c> 0$. We may apply Proposition~\ref{hcp} to $u$ and any positive multiple of $v$ to deduce that either $u\equiv tv$ for some $t>0$ or else $u \leq tv$ for every $t>0$. Obviously the first alternative implies $v\equiv 0$, a contradiction. Thus the second alternative holds, and we can send $t\to 0$, to infer that $u\leq 0$.
\end{proof}

\begin{lem}\label{alphbet}
Assume that $\alpha,\beta > 0$ and $v\in \hc{\alpha}$ is nonnegative. Then there exists a unique function $u \in \hc{\alpha}$ which is a solution of the problem
\begin{equation}\label{dp}
\left\{ \begin{aligned}
 \lefteqn{F\!\left( D^2u - \alpha(\alpha+2) |x|^{-4} (u-v) x\otimes x ,\, Du + \alpha|x|^{-2} (u-v)x, \, x\right)} & \qquad \qquad \qquad\qquad\qquad \qquad\qquad \qquad\qquad \qquad& & \\
& \quad = (1+\mu) |x|^{-2}(\alpha v - \beta u) + \alpha|x|^{-\alpha-2}  & \mbox{in} & \ \co\omega,\\
& u = 0 & \mbox{on} & \ \partial \co\omega\!\setminus\! \{ 0 \}.
\end{aligned} \right.
\end{equation}
Moreover, $u> 0$ in $\co\omega$, and we have the estimate
\begin{equation} \label{dpest}
\| u \|_{L^\infty(\omega)} \leq \frac{C\alpha}{\beta} \left( 1 + (1+\alpha)\| v \|_{L^\infty(\omega)} \right)
\end{equation}
for a constant $C>0$ which depends only on $n$, $\Lambda$ and $\mu$.
\end{lem}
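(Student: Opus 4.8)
The plan is to solve the boundary value problem \eqref{dp} directly in the unbounded cone $\co\omega$ by Perron's method, to deduce the homogeneity of the solution from its uniqueness and the scaling invariance of the problem, and to obtain positivity and the bound \eqref{dpest} from an explicit supersolution. The basic structural point is that $v$ enters \eqref{dp} only algebraically (never differentiated), so \eqref{dp} is a bona fide equation of the form $\mathcal{G}(D^2u,Du,u,x)=0$ which, on any subdomain of $\co\omega$ bounded away from the vertex, is uniformly elliptic with bounded coefficients and $\theta$-H\"older dependence in $x$; in particular it satisfies \eqref{unifellip}--\eqref{regx} there and thus falls within the scope of the comparison and existence theory for viscosity solutions (as in \cite{JS}).

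The engine is the supersolution $h_K(x):=K|x|^{-\alpha}$. One computes $D^2h_K-\alpha(\alpha+2)|x|^{-4}h_K\,x\otimes x=-\alpha K|x|^{-\alpha-2}\iden$ and $Dh_K+\alpha|x|^{-2}h_K\,x=0$, so the arguments of $F$ in \eqref{dp} at $u=h_K$ reduce, up to the terms carrying $v$, to $(-\alpha K|x|^{-\alpha-2}\iden,\,0)$. Using \eqref{ellip}, $F(0,0,x)=0$, the identity $\pucci(-\iden)=n\lambda$, and $v\ge 0$ together with $|x|^{-2}v=|x|^{-\alpha-2}(|x|^\alpha v)\le|x|^{-\alpha-2}\|v\|_{L^\infty(\omega)}$, one gets
\begin{equation*}
\mathcal{G}[h_K](x)\ \ge\ |x|^{-\alpha-2}\Bigl(\bigl(n\lambda\alpha+(1+\mu)\beta\bigr)K-\alpha-C\alpha(1+\alpha)\|v\|_{L^\infty(\omega)}\Bigr)
\end{equation*}
with $C=C(\Lambda,\mu)$, so that $h_K$ is a supersolution of \eqref{dp} — a strict one if $K$ is enlarged slightly — exactly when $K$ has the size on the right-hand side of \eqref{dpest}. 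The constant $0$ is a strict subsolution: by \eqref{ellip} and $v\ge 0$,
\begin{equation*}
\mathcal{G}[0]\le-\lambda\alpha(\alpha+2)|x|^{-2}v+\mu\alpha|x|^{-2}v-(1+\mu)\alpha|x|^{-2}v-\alpha|x|^{-\alpha-2}<0.
\end{equation*}

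The crucial step is the comparison principle, and here lies the only real difficulty: $\mathcal{G}$ need not be proper in $u$, because the gradient term in \eqref{ellip} can swamp the favorable sign coming from the $x\otimes x$-perturbation and from $(1+\mu)\beta|x|^{-2}u$. I would circumvent this by the classical device of conjugating by the positive strict supersolution: substituting $u=h_K\zeta$ and dividing by $h_K$ — legitimate since $F$ is $1$-homogeneous in $(M,p)$ by \eqref{homogen} — turns \eqref{dp} into a uniformly elliptic equation for $\zeta$ whose zeroth-order dependence on $\zeta$ is strictly increasing (precisely because $\zeta\equiv 1$ makes the conjugated operator strictly positive), so the comparison principle of \cite{JS} applies and, back in the original variables, yields comparison between bounded sub- and supersolutions of \eqref{dp}. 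Perron's method then applies, with lower barrier $0$, upper barrier $h_K$, and boundary barriers furnished by the $C^2$-regularity of $\partial\co\omega\setminus\{0\}$, and produces a solution $u\in C(\co\omega)$ of \eqref{dp} with $u=0$ on $\partial\co\omega\setminus\{0\}$ and $0\le u\le h_K$. (Alternatively, existence could be obtained from the Leray--Schauder theorem, using this a priori bound together with the global gradient H\"older estimate of Proposition~\ref{ggh} for compactness.) Since $v\in\hc\alpha$ and $F$ satisfies \eqref{scaling} and \eqref{homogen}, the rescaled function $x\mapsto r^\alpha u(rx)$ again solves \eqref{dp} with the same boundary data and again lies between $0$ and $h_K$; comparison forces it to equal $u$, so $u\in\hc\alpha$ and $\|u\|_{L^\infty(\omega)}=\sup_\omega u\le K$, which is \eqref{dpest}. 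Uniqueness within $\hc\alpha$ follows in the same way: any homogeneous solution is nonnegative (compare with $0$) and dominated by a finite multiple of $|x|^{-\alpha}$, hence comparable to — and so equal to — $u$.

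Finally, $u>0$: we have $u\ge 0$, and $u\not\equiv 0$ because $0$ is a strict subsolution of \eqref{dp} (hence not a solution), so the strong maximum principle, applied to the nonnegative solution $u$ of the uniformly elliptic equation $\mathcal{G}=0$, gives $u>0$ throughout $\co\omega$. The main obstacle is therefore the comparison principle: one must check with care that the conjugation by $h_K$ is legitimate for the fully nonlinear, $x$-dependent operator in \eqref{dp} and that the conjugated equation still meets the hypotheses of the comparison theorem being invoked; the blow-up of $h_K$ at the vertex is harmless, since by homogeneity all these considerations can be carried out on a bounded annular slice of $\co\omega$ (equivalently, on $\omega$ itself).
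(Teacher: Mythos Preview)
Your approach is essentially the paper's: the same strict subsolution $0$, the same supersolution $K|x|^{-\alpha}$ (with the same computation), and Perron's method. The differences are in bookkeeping and in how you handle comparison.

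On comparison: the paper avoids the conjugation machinery entirely by observing the same algebraic fact you used for $h_K$, but stated for an \emph{arbitrary} supersolution $w$: because $D^2(|x|^{-\alpha})-\alpha(\alpha+2)|x|^{-\alpha-4}x\otimes x=-\alpha|x|^{-\alpha-2}\iden$ and $D(|x|^{-\alpha})+\alpha|x|^{-\alpha-2}x=0$, replacing $w$ by $w+\ep|x|^{-\alpha}$ shifts the first argument of $F$ by $-\ep\alpha|x|^{-\alpha-2}\iden$, leaves the second argument unchanged, and decreases the right-hand side by $(1+\mu)\beta\ep|x|^{-\alpha-2}$; ellipticity then gives a strict gain of at least $(n\lambda\alpha+(1+\mu)\beta)\ep|x|^{-\alpha-2}$. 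Thus if $u_1,u_2\in\hc\alpha$ are two solutions and $\ep=\max_\omega(u_1-u_2)>0$, the function $u_2+\ep|x|^{-\alpha}$ is a strict supersolution touching the solution $u_1$ from above on $\omega$, a contradiction. This is exactly the calculation underlying your conjugation trick (increasing $\zeta$ by a constant is the same as increasing $u$ by a multiple of $|x|^{-\alpha}$), but it makes the reason transparent; your justification ``because $\zeta\equiv 1$ makes the conjugated operator strictly positive'' is not by itself a valid inference for nonlinear $F$, though the conclusion is correct once you redo this computation.

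One more cosmetic difference: the paper builds homogeneity directly into the Perron envelope, noting that the admissible class of subsolutions below $w=K|x|^{-\alpha}$ is invariant under $\tilde u\mapsto r^\alpha\tilde u(r\,\cdot)$, so the supremum is automatically in $\hc\alpha$. Your route (derive $u\in\hc\alpha$ a posteriori from uniqueness and scaling) is fine too, but requires you to first pin the Perron solution between $0$ and $h_K$ globally, which you do.
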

\begin{proof}
Fix $\alpha,\beta> 0$ and $w\in \hc{\alpha}$ with $w\geq 0$. The zero function is a smooth, strict subsolution of \eqref{dp} since by \eqref{ellip}
\begin{align*}
F(\alpha(\alpha+2) |x|^{-4}v x\otimes x , -\alpha|x|^{-2} vx,x) & \leq -\lambda\alpha(\alpha+2) |x|^{-2} v + \mu\alpha|x|^{-2} v \\
& < (1+\mu)\alpha |x|^{-2} v + \alpha |x|^{-\alpha-2}.
\end{align*}
We claim that the function $w:= k|x|^{-\alpha}\in \hc{\alpha}$ is a smooth supersolution of \eqref{dp} provided that we choose $k>0$ large enough. Observe that
\begin{align*}
Dw & = -\alpha |x|^{-2} w x, \\
D^2w &  = \alpha(\alpha+2) |x|^{-4} w x\otimes x - \alpha |x|^{-2} w \iden,
\end{align*}
and insert into \eqref{dp} to find that
\begin{align*}
\lefteqn{F\!\left( D^2w - \alpha(\alpha+2) |x|^{-4} (w-v) x\otimes x ,\, Dw + \alpha|x|^{-2} (w-v)x, \, x\right)} \qquad \qquad \qquad \qquad & \\
& = F\!\left( -\alpha |x|^{-2} w\iden + \alpha(\alpha+2)|x|^{-4} v x\otimes x, \, -\alpha |x|^{-2} vx, \, x \right) \\
& \geq \alpha n \lambda|x|^{-2} w - \alpha(\alpha+2) |x|^{-2} \Lambda v - \alpha \mu |x|^{-2} v.
\end{align*}
Thus $w$ is a supersolution of \eqref{dp} if
\begin{equation*}
 \alpha n \lambda|x|^{-2} w - \alpha(\alpha+2) |x|^{-2} \Lambda v - \alpha \mu |x|^{-2} v \geq (1+\mu) |x|^{-2} (\alpha v - \beta w) + \alpha |x|^{-\alpha-2},
\end{equation*}
which is equivalent to
\begin{equation*}
(\alpha n \lambda + (1+\mu)\beta) w \geq \alpha\left( |x|^{-\alpha} + \left( \Lambda(\alpha+2) + 2\mu + 1\right) v \right).
\end{equation*}
Thus $w$ is a (strict) supersolution of \eqref{dp} provided that we set
\begin{equation} \label{superk}
k := \frac{\alpha\left( 1 + (\Lambda(\alpha+2) + 2\mu+1) \| v\|_{L^\infty(\omega)} \right)}{\alpha n \lambda + \beta}.
\end{equation}
We use the standard Perron method to build a solution of \eqref{dp}. Let us define
\begin{multline*}
u(x) : = \sup\left\{ \widetilde u(x) \, : \, \widetilde u \in \USC(\overline{\mathcal{C}}_\omega \!\setminus\! \{ 0 \} ) \ \mbox{is a subsolution of} \ \eqref{dp} \ \mbox{such that} \right. \\
\left. \widetilde u \leq w \ \mbox{in} \ \co\omega \ \mbox{and}  \ \widetilde u \leq 0\  \mbox{on}\  \partial \co\omega \!\setminus\! \{ 0 \}\right\}.
\end{multline*}
Clearly $u$ is well-defined and $u \geq 0$ in $\co\omega$ since the zero function is a subsolution of~\eqref{dp}. By the invariance of the equation \eqref{dp} (recall \eqref{scaling}) and the fact that $w\in \hc{\alpha}$, we see that any $\widetilde u$ in the admissible class is such that the function  $x\mapsto r^{\alpha} \widetilde u(rx)$ is also in the admissible class, for any $r>0$. Thus by construction we have that $u$ is $(-\alpha)$-homogeneous. Standard  arguments from the theory of viscosity solutions imply that $u\in C(\coo)$, hence $u\in H_\alpha(\omega)$, and that $u$ is a solution of \eqref{dp}. The estimate \eqref{dpest} follows from $u \leq w$ and \eqref{superk}.

It remains to demonstrate the uniqueness of $u$. This follows from the fact that \eqref{dp} is ``strictly proper" in the sense that if $w$ is any supersolution of \eqref{dp}, then $w(x) + \ep|x|^{-\alpha}$ is a \emph{strict} supersolution for any $\ep > 0$, as it is easy to check. Thus if $u_1,u_2\in \hc{\alpha}$ are two solutions, say with $\ep : = \max_{\omega} (u_1-u_2) > 0$, then the function $u_2 + \ep|x|^{-\alpha}$ is a strict supersolution which touches the solution $u_1$ from above at some point, which is impossible.
\end{proof}

\begin{proof}[Proof of Theorem~\ref{fundycones}]
It suffices to prove the statements only for $\alpha^+$ and $\Psi^+$, since we obtain the corresponding assertions for $\alpha^-$ and $\Psi^-$ by considering the dual operator $F^*$ in place of $F$.

Let $K$ denote the convex cone of nonnegative continuous functions on the closure $\bar \omega$ of $\omega$, which is a subset of the Banach space $C(\bar\omega)$. For each $\alpha > 0$ and $w \in K$, let $A(\alpha, w)\in K$ denote the restriction to $\bar\omega$ of the unique solution  of the problem \eqref{dp} with $v = |x|^{-\alpha}w(\frac{x}{|x|})$ and $\beta = \alpha^+$. We also define $A(\alpha,w) = 0$ for every $w\in K$, and every $\alpha\le 0$. It follows from the estimate \eqref{dpest}, the global H\"older estimates for solutions of uniformly elliptic equations, and the stability of viscosity solutions under local uniform convergence, that $A:[0,\infty) \times K \to K$ is a compact, continuous mapping.

According to Theorem~\ref{rabinowitz} there exists an unbounded, connected subset $S \subseteq [0,\infty) \times K$ such that $(0,0)\in S$, and for every $(\alpha,\widetilde u) \in S$ we have $A(\alpha,\widetilde u) = \widetilde u$. That is, for each $(\alpha,\widetilde u)\in S$, the function $ u(x) : = |x|^{-\alpha} \widetilde u(\frac{x}{|x|})$ is a solution of the problem
\begin{equation*}
\left\{ \begin{aligned}
& F(D^2u, D u, x) = (1+\mu)(\alpha-\alpha^+) |x|^{-2} u + \alpha |x|^{-\alpha-2} & \mbox{in} & \ \co\omega, \\
&  u = 0 & \mbox{on} & \ \partial \co\omega \setminus \{ 0 \}.
\end{aligned} \right.
\end{equation*}

We claim that $S\subseteq [0,\alpha^+] \times K$. Suppose on the contrary that $(\alpha,\widetilde u)\in S$ with $\alpha > \alpha^+$. We see immediately that $u\in \hc{\alpha}$ satisfies the inequality
\begin{equation*}
F(D^2u,Du,x) \geq 0 \quad \mbox{in} \ \co\omega.
\end{equation*}
By the strong maximum principle, we have $u>0$. The definition of $\alpha^+$ implies that $\alpha \leq \alpha^+$, a contradiction. Thus we have the claim.

By the unboundedness of $S$, for each $j \geq 1$ there exists $0 < \alpha_j \leq \alpha^+$ and $u_j \in \hc{\alpha_j}$ such that $\| u_j \|_{L^\infty(\omega)} \geq j$ and $A(\alpha_j,\widetilde u_j) = \widetilde u_j$, that is,
\begin{equation*}
\left\{ \begin{aligned}
& F(D^2u_j, Du_j, x) = (1+\mu)(\alpha_j-\alpha^+) |x|^{-2} u_j + \alpha_j |x|^{-\alpha_j-2} & \mbox{in} & \ \co\omega, \\
& u_j = 0 & \mbox{on} & \ \partial \co\omega \! \setminus \! \{ 0 \}.
\end{aligned} \right.
\end{equation*}
By taking a subsequence if necessary, we may assume that $\alpha_j \rightarrow \bar\alpha \in [0,\alpha^+]$. We claim that $\bar\alpha > 0$. Indeed, using $\| u_j \|_{L^\infty(\omega)}\to\infty$ and the estimate \eqref{dpest} with $u=v=u_j$ and $\beta=\alpha^+$, we deduce that $\bar\alpha\ge c\min\{ \alpha^+, \sqrt{\alpha^+} \}$, for some $c>0$ depending only on $n,\Lambda$, and $\mu$.

By defining $v_j (x) : = \| u_j \|^{-1}_{L^\infty(\omega)} u_j(x)$ and taking another subsequence we may suppose that $v_j \rightarrow v$ locally uniformly in $\co\omega$, and uniformly on $\bar\omega$. Observe that $v$ is nonnegative, $\| v\|_{L^\infty(\omega)}=1$, and $v\in \hc{\bar\alpha}$. Dividing the equation for $u_j$ by $\| u_j\|_{L^\infty(\omega)}$ and passing to limits in the viscosity sense, we deduce that $v$ satisfies
\begin{equation} \label{gold}
\left\{ \begin{aligned}
& F(D^2v, Dv, x) = (1+\mu)(\bar \alpha -\alpha^+) |x|^{-2} v & \mbox{in} & \ \co\omega, \\
& v = 0 & \mbox{on} & \ \partial \co\omega\! \setminus\! \{ 0 \}.
\end{aligned} \right.
\end{equation}
By the strong maximum principle, $v> 0$ in $\co\omega$. It remains to show that $\bar\alpha =\alpha^+$. This is easy, since if on the contrary $\bar\alpha < \alpha^+$, then we obtain a contradiction to Corollary~\ref{eigmp}. Setting $\Psi^+ := v$, the proof is complete.
\end{proof}

\section{Proof of Theorem \ref{uniq}}\label{maxpri}

The main results proved in this section are Theorem \ref{uniq} and Theorem \ref{PLloc} in the particular case $\Omega= \co\omega$.
From now on we assume that $\Psi^+$ is normalized so that
\begin{equation}\label{normmax}
\max_{\co\omega\cap\partial B_1}\Psi^+ = 1,\qquad\min_{\co{\omega^\prime}\cap\partial B_1}\Psi^+ \ge 1/2,
\end{equation}
where $\omega^\prime$ is some fixed smooth proper subdomain of $\omega$.

\begin{proof}[Proof of Theorem \ref{PLloc} in the case $\Omega= \co\omega$]
We may assume  $A:=\limsup_{x\to 0}u^+(x)$ is finite, since otherwise we have nothing to prove. We can replace $u$ by $u^+$ since the maximum of subsolutions is a subsolution, so we can also assume $u\ge 0$ in $\Omega$.

We set $M(r)= \sup_{\co\omega\cap \partial B_r}u$ and claim that $r \mapsto M(r)$ is nondecreasing in the interval $(0,1/2)$. Fix $r\in (0,1/2)$ and $\delta > 0$, and for each $0 < s < 1/4$, consider the function
\begin{equation*}
w_s(x) : = s^{\alpha^+}\left(\delta \Psi^+(sx) - u(sx) +A_s\right)= \delta \Psi^+(x) - s^{\alpha^+}u(sx) +s^{\alpha^+}A_s,
\end{equation*}
where $A_s:= \sup_{(\partial \co{\omega}) \cap (B_{4s}\!\setminus \!B_{s/2})} u$, $\lim_{s\to 0}A_s= A$.
By \eqref{ellip} and \eqref{homogen} $w_s$ satisfies
\begin{equation*}
\Pucci(D^2w_s) +2\mu|Dw_s| \geq 0 \quad \mbox{ in }  \; E(\omega,\textstyle\frac12,4).
\end{equation*}
Obviously $w_s\ge0$ on $\partial \co{\omega} \cap (B_4\!\setminus \!B_{1/2})$.
The hypothesis \eqref{condformploc}  clearly implies that for each $\delta_1>0$ we can find $s_1>0$ such that for all $s\in (0,s_1)$
$$
w_s\ge \delta \Psi^+(x) - \delta_1\ge \delta2^{-\alpha^+}-\delta_1 \quad \mbox{ on }  \; \partial E(\omega',1,2),
$$
and of course $w_s\ge -\delta_1$ on $\co\omega \cap \partial \left( B_4\!\setminus \!B_{1/2}\right)$, since $\Psi^+$ is positive.

We can now apply Lemma \ref{snap},  fixing $\delta_1= \varepsilon_02^{-\alpha^+-1}$, where $\ep_0$ is the number from that lemma, to conclude that $w_s\ge 0$  in $E(\omega,1,2)$ for sufficiently small $s>0$. It follows that
\begin{equation*}
u \leq \delta \Psi^+ +s^{\alpha^+}A_s+ M(r) \quad \mbox{on} \ \partial E(\omega, s, r).
\end{equation*}
By the maximum principle, we deduce that
\begin{equation*}
u \leq \delta \Psi^+ +s^{\alpha^+}A_s+ M(r) \quad \mbox{in} \ E(\omega,s,r).
\end{equation*}
This holds for all small enough $s> 0$, and so we may pass to the limit $s\to 0$ and then send $\delta \to 0$ to obtain $u \leq M(r)$ in $\co\omega \cap B_r$. Thus $M(r) = \sup_{\co\omega\cap B_r}u$, so $r\mapsto M(r)$ is nondecreasing.

It follows that $\delta := \inf_{0<r<1/2} M(r) = \lim_{r\searrow 0} M(r)$. To complete the proof, we must show that $\delta = 0$. Suppose on the contrary that $\delta > 0$. Denote $u_r(x) : = u(rx)$ and let $\tilde u_r$ be the solution of the Dirichlet problem
\begin{equation*}
\left\{ \begin{aligned}
& F(D^2\tilde u_r,D\tilde u_r, x) = 0 & \mbox{in} & \ E(\omega,1/2,4), \\
& \tilde u_r = u_r & \mbox{on} & \ \partial E(\omega,1/2,4).
\end{aligned} \right.
\end{equation*}
Obviously, by the maximum principle
\begin{equation}\label{mp1}
\sup_{E(\omega,1/2,4)} u_r\ge \sup_{\partial E(\omega,1/2,4)} u_r = \sup_{E(\omega,1/2,4)} \tilde u_r \ge \tilde u_r \ge u_r\quad\mbox{ in }\; E(\omega,1/2,4),\end{equation} and hence $  \sup_{E(\omega,1/2,4)} \tilde u_r = \sup_{E(\omega,1/2,4)} u_r$.
It follows that for sufficiently small $r>0$, we have $0 \leq \tilde u_r \leq 1+\delta$ in $E(\omega,1/2,4)$. By H\"older regularity, we may assume that $\tilde u_r \to \tilde u$ uniformly on $E(\omega,1/2,4)$ as $r\to 0$, and $\tilde u$ solves the same PDE as $\tilde u_r$. It is clear that $\max_{E(\omega,1/2,4)} \tilde u = \delta$ and there is a point $x_0\in \co\omega\cap \partial B_1$ such that $\tilde u(x_0) = \delta$ (recall $\tilde u = \tilde u_r = 0$ on $\partial \co\omega $). Hence  the strong maximum principle implies that $\tilde u \equiv \delta$. This is impossible since $\tilde u$ vanishes on $\partial \co\omega \cap (B_4\setminus B_{1/2})$.

To prove Theorem \ref{PLloc} in case we have a condition at infinity,
using \eqref{dualalph}, we observe that $u^*(y) =  u(\frac{y}{|y|^2})$ satisfies the condition at zero with $F^*$ in place of $F$, since
\begin{equation*}
\limsup_{|x|\to \infty} |x|^{\alpha^-(F)} u(x) = \limsup_{y\to 0} |y|^{\alpha^*(F^*)}u^*(y).
\end{equation*}
Thus we conclude by what we already proved.
\end{proof}

Next, for each $\pi \subseteq \omega$ and a given function $u$ we set
\begin{equation*}
q^\pm(\pi,r) : = \inf_{E(\pi,r,2r)} \frac{u}{\Psi^\pm}, \qquad\qquad Q^\pm(\pi,r): = \sup_{E(\pi,r,2r)} \frac{u}{\Psi^\pm}.
\end{equation*}
We  write  $Q^\pm(r) = Q^\pm(\omega,r)$, and similarly for  $q^\pm$.

In the  following  two simple lemmas we describe some monotonicity properties of the quantities $q^\pm$ and $Q^\pm$, which play a crucial role in the proof of  Theorem \ref{uniq}.

\begin{lem} \label{PLorange}
Suppose that $u\in\USC(\bar{\mathcal C}_\omega \! \setminus \! \{ 0 \})$ satisfies
\begin{equation*}
\left\{ \begin{aligned}
& F(D^2u,Du,x) \leq 0 & \mbox{in} & \ \co{\omega}, \\
& u \leq 0 & \mbox{on} & \ \partial \co\omega \!\setminus \!\{ 0 \}.
\end{aligned} \right.
\end{equation*}
\begin{enumerate}
\item[(i)] If $\limsup_{|x|\to\infty} u(x) \leq 0$, then $r\mapsto Q^+(r)$ is nonincreasing on $(0,\infty)$;
\item[(ii)] If $\limsup_{x\to 0} u(x) \leq 0$, then $r\mapsto Q^-(r)$ is nondecreasing on $(0,\infty)$.
\end{enumerate}
\end{lem}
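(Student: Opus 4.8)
The plan is to prove (i); part (ii) then follows by applying the result to the inverted operator $F^*$, using the duality $\alpha^-(F,\omega) = -\alpha^+(F^*,\omega)$ from \eqref{dualalph} and the fact that $u\in\hc{\alpha}$ iff $u^*\in\hc{-\alpha}$, which swaps the roles of $0$ and $\infty$ and of $\Psi^+$ and $\Psi^-$. So fix $0 < r < R < \infty$ and set $k := Q^+(R) = \sup_{E(\omega,R,2R)} u/\Psi^+$. We want to show $Q^+(r)\le k$, i.e.\ $u\le k\Psi^+$ on $E(\omega,r,2r)$; since $r\le R$ was arbitrary below $R$, this gives that $Q^+$ is nonincreasing. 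By positive homogeneity and the scaling relation \eqref{scaling}, it suffices to prove this after rescaling, so we may normalize to a fixed pair of radii and work on a fixed annulus.

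The heart of the argument is a maximum principle comparison of $u$ against $k\Psi^+$ on the annular region $E(\omega, r, R)$. We know $u - k\Psi^+ \le 0$ on the outer piece $E(\omega,R,2R)\supseteq \co\omega\cap\partial B_R$ by definition of $k$, and $u - k\Psi^+ \le 0$ on the lateral boundary $\partial\co\omega\setminus\{0\}$ since $u\le 0$ there and $\Psi^+>0$. The only boundary piece we do not control is the inner sphere $\co\omega\cap\partial B_r$. This is exactly where the growth hypothesis $\limsup_{|x|\to\infty} u(x)\le 0$ enters — but note the statement concerns $(0,\infty)$ and we must let the inner radius shrink. The clean way: for $0<s<r<R$ apply the maximum principle on $E(\omega,s,R)$ to $w := u - k\Psi^+ - \delta\Psi^-$ for $\delta>0$; here $\Psi^-$ is the other homogeneous solution, which blows up as $|x|\to 0$ (it is $(-\alpha^-)$-homogeneous with $\alpha^-<0$, so $|x|^{-\alpha^-}\to\infty$... wait, one must track signs), so that $w\le 0$ on $\co\omega\cap\partial B_s$ once $s$ is small enough, while still $w\le 0$ on the other boundary pieces. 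Since $\Pucci(D^2 u)+2\mu|Du|\ge F(D^2u,Du,x)\ge \cdots$ — more precisely $u$ and $\Psi^+$ are both solutions and $\Psi^-$ a solution, the difference satisfies $\Pucci(D^2 w)+2\mu|Dw|\ge 0$ by \eqref{ellip} and \eqref{homogen}, so the ordinary maximum principle applies and gives $w\le 0$ on $E(\omega,s,R)$. Letting $\delta\to 0$ yields $u\le k\Psi^+$ on $E(\omega,s,R)$ for all small $s$, hence on all of $\co\omega\cap\{|x|<R\}$, in particular on $E(\omega,r,2r)$ provided $2r<R$ — and then extend to $2r\le R$ or handle via a limiting/continuity argument in $R$.

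Actually I suspect the paper's proof is even lighter: using the special case of Theorem~\ref{PLloc} with $\Omega=\co\omega$ already proved in this section, applied on the truncated cone, one directly gets $u\le M(r)$-type bounds; combined with the homogeneity of $\Psi^+$ this translates into monotonicity of $Q^+$. Concretely, the function $v_s(x):= s^{\alpha^+}(k\Psi^+(sx) - u(sx))$ satisfies $\Pucci(D^2 v_s)+2\mu|Dv_s|\ge 0$ on a fixed annulus, is $\ge 0$ on the lateral sides and (for the right choice of reference radii) $\ge 0$ on the outer sphere, and the hypothesis at infinity forces $v_s$ to be asymptotically nonnegative near the inner sphere; Lemma~\ref{snap} then snaps it to being $\ge 0$ on an interior annular slice, which is precisely the monotonicity inequality for $Q^+$ between two scales differing by the fixed ratio. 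Iterating the fixed-ratio comparison gives monotonicity on all of $(0,\infty)$.

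The main obstacle is the bookkeeping at the inner radius: one has no a priori control of $u$ near $|x|=0$ (indeed $u$ may be singular there), so the comparison domain's inner boundary must be handled either by inserting a small multiple of the singular solution $\Psi^-$ as a barrier and letting it vanish, or by exploiting that we only ever compare across a bounded ratio of radii and pushing the inner boundary to infinity is not needed — the hypothesis $\limsup_{|x|\to\infty}u\le 0$ is used at the \emph{outer} end when $R\to\infty$ to pin down $Q^+(R)\to$ something, not at the inner end. I would be careful to state at the outset which endpoint each hypothesis controls, set up the single fixed-ratio comparison cleanly via Lemma~\ref{snap} (mirroring the proof of Theorem~\ref{PLloc} above), and then note that composing these comparisons along a dyadic sequence of radii gives the full monotonicity statement on $(0,\infty)$; part (ii) is immediate by inversion.
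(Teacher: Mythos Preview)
Your setup has the comparison backwards, and this is a genuine gap. You fix $0<r<R$, set $k:=Q^+(R)$, and try to show $Q^+(r)\le k$; but that is $Q^+(r)\le Q^+(R)$ for $r<R$, i.e.\ $Q^+$ \emph{nondecreasing}, the opposite of what is claimed. The paper's proof goes the other way: fix $r$, set $k:=Q^+(r)$, and show $u\le k\Psi^+$ on all of $\co\omega\setminus B_r$, which gives $Q^+(s)\le Q^+(r)$ for $s>r$. Concretely, on $\partial E(\omega,r,R)$ one has $u\le Q^+(r)\Psi^+$ on the \emph{inner} sphere by definition of $Q^+(r)$; $u\le 0=\Psi^+$ on the lateral boundary; and on the outer sphere, for any $\ep>0$ one can take $R$ large so that $u\le\ep$ there (this is where $\limsup_{|x|\to\infty}u\le 0$ is used) while $\Psi^+\to 0$, hence $u\le\ep+Q^+(r)\Psi^+$. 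The ordinary comparison principle then gives $u\le\ep+Q^+(r)\Psi^+$ in $E(\omega,r,R)$; send $R\to\infty$, then $\ep\to 0$.

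Once you orient the comparison correctly, the proof is short and direct: there is no need for a $\Psi^-$ barrier (and indeed your barrier idea cannot work, since $\Psi^-$ is $(-\alpha^-)$-homogeneous with $\alpha^-<0$, so $\Psi^-\to 0$ at the origin, not $\infty$), nor for Lemma~\ref{snap}, nor for any dyadic iteration. Your instinct that part (ii) follows from (i) via inversion is correct and is exactly what the paper does.
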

\begin{proof}
(i) Observe that for each $\ep > 0$ and $r>0$, there exists $R>r$ such that
\begin{equation*}
u \leq \ep + Q^+(r) \Psi^+ \quad \mbox{on} \ \partial E(\omega,r,R).
\end{equation*}
By the maximum principle, $u\leq \ep + Q^+(r) \Psi^+$ in $E(\omega,r,R)$. Sending $R\to \infty$ and then $\ep \to 0$ we obtain $u \leq Q^+(r) \Psi^+$ in $\co\omega \setminus B_r$, that is, $Q^+(r)=\sup_{\co\omega \setminus B_r} (u/\Psi^+)$. Thus $Q^+(s) \leq Q^+(r)$ for all $0 < r < s$.

(ii) Apply (i) to $u^*$ and $F^*$ and rewrite the result in terms of $u$ and $F$.
\end{proof}

\begin{lem}\label{PLpurple}
Suppose that $u\in\LSC(\bar{\mathcal C}_\omega \! \setminus \! \{ 0 \})$ satisfies
\begin{equation*}
\left\{ \begin{aligned}
& F(D^2u,Du,x) \geq 0 & \mbox{in} & \ \co{\omega}, \\
& u \geq 0 & \mbox{on} & \ \partial \co\omega\! \setminus\! \{ 0 \}.
\end{aligned} \right.
\end{equation*}
\begin{enumerate}
\item[(i)] If $\liminf_{|x|\to\infty} u(x) \geq 0$, then $r\mapsto q^+(r)$ is nondecreasing on $(0,\infty)$;
\item[(ii)] If $\liminf_{x\to 0} u(x) \geq 0$, then $r\mapsto q^-(r)$ is nonincreasing on $(0,\infty)$.
\end{enumerate}
\end{lem}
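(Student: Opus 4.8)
The plan is to prove (i) directly — it is the mirror image of the proof of Lemma~\ref{PLorange}(i), with ``supersolution'' replacing ``subsolution'' and the minimum principle replacing the maximum principle — and then to deduce (ii) from (i) by the inversion $y\mapsto |y|^{-2}y$, exactly as Lemma~\ref{PLorange}(ii) was deduced from Lemma~\ref{PLorange}(i).

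For (i), fix $0<r<s$ and $\ep>0$. The first step is to show that $u\ge q^+(r)\Psi^+-\ep$ on $\partial E(\omega,r,R)$ once $R$ is large. On the lateral boundary $\partial\co\omega\cap(B_R\setminus B_r)$ this holds because $\Psi^+=0$ there while $u\ge0$; on the inner cap $\co\omega\cap\partial B_r\subseteq E(\omega,r,2r)$ it holds by the definition of $q^+(r)$; and on the outer cap $\co\omega\cap\partial B_R$ it holds for $R$ large because $\Psi^+\in\hc{\alpha^+}$ with $\alpha^+>0$ forces $\Psi^+(x)=|x|^{-\alpha^+}\Psi^+(x/|x|)=O(|x|^{-\alpha^+})\to0$ as $|x|\to\infty$, which together with $\liminf_{|x|\to\infty}u\ge0$ gives $u\ge-\ep/2\ge q^+(r)\Psi^+-\ep$ there. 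Since $q^+(r)\,\Psi^+$ is (a nonnegative multiple of) a viscosity solution of $F=0$ in the bounded domain $E(\omega,r,R)$ — here one uses the positive $1$-homogeneity \eqref{homogen} — while $u$ is a supersolution, the comparison principle gives $u\ge q^+(r)\Psi^+-\ep$ throughout $E(\omega,r,R)$. Letting $R\to\infty$ and then $\ep\to0$ yields $u\ge q^+(r)\Psi^+$ in $\co\omega\setminus B_r$, so that $q^+(r)\le\inf_{E(\omega,s,2s)}u/\Psi^+=q^+(s)$ because $E(\omega,s,2s)\subseteq\co\omega\setminus B_r$. Hence $q^+$ is nondecreasing.

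For (ii), put $u^*(y):=u(|y|^{-2}y)$. By Section~\ref{invop}, $u^*$ is a supersolution of $F^*=0$ in $\co\omega$, it satisfies $u^*\ge0$ on $\partial\co\omega\setminus\{0\}$ (the inversion fixes the sides of the cone), and $\liminf_{|y|\to\infty}u^*(y)=\liminf_{x\to0}u(x)\ge0$. Applying (i) to $u^*$ and $F^*$ shows that $\rho\mapsto\inf_{E(\omega,\rho,2\rho)}u^*/\widetilde\Psi^+$ is nondecreasing, where $\widetilde\Psi^+$ is the first singular solution of $F^*$. Now $(\Psi^-)^*$ is a positive solution of $F^*=0$ in $\co\omega$ vanishing on $\partial\co\omega\setminus\{0\}$ and homogeneous of the order corresponding to $\alpha^+(F^*,\omega)=-\alpha^-(F,\omega)$ (by \eqref{dualalph}), so the uniqueness in Theorems~\ref{fundycones}--\ref{uniq} gives $(\Psi^-)^*=c\,\widetilde\Psi^+$ for some $c>0$. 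Since $u^*/(\Psi^-)^*=(u/\Psi^-)^*$ and the inversion carries $E(\omega,\rho,2\rho)$ onto $E(\omega,\tfrac1{2\rho},\tfrac1\rho)=E(\omega,s,2s)$ with $s=\tfrac1{2\rho}$, the nondecreasing quantity above equals $c$ times $q^-(s)$; as $s$ is a decreasing function of $\rho$, this means $q^-$ is nonincreasing, which is (ii).

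The argument is routine; the only step requiring a little care is the comparison in (i), where one must legitimately use the multiple $q^+(r)\,\Psi^+$ of a viscosity solution as a lower barrier — invoking the comparison principle under \eqref{ellip} and \eqref{regxh} (cf.~\cite{JS}), with the case $q^+(r)<0$ handled, if it arises, exactly as in the proof of Lemma~\ref{PLorange} — and where it is essential for the estimate on $\partial B_R$ that $\Psi^+$ decays at infinity, i.e.\ that $\alpha^+>0$. I do not anticipate any real obstacle beyond this.
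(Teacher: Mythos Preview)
Your proposal is correct and follows the paper's approach exactly: part (i) by direct comparison with the barrier $q^+(r)\Psi^+$ on $E(\omega,r,R)$ and sending $R\to\infty$, $\ep\to0$; part (ii) by inversion to $F^*$ and $u^*$. Your write-up is in fact more detailed than the paper's terse proof---in particular, your explicit verification that the inversion carries $E(\omega,\rho,2\rho)$ to $E(\omega,1/(2\rho),1/\rho)$ and your identification of $(\Psi^-)^*$ with a positive multiple of the $F^*$-singular solution via \eqref{dualalph} make fully transparent what the paper leaves as ``rewrite the result in terms of $u$ and $F$.''
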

\begin{proof}
(i) For each $\ep > 0$ and $r> 0$, choose $R > 0$ so large that
\begin{equation*}
u \geq q^+(r) \Psi^+ - \ep \quad \mbox{on} \ \partial E(\omega,r,R).
\end{equation*}
Using the maximum principle and sending $R\to \infty$ and then $\ep \to 0$, we obtain $u \geq q^+(r) \Psi^+$ in $\co\omega\setminus B_r$. Thus $r\mapsto q^+(r)$ is nondecreasing.

(ii) Apply (i) to $u^*$ and $F^*$ and rewrite the result in terms of $u$ and $F$.
\end{proof}

We now proceed to the proof of Theorem~\ref{uniq}, which  relies on the global Harnack inequality for quotients (Proposition~\ref{bhq}) and the previous lemmas.

\begin{proof}[{Proof of Theorem~\ref{uniq}}]
We prove only (i), since (ii) is obtained by a similar argument, or alternatively by applying (i) to $F^*$ and $u^*$.

Recall the function $\hat u_r(x) = u(rx)$ satisfies the same equation as $u$, by the hypothesis \eqref{scaling}. The global Harnack inequality for quotients (Proposition~\ref{bhq}) then implies
\begin{equation} \label{harsnap}
Q^+(r) \leq C q^+(r),
\end{equation}
for some $C> 0$ which does not depend on $r$. The monotonicity of $Q^+$ and $q^+$ given by Theorem \ref{PLloc} and Lemmas~\ref{PLorange} and \ref{PLpurple} then yield
\begin{equation} \label{cqQC}
0 < c \leq q^+(r) \leq Q^+(r) \leq C.
\end{equation}
as well as $q^+(0) \leq q^+(r) \leq Q^+(r) \leq Q^+(0)$, where $q^+(0) : = \lim_{r\searrow 0} q^+(r)$ and $Q^+(0) : = \lim_{r\searrow 0} Q^+(r)$. Therefore to obtain $u\equiv t\Psi^+$, it suffices to show that $t: = Q^+(0) = q^+(0)$. For this we use a rescaling (blow-up) argument. Define the function $u_r(x) : = r^{\alpha^+} u(rx)$. By the monotonicity of $q^+(r)$ and $Q^+(r)$, for every $k>0$ we have
\begin{equation*}
q^+(kr) \Psi^+ \leq u_r \leq Q^+(kr) \Psi^+ \quad \mbox{in} \ \co\omega \setminus B_{k},
\end{equation*}
and
\begin{equation*}
\inf_{E(\omega,k,2k)} \frac{u_r}{\Psi^+} = q^+(kr), \quad \sup_{E(\omega,k,2k)} \frac{u_r}{\Psi^+} = Q^+(kr).
\end{equation*}

Using H\"older estimates and passing to the limit along a subsequence $r \to 0$, we obtain a function $u_0$ which satisfies $F(D^2u_0,Du_0,x) = 0$ in $\co\omega$, $u_0 = 0$ on $\partial \co\omega \!\setminus \! \{ 0 \}$,
\begin{equation*}
q^+(0) \Psi^+ \leq u_0 \leq Q^+(0) \Psi^+\quad \mbox{in} \ \co\omega,
\end{equation*}
and for every $k > 0$,
\begin{equation} \label{bueq}
\inf_{E(\omega,k,2k)} \frac{u_0}{\Psi^+} = q^+(0), \quad \sup_{E(\omega,k,2k)} \frac{u_0}{\Psi^+} = Q^+(0).
\end{equation}
We will show that $u_0 \equiv Q^+(0) \Psi^+$, from which $Q^+(0) = q^+(0)$ follows. Suppose on the contrary that $u_0 \not\equiv Q^+(0) \Psi^+$, so $u_0 < Q^+(0) \Psi^+$ in $\co\omega$. Set $a: = \frac{1}{2}\inf_{E(\omega',1,2)} (Q^+(0)\Psi - u_0)>0$. Then for all $\delta > 0$, the function $$\tilde v(x) : = (Q^+(0) - \delta) \Psi^+(x) - u_0(x)\ge -\delta \Psi^+(x)$$ satisfies
\begin{equation*}
 \Pucci(D^2\tilde v) +2\mu|D\tilde v| \geq 0 \quad   \mbox{ in } \; E(\omega,\textstyle \frac12,4)
 \end{equation*}
  as well as $ \tilde v = 0$ on $\partial \co\omega \cap (B\!\setminus\! B_{1/2})$,  $\tilde v \geq \frac{1}{2} a $ on $E(\omega',1,2)$,
 and $\tilde v \geq -\delta 2^{-\alpha^+}$ on $ E(\omega,\textstyle\frac12,4)$.

Thus if $\delta > 0$ is sufficiently small with respect to $a$, Lemma \ref{snap} applies and we can deduce that $\tilde v\geq 0$ in $E(\omega,1,2)$. This contradicts \eqref{bueq}, and confirms that $Q^+(0) = q^+(0)$. If we let $t$ denote this value, then by the monotonicity properties of $Q^+$ and $q^+$, we deduce that $Q^+\equiv q^+ \equiv t$ on $(0,\infty)$. That is, $u\equiv t \Psi^+$.
\end{proof}

\section{Continuous dependence estimates} \label{SCDE}

In this section, we state and prove a continuous dependence estimate which has an important role in the classification of isolated boundary singularities. To simplify the presentation, we do not attempt to achieve maximal generality in the structural hypotheses below, and we state the result in a form well-suited to its primary end, which is in the proof of Theorem~\ref{ibs0} in Section~\ref{ibsec}. Nevertheless, in view of further applications and since no technical complications arise, we consider operators which may include zero-order or inhomogeneous terms.

We consider an operator $F= G_0$ which satisfies the assumptions \eqref{ellip}--\eqref{scaling} as well as a family of continuous functions
\begin{equation*}
G_r: \Sy\times\R^n \times \R\times (\co\omega\setminus B_1)\to \R
\end{equation*}
which converge to $G_0$ as $r\to 0$ in the sense that there exists $\epsilon\in (0,1)$ such that, for all $M\in\Sy$, $p\in \rn$, $z\in \mathbb{R}$, and $x,y\in\coo$ such that $1\le |x|,|y|\le r^{-\epsilon}$,
\begin{equation} \label{closeness}
\left| G_r(M,p,z,x) - G_0(M,p,y) \right| \leq C|p| |x-y|^\epsilon +  r^\epsilon (1+|z|+|p|+|M|).
\end{equation}

To prove Theorem~\ref{ibs0}, we need a continuous dependence estimate to control the separation between solutions of $G_r=0$ and $G_0=0$, for small $r$. It is well-known how to obtain such estimates using classical viscosity solution methods \cite{UG,IL}. Indeed, such estimates have been obtained for degenerate fully nonlinear elliptic equations under various hypotheses, see for instance \cite{JK}.

However, to prove Theorem~\ref{ibs0}, we need more than usual. In particular, we need to control the \emph{ratios} of the solutions up to the boundary on which they vanish as well as allow for perturbations of the boundary values far away from the region which concerns us (see Figure~\ref{contdepfig}). It is for this reason that the estimate recorded in Proposition~\ref{CDE} has a rather unusual form.

For convenience and with the application to the proof of Theorem~\ref{ibs0} in mind, we state the continuous dependence estimate for perturbations of $\Psi^+$, where $\Psi^+$ is the function given in Theorems \ref{fundycones} and \ref{uniq} for the operator $F=G_0$. Thanks to Proposition \ref{ggh} and the fact that $\Psi^+$ is $-\alpha^+$-homogeneous for some $\alpha^+>0$, we have that $\Psi^+  \in C^{1,\gamma}(\coo\setminus\{0\})$, for some $\gamma > 0$ depending only on $n$, $\lambda$, $\Lambda$ and $\mu$, as well as the estimate
\begin{equation} \label{Psic1alph}
\| \Psi^+\|_{C^{1,\gamma}(E(\omega,1,\infty))} \leq C.
\end{equation}

\medskip

The continuous dependence estimate is the following.

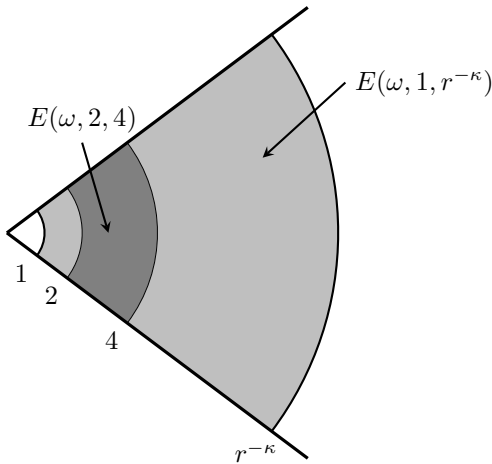
\begin{figure}
\begin{tikzpicture}[>=stealth]
\pgftransformscale{1}
\fill[gray] (.8,-.6) arc (-36.8699:36.8699:1) -- (1.6,1.2) arc (36.8699:-36.8699:2) -- (0.8,-0.6);
\fill[lightgray] (.4,-.3) arc (-36.8699:36.8699:0.5) -- (0.8,0.6) arc (36.8699:-36.8699:1) -- cycle;
\fill[lightgray] (1.6,-1.2) arc (-36.8699:36.8699:2) -- (3.52,2.64) arc (36.8699:-36.8699:4.4) -- cycle;
\draw[very thick] (0,0) -- (4,3);
\draw[very thick] (0,0) -- (4,-3);
\draw[thick] (.4,-.3) node[anchor=north]{{$\!\!\!\!\!\!\!1$}} arc (-36.8699:36.8699:0.5);
\draw[very thin] (.8,-.6) node[anchor=north]{{$\!\!\!\!\!\!\!2$}} arc (-36.8699:36.8699:1);
\draw[very thin] (1.6,-1.2) node[anchor=north]{{$\!\!\!\!\!\!\!4$}} arc (-36.8699:36.8699:2);
\draw[thick] (3.52,-2.64) node[anchor=north]{$\!\!\!\!\!\!\!r^{-\kappa}$} arc (-36.8699:36.8699:4.4);
\draw[thick,->] (1,1.2)node[anchor=south]{$E(\omega,2,4)$} -- (1.35, 0);
\draw[thick,->] (4.5,2)node[anchor=west]{$E(\omega,1,r^{-\kappa})$} -- (3.4, 1);
\end{tikzpicture}
\caption{The domain on which we prove the continuous dependence estimate is illustrated in grey. In the dark grey region, we can control the \emph{ratio} between the two solutions.}
\label{contdepfig}
\end{figure}

\begin{prop} \label{CDE} For each $K_0>0$, there exist positive constants $\kappa, \beta,\bar\beta, r_0 >0$ depending on $n, \lambda, \Lambda, \mu$, $\gamma$, $K_0$, the curvature of $\omega$ and $\epsilon$ (as in \eqref{closeness}), such that if $\psi\in C^1(E(\omega,1,r^{-\kappa}))$ is a solution of
\begin{equation}\label{psirdirp}
\left\{ \begin{aligned}
& G_r(D^2\psi,D\psi,\psi,x) = 0 & \mbox{in} & \ E(\omega,1,r^{-\kappa}), \\
& \psi = \Psi^+ + h & \mbox{on} & \ \partial E(\omega,1,r^{-\kappa}), \\
\end{aligned} \right.
\end{equation}
for some function $h\in C(\partial E(\omega,1,r^{-\kappa}))$ satisfying $|h|\le 1$, and
\begin{equation}\label{psic1alph}
\|\psi\|_{C^{0,1}(E(\omega,1,r^{-\kappa}))}\le K_0,
\end{equation}
then, for all $0 < r \leq r_0$,
\begin{equation}\label{diffnce}
|\psi-\Psi^+ | \leq C r^\beta + \max_{\partial E(\omega,1,r^{-\kappa})}|h|\quad \mbox{in} \ E(\omega, 1, r^{-\kappa}).
\end{equation}
If, in addition, $h=0$ on $\partial \co\omega\cap \partial E(\omega,3/2,6)$ and
\begin{equation}\label{psic1alph2}
\|\psi\|_{C^{1,\gamma}(E(\omega,3/2,6))}\le K_0,
\end{equation}
then, for all $0 < r\leq r_0$,
\begin{equation} \label{ratioestt}
\left|\frac{\psi}{\Psi^+} - 1\right| \leq C \left( r^\beta + \max_{\partial E(\omega, 1, r^{-\kappa})} |h| \right)^{\bar \beta} \quad \mbox{in} \ E(\omega,2,4).
\end{equation}
\end{prop}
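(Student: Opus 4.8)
The plan is to prove the continuous dependence estimate in two stages, corresponding to the two conclusions \eqref{diffnce} and \eqref{ratioestt}.

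\textbf{Stage 1: the sup-norm estimate \eqref{diffnce}.} The idea is to build a supersolution of $G_0 = 0$ that dominates $\psi$ (and a symmetric subsolution below), using the classical doubling-of-variables machinery from \cite{UG,IL} adapted to the perturbation \eqref{closeness}, combined with an explicit homogeneous barrier. First, since $\psi$ is a solution of $G_r = 0$ and, by \eqref{closeness}, $G_r$ differs from $G_0$ by at most $C|p||x-y|^\epsilon + r^\epsilon(1+|z|+|p|+|M|)$, we have that $\psi$ is a subsolution of
\begin{equation*}
G_0(D^2\psi, D\psi, \psi, x) \ge -C r^\epsilon (1 + |\psi| + |D\psi| + |D^2\psi|),
\end{equation*}
at least in the region $\{1 \le |x| \le r^{-\epsilon}\}$; choosing $\kappa < \epsilon$ ensures $E(\omega,1,r^{-\kappa})$ lies inside that region. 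In fact, because $C^2$ estimates are unavailable, the $|D^2\psi|$ term must be handled via the viscosity-solution doubling argument rather than pointwise: one compares $\psi(x)$ with $\Psi^+(y) + \phi(x) + \tfrac{1}{2\delta}|x-y|^2$ on $E(\omega,1,r^{-\kappa}) \times E(\omega,1,r^{-\kappa})$, where $\phi$ is a radial corrector. The key quantitative input is the uniform Lipschitz bound \eqref{psic1alph} together with the $C^{1,\gamma}$ bound \eqref{Psic1alph} on $\Psi^+$: these replace the would-be second derivative terms by controlled first-order quantities in the standard way, at the cost of powers of $r$ and of the doubling parameter. The corrector $\phi$ should be a small multiple of a homogeneous function like $|x|^{-\alpha^+ + \eta} - |x|^{-\alpha^+ - \eta}$ (a supersolution of the Pucci extremal operator appropriately strictly) that is comparable to $r^\beta$ on the annulus $E(\omega,1,r^{-\kappa})$ — here one tunes $\kappa$ and $\beta$ so that the homogeneity growth over the scale $r^{-\kappa}$ stays of order $r^\beta$. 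The boundary datum contributes $\max_{\partial E}|h|$ directly. Running the comparison principle then yields $\psi \le \Psi^+ + Cr^\beta + \max|h|$, and the reverse inequality is symmetric.

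\textbf{Stage 2: the ratio estimate \eqref{ratioestt}.} This is where the global Harnack inequality for quotients (Proposition~\ref{bhq}) and the global gradient H\"older estimate (Proposition~\ref{ggh}) enter, and it is the more delicate part. Write $w := \psi - \Psi^+$. By Stage 1, $\|w\|_{L^\infty(E(\omega,1,r^{-\kappa}))} \le Cr^\beta + \max|h| =: \eta$. The function $w$ solves a uniformly elliptic equation of Pucci extremal type with a right-hand side of size $O(r^\epsilon)$ coming from the $G_r$–$G_0$ discrepancy, and $w = h = 0$ on $\partial\co\omega \cap \partial E(\omega,3/2,6)$. Applying the global gradient H\"older estimate \eqref{psic1alph2} on $E(\omega,3/2,6)$ and interpolating against the $L^\infty$ bound $\eta$, one gets $\|w\|_{C^{1}(E(\omega,2,4))} \le C\eta^{\bar\beta}$ for some $\bar\beta \in (0,1)$ (this is the standard ``interpolation between $L^\infty$ and $C^{1,\gamma}$'' that produces a fractional power). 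Since $w$ vanishes on the smooth lateral boundary together with the comparison function $\Psi^+$, which itself satisfies a Hopf-type lower bound $\Psi^+(x) \ge c\,\dist(x,\partial\co\omega)$ on $E(\omega,2,4)$ (a consequence of Hopf's lemma, or of Proposition~\ref{bhq} applied with $v = \Psi^+$ and $u = \dist$), the quotient $w/\Psi^+$ is controlled: $|w/\Psi^+| \le \|w\|_{C^1}/\inf(\Psi^+/\dist) \le C\eta^{\bar\beta}$ on $E(\omega,2,4)$. That is exactly \eqref{ratioestt}.

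\textbf{Main obstacle.} The hard part is Stage 1, specifically handling the perturbation term in \eqref{closeness} \emph{without} any second-derivative bound on $\psi$. The term $C|p||x-y|^\epsilon$ is benign (it is absorbed by the doubling penalization once the Lipschitz bound \eqref{psic1alph} is used to bound $|p|$), but the term $r^\epsilon|M|$ must be dealt with carefully: in the viscosity framework one never evaluates $D^2\psi$, so instead one must structure the comparison so that the penalization matrices produced by Jensen-Ishii's lemma carry the factor $1/\delta$, and then choose $\delta$ as a suitable power of $r$ so that $r^\epsilon \cdot \delta^{-1}$ remains a small power of $r$. Balancing the three small parameters — $r$ (perturbation size), $r^{-\kappa}$ (size of the domain, which inflates the homogeneous corrector), and $\delta$ (doubling parameter) — to produce a single clean exponent $\beta$ is the technical crux, and the constraint $\theta > \tfrac12$ in \eqref{regxh} is presumably what makes it work. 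A secondary subtlety is that the comparison must be run on the unbounded-aspect-ratio annulus $E(\omega,1,r^{-\kappa})$ with the lateral boundary of the cone present; but the conical boundary contributes nothing extra because $\psi - \Psi^+ = h$ there and $h$ is controlled by hypothesis, so no small-domain maximum principle near the vertex is needed (the vertex has been removed by working in $|x| \ge 1$).
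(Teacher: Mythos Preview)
Your Stage~2 is correct and essentially identical to the paper's argument: set $v=\psi-\Psi^+$, use Hopf's lemma to bound $\Psi^+$ below by $\dist(x,\partial\co\omega)$ on $E(\omega,2,4)$, and interpolate between $\|v\|_{L^\infty}$ (which is $\eta$ by Stage~1) and $\|v\|_{C^{1,\gamma}}$ (controlled by \eqref{psic1alph2} and \eqref{Psic1alph}) to get $\|v\|_{C^{0,1}}\le C\eta^{\gamma/(1+\gamma)}$; then $|v|/\Psi^+\le C\|v\|_{C^{0,1}}$. The paper does exactly this, with $\bar\beta=\gamma/(1+\gamma)$.

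Your Stage~1 has the right skeleton (doubling of variables on $V_r\times V_r$, maximum principle for semicontinuous functions, balancing $\delta$ against $r^\epsilon$), but two ingredients differ from the paper and your versions are not quite right.

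First, the corrector. The paper does \emph{not} use a homogeneous barrier; it takes $\varphi$ to be the solution of $\pucci(D^2\varphi)-\mu|x|^{-1}|D\varphi|=1$ in $V_r$ with $\varphi=0$ on $\partial V_r$, and bounds $0\le\varphi\le R^4$ via the ABP inequality (this is where the precise dependence of the ABP constant on $\mu|x|^{-1}$ matters and forces $\kappa$ to be a small power of $\epsilon$). The point of $\varphi$ is to inject a \emph{constant} strictness margin $k$ into the comparison: after doubling and subtracting the two viscosity inequalities one obtains $k+\lambda t\le(\text{error terms})$, and this $k$ is the quantity to be bounded. Your candidate $|x|^{-\alpha^++\eta}-|x|^{-\alpha^+-\eta}$ has no evident sign for $\pucci$ and does not produce a clean constant margin; it is not a working substitute.

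Second, the matrix term. You propose to control $r^\epsilon|M|$ by the crude Jensen--Ishii bound $|Y|\lesssim\delta^{-1}$ and then balance $r^\epsilon\delta^{-1}$ against the other errors. The paper instead uses the sharper structural estimate from \cite[Lemma~III.1]{IL},
\[
|X|,|Y|\;\le\;C\!\left(\delta^{-1/2}\,t^{1/2}+t\right),\qquad t:=\trace(Y-X)\ge0,
\]
so that $r^\epsilon|Y|$ is absorbed by the ellipticity term $\lambda t$ after a Cauchy inequality, allowing the choice $\delta=r^\epsilon$. Your cruder route can in fact be pushed through with a different $\delta$ (e.g.\ $\delta=r^{\epsilon/(1+\epsilon)}$ balances $\delta^\epsilon$ against $r^\epsilon\delta^{-1}$), but you would still need the correct strictifier $\varphi$ above to carry the $k$, and you should recognize that this is what closes the argument rather than the condition $\theta>\tfrac12$ in \eqref{regxh}, which plays no role here.
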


\begin{proof}
For convenience we write $R=r^{-\kappa}$ and $V_r : = E(\omega,1,R) = E(\omega,1,r^{-\kappa})$.
Let $\varphi(x)\in C^2(V_r)$  be the solution of the following boundary value problem
\begin{equation} \label{strictifier}
\left\{
\begin{aligned}
& \pucci(D^2\varphi) - \frac{\mu}{|x|}|D\varphi| = 1 & \mbox{in} &  \  V_r,\\
& \varphi = 0 & \mbox{on} & \ \partial V_r.
\end{aligned}
\right.
\end{equation}
The ABP inequality (see Proposition~\ref{abp} below, and the remarks that follow it) yields
 \begin{equation}\label{rrr}
0 \leq \varphi \leq R^4\quad\mbox{ in }\;V_r,\end{equation}
for sufficiently small $r >0$. Define the auxiliary function $\xi: \overline V_r \times \overline V_r \to \R$ by
\begin{equation*}
\xi(x,y) : = \Psi^+(x) - \psi(y) - k \varphi(x) - \frac{A}{2\delta} |x-y|^2  - \max_{\partial E(\omega,1,r^{-\kappa})} |h| - \delta,
\end{equation*}
for some positive constants $\delta,k,A > 0$, to be selected below. Observe that
\begin{equation} \label{xibndry}
\xi(x,x) \leq -\delta \quad \mbox{for each} \quad x\in \partial V_r.
\end{equation}

We will choose the constants $\delta,k,A$ appropriately so that $\xi \leq 0$ on $\overline V_r \times \overline V_r$.
Suppose, on the contrary, that there exists $(x_0,y_0) \in \overline V_r \times \overline V_r$ such that
\begin{equation*}
\xi(x_0,y_0) = \sup_{\overline V_r \times \overline V_r} \xi(x,y) > 0.
\end{equation*}
It is not difficult to show in this case that
\begin{equation} \label{x0y0cls}
|x_0 - y_0 | \leq \bar{C} A^{-1}\delta,
\end{equation}
where $\bar{C}$ depends only on the Lipschitz constants of $\Psi^+$ and $\psi$ (which we are fixed in \eqref{Psic1alph} and \eqref{psic1alph}). Specifically, if one of the points $x_0$ and $y_0$ belongs to $\partial V_r$, then \eqref{x0y0cls} follows from $h=\psi-\Psi^+$ on $\partial V_r$ and
$$
\frac{A}{2\delta} |x_0-y_0|^2<\Psi^+(x_0) - \psi(y_0)    - \max_{\partial E(\omega,1,r^{-\kappa})} |h|.$$
If $x_0,y_0\in V_r$, then \eqref{x0y0cls} follows from $\frac{\partial \xi}{\partial y}(x_0,y_0)=0$.

If one of $x_0$, $y_0$ belongs to $\partial V_r$, then  \eqref{xibndry} and the mean value theorem imply
$$
\delta\le {C}|x_0-y_0|,
$$
where $C$ depends only on \eqref{Psic1alph} and \eqref{psic1alph}. Combining this inequality with \eqref{x0y0cls}, we see that we can fix $A>0$ sufficiently large to get a contradiction. Hence we may assume that $x_0,y_0\in V_r$.

By the maximum principle for semicontinuous functions (see \cite[Theorem 3.2]{UG} and \cite[Proposition II.3]{IL}), there exist matrices $X,Y\in \Sy$ such that
\begin{equation} \label{matmad}
-\frac{3A}{\delta} \begin{pmatrix} \iden & 0 \\ 0 & \iden \end{pmatrix} \leq \begin{pmatrix} X & 0 \\ 0 & -Y \end{pmatrix} \leq \frac{3A}{\delta} \begin{pmatrix} \iden & -\iden \\ -\iden & \iden \end{pmatrix}
\end{equation}
and
\begin{equation} \label{jetmad}
\left( \frac{x_0-y_0}{\delta}, \, X \right) \in \overline{\mathcal{J}}^{2,+} \!\left( \Psi^+\! - k \varphi\right) (x_0), \quad  \left( \frac{x_0-y_0}{\delta} , \, Y \right) \in \overline{\mathcal{J}}^{2,-}\psi(y_0).
\end{equation}
See \cite{UG} for the definition and basic properties of the semijets $\overline{\mathcal{J}}^{2,\pm}$ and their relation to viscosity solutions. The second matrix inequality in \eqref{matmad} and simple linear algebra facts (see \cite[ Proposition II.3 and {Lemma III.1}]{IL}) imply that $X \leq Y$, as well as
\begin{equation} \label{XYord}
|X|, |Y| \leq C \!\left( \delta^{-1/2} \trace(Y-X)^{1/2} + \trace(Y-X) \right), \quad C = C(n,A).
\end{equation}
Since $\varphi\in C^2$, the first inclusion on \eqref{jetmad} may be rewritten as
\begin{equation*}
\left( \frac{x_0-y_0}{\delta}+ k D\varphi(x_0), \, X + k D^2\varphi(x_0) \right) \in \overline{\mathcal{J}}^{2,+} (\Psi^+)(x_0).
\end{equation*}
Since $\Psi^+$ and $\psi$ are, respectively, solutions of $F=0$ and \eqref{psirdirp}, we obtain
\begin{equation}\label{Fineq}
F\!\left( X + k D^2\varphi(x_0),\frac{x_0-y_0}{\delta}+ k D\varphi(x_0),x_0 \right) \leq 0,
\end{equation}
and
\begin{equation} \label{Gineq}
G_r\!\left( Y ,  \, \frac{1}{\delta} (x_0 - y_0) , \psi(y_0), y_0 \right) \geq 0.
\end{equation}
The uniform ellipticity of $F$ and \eqref{strictifier} yield
\begin{align*}
F\left(Y,  \, \frac{1}{\delta} (x_0 - y_0) , x_0\right) & \leq F(X + k D^2\varphi(x_0),\frac{x_0-y_0}{\delta}  + k D\varphi(x_0),x_0)\\ & \qquad + \Pucci(Y-X) - k \Big(\pucci(D^2\varphi(x_0)) -\frac{\mu}{|x|}|D\varphi(x_0)|\Big)\\
& \leq -\lambda \trace(Y-X) -k.
\end{align*}
Set $t: = \trace(Y-X)$. Subtracting the last inequality from \eqref{Gineq} and using \eqref{closeness}, we arrive at
\begin{equation*}
k + \lambda t \leq  C\delta^{-1} |x_0-y_0|^{1+\epsilon} + Cr^\epsilon \Big( 1+K_0 +\frac{|x_0-y_0|}{\delta}+ |Y| \Big).
\end{equation*}
Define $\delta := r^{\epsilon}$. The previous inequality, \eqref{x0y0cls} and \eqref{XYord} yield, for small $r>0$,
\begin{equation} \label{dour}
k + \lambda t  \leq C\delta^\ep + C\delta (1+t)(1+\delta^{-1/2}) \leq C_1r^{\epsilon^2} + C_2r^{\epsilon/2}(1+t).
\end{equation}
If $r > 0$ is sufficiently small, then $ C_2r^{\epsilon/2}<\min\{\lambda, C_1r^{\epsilon^2/2}\}$.  Hence
\begin{equation*}
k \leq 2C_1r^{\epsilon^2/2}.
\end{equation*}
Selecting $k:=4C_1r^{\epsilon^2/2}$ yields a contradiction.

We have shown that, for sufficiently small $r> 0$, we have $\xi \leq 0$ on $\overline V_r \times \overline V_r$. Therefore, for every $x\in E(\omega,1,R)$,
\begin{equation*}
\Psi^+(x) - \psi(x) - \max_{\partial E(\omega,1,R)} |h| \leq k \varphi(x) + \delta \leq Cr^{\epsilon^2/2-4\kappa} + Cr^{\epsilon}.
\end{equation*}
Set $\kappa := \epsilon^2/16$ to obtain $$\Psi^+-\psi - \max_{\partial E(\omega,1,R)} |h|\leq Cr^\beta$$ with $\beta := \epsilon^2/4$.
Reversing the roles of $\psi$ and $\Psi^+$, and repeating the same argument, we obtain \eqref{diffnce}.

We conclude by showing that \eqref{ratioestt} follows from \eqref{diffnce}, \eqref{psic1alph2}  and an interpolation inequality, which is recorded in Lemma~\ref{lipest} below. Set $v := \psi - \Psi^+$ and $\Gamma := (B_4 \setminus B_2) \cap \partial C_\omega$.
 By the Hopf lemma,
\begin{equation*}
\Psi^+(x) \geq \frac{1}{C} \dist(x, \Gamma) \quad \mbox{in } E(\omega, 2, 4).
\end{equation*}
Fix a smooth domain $\Omega$ such that $E(\omega, 2, 4)\subseteq \Omega\subseteq E(\omega, 3/2, 6)$. Since $v=0$ on $\Gamma$, Lemma \ref{lipest} implies that, for any $x\in E(\omega, 2, 4)$,
\begin{equation*}
\bigg|\frac{\psi(x)}{\Psi^+(x)} - 1\bigg|=\frac{|v(x)|}{\Psi^+(x)}\le C\frac{|v(x)|}{\dist(x, \Gamma)}\le C\| v\|_{C^{0,1}(E(\omega, 2, 4))} \leq C \|v\|_{C^{1,\gamma}(\Omega)}^{\frac{1}{1+\gamma}}
\|v\|_{L^\infty(\Omega)}^{\frac{\gamma}{1+\gamma}}.
\end{equation*}
The bounds \eqref{Psic1alph}, \eqref{psic1alph2} and \eqref{diffnce} yield \eqref{ratioestt} with $\bar\beta := \frac{\gamma}{1+\gamma}$.
\end{proof}

The following interpolation inequality is certainly known. We cannot find a reference, so we record a simple proof.

\begin{lem}\label{lipest}
Let $\Omega$ be a bounded $C^2$-domain. Then for each $u\in C^{1,\gamma}(\Omega)$,
$$
\|u\|_{C^{0,1}(\Omega)}\le C \|u\|_{C^{1,\gamma}(\Omega)}^{\frac{1}{1+\gamma}}
\|u\|_{L^\infty(\Omega)}^{\frac{\gamma}{1+\gamma}},
$$
for a constant $C$ depending only on $n,\gamma$, and the curvature of $\partial \Omega$.
\end{lem}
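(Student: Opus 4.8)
The plan is to reduce the estimate to a pointwise bound on $|Du|$. Note first that $\|u\|_{L^\infty(\Omega)}\le \|u\|_{C^{1,\gamma}(\Omega)}^{1/(1+\gamma)}\|u\|_{L^\infty(\Omega)}^{\gamma/(1+\gamma)}$ trivially (the $C^{1,\gamma}$ norm dominates the sup norm), and that the Lipschitz seminorm of $u$ on the bounded $C^2$-domain $\Omega$ is comparable to $\sup_\Omega|Du|$. Hence it suffices to prove $\sup_\Omega|Du|\le C\|u\|_{C^{1,\gamma}(\Omega)}^{1/(1+\gamma)}\|u\|_{L^\infty(\Omega)}^{\gamma/(1+\gamma)}$. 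Throughout write $H:=\|u\|_{C^{1,\gamma}(\Omega)}$ and $K:=\|u\|_{L^\infty(\Omega)}$, so that $[Du]_{C^{0,\gamma}(\Omega)}\le H$ and $K\le H$. This is simply the one-dimensional Landau--Kolmogorov interpolation inequality carried out along well-chosen segments inside $\Omega$.

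\textbf{Geometric input.} A bounded $C^2$-domain satisfies a uniform interior cone condition: there is $\delta_0>0$, depending only on $n$ and the curvature of $\partial\Omega$, such that every $x_0\in\Omega$ is the vertex of a cone $\mathcal K(x_0)\subseteq\Omega$ of half-angle $\pi/3$ and height $\delta_0$, with some axis $\nu_0=\nu_0(x_0)$. When $\dist(x_0,\partial\Omega)\ge\delta_0$ this is immediate; when $\dist(x_0,\partial\Omega)<\delta_0$ one flattens $\partial\Omega$ near the (unique, for small $\delta_0$) nearest boundary point and uses the bound on the second derivatives of the defining graph, taking $\nu_0$ to be the inner unit normal there. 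A short trigonometric observation then shows that for \emph{any} unit vector $e$ there is a unit vector $e'$ pointing into $\mathcal K(x_0)$ with $|e\cdot e'|\ge c_1$ for a universal constant $c_1>0$ (say $c_1=\tfrac12$): letting $\tilde e$ be whichever of $\pm e$ satisfies $\tilde e\cdot\nu_0\ge0$, if the angle $\alpha$ between $\tilde e$ and $\nu_0$ is at most $\pi/3$ take $e'=\tilde e$; otherwise rotate $\tilde e$ towards $\nu_0$ inside their common plane until it makes angle $\pi/3$ with $\nu_0$, so that, since $\alpha\le\pi/2$, the resulting $e'$ satisfies $|e\cdot e'|=\cos(\alpha-\pi/3)\ge\cos(\pi/6)$.

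\textbf{Main argument.} Fix $x_0\in\Omega$ and set $L:=|Du(x_0)|$; if $L=0$ there is nothing to do. Apply the above with $e:=Du(x_0)/L$ to get $e'$, and put $g(t):=\pm u(x_0+te')$ for $t\in[0,\delta_0]$, with sign chosen so $g'(0)=|Du(x_0)\cdot e'|\ge c_1L$. Since $[Du]_{C^{0,\gamma}(\Omega)}\le H$ we have $g'(t)\ge c_1L-Ht^\gamma$, hence $g'\ge c_1L/2$ on $[0,\rho]$ with $\rho:=\min\{\delta_0,\,(c_1L/2H)^{1/\gamma}\}$. Integrating and using $|g(\rho)-g(0)|\le 2K$ gives $\tfrac12 c_1L\,\rho\le 2K$. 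If $\rho=(c_1L/2H)^{1/\gamma}$, this rearranges to $(c_1L)^{(1+\gamma)/\gamma}\le C\,K\,H^{1/\gamma}$, i.e. $L\le C\,K^{\gamma/(1+\gamma)}H^{1/(1+\gamma)}$. If instead $\rho=\delta_0$, then $L\le 4K/(c_1\delta_0)\le C\,K\le C\,K^{\gamma/(1+\gamma)}H^{1/(1+\gamma)}$, using $K\le H$ in the last step. Taking the supremum over $x_0\in\Omega$ and combining with the reduction above completes the proof.

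\textbf{Main obstacle.} The only genuinely non-routine point is the geometric lemma near $\partial\Omega$: securing a uniform interior cone whose half-angle is bounded \emph{away from zero} (here $\pi/3$), with $\delta_0$ depending explicitly on the curvature of $\partial\Omega$, and verifying that such a cone — whatever its axis — always contains a direction making a definite angle with $Du(x_0)$. Both are elementary (boundary flattening plus plane trigonometry); everything else is calculus and bookkeeping of the exponents $\gamma/(1+\gamma)$ and $1/(1+\gamma)$.
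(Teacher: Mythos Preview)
Your proof is correct and follows essentially the same strategy as the paper's: both reduce to a one-dimensional Landau--Kolmogorov estimate along a short segment inside $\Omega$, integrate the lower bound on the directional derivative coming from the $C^{0,\gamma}$ control of $Du$, and optimize over the segment length. The only difference is in the geometric setup: the paper invokes the uniform interior ball condition and picks a coordinate direction $\nu_x$ with $|\partial_{\nu_x}u|\ge |Du|/\sqrt{n}$, asserting (somewhat loosely) that one of $\pm\nu_x$ gives a segment of length $h_0$ in $\Omega$, whereas you use an interior cone and explicitly produce a direction $e'$ in it with $|e'\cdot Du(x_0)|\ge c_1|Du(x_0)|$. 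Your treatment of this step is actually cleaner and avoids the minor imprecision in the paper's choice of direction, but the analytic heart of the argument is the same.
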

\begin{proof}
Let $\Omega$ satisfy a uniform interior ball condition with radius $h_0 > 0$. For each point $x\in \Omega$ we can fix a direction $\nu_x\in S^{n-1}$ such that $$|\partial_{\nu_x}u(x)|\ge (1/\sqrt{n})|Du(x)|.$$ We may also assume that $x+h\nu_x\in \Omega$ for all $h\in (0,h_0)$, since if necessary we can replace $\nu_x$ by $-\nu_x$.

Set $K_1:= \|u\|_{C^{1,\gamma}(\Omega)}$ and
$K_2:=\|u\|_{L^\infty(\Omega)}$. By Taylor's formula we have
\begin{equation}\label{inter1}
\frac{1}{\sqrt{n}}|Du(x)|h -K_1 h^{1+\gamma} \le |u(x+h\nu_x)-u(x)|\le 2K_2
\end{equation}
for any $h\in (0,h_0)$. We apply \eqref{inter1} with
$$h:=\frac{h_0}{2} |Du(x)|^{\frac{1}{\gamma}} K_1^{-\frac{1}{\gamma}}\le \frac{h_0}{2} $$
and, making the additional requirement that $h_0\le 2^{1-1/\gamma} n^{-2/\gamma}$, obtain after a  rearrangement of the resulting inequality that
\begin{equation*}
|Du(x)|\le CK_1^{\frac{1}{1+\gamma}}
K_2^{\frac{\gamma}{1+\gamma}}. \qedhere
\end{equation*}
\end{proof}
\bigskip

We next recall the Alexandrov-Bakelman-Pucci inequality with explicit dependence on the coefficients in the equation and the diameter of the domain, which was also used in the proof of Proposition~\ref{CDE} above.

\begin{prop}\label{abp} Let $\Omega$ be a bounded domain such that $\Omega\subset B_R$ for $R>0$, and set $\Omega_R:=\{x\::\:Rx\in \Omega\}\subset B_1$. There exist constants $C_1,C_2$ depending only on $n,\lambda,\Lambda$ such that if $u\in C(\overline{\Omega})$ is a solution of
$$
\pucci(D^2u)-\mu(x)|Du|\le f(x)\quad\mbox{in} \ \Omega$$
for some $f,\mu\in L^n(\Omega)$ with $f,\mu\ge 0$, then
\begin{equation}\label{abp1}
\sup_\Omega u \le \sup_{\partial\Omega} u + C_A R^2\|f(Rx)\|_{L^n(\Omega_R)},
\end{equation}
where
$$
C_A:=C_1\exp\left(C_2R\|\mu(Rx)\|_{L^n(\Omega_R)}\right).
$$
If in addition $\nu\in L^n(\Omega)$, $\nu\ge 0$ in $\Omega$ is such that $R^2\|\nu(Rx)\|_{L^n(\Omega_R)}<C_A^{-1}$ and
$$
\pucci(D^2u)-\mu(x)|Du|-\nu(x)|u|\le f(x)\quad\mbox{in}\ \Omega,$$
then
\begin{equation}\label{abp2}
\sup_\Omega u \le \left( 1-C_AR^2\|\nu(Rx)\|_{L^n(\Omega_R)}\right)^{-1}\left(\sup_{\partial\Omega} u + C_A R^2\|f(Rx)\|_{L^n(\Omega_R)}\right).
\end{equation}
\end{prop}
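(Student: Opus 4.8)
The plan is to reduce, by a dilation, to the case $\Omega\subset B_1$, in which \eqref{abp1} is the classical Alexandrov--Bakelman--Pucci estimate for viscosity subsolutions of the extremal inequality $\pucci(D^2u)-b(x)|Du|\le g$, and then to deduce \eqref{abp2} from \eqref{abp1} by absorbing the zeroth-order term into the right-hand side on the upper contact set.

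First I would record the scaling. Put $v(x):=u(Rx)$ for $x\in\Omega_R$. Since $\pucci$ is positively $1$-homogeneous, $\pucci(D^2v(x))=R^2\,\pucci\bigl(D^2u(Rx)\bigr)$ and $|Dv(x)|=R\,|Du(Rx)|$, so $v$ is a viscosity solution of $\pucci(D^2v)-R\mu(Rx)|Dv|\le R^2 f(Rx)$ in $\Omega_R\subset B_1$, with $\sup_{\Omega_R}v=\sup_\Omega u$ and $\sup_{\partial\Omega_R}v=\sup_{\partial\Omega}u$. Thus it suffices to prove: if $\Omega\subset B_1$ is a bounded domain, $b,g\in L^n(\Omega)$ with $b,g\ge 0$, and $u\in C(\bar\Omega)$ satisfies $\pucci(D^2u)-b|Du|\le g$ in $\Omega$, then $\sup_\Omega u\le\sup_{\partial\Omega}u+C_1\exp\bigl(C_2\|b\|_{L^n(\Omega)}\bigr)\,\|g\|_{L^n(\Omega)}$ with $C_1,C_2$ depending only on $n,\lambda,\Lambda$. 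Applying this to $v$ with $b=R\mu(R\cdot)$ and $g=R^2 f(R\cdot)$, and pulling the constant $R$ out of $\|R\mu(R\cdot)\|_{L^n(\Omega_R)}$, gives exactly \eqref{abp1} with $C_A=C_1\exp\bigl(C_2 R\|\mu(R\cdot)\|_{L^n(\Omega_R)}\bigr)$.

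For the reduced estimate: since $g\ge 0$ we may assume $M:=\sup_\Omega u>\sup_{\partial\Omega}u$ (otherwise there is nothing to prove), and, subtracting a constant (harmless, there being no zeroth-order term), that $u\le 0$ on $\partial\Omega$. Let $\Gamma$ be the concave envelope over $B_2\supset\bar\Omega$ of $u^+$ extended by zero, and $\Gamma^+:=\{u=\Gamma\}\cap\Omega$ the upper contact set. I would then use the two standard ingredients of the contact-set method for viscosity solutions: (i) the sub-differential map of $\Gamma$ carries $\Gamma^+$ onto a set containing a ball $B_{cM}$, $c=c(n)$ (here $\diam\Omega\le 2$); and (ii) Jensen's lemma, by which $u$ is pointwise twice differentiable a.e.\ on $\Gamma^+$ and the inequality $\pucci(D^2u)-b|Du|\le g$ holds there in the classical sense. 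On $\Gamma^+$ one has $D^2u\le D^2\Gamma\le 0$, so all eigenvalues of $D^2u$ are $\le 0$ and, by \eqref{puccform}, $\pucci(D^2u)=-\lambda\,\trace(D^2u)$; hence $-\trace(D^2u)\le\lambda^{-1}(g+b|Du|)$, and the arithmetic--geometric mean inequality gives $\det(-D^2u)\le(n\lambda)^{-n}(g+b|Du|)^n$ a.e.\ on $\Gamma^+$, while $\det(-D^2u)\ge|\det D^2\Gamma|$ there. Testing the area formula for the normal map of $\Gamma$ against the weight $p\mapsto(\eta^n+|p|^n)^{-1}$ with $\eta:=\|g\|_{L^n(\Gamma^+)}$ (this choice, together with $g+b|Du|\le(\tfrac g\eta+b)(\eta+|Du|)$, is what absorbs the drift), one is led to an inequality of the form $\log\bigl(1+(cM/\eta)^n\bigr)\le C(n,\lambda,\Lambda)\,\Phi(\|b\|_{L^n})$ whose exponentiated form is the desired bound, with the exponential dependence on $\|b\|_{L^n}$ recorded in the statement; if $\|g\|_{L^n}=0$ one lets $\eta\downarrow 0$ to get $M=0$. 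This is the classical argument, and I would follow the presentation in \cite{CC}.

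Finally, \eqref{abp2} follows from \eqref{abp1}: after the dilation, on the upper contact set $\Gamma^+$ one has $0\le u\le\sup_\Omega u^+$, so $\nu|u|\le\nu\,\sup_\Omega u^+$ there; since only the values of the right-hand side on $\Gamma^+$ enter the contact-set estimate, the proof of \eqref{abp1} applies verbatim with right-hand side $f+\nu\,\sup_\Omega u^+$, yielding $\sup_\Omega u^+\le\sup_{\partial\Omega}u^++C_A R^2\bigl(\|f(R\cdot)\|_{L^n(\Omega_R)}+\|\nu(R\cdot)\|_{L^n(\Omega_R)}\sup_\Omega u^+\bigr)$; since $C_A R^2\|\nu(R\cdot)\|_{L^n(\Omega_R)}<1$ by hypothesis, rearranging gives \eqref{abp2}. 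The only genuine work here is the bookkeeping — carrying the radius $R$ and the $L^n$-norm of the drift coefficient correctly through the dilation and through the weighted normal-map estimate so that the constant emerges in the stated scale-invariant form; the viscosity-solution input (Jensen's lemma and the sub-differential covering) is entirely standard.
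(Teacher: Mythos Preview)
Your proposal is correct and follows the same route as the paper: reduce by the dilation $v(x)=u(Rx)$ to the unit-scale ABP inequality (the paper simply cites \cite{KS1} for this), and then obtain \eqref{abp2} by absorbing the zero-order term (the paper cites \cite{S}). You supply the standard contact-set/weighted normal-map argument that the paper leaves entirely to the references, and your bookkeeping of the $R$-dependence is exactly what the paper means by ``a scaled version of the classical estimate.''
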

\begin{proof} The first statement is a scaled version (with respect to $R$) of the classical estimate, for strong solutions. For viscosity solutions it is Proposition 2.8 in \cite{KS1}. The second statement follows from Proposition 3.4 in \cite{S} and its proof.
\end{proof}

We  use the precise form of the constants which appear in this inequality. In particular, if $\mu(x) = |x|^{-1}$, $R=r^{-\kappa}$ and $\Omega \subset B_R\setminus B_1$ we see that $$C_A= C_1\exp\left(C_2\sqrt[n]{-\log(r)}\right),$$ hence for any $\delta>0$ we have  $C_A \leq r^{-\delta}$ if $r>0$ is sufficiently small. In particular, we see that for the case $f=1$, \eqref{abp1} implies \eqref{rrr} for small $r>0$.

Furthermore, if for some $\epsilon>0$ we have $\nu(x)\le r^{\epsilon} |x|^{-2+\epsilon}$ for $1\le |x|\le R=r^{-\kappa}$, then we compute that
$$
C_AR^2\|\nu(Rx)\|_{L^n(\Omega_d)} \le C_A r^{\epsilon-\kappa}\le r^{\epsilon-\kappa-\delta}\to 0\quad\mbox{ as }\; r\to 0,$$
if we set $\kappa= \delta = \epsilon/4$, so in this case \eqref{abp2} is valid for sufficiently small $r>0$, and the term in the first parentheses in \eqref{abp2} is close to 1 for large $R=r^{-\kappa}$.

\medskip

We are interested in applying the above continuous dependence estimate to the family of operators
$$
G_r(M,p,z,x) =r^2G(r^{-2}M, r^{-1}p, z, rx),
$$
where $G$ is some fixed operator. A typical example of an operator $G$ to which Proposition \ref{CDE} applies is the general extremal operator
$$
G^\pm(D^2u,Du,u,x)= \mathcal{P}^\pm(D^2u)  \pm \frac{\mu}{|x|} |Du|\pm \frac{\nu}{|x|^{1-\epsilon_1}} |Du| \pm \frac{\alpha}{|x|^{2-\epsilon_2}} |u| \pm \frac{\beta}{|x|^{2-\epsilon_2}}  f(x),$$
for some $\mu,\nu,\alpha,\beta\in\mathbb{R}^+$, $\epsilon_1, \epsilon_2>0$, $f\in C(B_1)$. The extremal operators $F^\pm, \mathcal{L}^\pm$ which we defined in Section \ref{prelim} correspond to $\nu=\alpha=0$, resp. $\mu=0$, $\epsilon_1=1,\epsilon_2=2$, and $f=0$.

Let us check that Proposition \ref{CDE} can be applied to these operators.
 First, for $G^\pm_r$ the limit operator $G_0$ is precisely $F^\pm$ and \eqref{closeness} holds, as it is very easy to check. That the Dirichlet problem \eqref{psirdirp} for $G^\pm_r$
 has a unique solution for sufficiently small $r>0$ follows from the ABP inequality, the above remarks and the available existence and uniqueness results; see \cite{CKLS,JS}, and pages 592-595 of \cite{S}. The ABP inequality implies that the solution $\psi$ of \eqref{psirdirp} with $G_r=G^\pm_r$ is uniformly bounded in $E(\omega,1,r^{-\kappa})$, independently of $r$. Since $E(\omega,1,r^{-\kappa})$ satisfies an exterior sphere condition, a standard barrier argument gives the Lipschitz bound \eqref{psic1alph}. The global gradient H\"older estimate stated in Proposition \ref{ggh} yields, in each set $B(x_0,1/4)\cap E(\omega,1,r^{-\kappa})$ with $x_0\in E(\omega,3/2,6)$, that the $C^{1,\gamma}$-norm of $\psi$ is bounded by a constant independent of $x_0$. Therefore \eqref{psic1alph2} holds.

These remarks are valid for any operator $G(M,p,z,x)$ which is positively homogeneous in $(M,p,z)$ and is appropriately bounded between $G^-$ and $G^+$, so  Proposition \ref{CDE} holds for such operators provided that a limit operator $G_0$ exists.

\section{Classification of isolated boundary singularities} \label{ibsec}

\subsection{Hypotheses and the statement of the theorem} \label{ibsec1}
In this section we study the solutions of fully nonlinear, uniformly elliptic equations which are possibly singular near a conical boundary point.

Let $\Omega$ be a domain,  $0 \in \partial \Omega$, and   \eqref{zetaiden} holds.
We assume we are given an continuous  operator $G: \Sy\times\R^n \times (\bar\Omega\cap \bar B_1) \to \R$ which satisfies the structural conditions \eqref{ellip}, \eqref{homogen} and \eqref{regxh} (but not necessarily \eqref{scaling}) for all $M \in \Sy$, $p\in \rn$, and $x,y\in \bar \Omega\cap \bar B_1$. We set
\begin{equation} \label{gr}
G_r(M,p,x) : = r^2 G\left(r^{-2} M, r^{-1} p, rx\right) = G(M,rp,rx).
\end{equation}
Obviously $G_r$ satisfies \eqref{ellip}, \eqref{homogen} and \eqref{regxh} with constants independent of $r\le1$; and $G_r=G$ if $G$ satisfies \eqref{scaling}.
We recall that \eqref{ellip}-\eqref{regxh} ensure that $G_r$ satisfies the comparison principle and that the Dirichlet problem associated to $G_r$ is uniquely solvable in any bounded domain satisfying an uniform exterior cone condition; see the remarks at the end of the previous section.

We further assume that there exists an operator $G_0=F$ which satisfies all assumptions \eqref{ellip}-\eqref{scaling} and which is such that $G_r\to G_0$ in the sense of \eqref{closeness}, that is for some $\epsilon\in (0,1)$,
\begin{equation} \label{closeness1}
\left| G_r(M,p,x) - G_0(M,p,y) \right| \leq O(|x-y|^\epsilon)|p| +  r^\epsilon (1+|p|+|M|)
\end{equation}
for all $M\in\Sy$, $p\in \rn$, and all $x,y$ such that $1\le |x|,|y|\le r^{-\epsilon}$. Then, as we explained in the previous section, the continuous dependence estimate (Proposition~\ref{CDE}) applies to $G_r$ and $G_0$, with $\Psi^+$ and $\alpha^+$ being defined as in Theorem~\ref{fundycones} for the operator $G_0(M,p,x)$
with respect to the cone $\co\omega$. Recall we have normalized $\Psi^+$ according to \eqref{normmax}, and that \eqref{Psic1alph} holds.

We now state our main result concerning the behavior of a solution near the isolated boundary point $0$. The following theorem reduces to Theorem \ref{ibs0} if $G$ satisfies all hypotheses \eqref{ellip}-\eqref{scaling}.

\begin{thm}\label{ibs}
Suppose that $G$ and $\Omega$ are as above, $u \in C\!\left( (\bar\Omega \!\setminus \! \{ 0 \})\cap B_1 \right)$ solves
\begin{equation} \label{ibseq}
\left\{ \begin{aligned}
& G(D^2u,Du,x) =0  & \mbox{in} & \ \Omega \cap B_1, \\
& u = 0 & \mbox{on} & \ (\partial \Omega \!\setminus \! \{ 0 \})\cap B_1  ,
\end{aligned} \right.
\end{equation} and $u$ is bounded below on $\Omega \cap B_1$. Then the conclusion of Theorem \ref{ibs0} is valid.
\end{thm}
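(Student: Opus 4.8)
The plan is to transfer the problem to the model cone, dispose of the case when $u$ stays bounded near $0$ by a soft compactness argument, and reduce the genuinely singular case to a perturbation of the proof of Theorem~\ref{uniq}. \emph{Step 1 (straightening the boundary).} First I would push the equation forward under the diffeomorphism $\zeta$ of \eqref{zetaiden}, setting $\hat u := u\circ\zeta^{-1}$ on $\co\omega\cap B_\rho$. Because $\zeta\in C^2$ with $\zeta(0)=0$ and $D\zeta(0)=\iden$, the transformed operator $\hat G$ again satisfies \eqref{ellip}, \eqref{homogen}, \eqref{regxh} (these are preserved by $C^2$ changes of variables, since the transformation is linear in $(M,p)$ and $\zeta$ is $C^1$), and the rescalings $\hat G_r$ still converge to $G_0=F$ in the sense of \eqref{closeness1}: the discrepancy between $\hat G$ and $G$ is controlled by $D\zeta-\iden=O(|x|)$ and $D^2\zeta=O(1)$, which on the window $1\le |x|\le r^{-\epsilon}$ contributes second-order perturbations of size $O(r^{1-\epsilon}|M|)$ and first-order perturbations of size $O(r|p|)$, both absorbed into the right-hand side of \eqref{closeness1} after shrinking $\epsilon$. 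Under this change of variables $\Psi^+\circ\zeta$ becomes $\Psi^+$. Hence I may assume from now on that $\Omega\cap B_\rho=\co\omega\cap B_\rho$, $\zeta=\mathrm{id}$, and I must show that either $u$ extends continuously by $u(0)=0$ or $u(x)/\Psi^+(x)\to t$ for some $t>0$ as $\co\omega\ni x\to 0$.

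\emph{Step 2 (the bounded case).} Since $u$ is bounded below, either $u$ is bounded near $0$ or $\limsup_{x\to0}u=+\infty$. In the first case, $|u|\le K_0$ on $\co\omega\cap B_\rho$, and I would look at the rescalings $u_r(x):=u(rx)$, which solve $G_r(D^2u_r,Du_r,x)=0$ on $\co\omega\cap(B_{\rho/r}\setminus\{0\})$, vanish on $\partial\co\omega$, and are uniformly bounded by $K_0$. By Proposition~\ref{ggh} the family $\{u_r\}$ is precompact in $C_{\mathrm{loc}}(\overline{\co\omega}\setminus\{0\})$; any subsequential limit $u_0$ solves $F(D^2u_0,Du_0,x)=0$ in $\co\omega$, vanishes on $\partial\co\omega\setminus\{0\}$, and satisfies $|u_0|\le K_0$ throughout $\co\omega$. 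Then $\max(u_0,0)$ is a bounded nonnegative subsolution of $F=0$ vanishing on the sides; solving the Dirichlet problem for $F=0$ in the annuli $E(\omega,1/j,j)$ with these data and sending $j\to\infty$ yields a bounded nonnegative solution of $F=0$ in $\co\omega$ vanishing on the sides and dominating $\max(u_0,0)$. By Theorem~\ref{uniq}(ii) that solution is a multiple of $\Psi^-$, hence identically zero because $\Psi^-$ is unbounded at infinity; thus $u_0\le 0$. Applying the same reasoning to the dual operator $\widetilde F$ (Theorem~\ref{uniq} applies to it as well) and to $\max(-u_0,0)$ gives $u_0\ge0$, so $u_0\equiv0$. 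Since every subsequential limit vanishes, $u_r\to0$ locally uniformly, i.e. $\sup_{\co\omega\cap\partial B_r}|u|\to0$, which is alternative~(i).

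\emph{Step 3 (the singular case).} Now suppose $\limsup_{x\to0}u=+\infty$. For small $r>0$ set $q(r):=\inf_{E(\omega,r,2r)}u/\Psi^+$ and $Q(r):=\sup_{E(\omega,r,2r)}u/\Psi^+$; since $q(r)\le u(x)/\Psi^+(x)\le Q(r)$ for $r\le|x|\le 2r$, it suffices to prove $\lim_{r\to0}q(r)=\lim_{r\to0}Q(r)=t$ for some $t\in(0,\infty)$. The plan has three ingredients. \emph{(a) A priori bounds and positivity:} the continuous dependence estimate (Proposition~\ref{CDE}), applied on dyadic annuli and iterated across scales, shows a solution of $G_r=0$ cannot grow essentially faster than $\Psi^+$, giving $Q(r)\le C$ for small $r$; the global Harnack inequality for quotients (Proposition~\ref{bhq}), which by scaling gives $Q(\rho r)\le C\,q(\rho r)$ with $C$ independent of $r$ once $u>0$ on one annulus, combined with the fact that $\limsup_{x\to0}u=+\infty$ prevents $Q(r)\to0$ (if $Q(r)\to0$ then $\limsup_{x\to0}|x|^{\alpha^+}u(x)\le0$, so the model-cone case of Theorem~\ref{PLloc} and the reasoning of Step~2 would force $u$ bounded near $0$, a contradiction), then yields $u>0$ near $0$ and $0<c\le q(r)\le Q(r)\le C$ for small $r$. \emph{(b) Almost-monotonicity:} proceeding exactly as in Lemmas~\ref{PLorange}--\ref{PLpurple} but with $F$ replaced by $G$, and quantifying the comparison between a solution of $G_r=0$ on $E(\omega,1,r^{-\kappa})$ and the appropriate multiple of $\Psi^+$ by Proposition~\ref{CDE}, one gets $Q(r/2)\le Q(r)+Cr^\beta$ and $q(r/2)\ge q(r)-Cr^\beta$ with $\beta>0$. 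Since $\sum_j 2^{-j\beta}<\infty$, (a) and (b) imply that $q(0):=\lim_{r\to0}q(r)$ and $Q(0):=\lim_{r\to0}Q(r)$ exist, are finite, and satisfy $0<q(0)\le Q(0)<\infty$. \emph{(c) Blow-up:} finally, rescaling $u_r(x):=r^{\alpha^+}u(rx)$ (so that $u_r/\Psi^+=(u/\Psi^+)(r\,\cdot)$), extracting a limit $u_0$ solving $F=0$ in $\co\omega$ with $q(0)\Psi^+\le u_0\le Q(0)\Psi^+$ and with $\inf$ (resp.\ $\sup$) of $u_0/\Psi^+$ over $E(\omega,k,2k)$ equal to $q(0)$ (resp.\ $Q(0)$) for every $k>0$, and then running the argument from the proof of Theorem~\ref{uniq} — if $u_0\not\equiv Q(0)\Psi^+$, apply Lemma~\ref{snap} to $(Q(0)-\delta)\Psi^+-u_0$ to reach a contradiction — forces $q(0)=Q(0)=:t>0$. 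This is alternative~(ii).

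The main obstacle is the interaction between Step~1 and Step~3: because $G$ is not scale-invariant and $\zeta$ perturbs precisely the second-derivative dependence, and because no $C^2$ estimates are available, the naive continuous dependence argument fails; everything hinges on Proposition~\ref{CDE}, whose proof rests on a careful re-examination of the viscosity uniqueness machinery and which supplies the summable-across-scales error $Cr^\beta$ that makes the almost-monotonicity of $q$ and $Q$ usable.
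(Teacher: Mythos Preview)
Your overall strategy---flatten via $\zeta$, use Proposition~\ref{CDE} to get almost-monotonicity of $q$ and $Q$, then blow up and invoke Theorem~\ref{uniq}---matches the paper's. Your bounded/unbounded case split is organized differently from the paper's split on $a=\lim Q^+=0$ versus $a>0$, but both are valid, and your Step~2 argument (blow up, dominate $\max(u_0,0)$ by a bounded nonnegative solution, kill it via Theorem~\ref{uniq}) is fine.

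There is, however, a real gap in Step~3. The almost-monotonicity inequalities in (b) point the wrong way. The comparison one can actually run (solve \eqref{psirdirp} on $E(\omega,1,r^{-\kappa})$ with data $Q^+(r)\Psi^+$ or $q^+(r)\Psi^+$ on the \emph{inner} sphere $\partial B_1$, and a growth bound on the outer sphere) yields
\[
q^+(2r)\ \ge\ (1-Cr^\beta)\,q^+(r)\qquad\text{and}\qquad Q^+(2r)\ \le\ (1+Cr^\beta)\,Q^+(r)+Cr^\beta,
\]
i.e.\ $q^+$ almost nondecreasing and $Q^+$ almost nonincreasing in $r$---the opposite of what you wrote. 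The reason is that near the singularity you have no a~priori control of $u$, so you cannot prescribe anything useful on the sphere closer to $0$; information propagates only outward.

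This reversal matters because the bound $Q^+(r)\le C$ in (a) is then \emph{not} a consequence of iterating Proposition~\ref{CDE}, and your one-line justification there hides the main missing step. The paper's route is indirect: first (Lemma~\ref{pullup}) one shows that whenever $Q^+(r)\ge\sigma r^{\alpha^+}$, comparison with an auxiliary nonnegative Dirichlet solution together with the quotient Harnack inequality and Lemma~\ref{snap} forces $u>0$ on $E(\omega,r,2r)$. If $Q^+$ were unbounded, this plus the strong maximum principle would give $u>0$ in a full neighborhood of $0$; then the quotient Harnack and the (correct) almost-monotonicity of $q^+$ yield $Q^+\le Cq^+\le C$, a contradiction. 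The same Lemma~\ref{pullup} mechanism is what actually delivers the positivity $u>0$ near $0$ that you assert in (a); ``$Q\not\to 0$'' alone does not give it, since $u$ may change sign arbitrarily close to~$0$.
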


\subsection{Flattening the boundary}
We may use the diffeomorphism $\zeta$ to rewrite the equation \eqref{ibseq} on the domain $\co\omega \cap B_1$. Define $H:\Sy\times\R^n\times \R\times\left( \co\omega\cap B_1\right) \to \R$ by
\begin{equation*}
H(M,p,x) : = G\!\left( D\zeta(x)^t M D\zeta(x) + D^2\zeta(x) p, \ D\zeta(x)^tp, \ \zeta^{-1}(x) \right),
\end{equation*}
where the product $D^2\zeta(x) p$ is the matrix with entries $\sum_{k=1}^n \zeta^k_{x_ix_j} p_k$. It is very easy to check that $H$ satisfies \eqref{ellip}, \eqref{homogen} and \eqref{regxh}, with  possibly modified constants depending only on the $C^2$-norm of the diffeomorphism~$\zeta$.
By defining $v(x) : = u\!\left( \zeta^{-1}(x) \right)$, we  see that \eqref{ibseq} is equivalent to
\begin{equation} \label{ibseqG}
H(D^2v,Dv,v,x) = 0 \quad \mbox{ in } \;  \co\omega \cap B_1,
\end{equation}
and $v = 0$ on $(\partial \co\omega \!\setminus \! \{ 0 \}) \cap B_1$. This can be checked at once for a $C^2$ function $u$, and an analogous calculation can be performed with smooth test functions to confirm the equivalence in the viscosity sense.   It is also trivial to see that the rescaled operator $H_r$ converges to $H_0=G_0$ in the sense of \eqref{closeness1}, thanks to our hypotheses on~$\zeta$.

This reduces the proof of Theorem~\ref{ibs} to the case that $\Omega = \co\omega$. Indeed, $G$ and $H$ satisfy the same structural conditions, and the function $u$ will satisfy the conclusion of Theorem~\ref{ibs} if and only if $v$ satisfies the same assertions with $\Omega$ replaced by $\co\omega$. It therefore suffices to prove Theorem~\ref{ibs} in the case $\Omega = \co\omega$, which we assume in the rest of this section.

\subsection{Strategy of proof}\label{proofstrategy}
The proof of Theorem~\ref{ibs} is delicate and technical. However, the underlying idea can already be found in the argument used in Section~\ref{maxpri} in order to prove Theorem~\ref{uniq}, in particular, the combined use of blow-up, the global Harnack inequality for quotients and the monotonicity properties of the maps $r\mapsto q^+(r), \, Q^+(r)$ given in Lemmas~\ref{PLorange} and~\ref{PLpurple}.

The main difficulty we encounter when trying to adapt these ideas in order to prove Theorem~\ref{ibs} is that, while we can define $q^+$ and $Q^+$ in almost the same way, these functions do not have the same monotonicity properties due to the fact that $u$ and $\Psi^+$ are not solutions of the same equation. In particular, this leads to a difficulty in obtaining \eqref{cqQC} and the existence of limits of $q^+$ and $Q^+$.

On the other hand, the function $u$ is \emph{nearly} a solution of $F=0$ very close to the origin. To see this, let us rescale the equation by defining $u_r(x) : = u(rx)$ and observe that $u_r$ is a solution of
\begin{equation*}
G_r(D^2u_r,Du_r,x) = 0 \quad \mbox{in} \ \co\omega \cap B_{1/r},
\end{equation*}
where $G_r$ is  defined by \eqref{gr}. Notice that for $r> 0$ very small, the operator $G_r$ is very close to $F=G_0$, by \eqref{closeness1}.
We therefore expect the maps $r\mapsto q^\pm(r), \, Q^\pm(r)$ to be \emph{nearly} monotone, in an appropriate way to be determined, which would permit us to pass to the limit $r\to 0$, and complete the proof using a blow-up argument as in Section \ref{maxpri}.

To make this intuition precise, we use the continuous dependence estimate from the previous section in order to control the separation between rescaled solutions of $F=0$ and those of  $G_r=0$ in $\co\omega \cap B_1$. Precisely, we use Proposition \ref{CDE} to construct  solutions of $G_r = 0$ in  large slices of the cone, and these functions are very close to $\Psi^+$. These solutions can be more easily compared to $u_r(x)  = r^{\alpha^+} u(rx)$, since they solve the same equation as $u_r$. The estimate between the ratios \eqref{ratioestt} will then lead us to an inequality between $q^+(r)$ and $q^+(2r)$ (and another between $Q^+(r)$ and $Q^+(2r)$) which roughly asserts that, for small $r$, the map $q$ is nearly monotone in~$r$. These inequalities may be iterated to
see that $Q^+(r)$ and $q^+(r)$ have limits as $r\to 0$. We also need Lemma \ref{snap} and the global Harnack inequality to show that $Q^+(r)$ is bounded as $r\to 0$.

If the limit of $q^+(r)$ is positive, we have \eqref{cqQC} and obtain alternative (ii) similarly to the proof of Theorem \ref{uniq}. On the other hand, if $\lim_{r\to0}Q^+(r)= 0$ then the ``almost monotonicity" estimate for $Q^+$ is enough to see that  in fact we have an algebraic rate of convergence: $Q^+(r) \leq Cr^{\beta_0}$ for some $\beta_0>0$. A blow-up argument is then combined with the uniqueness Theorem \ref{uniq} in order to show that $\beta_0$ may be improved to $\alpha^+$, which implies that $u$ is bounded from above. With the latter information in hand, another blow-up argument yields alternative (i).

\subsection{Characterization of isolated boundary singularities}
Given a function $u$ defined on $\coo \cap B_1$, define the quantities
\begin{equation*}
q^+(r) : = \inf_{E(\omega,r,2r)} \frac{\max\{ 0, u \}}{\Psi^+} \quad \mbox{and} \quad Q^+(r) : = \sup_{E(\omega,r,2r)} \frac{\max\{ 0, u \}}{\Psi^+}.
\end{equation*}

We begin our proof of Theorem~\ref{ibs} by showing that  if $u$ is a nonnegative supersolution of \eqref{ibseq}, then $r\mapsto q^+(r)$ is nearly nondecreasing  on an interval $(0,r_0)$.

\begin{lem} \label{qnearlymo}
Suppose that $u\in C\!\left( \overline{\mathcal{C}}_\omega\cap B_1\! \setminus\! \{ 0 \} \right)$  satisfies $u\ge 0$ and
\begin{equation}
G(D^2u,Du,x) \geq 0 \quad \mbox{in}  \ \co\omega \cap B_1.\\
\end{equation}
Then for some $\beta_0, r_0 > 0$ we have
\begin{equation} \label{qnearlymoeq}
q^+(s) \leq \exp(r^{\beta_0}) q^+(r) \quad \mbox{for every} \ 0 < s < r \leq r_0.
\end{equation}
In particular, $r\mapsto q^+(r)$ is bounded on $(0 , r_0)$.
\end{lem}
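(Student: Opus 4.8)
The plan is to prove a discrete ``almost‑monotonicity'' inequality $q^+(\rho)\le(1+C\rho^{\beta_1})q^+(2\rho)$ valid for all small $\rho$, and then to iterate it dyadically. This is the perturbed analogue of the exact monotonicity of Lemma~\ref{PLpurple}(i): there a nonnegative supersolution of $F=0$ automatically satisfies $u\ge q^+(\rho)\Psi^+$ on all of $\co\omega\setminus B_\rho$, so $q^+$ is nondecreasing. Here $u$ solves only $G\ge0$ and cannot be compared directly with $q^+(\rho)\Psi^+$; instead I would compare the rescaled supersolution with a genuine solution of the rescaled perturbed equation that (i) lies below $u$ on the boundary of a large annular slice and (ii) is close, in the multiplicative sense supplied by Proposition~\ref{CDE}, to $\Psi^+$.

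For the discrete step, fix $\rho\in(0,r_0]$ and put $u_\rho(x):=\rho^{\alpha^+}u(\rho x)$, which by \eqref{gr} and the homogeneity of $G$ is a nonnegative supersolution of $G_\rho=0$ in $\co\omega\cap B_{1/\rho}$ and which, by the homogeneity of $\Psi^+$, satisfies $q^+(\rho)=\inf_{E(\omega,1,2)}u_\rho/\Psi^+=:m$ and $q^+(2\rho)=\inf_{E(\omega,2,4)}u_\rho/\Psi^+$. We may assume $m>0$. On $V_\rho:=E(\omega,1,\rho^{-\kappa})$, with $\kappa$ as in Proposition~\ref{CDE}, let $\Phi$ solve $G_\rho(D^2\Phi,D\Phi,x)=0$ with boundary values $m\Psi^+$ on the inner sphere $\co\omega\cap\partial B_1$ and $0$ on the rest of $\partial V_\rho$. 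These data lie below those of $u_\rho$ (equal on the inner sphere by the definition of $m$, both zero on $\partial\co\omega$, and $u_\rho\ge0$ on the outer sphere), so the comparison principle gives $u_\rho\ge\Phi$ in $V_\rho$. At the same time $\Phi/m$ solves $G_\rho=0$ with boundary data $\Psi^+ + h$, where $h$ is supported on the outer sphere $\co\omega\cap\partial B_{\rho^{-\kappa}}$ and equals $-\Psi^+$ there; since $\Psi^+$ is $(-\alpha^+)$‑homogeneous, $|h|\le\rho^{\kappa\alpha^+}\max_\omega\Psi^+\le1$ for small $\rho$, and $h\equiv0$ on $\partial\co\omega$. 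The bounds \eqref{psic1alph} and \eqref{psic1alph2} hold for $\Phi/m$ (ABP together with a barrier for the Lipschitz bound, and the global gradient H\"older estimate near the lateral boundary for the interior $C^{1,\gamma}$ bound), and $G_\rho\to G_0=F$ in the sense of \eqref{closeness1}, so Proposition~\ref{CDE} applies and yields
\begin{equation*}
\Big|\frac{\Phi}{m\Psi^+}-1\Big|\le C\big(\rho^{\beta}+\rho^{\kappa\alpha^+}\big)^{\bar\beta}\le C\rho^{\beta_1},\qquad \beta_1:=\bar\beta\min\{\beta,\kappa\alpha^+\},
\end{equation*}
on $E(\omega,2,4)$. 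Combined with $u_\rho\ge\Phi$ this gives $u_\rho\ge m(1-C\rho^{\beta_1})\Psi^+$ on $E(\omega,2,4)$, hence $q^+(2\rho)\ge m(1-C\rho^{\beta_1})$, i.e.\ (after shrinking $r_0$) $q^+(\rho)\le(1+C\rho^{\beta_1})q^+(2\rho)$ for $0<\rho\le r_0$.

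Iterating over $\rho=2^{-j}r$ gives, for $s=2^{-k}r$,
\begin{equation*}
q^+(s)\le\prod_{j=1}^{k}\big(1+C(2^{-j}r)^{\beta_1}\big)\,q^+(r)\le\exp\!\Big(\tfrac{C}{2^{\beta_1}-1}\,r^{\beta_1}\Big)\,q^+(r),
\end{equation*}
so taking $\beta_0:=\beta_1/2$ and $r_0$ small enough that $\tfrac{C}{2^{\beta_1}-1}r^{\beta_1}\le r^{\beta_0}$ for $r\le r_0$ proves \eqref{qnearlymoeq} whenever $s$ is a dyadic submultiple of $r$. For general $0<s<r$ one chains the discrete step---which, upon applying Proposition~\ref{CDE} on a slightly enlarged annular slice, holds in the form $q^+(\rho)\le(1+C\rho^{\beta_1})q^+(\rho')$ for every $\rho'\in[\lambda_1\rho,\lambda_2\rho]$ with fixed $1<\lambda_1<\lambda_2$---along a decreasing sequence of scales joining $r$ to $s$; this settles all pairs with $r/s\ge\lambda_1$, and the remaining pairs (with $r/s$ bounded) are disposed of by a direct comparison, using that $q^+$ is finite and continuous on $(0,r_0)$ and that $\exp(r^{\beta_0})>1$. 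The boundedness of $q^+$ on $(0,r_0)$ then follows at once: fixing $\rho_\ast\in(0,r_0)$, \eqref{qnearlymoeq} bounds $q^+(s)$ by $\exp(\rho_\ast^{\beta_0})q^+(\rho_\ast)$ for $s<\rho_\ast$, while on $[\rho_\ast,r_0]$ one has $q^+(\rho)\le u(\tfrac32\rho e)/\Psi^+(\tfrac32\rho e)$ for a fixed $e\in\omega$, a continuous function of $\rho$.

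The crux, and the step I expect to be the main obstacle, is the construction of $\Phi$ and the verification that Proposition~\ref{CDE} applies to it. Requiring $\Phi\le u_\rho$ on all of $\partial V_\rho$ forces its data on the far sphere $\partial B_{\rho^{-\kappa}}$ to be merely nonnegative; what makes this a \emph{small} perturbation of the homogeneous solution $m\Psi^+$ is precisely the decay of $\Psi^+$ at infinity, which keeps $\|h\|_{\infty}$ of size $\rho^{\kappa\alpha^+}$. One must also recognize that it is the \emph{multiplicative} (ratio) conclusion of Proposition~\ref{CDE} that is needed here: the additive estimate $|\Phi/m-\Psi^+|\le C\rho^\beta$ is useless on the slice $E(\omega,2,4)$ because $1/\Psi^+$ is unbounded near $\partial\co\omega$. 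Since the ratio control degrades near the inner sphere of $V_\rho$, the step only propagates information outward by a bounded factor, and so the final estimate must be assembled by iteration rather than obtained in one shot.
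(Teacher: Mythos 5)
Your proof follows the same approach as the paper: you derive the discrete almost-monotonicity step via the continuous dependence estimate (Proposition~\ref{CDE}) applied to the solution $\psi_r$ of the rescaled problem with boundary data vanishing on the far sphere---your $\Phi$ is just $q^+(r)\psi_r$ by the $1$-homogeneity of $G_r$---and then iterate dyadically with the same infinite-product bound. The only divergence is in the treatment of non-dyadic pairs $s<r$, where the paper invokes a quick comparison of $q^+(s)$ with the two nearest dyadic values while you propose chaining a version of the discrete step over flexible scale ratios; this is a minor technical point and both routes close the argument.
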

\begin{proof}
The first step is to show that for some $\beta_0 > 0$,
\begin{equation} \label{qnearlymono}
q^+(2r) \geq \left( 1 - r^{\beta}\right) q^+(r) \quad \mbox{for sufficiently small} \ r>0.
\end{equation}
Denote $ u_r(x) : = r^{\alpha^+} u(rx)$. Then $ u_r$ is a solution of the inequality
\begin{equation*}
G_r(D^2 u_r,D u_r,x) \geq 0 \quad \mbox{in} \ \co\omega \cap B_{1/r}.
\end{equation*}
Let $\psi_r$ be the solution of \eqref{psirdirp} for $h = -\Psi^+$ on $\co\omega \cap \partial B_{r^{-\kappa}}$ and $h=0$ elsewhere on $\partial E(\omega,1,r^{-\kappa})$. Observe that $\max_{\partial E(\omega,1,r^{-\kappa})}|h|\le Cr^{\kappa\alpha^+}$.

By the homogeneity of $\Psi^+$ and the definition of $q^+(r)$ we obtain  $$ u_r(x) \geq q^+(r)  r^{\alpha^+}\Psi^+(rx) = q^+(r) \Psi^+ (x)= q^+(r) \psi_r\quad\mbox{ on }\; \co\omega\cap \partial B_1,$$ and $u_r \geq 0 = q^+(r)\psi_r$ on the rest of the boundary of $E(\omega,1,r^{-\kappa})$. Since the comparison principle holds for the operator $G_r$ in $E(\omega,1,r^{-\kappa})$,  using the estimate \eqref{ratioestt} we have
\begin{equation*}
u_r \geq q^+(r) \psi_r \geq (1-r^{\beta_1}) q^+(r) \Psi^+ = (1-r^{\beta_1}) q^+(r) r^{\alpha^+}\Psi^+(rx)\quad \mbox{in} \ E(\omega,2,4)
\end{equation*}
for sufficiently small $r> 0$, where $\beta_1= (\bar\beta/2)\min\{\beta,\kappa\alpha^+\}$. This establishes \eqref{qnearlymono}.

By induction on \eqref{qnearlymono}, for every $k\in \mathbb{N}$ and $r>0$ sufficiently small, we have
\begin{equation*}
q^+(2^{-k}r) \leq q^+(r) \prod_{j=1}^k \left( 1 - \left(\frac{r}{2^j}\right)^{\beta_1}  \right)^{-1}.
\end{equation*}
Let $0<s<r$. Then for some $k$ we have $2^{-(k+1)}r<s\le 2^{-k}r$, and by using the obvious inequality $q^+(s)\le \max\{q^+(2^{-(k+1)}r),q^+(2^{-k}r)\}$ we obtain
\begin{equation*}
q^+(s) \leq q^+(r) \prod_{j=1}^{\infty} \left( 1 - r^{\beta_1} \left(2^{-\beta_1}\right)^j \right)^{-1}.
\end{equation*}
To estimate this infinite product  we use the elementary inequality
\begin{equation*}
-\log(1-y) \leq 2y \quad \mbox{for every}  \ 0 < y \leq \frac12\,,
\end{equation*}
to obtain for each $\beta>0$
\begin{align*}
\log \prod_{j=1}^\infty \left( 1 - r^\beta \left(2^{-\beta}\right)^j \right)^{-1} = - \sum_{j=1}^\infty \log\left( 1 - r^\beta \left(2^{-\beta}\right)^j \right) \leq 2r^{\beta} \sum_{j=1}^\infty \left(2^{-\beta}\right)^j = C r^\beta.
\end{align*}
Setting $\beta_0= \beta_1/2$ we have $Cr^{\beta_1}\le r^{\beta_0}$  for every $r>0$ small enough.  Hence for all $0 <s < r\le r_0$ we get \eqref{qnearlymoeq}.
\end{proof}
\medskip

We would like to conclude from Lemma~\ref{qnearlymo} that $Q^+(r)$ is bounded on $(0,r_0)$ if $u$ is a solution of \eqref{ibseq} which is bounded below. Of course, if $u$ is nonnegative then the boundary Harnack inequality (Proposition~\ref{bhq}) and Lemma~\ref{qnearlymo} imply that
\begin{equation} \label{qmqp}
Q^+(r) \leq C q^+(r)\le C.
\end{equation}
However, obviously \eqref{qmqp} fails to hold if $u$ changes sign in $E(\omega,r,2r)$. We circumvent this difficulty with the observation that if $u$ is bounded below, then it may only change sign if $Q^+(r)$ is small.

\begin{lem} \label{pullup}
Let $u \in C(\overline{\mathcal C}_\omega \cap B_1\!\setminus \{ 0 \})$ be a solution of \eqref{ibseq} which is bounded below. Then there exists $\sigma > 0$ such that $0 < r < \tfrac14$ and $Q^+(r) \geq \sigma r^{\alpha^+}$ imply that $u > 0$ in $E(\omega,r,2r)$.
\end{lem}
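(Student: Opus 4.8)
The plan is to exploit the fact that $u$ is bounded below, say $u \ge -C_0$ on $\co\omega \cap B_{1/2}$ for some $C_0 > 0$, and that the lower bound degrades only algebraically under rescaling. Fix $r \in (0,\tfrac14)$ and work with the rescaled function $u_r(x) := r^{\alpha^+} u(rx)$, which solves $G_r(D^2 u_r, D u_r, x) \ge 0$ in $E(\omega,\tfrac12,4)$ (in fact in a much larger slice) and vanishes on $\partial \co\omega$. From the hypothesis $Q^+(r) \ge \sigma r^{\alpha^+}$ together with the global Harnack inequality for quotients (Proposition~\ref{bhq}), applied on an annular slice where $u_r$ is not yet known to be nonnegative we instead argue as follows: since $\sup_{E(\omega,1,2)} u_r/\Psi^+ = Q^+(r) r^{-\alpha^+} \ge \sigma$, there is a point in $E(\omega,1,2)$ where $u_r \ge c\sigma$ for a dimensional constant $c>0$; but $u_r$ need not be a nonnegative supersolution, so before invoking Harnack we first push it up.

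The key step is a comparison argument using Lemma~\ref{snap}. On the annular region $E(\omega,\tfrac12,4)$, the function $u_r$ satisfies $\Pucci(D^2 u_r) + 2\mu |D u_r| \ge G_r(\cdots) \ge 0$ by \eqref{ellip} (the zero-order and inhomogeneous perturbations in $G_r$ contribute only an $O(r^\epsilon)$-type error that can be absorbed), it vanishes on the lateral boundary $\partial \co\omega \cap (B_4 \setminus B_{1/2})$, and it is bounded below by $-C_0 r^{\alpha^+}$ on $\co\omega \cap \partial(B_4 \setminus B_{1/2})$ because $u \ge -C_0$. The first step is to upgrade the information $Q^+(r) \ge \sigma r^{\alpha^+}$ to a lower bound $u_r \ge a$ on an inner slice $E(\omega',1,2)$ for a suitable $\omega' \subset\subset \omega$: one combines the interior Harnack inequality (Krylov--Safonov) for the nonnegative part, or more directly one first applies Lemma~\ref{snap} with $a$ replaced by the known value $c\sigma$ at a single point and propagates positivity. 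Then I would apply Lemma~\ref{snap} itself with $\ep = C_0 r^{\alpha^+}$ and $a \sim \sigma$: since $\ep/a \sim (C_0/\sigma) r^{\alpha^+} \to 0$, for $\sigma$ fixed and $r$ small the smallness condition $\ep \le \ep_0 a$ holds, and Lemma~\ref{snap} yields $u_r \ge 0$ on $E(\omega,1,2)$, i.e. $u \ge 0$ on $E(\omega,r,2r)$. But we need $u > 0$: this follows from the strong maximum principle once we know $u_r \ge 0$ and $u_r \not\equiv 0$ (the latter is clear since $Q^+(r) > 0$).

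The main obstacle is the interplay of the two scales $\sigma$ and $r$: Lemma~\ref{snap} gives a threshold $\ep_0$ depending only on the structural data, so we need $C_0 r^{\alpha^+} \le \ep_0 \cdot (\text{something comparable to } \sigma)$, and the hypothesis $Q^+(r) \ge \sigma r^{\alpha^+}$ is exactly what makes the relevant lower bound $a$ comparable to $\sigma$ rather than to $\sigma r^{\alpha^+}$ — the factor $r^{\alpha^+}$ cancels after rescaling. Thus $\sigma$ can be chosen as a fixed multiple of $C_0/\ep_0$ (times a Harnack constant) and the conclusion holds for all $r < \tfrac14$. A secondary technical point is verifying that one may genuinely reduce to the nonnegative-part supersolution $\max\{0,u\}$ on the relevant slice, or alternatively that the perturbative terms distinguishing $G_r$ from $F$ do not spoil the differential inequality $\mathcal L^+[u_r] \ge 0$; this is routine given \eqref{ellip}, \eqref{homogen} and the boundedness of $u_r$. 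I would also record that the argument only uses $u$ on $E(\omega, r/2, 4r) \subset B_1$, so staying inside the domain of definition is not an issue for $r < \tfrac14$.
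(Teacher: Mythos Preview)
Your overall architecture --- rescale to $E(\omega,\tfrac12,4)$, feed the result into Lemma~\ref{snap}, and choose $\sigma$ so that the ratio $\ep/a$ beats $\ep_0$ --- is exactly the paper's. There is a harmless scaling slip: with $u_r(x)=r^{\alpha^+}u(rx)$ and $\Psi^+(rx)=r^{-\alpha^+}\Psi^+(x)$ one gets $\sup_{E(\omega,1,2)}u_r/\Psi^+=Q^+(r)$, not $Q^+(r)r^{-\alpha^+}$; the $r^{\alpha^+}$ factors then appear in both $a$ and $\ep$ and cancel, so the threshold for $\sigma$ is $r$-independent (it does \emph{not} improve as $r\to0$, contrary to what you write).

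The genuine gap is the step where you pass from ``$\sup_{E(\omega,1,2)}u_r/\Psi^+\ge\sigma$'' to ``$u_r\ge a$ on all of $E(\omega',1,2)$''. Neither of your two suggestions works. Interior Harnack for $\max\{0,u_r\}$ fails because the positive part is only a subsolution, not a supersolution, so you get no lower bound. Interior Harnack for the nonnegative shift $u_r+C_0r^{\alpha^+}$ does apply, but only on compact subsets: the point realizing the supremum of $u_r/\Psi^+$ may lie arbitrarily close to $\partial\co\omega$ (where $\Psi^+$ is small), so a large ratio there says nothing about the size of $u_r$ on $E(\omega',1,2)$. And Lemma~\ref{snap} needs the bound on the whole inner slice, not at a single point.

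The paper's device for this step is to introduce the auxiliary solution $\tilde u_r$ of $G_r=0$ in $E(\omega,\tfrac12,4)$ with boundary data $\max\{u(rx),0\}$. By comparison $u(rx)\le\tilde u_r\le u(rx)+M$, and crucially $\tilde u_r\ge0$ with $\tilde u_r=0$ on the lateral boundary, so the global Harnack inequality for quotients (Proposition~\ref{bhq}) applies to $\tilde u_r/\Psi^+$ and gives $\inf_{E(\omega,1,2)}\tilde u_r/\Psi^+\ge c_0\sigma$. Transferring back via $u(rx)\ge\tilde u_r-M$ yields $u(rx)\ge c\sigma-M$ on $E(\omega',1,2)$, and Lemma~\ref{snap} finishes the job once $\sigma$ is large relative to $M$. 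This auxiliary-solution trick is the missing ingredient in your plan.
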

\begin{proof} Let us suppose that $u \geq -M$ in $\coo\cap B_1$ for some $M >0$. Let $\tilde u_r$ be the solution of the boundary value problem
\begin{equation*}
\left\{
\begin{aligned}
& G_r(D^2\tilde u_r, D\tilde u_r, x) = 0 & \mbox{in}  & \ E(\omega,1/2,4), \\
& \tilde u_r(x) = \max\{u(rx),0\} & \mbox{for} & \ x\in \partial E(\omega,1/2,4).
\end{aligned}
\right.
\end{equation*}
We have $u(rx)\le \tilde u_r(x) \le u(rx)+M$ for $x\in \partial E(\omega,1/2,4)$ so by the maximum principle the same inequality holds in the whole $E(\omega,1/2,4)$.

 If $Q^+(r) \geq \sigma r^{\alpha^+}$, then by the homogeneity of $\Psi$ and the global Harnack inequality (Proposition~\ref{bhq}) we have
\begin{equation*}
\inf_{E(\omega,1,2)}\frac{\tilde u_r}{\Psi^+} \ge c_0 \sup_{E(\omega,1,2)}\frac{\tilde u_r}{\Psi^+}\ge  c_0 \sup_{E(\omega,r,2r)}\frac{u(x)}{\Psi^+(x/r)} \geq c_0 \sigma .
\end{equation*}
for some $c_0> 0$ which does not depend on $r$. Recalling \eqref{normmax} we obtain $$u(rx)\ge c\tilde u_r(x) -M\ge c\sigma - M\quad\mbox{ on }\;E(\omega^\prime,1,2),$$ and $u(rx)\ge -M$ on $\partial E(\omega,1/2,4)$.
By Lemma~\ref{snap} we see that $\sigma > 0$ may be taken large enough relative to $M$ to ensure that $u > 0$ in $E(\omega,r,2r)$.
\end{proof}

From the previous lemmas and discussion we deduce the following corollary.

\begin{cor} \label{Qpbnd}
Suppose that $u \in C(\overline{\mathcal C}_\omega \cap B_1\!\setminus \{ 0 \})$ is a solution of \eqref{ibseq} which is bounded below. Then the map $r\mapsto Q^+(r)$ is bounded on $(0,\tfrac14)$.
\end{cor}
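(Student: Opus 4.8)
The plan is to use the dichotomy of Lemma~\ref{pullup} together with the almost--monotonicity of $q^+$ from Lemma~\ref{qnearlymo} and the global Harnack inequality for quotients (Proposition~\ref{bhq}). First I would observe that it suffices to bound $Q^+$ on some interval $(0,\rho_0)$: since $u$ is continuous on $\overline{\mathcal C}_\omega\cap B_1\setminus\{0\}$ and vanishes on the piecewise $C^2$ lateral boundary, the global gradient estimate (Proposition~\ref{ggh}) gives $|u(x)|\le C_\rho\,\dist(x,\partial\mathcal C_\omega)$ on $\overline{E(\omega,\rho,2\rho)}$ for each fixed $\rho>0$, while the Hopf lemma gives $\Psi^+\ge c_\rho\,\dist(\cdot,\partial\mathcal C_\omega)$ there; hence $u^+/\Psi^+$ is bounded on each fixed annular slice, so $Q^+$ is bounded on $[\rho_0,\tfrac14)$.

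The key device near the origin is the auxiliary function $w:=u+M$, where $u\ge -M$. Because the operator $G$ in this section carries no zeroth--order term, $w$ is again a (nonnegative) solution of $G(D^2w,Dw,x)=0$ in $\mathcal C_\omega\cap B_1$, so Lemma~\ref{qnearlymo} applies to $w$ and yields $\beta_0,r_0>0$ with $q^+_w(s)\le\exp(r^{\beta_0})q^+_w(r)$ for $0<s<r\le r_0$, where $q^+_w(\rho):=\inf_{E(\omega,\rho,2\rho)}w/\Psi^+$. Since $w$ is bounded on the fixed slice $E(\omega,r_0,2r_0)$ and $\Psi^+$ is bounded below in its interior, $q^+_w(r_0)<\infty$, so $q^+_w$ is bounded on $(0,r_0)$; moreover $u^+\le w$ gives $q^+(r)\le q^+_w(r)$ whenever $u>0$ on $E(\omega,r,2r)$. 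Passing from $u$ to $w$ is the only point where boundedness from below is genuinely used, and it is what lets the Harnack--for--quotients argument of the nonnegative case be salvaged; dealing with a possible sign change of $u$ near $0$ is the main obstacle.

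It then remains to bound $Q^+(r)$ for $r\in(0,r_0)$. By Lemma~\ref{pullup}, with $\sigma$ the constant there, either $Q^+(r)<\sigma r^{\alpha^+}\le\sigma(\tfrac14)^{\alpha^+}$, and we are done, or $u>0$ in $E(\omega,r,2r)$. In the latter case $u$ is positive on a maximal conical annulus around the scale $r$; at any finite endpoint scale of that annulus lying below $\tfrac14$, $u$ is no longer positive on the corresponding slice, so Lemma~\ref{pullup} forces $Q^+$ to be smaller than $\sigma(\cdot)^{\alpha^+}$ there, and this (together with the trivial bound at scale $\sim\tfrac14$) bounds the contributions of the two ``caps'' of $E(\omega,r,2r)$. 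For the bulk part of $E(\omega,r,2r)$, $u$ is positive on a full fixed octave--range of scales, so after rescaling by $r$ one applies Proposition~\ref{bhq} to $u$ and $\Psi^+$ on $E(\omega,\tfrac12,4)$ (with $\Sigma$ the lateral boundary and $\Omega'=E(\omega,1,2)$, so the caps of $\Omega'$ lie at a fixed distance from those of $E(\omega,\tfrac12,4)$), obtaining $Q^+(r)\le C\,q^+(r)\le C\,q^+_w(r)\le C$. Combining the cases yields a bound on $Q^+(r)$ depending only on $n,\lambda,\Lambda,\mu,\omega$ and $M$.
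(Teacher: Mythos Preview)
Your overall strategy is sound up to a point, but there is a genuine gap in your step 4, and the paper's argument closes it with an idea you are missing.

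The problem is your application of the boundary Harnack inequality. To obtain $Q^+(r)\le C\,q^+(r)$ with a constant $C$ \emph{independent of $r$}, you must apply Proposition~\ref{bhq} (after rescaling) with, say, $\Omega=E(\omega,\tfrac12,4)$ and $\Omega'=E(\omega,1,2)$; this requires $u>0$ on the \emph{enlarged} annulus $E(\omega,r/2,4r)$, not merely on $E(\omega,r,2r)$. Lemma~\ref{pullup} only gives the latter. Your ``maximal conical annulus'' and ``caps/bulk'' reasoning does not recover the needed positivity: knowing that $Q^+$ is small at an endpoint scale $s$ near $r$ bounds $\sup_{E(\omega,s,2s)}u^+/\Psi^+$, but this says nothing about $\sup_{E(\omega,r,2r)}u/\Psi^+$ near the caps $|x|\approx r$ or $|x|\approx 2r$, where the ratio $u/\Psi^+$ could in principle be large (both functions vanish on the lateral boundary there, and boundary Harnack is precisely what controls their ratio). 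There is no additive decomposition of $Q^+(r)$ into ``cap'' and ``bulk'' contributions.

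The paper's proof supplies the missing idea by arguing by contradiction and invoking the \emph{comparison principle} (with the zero function) rather than only Lemma~\ref{pullup}. If $Q^+$ were unbounded, one finds a sequence $r_j\searrow 0$ with $Q^+(r_j)\ge\sigma r_j^{\alpha^+}$, hence $u>0$ in each slice $E(\omega,r_j,2r_j)$. Since $u$ solves $G=0$, vanishes on the lateral boundary, and is $\ge 0$ on the inner and outer spherical caps of $E(\omega,r_j,2r_1)$, comparison with $0$ and the strong maximum principle give $u>0$ in the \emph{full} annulus $E(\omega,r_j,2r_1)$. Letting $j\to\infty$ yields $u>0$ in $\mathcal C_\omega\cap B_{2r_1}$, after which boundary Harnack and Lemma~\ref{qnearlymo} (applied directly to $u$, now nonnegative near the origin) give $Q^+(r)\le C\,q^+(r)\le C$, a contradiction. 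Note that once positivity is established on a full neighbourhood of $0$, your auxiliary function $w=u+M$ is unnecessary: Lemma~\ref{qnearlymo} applies to $u$ itself after rescaling.
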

\begin{proof} If $Q$ is unbounded, then in particular we may choose a sequence $r_j \searrow 0$ such that $Q^+(r_j)\ge \sigma r_j^{\alpha^+}$, so by the previous lemma $u$ is positive in each $E(\omega,r_j,2r_j)$. The maximum principle then yields that $u> 0$ in $E(\omega,r_j,2r_1)$, and letting $j\to \infty$ we obtain that $u> 0$ in $\coo\cap B_{r_1}$. We may therefore apply the boundary Harnack inequality and Lemma~\ref{qnearlymo} to obtain \eqref{qmqp} for $r\leq 2 r_1$, a contradiction.
\end{proof}

Using Corollary~\ref{Qpbnd}, we show that if $u \geq -M$ is a solution of the problem \eqref{ibseq}, then $r\mapsto Q^+(r)$ is almost nonincreasing for small $r$.

\begin{lem} \label{Qnearlymo}
Suppose that $u \in C(\overline{\mathcal C}_\omega \cap B_1\!\setminus \{ 0 \})$ is a solution of \eqref{ibseq} which is bounded below. Then there exist constants $\beta_0, r_0 > 0$ such that
\begin{equation} \label{Qnearlymoeq}
Q^+(r) \leq \exp(r^{\beta_0})\, Q^+(s) + r^{\beta_0} \quad \mbox{for every} \ 0 < s < r \leq r_0.
\end{equation}
\end{lem}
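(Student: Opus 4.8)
The plan is to mimic the proof of Lemma~\ref{qnearlymo}, working with $Q^+$ in place of $q^+$ and with supersolutions replaced by subsolutions, but now using Corollary~\ref{Qpbnd} to control the constant that appears in the continuous dependence estimate. The first and main step is to establish the one-step inequality
\begin{equation*}
Q^+(2r) \geq \left(1 - r^{\beta_1}\right) Q^+(r) - C r^{\beta_1} \quad \mbox{for sufficiently small } r>0,
\end{equation*}
or equivalently an upper bound for $Q^+(r)$ in terms of $Q^+(2r)$. To do this I would set $u_r(x) := r^{\alpha^+}\max\{u(rx),0\}$ (or work with $u_r(x):=r^{\alpha^+}u(rx)$ on an annulus where $u$ is already positive, invoking Lemma~\ref{pullup} and Corollary~\ref{Qpbnd} to know that changes of sign only occur when $Q^+(r)$ is already $\le \sigma r^{\alpha^+}$, in which case \eqref{Qnearlymoeq} is trivial). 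On the annular region $E(\omega,r,2r)$ we have $u(rx)\le Q^+(r)\Psi^+(x/r)$, i.e.\ after rescaling $u_r \le Q^+(r)\Psi^+$ on $\co\omega\cap\partial B_1$, and $u_r\le \bar M r^{\alpha^+}$ on the far boundary $\co\omega\cap\partial B_{r^{-\kappa}}$ (using boundedness below of $u$ together with the maximum principle, or Corollary~\ref{Qpbnd}). I would then let $\psi_r$ be the solution of \eqref{psirdirp} for $G_r$ with boundary data $h$ chosen so that $Q^+(r)\psi_r$ dominates $u_r$ on $\partial E(\omega,1,r^{-\kappa})$: take $h=0$ on $\co\omega\cap\partial B_1$ and $h$ equal to a small correction (of size $O(r^{\kappa\alpha^+})+O(Q^+(r)^{-1}r^{\alpha^+})$) near $\partial B_{r^{-\kappa}}$, so that $Q^+(r)(\Psi^++h)\ge u_r$ there. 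The comparison principle for $G_r$ on $E(\omega,1,r^{-\kappa})$ gives $u_r \le Q^+(r)\psi_r$ in the whole annulus, and then the ratio estimate \eqref{ratioestt} yields $\psi_r \le (1+r^{\beta_1})\Psi^+ + (\text{error})$ on $E(\omega,2,4)$, hence $u_r \le (1+r^{\beta_1})Q^+(r)\Psi^+ + C r^{\beta_1}$ there. Taking sup over $E(\omega,2,4)$ and unrescaling gives the one-step inequality, with $\beta_1 = (\bar\beta/2)\min\{\beta,\kappa\alpha^+\}$ as in Lemma~\ref{qnearlymo}.

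The second step is the iteration. Rewriting the one-step inequality as $Q^+(r) \le (1-r^{\beta_1})^{-1}\big(Q^+(2r) + Cr^{\beta_1}\big)$ and iterating along a dyadic sequence, exactly as in the proof of Lemma~\ref{qnearlymo}, I would use the boundedness of $Q^+$ on $(0,\tfrac14)$ from Corollary~\ref{Qpbnd} to absorb the additive errors: the product $\prod_{j}(1-(r/2^j)^{\beta_1})^{-1}$ is bounded by $\exp(Cr^{\beta_1})$ via the elementary inequality $-\log(1-y)\le 2y$ for $0<y\le \tfrac12$, and the accumulated additive error $\sum_j C (r/2^j)^{\beta_1}\,(\text{product of later factors})$ is bounded by $C r^{\beta_1}$ since $Q^+$ is uniformly bounded. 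For a general pair $0<s<r\le r_0$ one picks $k$ with $2^{-(k+1)}r < s \le 2^{-k}r$ and uses $Q^+(s)\le$ (a bounded multiple of) $\max\{Q^+(2^{-k}r),Q^+(2^{-(k+1)}r)\}$ — here one must be slightly careful, since unlike $q^+$ the map $Q^+$ need not satisfy $Q^+(s)\le\max\{\dots\}$ directly; I would instead note that $E(\omega,s,2s)$ is covered by $E(\omega,2^{-(k+1)}r, 2^{-(k-1)}r)$ and apply the one-step inequality one or two extra times. Shrinking $\beta_1$ to $\beta_0=\beta_1/2$ to absorb the constant $C$ (so that $Cr^{\beta_1}\le r^{\beta_0}$ for small $r$) gives \eqref{Qnearlymoeq}.

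The main obstacle I anticipate is the bookkeeping around the far boundary $\partial B_{r^{-\kappa}}$ and the possibility that $u$ changes sign: the continuous dependence estimate \eqref{ratioestt} is stated for perturbations of $\Psi^+$ with $C^{0,1}$ and $C^{1,\gamma}$ bounds on the perturbed solution, so I must verify that the comparison function $\psi_r$ (built from $G_r$, which is only close to $G_0$, not equal to it) genuinely satisfies the hypotheses of Proposition~\ref{CDE} uniformly in $r$ — this is exactly the content of the remarks at the end of Section~\ref{SCDE} (ABP bound, barrier argument for the Lipschitz estimate, Proposition~\ref{ggh} for the $C^{1,\gamma}$ estimate), so it should go through. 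The sign issue is handled cleanly by Lemma~\ref{pullup} and Corollary~\ref{Qpbnd}: either $Q^+(r)\ge \sigma r^{\alpha^+}$, in which case $u>0$ on the relevant annuli and all the above applies with $u$ itself, or $Q^+(r)< \sigma r^{\alpha^+}\le r^{\beta_0}$ for $\beta_0<\alpha^+$ and small $r$, in which case \eqref{Qnearlymoeq} holds trivially because its right-hand side is nonnegative. Everything else is a direct transcription of the argument already carried out for $q^+$.
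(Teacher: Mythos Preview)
Your overall plan mirrors the paper's, but there is a genuine gap in the one-step comparison, and also a direction slip worth pointing out.

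\medskip

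\textbf{Direction slip.} The comparison argument you describe --- $u_r \le Q^+(r)\psi_r$ on $E(\omega,1,r^{-\kappa})$, then $\psi_r \le (1+r^{\beta_1})\Psi^+$ on $E(\omega,2,4)$ --- actually yields
\[
Q^+(2r) \;\le\; (1+r^{\beta_1})\,Q^+(r) + Cr^{\beta_1},
\]
which is the correct direction for \eqref{Qnearlymoeq} (iterating $r\mapsto r/2$ bounds $Q^+(r)$ by $Q^+(r/2^k)$). Your stated inequality $Q^+(2r)\ge(1-r^{\beta_1})Q^+(r)-Cr^{\beta_1}$ and the iteration ``$Q^+(r)\le (1-r^{\beta_1})^{-1}(Q^+(2r)+Cr^{\beta_1})$'' are the wrong way round and would not give the lemma.

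\medskip

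\textbf{The real gap: the far-boundary data.} The claim $u_r \le \bar M r^{\alpha^+}$ on $\co\omega\cap\partial B_{r^{-\kappa}}$ is not justified: boundedness \emph{below} of $u$ gives no upper bound, and at this stage $u$ is not known to be bounded above. What Corollary~\ref{Qpbnd} actually gives is $u\le k_0\Psi^+$, hence
\[
u_r \;\le\; k_0\,\Psi^+ \;\le\; k_0\,r^{\kappa\alpha^+}\quad\mbox{on }\co\omega\cap\partial B_{r^{-\kappa}}.
\]
To force $u_r \le Q^+(r)(\Psi^++h)$ on that boundary you then need $h$ of size $Q^+(r)^{-1}k_0 r^{\kappa\alpha^+}$. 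Your dichotomy from Lemma~\ref{pullup} only guarantees $Q^+(r)\ge \sigma r^{\alpha^+}$ in the nontrivial case, and since $\kappa<1$ this gives $|h|\lesssim r^{(\kappa-1)\alpha^+}\to\infty$. So $\max|h|$ is \emph{not} small and Proposition~\ref{CDE} gives no useful ratio estimate; the Lemma~\ref{pullup} threshold resolves the sign issue but not this size issue.

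\medskip

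\textbf{How the paper fixes this.} The paper does not compare $u_r$ with $Q^+(r)\psi_r$ but with $T\psi_r$, where
\[
T:=\max\{\,Q^+(s),\;r^{\frac12\kappa\alpha^+}\,\},\qquad s\le r<2s,
\]
and takes $h=k_0 T^{-1}\Psi^+$ on $\co\omega\cap\partial B_{r^{-\kappa}}$. Since $T\ge r^{\frac12\kappa\alpha^+}$, one always has $|h|\le k_0 r^{\frac12\kappa\alpha^+}$, so Proposition~\ref{CDE} applies uniformly and gives $u_r\le(1+Cr^\beta)T\Psi^+$ on $E(\omega,2,4)$. This yields the one-step
\[
Q^+(2r)\;\le\;(1+r^\beta)T\;\le\;(1+r^\beta)\,Q^+(s)+Cr^\beta,
\]
the additive $Cr^\beta$ coming precisely from $T-Q^+(s)\le r^{\frac12\kappa\alpha^+}$. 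This additive term is the source of the ``$+\,r^{\beta_0}$'' in \eqref{Qnearlymoeq}, and is the ingredient missing from your argument. With this modification the iteration you outline (and the handling of non-dyadic $s$) goes through essentially as you say.
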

\begin{proof}
According to Corollary~\ref{Qpbnd}, we have that $u \leq k_0\Psi^+$ in $\coo \cap B_{1/4}$ for some $k_0 > 0$. As usual we denote $u_r(x) : = r^{\alpha^+}u(rx)$ for small $r$, so that $u_r\le k_0\Psi^+$ in $\coo \cap B_{1/(4r)}$. Fix $s\le r < 2s < 1/2$ and consider the quantity
\begin{equation*}
T : = \max\{ Q^+(s) , r^{\frac12\kappa\alpha^+}\},
\end{equation*}
where $0 < \kappa < 1$ is as in our continuous dependence estimate (Proposition \ref{CDE}). Let $\psi_r$ be the solution of the Dirichlet problem \eqref{psirdirp} with $h =  k_0 T^{-1} \Psi^+$ on $\co\omega \cap \partial B_{r^{-\kappa}}$ and $h=0$ elsewhere on $\partial E(\omega, 1, r^{-\kappa})$. Observe that
\begin{equation*}
0 \leq h \leq k r^{-\frac12\kappa\alpha^+} \Psi^+ \leq k r^{\frac12\kappa\alpha^+} \quad \mbox{on} \ \partial E(\omega, 1, r^{-\kappa}).
\end{equation*}
Thus Proposition \ref{CDE} furnishes constants $\beta,r_0> 0$ such that for every $0< r \leq r_0$,
\begin{equation} \label{ratiobit}
\left| \psi_r  / \Psi^+ - 1 \right| \leq Cr^\beta \quad \mbox{in} \ E(\omega,2,4).
\end{equation}
Observe that $u_r\le k_0\Psi^+\le Th \leq T \psi_r$ on $\co\omega\cap \partial B_{r^{-\kappa}}$ and  $u_r\le Q^+(s)\Psi^+\le T\psi_r$ on $\co\omega\cap \partial B_1$. Hence  $u_r\le T\psi_r$ on $\partial E(\omega,1,r^{-\kappa})$, so the comparison principle and \eqref{ratiobit} imply that
\begin{equation*}
u_r \leq T \psi_r \leq (1+r^\beta) T \Psi^+ \quad \mbox{in} \ E(\omega, 2, 4)
\end{equation*}
for some $\beta > 0$ and small $r> 0$. Taking $\beta<\frac12\kappa\alpha^+$ we have the inequality $T \leq Q(s) + r^\beta$, and so we deduce that for $0 < s \le r <2s$,
\begin{equation} \label{recursbit}
Q^+(2r) \leq (1+r^\beta) T \leq (1+r^\beta) Q^+(s) + Cr^\beta.
\end{equation}
By induction on \eqref{recursbit} with $r=s$, we find that for every positive integer $k\geq 1$
\begin{equation} \label{indbd}
Q^+(r) \leq Q^+(\frac{r}{2^k}) \prod_{j=1}^{k+1} \left( 1 + 2^{-\beta j} r^\beta \right) + Cr^\beta \sum_{j=1}^{k+1} 2^{-\beta j} \prod_{\ell=1}^{j-1} \left(1+2^{-\beta \ell} r^\beta \right).
\end{equation}
By using the inequality $\log(1+t)\le t$ for $t>0$, it is easy to check that
\begin{equation*}
\prod_{j=1}^\infty \left( 1 + 2^{-\beta j} r^\beta \right) \leq \exp(Cr^\beta),
\end{equation*}
so for each $k\in \mathbb{N}$ we have
\begin{equation} \label{indbd1}
Q^+(r) \leq \exp(Cr^\beta) Q^+(\frac{r}{2^k})+ Cr^\beta.
\end{equation}
Since each $s<r$ is such that $2^{-(k+1)}r<s\le 2^{-k}r$ for some $k\in \mathbb{N}$, by using \eqref{recursbit} once more we obtain \begin{equation} \label{indbd2}
Q^+\!\left(\frac{r}{2^k}\right) \leq \max\{Q(s),Q(2s)\}\leq (1+s^\beta) Q^+(s)+ Cs^\beta.
\end{equation}
Combining \eqref{indbd1} and \eqref{indbd2} and setting $\beta_0=\beta/2$, we obtain the inequality \eqref{Qnearlymoeq}
for sufficiently small $r>0$. \end{proof}

\begin{lem} \label{existlimits} Assume that $u \in C(\overline{\mathcal{C}}_\omega \cap B_1\!\setminus \!\{0\})$ is bounded below and satisfies \eqref{ibseq}. Then the limits $\lim_{r\to 0} Q^+(r)$  and $\lim_{r\to 0} q^+(r)$ exist.
\end{lem}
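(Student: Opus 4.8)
The plan is to read off the existence of both limits directly from the ``almost monotonicity'' estimates already established, namely \eqref{qnearlymoeq} in Lemma~\ref{qnearlymo} and \eqref{Qnearlymoeq} in Lemma~\ref{Qnearlymo}, together with the boundedness of $q^+$ and $Q^+$ near the origin (Lemma~\ref{qnearlymo} and Corollary~\ref{Qpbnd}). The key observation is that these estimates degenerate to genuine monotonicity as $r\to0$, which forces the upper and lower limits to coincide; there is no real obstacle, the substance of the statement being entirely contained in the preceding lemmas.

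First I would treat $q^+$. Let $r_0,\beta_0>0$ be as in Lemma~\ref{qnearlymo}, and set $L := \limsup_{r\to 0} q^+(r)$ and $\ell := \liminf_{r\to 0} q^+(r)$, both finite since $q^+$ is bounded on $(0,r_0)$. Choose a sequence $r_j \searrow 0$ with $r_j \leq r_0$ and $q^+(r_j)\to \ell$. For each fixed $j$ and every $0<s<r_j$, \eqref{qnearlymoeq} gives $q^+(s)\le \exp(r_j^{\beta_0})\, q^+(r_j)$; since the right-hand side does not depend on $s$, taking the supremum over small $s$ yields $L\le \exp(r_j^{\beta_0})\, q^+(r_j)$. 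Letting $j\to\infty$ and using $r_j^{\beta_0}\to 0$ and $q^+(r_j)\to\ell$ we obtain $L\le \ell$, hence $L=\ell$ and $\lim_{r\to0}q^+(r)$ exists.

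For $Q^+$ the argument is identical up to the harmless additive error term. Let $r_0,\beta_0>0$ now be as in Lemma~\ref{Qnearlymo}, and set $L := \limsup_{r\to 0} Q^+(r)$ and $\ell := \liminf_{r\to 0} Q^+(r)$, which are finite by Corollary~\ref{Qpbnd}. Pick $r_j\searrow 0$ with $r_j\le r_0$ and $Q^+(r_j)\to L$. For each $j$ and every $0<s<r_j$, \eqref{Qnearlymoeq} reads $Q^+(r_j)\le \exp(r_j^{\beta_0})\, Q^+(s) + r_j^{\beta_0}$, so that $Q^+(s)\ge \exp(-r_j^{\beta_0})\bigl(Q^+(r_j)-r_j^{\beta_0}\bigr)$ for all such $s$; taking the infimum over small $s$ gives $\ell \ge \exp(-r_j^{\beta_0})\bigl(Q^+(r_j)-r_j^{\beta_0}\bigr)$. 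Letting $j\to\infty$ yields $\ell\ge L$, whence $\ell=L$ and $\lim_{r\to0}Q^+(r)$ exists. This completes the proof; the only thing that had to be checked is the elementary fact that a bounded function satisfying such two-sided ``almost monotone'' inequalities with vanishing defect has a limit.
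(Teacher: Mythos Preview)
Your argument for $Q^+$ is correct and is exactly what the paper does (it remarks that \eqref{Qnearlymoeq} forces any two subsequential limits to agree). The problem is with your treatment of $q^+$.

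Lemma~\ref{qnearlymo} is stated under the hypothesis $u\ge 0$ in $\co\omega\cap B_1$, not merely $u$ bounded below. In Lemma~\ref{existlimits} the solution $u$ is only assumed bounded below, so you are not entitled to invoke \eqref{qnearlymoeq} directly. Nor can you repair this by applying Lemma~\ref{qnearlymo} to $\max\{0,u\}$: with the sign conventions of the paper, $G\ge 0$ is the supersolution inequality, and the \emph{maximum} of two supersolutions need not be a supersolution, so $\max\{0,u\}$ does not in general satisfy $G(D^2\cdot,D\cdot,x)\ge 0$. Thus the ``almost monotonicity'' of $q^+$ is simply not available at this point of the argument.

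The paper's proof avoids this by first establishing $a:=\lim_{r\to0}Q^+(r)$ (as you did), and then splitting into cases. If $a>0$, Lemma~\ref{pullup} yields $u>0$ in a neighbourhood of the origin, so Lemma~\ref{qnearlymo} \emph{now} applies (for small $r$) and your argument for $q^+$ goes through. If $a=0$, one just uses $0\le q^+(r)\le Q^+(r)\to 0$. So the fix is cheap, but the order of the two parts of your proof matters: the existence of $\lim q^+$ genuinely relies on what you have already shown about $Q^+$, via Lemma~\ref{pullup}.
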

\begin{proof}
It is clear from Corollary~\ref{Qpbnd} and \eqref{Qnearlymoeq} that $a: = \lim_{r\to 0} Q^+(r)\ge0$ exists (note \eqref{Qnearlymoeq} implies that every two convergent subsequences of $Q^+(r)$ have the same limit as $r\to 0$). If $a>0$, then Lemma \ref{pullup} applies for sufficiently small $r$, and we deduce that $u$ is positive in a neighborhood of zero. In this case, we may apply Lemma \ref{qnearlymo} and \eqref{qmqp} to infer that $b:= \lim_{r\to 0} q^+(r)$ exists and $0 < b \leq a$. In the case that $a=0$, then clearly $\limsup_{r\to 0} q^+(r) \leq \lim_{r\to 0} Q^+(r) = 0$.
\end{proof}

With $a: = \lim_{r\to 0} Q^+(r)$ and $b:=\lim_{r\to 0} q^+(r)$ as above, what remains of the proof of Theorem~\ref{ibs} is to show:
\begin{itemize}
\item if $a=0$, then $u$ can be continuously extended by defining $u(0)= 0$; and
\item if $a> 0$, then $a=b$.
\end{itemize}
We prove these statements in the following two lemmas.

\begin{lem} \label{extcont}
Assume that $u \in C(\overline{\mathcal{C}}_\omega \cap B_1\!\setminus \!\{0\})$ is bounded below and satisfies \eqref{ibseq}, and suppose also that
\begin{equation}\label{Qtozero}
\lim_{r\to 0} Q^+(r) = 0.
\end{equation}
Then $u(x) \rightarrow 0$ as $|x| \to 0$.
\end{lem}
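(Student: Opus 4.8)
The plan has three stages. First, I would upgrade the qualitative hypothesis $Q^+(r)\to0$ to a quantitative algebraic decay. Second, I would bootstrap that rate up to the scaling exponent $\alpha^+$; since $u^+\le Q^+(|x|)\,\Psi^+$ and $\Psi^+\le|x|^{-\alpha^+}$ by \eqref{normmax}, the bound $Q^+(r)=O(r^{\alpha^+})$ is precisely the assertion that $u$ is bounded above near $0$, and in fact one can then squeeze out $u^+(x)\to0$. Third, with $u$ now bounded on both sides, a blow-up/compactness argument combined with Theorem~\ref{uniq} shows that every rescaled limit vanishes, which gives $u(x)\to0$.

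Stage 1 is immediate. Fixing $r\le r_0$ in the almost-monotonicity estimate \eqref{Qnearlymoeq} and letting $s\to0$, the hypothesis \eqref{Qtozero} forces $Q^+(r)\le r^{\beta_0}$ on $(0,r_0]$. Consequently $|x|^{\alpha^+}u^+(x)\le Q^+(|x|)\,|x|^{\alpha^+}\Psi^+(x)\le|x|^{\beta_0}\to0$, so in particular $u$ satisfies the growth condition \eqref{condformploc} at the origin.

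Stage 2 is the technical heart. I would iterate a ``gain'' lemma: if $Q^+(r)\le C_1 r^{\gamma}$ on a punctured neighbourhood of $0$ with $0<\gamma<\alpha^+$, then the same holds with $\gamma$ replaced by $\gamma':=\min\{\gamma+\delta_0,\alpha^+\}$ for a fixed $\delta_0>0$; finitely many iterations starting from $\gamma=\beta_0$ land at $\gamma=\alpha^+$, whence $u^+(x)\le Q^+(|x|)\Psi^+(x)\le C$, and re-running stage 1 with the improved exponent upgrades this to $u^+(x)\to0$. The gain lemma is proved by contradiction through a blow-up: if it failed at the level $\gamma'$, one would select the largest scales $\rho_j\downarrow0$ at which $r\mapsto r^{-\gamma'}Q^+(r)$ reaches a prescribed level $j\to\infty$, and set $v_j(x):=Q^+(\rho_j)^{-1}\rho_j^{\alpha^+}u(\rho_j x)$. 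By homogeneity $v_j$ solves $G_{\rho_j}(D^2v_j,Dv_j,x)=0$ with $G_{\rho_j}\to G_0=F$ in the sense of \eqref{closeness1} (recall \eqref{gr}); the choice of $\rho_j$ gives $Q^+(2^k\rho_j)/Q^+(\rho_j)\le C2^{k\gamma'}\le C2^{k\alpha^+}$, which together with $\Psi^+\le|x|^{-\alpha^+}$ keeps $v_j$ bounded for large $|x|$, while $v_j^-\to0$ and $\sup_{E(\omega,1,2)}v_j^+/\Psi^+=1$. Using Proposition~\ref{ggh} and the stability of viscosity solutions (and the scaling invariance of $F$ to globalize), a subsequential locally uniform limit $v$ is a nonnegative, nonzero solution of $F=0$ in $\co\omega$ vanishing on $\partial\co\omega\setminus\{0\}$ which is bounded in $\{|x|>1\}$, so $v\equiv t\Psi^+$ with $t>0$ by Theorem~\ref{uniq}; normalization forces $t=1$, and then comparing ratios on the dyadic annuli $E(\omega,2^{-k},2^{-k+1})$ — on which $v\equiv\Psi^+$ forces $Q^+(2^{-k}\rho_j)/Q^+(\rho_j)\to1$ — against the standing bound $Q^+\le C_1 r^{\gamma}$, as $k\to\infty$, produces the contradiction. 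Carrying out this blow-up (choosing the renormalization so the limit is nontrivial yet globally defined, and absorbing the additive error term present in \eqref{Qnearlymoeq}) is where the continuous dependence estimate of Section~\ref{SCDE} and Theorem~\ref{uniq} really do the work, and I expect it to be the main obstacle.

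For stage 3, by stage 2 we have $u^+(x)\to0$, and $u$ is bounded below, so $|u|\le C$ on $\co\omega\cap B_\rho$ for some $\rho>0$. Put $u_r(x):=u(rx)$; it solves $G_r(D^2u_r,Du_r,x)=0$ in $\co\omega\cap B_{1/r}$, vanishes on $\partial\co\omega\setminus\{0\}$, satisfies $-M\le u_r$, and $u_r^+(x)=u^+(rx)\to0$ locally uniformly on $\co\omega$. By Proposition~\ref{ggh}, the uniform structural bounds on $G_r$, and a scaling argument (dilating and reapplying \eqref{closeness1}), any sequence $r_j\downarrow0$ has a subsequence along which $u_{r_j}\to v$ locally uniformly, where $v$ solves $F=0$ in $\co\omega$, vanishes on $\partial\co\omega\setminus\{0\}$, and satisfies $-M\le v\le0$. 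Then $-v\ge0$ solves the dual equation $\widetilde F(D^2(-v),D(-v),x)=0$ (with $\widetilde F$ as in \eqref{Ftilde}) in $\co\omega$, vanishes on $\partial\co\omega\setminus\{0\}$, and is bounded both near $0$ and in $\{|x|>1\}$; applying Theorem~\ref{uniq} to $\widetilde F$, the function $-v$ is simultaneously a nonnegative multiple of $\widetilde\Psi^+$ and of $\widetilde\Psi^-$, the singular solutions furnished by Theorem~\ref{fundycones} for $\widetilde F$. Since $\widetilde\Psi^\pm$ are positive and homogeneous of distinct degrees, this forces $-v\equiv0$, i.e.\ $v\equiv0$. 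As every subsequential blow-up limit vanishes, $\sup_{E(\omega,1,2)}|u_r|=\sup_{E(\omega,r,2r)}|u|\to0$ as $r\to0$, which is exactly $u(x)\to0$ as $|x|\to0$, proving the lemma.
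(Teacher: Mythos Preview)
Your three-stage outline matches the paper's, and Stage~1 is exactly right. The genuine gap is in Stage~2.

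In your gain-lemma blow-up, the sequence $v_j(x)=Q^+(\rho_j)^{-1}\rho_j^{\alpha^+}u(\rho_j x)$ has no a~priori local bound on annuli $E(\omega,a,2a)$ with $a<1$. For $|x|\ge1$ your choice of $\rho_j$ indeed gives $Q^+(2^k\rho_j)/Q^+(\rho_j)\le 2^{k\gamma'}$, but for $|x|<1$ only the standing hypothesis $Q^+\le C_1r^\gamma$ is available, and this yields $Q^+(a\rho_j)/Q^+(\rho_j)\le C_1 a^\gamma\rho_j^{\gamma-\gamma'}/j$, where $\rho_j^{\gamma-\gamma'}/j$ is bounded \emph{below} (by $1/C_1$) but not above. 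Without bounds on both sides you cannot extract a limit on all of $\co\omega$, and Theorem~\ref{uniq} needs the full cone. The paper's remedy is to first note, via Lemma~\ref{pullup} and the maximum principle, that the failure of $Q^+(r)\le Cr^{\alpha^+}$ forces $u>0$ on a full neighbourhood of $0$; positivity then unlocks the boundary Harnack inequality (Proposition~\ref{bhq}), which gives $\bar u_r/\Psi^+\le C(A)$ on every $E(\omega,1/A,A)$. The paper also uses the strong maximum principle to see that $M(s):=\sup_{\co\omega\cap\partial B_s}u$ is monotone, so the limit is bounded at infinity and Theorem~\ref{uniq}(i) identifies it as $\Psi^+$; uniqueness of the limit gives \emph{full-sequence} convergence $\bar u_r\to\Psi^+$, which then iterates to $M(2r)/M(r)\le 2^{-\alpha^++\varepsilon}$ and contradicts $M(r)\le Q^+(r)r^{-\alpha^+}\le r^{\beta_0-\alpha^+}$. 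By contrast, your contradiction (``comparing ratios \ldots\ as $k\to\infty$'') uses convergence only along the special subsequence $\rho_j$, which does not iterate in $k$. Finally, the claim that Stage~2 yields $u^+(x)\to0$ (``re-running Stage~1 with the improved exponent'') is unjustified: the gain lemma caps at $\gamma=\alpha^+$, giving only $u$ bounded above.

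Your Stage~3 is a valid alternative \emph{provided} $v\le0$, but that rests on the unproven $u^+\to0$. The paper's Stage~3 is simpler and avoids this dependency: once $u$ is bounded, blow up $u_j(x)=u(r_jx)$ only on the fixed annulus $E(\omega,\tfrac12,4)$ along $r_j\to0$ chosen so $M(r_j)\to M_0:=\limsup_{x\to0}u$, obtain a limit $\bar u\le M_0$ attaining $M_0$ at an interior point, and use the strong maximum principle to get $\bar u\equiv M_0$, contradicting $\bar u=0$ on $\partial\co\omega$; the same with $-u$ and $\widetilde F$ rules out $\liminf u<0$.
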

\begin{proof}
Sending $s\to 0$ in \eqref{Qnearlymoeq} we obtain the estimate
\begin{equation} \label{algrate}
Q^+(r) \leq  r^{\beta_0}
\end{equation}
for some $\beta_0> 0$ and sufficiently small $r >0$. We will combine a blow-up argument with Theorem~\ref{uniq} to improve this algebraic rate of convergence by replacing $\beta$ with $\alpha^+$ in \eqref{algrate}. That is, we claim that
\begin{equation}\label{claimche}
Q^+(r)\le Cr^{\alpha^+},
\end{equation}
for sufficiently small $r$ and some $C$ independent of $r$.
Note that \eqref{claimche} is equivalent to  $u$ being bounded in a neighborhood of the origin, by the definition of $Q^+(r)$ and the homogeneity of $\Psi^+$.

Suppose on the contrary that \eqref{claimche} does not hold. Then  Lemma \ref{pullup} applies and $u>0$ in some neighborhood of the origin  $\co\omega\cap B_{r_0}$.

  Denote $ M(s) : = \sup_{\co\omega\cap \partial B_{s}} u$. By the strong maximum principle, $ M(s)$ cannot have a  local maximum on $(0,r_0)$. Since $ M(s)$ is unbounded in any neighborhood of zero, we see that $s\mapsto  M(s)$ is nonincreasing on some interval $(0,r_0]$ and $M(s)\to \infty$ as $s\to 0$. Define the function
\begin{equation*}
\bar u_r(x) : =\frac{r^{\alpha^+} u(rx)}{ Q^+(r)} \quad\mbox{ in }\; \co\omega \cap B_{r_0/r}.
\end{equation*}
Obviously $\sup_{E(\omega,1,2)} \bar u_r \leq \sup_{E(\omega,1,2)} \Psi^+ \leq C$, and
\begin{equation}\label{supche}
\sup_{E(\omega,1,2)} \frac{\bar u_r}{\Psi^+} = 1.
\end{equation}
 Using the global Harnack inequality we see that $\bar u_r/\Psi^+\le C(A)$ in $E(\omega, 1/A,A)$ for any $A>0$ and $r<1/A$. Therefore $\bar u_r$ is bounded in any compact subset of $\overline{\mathcal{C}}_\omega \!\setminus \!\{ 0 \}$ so we may use the global H\"older estimates to find a subsequence $r_j \to 0$ and a function $v\ge0$ such that $\bar u_{r_j} \rightarrow v$ uniformly on bounded subsets of $\overline{\mathcal{C}}_\omega \!\setminus \!\{ 0 \}$.

We claim that necessarily $v \equiv \Psi^+$, from which we can conclude that the entire sequence $\bar u_r$ converges  to $\Psi^+$ on bounded subsets of $\coo$. Passing to limits in the viscosity sense along $r_j$ and using that $G_r \rightarrow F$ as $r\to 0$ (recall \eqref{gr}-\eqref{closeness1}), we see that $v$ satisfies
\begin{equation*}
F(D^2v, Dv,x) = 0 \quad \mbox{in} \ \co \omega,
\end{equation*}
as well as $v=0$ on $\partial \co\omega\!\setminus\! \{ 0\}$. We also have  $v > 0$ in $\coo$ by the strong maximum principle ($v\equiv0$ is excluded by \eqref{supche}).

Recalling that $M_r(s):=\max_{\co\omega\cap \partial B_s} \bar u_r$ is nondecreasing on $(0,r_0/r)$ we deduce that $s\mapsto \max_{\co\omega\cap \partial B_s} v$ is nonincreasing for all $s>0$, and in particular $v$ is bounded away from the origin. We may now conclude from Theorem~\ref{uniq} that $v \equiv t \Psi^+$ for some $t> 0$. Passing to limits in \eqref{supche} we get $t=1$.  The claim is proved.

We have shown that the full sequence $\bar u_r \rightarrow \Psi^+$ locally uniformly in $\overline{\mathcal{C}}_\omega \!\setminus \!\{0\}$ as $r \to 0$. This implies that for each $\ep > 0$, there exists $r_\ep > 0$ such that
\begin{equation*}
\frac{ M(2r)}{ M(r)} = \frac{\sup_{\co\omega \cap \partial B_2} \bar u_r}{\sup_{\co\omega \cap \partial B_1} \bar u_r} \leq 2^{-\alpha^+ + \ep}  \quad \mbox{for} \ 0 < r \leq r_\ep,
\end{equation*}
where, as usual, $M(r) = \sup_{\co\omega\cap \partial B_r} u$. By induction, we then have
\begin{equation}\label{fre1}
M(2^{-k}r_0) \geq 2^{(\alpha^+-\varepsilon)k}M(r_0)
\end{equation}
for each $k\in \mathbb{N}$.  On the other hand, obviously,
\begin{equation}\label{fre2}
M(r) = \sup_{\co\omega \cap \partial B_r}[(u/\Psi^+)\Psi^+]\leq  Q^+(r)r^{-\alpha^+}
\end{equation}
 If we now fix $\ep = \beta_0/2$, combining \eqref{algrate}, \eqref{fre1} and \eqref{fre2} with $r = 2^{-k}r_0$ yields a contradiction for sufficiently large $k$. We have proved \eqref{claimche}.

Knowing that $u$ is bounded in a neighborhood of the origin, the third and final step is to show that in fact $u(x) \to 0$ as $|x| \to 0$. To this aim we will use a simple blow-up argument combined with the strong maximum principle and the stability of viscosity solutions with respect to uniform convergence.

Let us give the argument, for completeness.  Set  $M_0= \limsup_{x\to 0}u(x)$ and $m_0= \liminf_{x\to 0}u(x)$. Assume $ M_0>0$ and take a sequence $r_j\to 0$ such that $M(r_j)\to M_0$ as $j\to\infty$. Set $u_j(x)=u(r_jx)$. Then $G_{r_j}(D^2u_j, Du_j,x) =0$ and $u_j$ is bounded independently of $j$ in $E(\omega, 1/2, 4)$. By the H\"older estimates we can pass to the limit and obtain a function $\bar u$ such that $F(D^2\bar u, D\bar u, x)=0$  and $m_0\le \bar u \le M_0$ in $E(\omega, 1/2, 4)$, and $\bar u(\bar x)=M_0>0$ for some point $\bar x\in \partial B_1$. The strong maximum principle then implies $\bar u\equiv M_0$ in $E(\omega, 1/2, 4)$ which contradicts the boundary condition $\bar u=0$ on $\partial \co\omega$.

 In the same way, we see that $m_0<0$ is impossible. Thus $u(x) \to 0$ as $|x| \to 0$.
\end{proof}

We next discuss the case that $\lim_{r\to 0} Q^+(r) > 0$, and show that the solution $u$ must blow up to a multiple of $\Psi^+$.

\begin{lem}
Suppose $u \in C(\overline{\mathcal{C}}_\omega \cap B_1\!\setminus \!\{0\})$ is a solution of \eqref{ibseq} which is bounded below, and suppose also that
\begin{equation*}
a: = \lim_{r\to 0} Q^+(r) > 0.
\end{equation*}
Then $\lim_{r\to 0} q^+(r) = a$. That is,
\begin{equation}
\lim_{\co\omega \ni x\to 0}\frac{ u(x)}{\Psi^+(x)} = a .
\end{equation}
\end{lem}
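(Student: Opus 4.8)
The plan is to run the same blow-up argument used to prove Theorem~\ref{uniq}, which becomes available precisely because the hypothesis $a>0$ puts us in the favorable case. First I would collect the starting data: since $a=\lim_{r\to0}Q^+(r)>0$, Lemma~\ref{pullup} gives $u>0$ in $\co\omega\cap B_{r_0}$ for some $r_0>0$, the global Harnack inequality for quotients (Proposition~\ref{bhq}) combined with Lemma~\ref{qnearlymo} yields $Q^+(r)\le Cq^+(r)$ for small $r$, and Lemma~\ref{existlimits} then guarantees that $b:=\lim_{r\to0}q^+(r)$ exists with $0<b\le a$. Everything reduces to showing $a=b$: indeed, for $x\in E(\omega,s,2s)$ the ratio $u(x)/\Psi^+(x)$ lies between $q^+(s)$ and $Q^+(s)$, so if $a=b$ it is squeezed to $a$ as $x\to0$, which is the assertion.

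Next I would set up the rescaling. Put $u_r(x):=r^{\alpha^+}u(rx)$, a positive solution of $G_r(D^2u_r,Du_r,x)=0$ in $\co\omega\cap B_{1/r}$ with $u_r=0$ on $\partial\co\omega\setminus\{0\}$. Using the homogeneity of $\Psi^+$ one has $u_r(x)/\Psi^+(x)=u(rx)/\Psi^+(rx)$; covering an annular region $\co\omega\cap(B_{k'}\setminus B_k)$ by slices $E(\omega,s,2s)$ and feeding in the near-monotonicity estimates \eqref{qnearlymoeq} and \eqref{Qnearlymoeq} (letting the ``smaller'' argument tend to $0$) I would obtain
\[
\exp\!\big(-(k'r)^{\beta_0}\big)\,b\ \le\ \frac{u_r}{\Psi^+}\ \le\ \exp\!\big((k'r)^{\beta_0}\big)\,a+(k'r)^{\beta_0}\qquad\text{in }\ \co\omega\cap(B_{k'}\setminus B_k),
\]
whereas on each slice $\inf_{E(\omega,k,2k)}u_r/\Psi^+=q^+(kr)\to b$ and $\sup_{E(\omega,k,2k)}u_r/\Psi^+=Q^+(kr)\to a$ as $r\to0$. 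Corollary~\ref{Qpbnd} makes $u_r$ locally uniformly bounded on $\overline{\co\omega}\setminus\{0\}$, so the global H\"older estimates let me pass to a subsequence $r_j\to0$ with $u_{r_j}\to u_0$ locally uniformly there; stability of viscosity solutions together with $G_r\to F$ in the sense of \eqref{closeness1} shows $u_0$ solves $F(D^2u_0,Du_0,x)=0$ in $\co\omega$ and $u_0=0$ on $\partial\co\omega\setminus\{0\}$, and passing to the limit yields $b\Psi^+\le u_0\le a\Psi^+$ in $\co\omega$ together with $\inf_{E(\omega,k,2k)}u_0/\Psi^+=b$ and $\sup_{E(\omega,k,2k)}u_0/\Psi^+=a$ for \emph{every} $k>0$.

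Finally I would close the argument exactly as in the last step of the proof of Theorem~\ref{uniq}: I claim $u_0\equiv a\Psi^+$. If not, then since $a\Psi^+-u_0$ is a nonnegative supersolution of $\Pucci(D^2\cdot)+2\mu|D\cdot|\ge0$ in $\co\omega$, the strong maximum principle forces $u_0<a\Psi^+$ throughout $\co\omega$; choosing $\omega'\subset\subset\omega$ and setting $\bar a:=\tfrac12\inf_{E(\omega',1,2)}(a\Psi^+-u_0)>0$, for all sufficiently small $\delta>0$ the function $(a-\delta)\Psi^+-u_0$ meets the hypotheses of Lemma~\ref{snap} on $E(\omega,\tfrac12,4)$ (the normalization \eqref{normmax} controls $\Psi^+$ from above on the caps), hence is $\ge0$ on $E(\omega,1,2)$, i.e. $\sup_{E(\omega,1,2)}u_0/\Psi^+\le a-\delta$, contradicting $\sup_{E(\omega,1,2)}u_0/\Psi^+=a$. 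Thus $u_0\equiv a\Psi^+$, so $b=\inf_{E(\omega,1,2)}u_0/\Psi^+=a$, which finishes the proof.

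I expect the middle step to be the main obstacle: turning the \emph{almost}-monotone control of $q^+$ and $Q^+$ (Lemmas~\ref{qnearlymo} and~\ref{Qnearlymo}) into the clean statement that the blow-up limit $u_0$ has constant infimum $b$ and constant supremum $a$ on \emph{all} annular slices simultaneously, and giving meaning to $u/\Psi^+$ up to the lateral boundary where both functions vanish (which relies on the Hopf lemma and Proposition~\ref{bhq}). Once $u_0$ is pinned down in this way, the conclusion is a verbatim repeat of the corresponding step in the proof of Theorem~\ref{uniq}.
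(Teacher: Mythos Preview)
Your proposal is correct and follows exactly the approach the paper prescribes: the paper's own proof simply cites Lemma~\ref{existlimits} for the existence of $b\in(0,a]$ and then says the blow-up argument is ``identical to the one used in the proof of Theorem~\ref{uniq}, so we omit it.'' You have faithfully reconstructed those omitted details, including the one necessary adaptation (that $u_r$ solves $G_r=0$ rather than $F=0$, and one must use $G_r\to F$ via \eqref{closeness1} to conclude that the limit $u_0$ solves $F=0$).
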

\begin{proof}
As we discussed in the proof of Lemma~\ref{existlimits}, under our hypotheses we have that the limit $b: = \lim_{r\to 0} q^+(r)$ exists and $0 < b \leq a$. Using a blow-up argument, we may deduce that $b=a$. In fact, the argument is identical to the one used in the  proof of Theorem 2, so we omit it.
\end{proof}

\begin{proof}[{Proof of Theorem~\ref{ibs}}]
The proof is easily assembled from the above lemmas.
\end{proof}

\begin{proof}[{Proof of Theorem~\ref{poisson}}]
Fix $x_0 \in \Omega$.  For each small $ r \in(0, |x_0|)$, let $u_r$ be the solution of the Dirichlet problem
\begin{equation*}
\left\{ \begin{aligned}
& F(D^2u_r,Du_r,x) = 0 & \mbox{in} & \ \Omega \setminus B_r, \\
& u_r = g_r & \mbox{on} & \ \partial (\Omega \setminus B_r),
\end{aligned} \right.
\end{equation*}
where $g_r$ is a nonnegative continuous function on $\partial (\Omega \setminus B_r)$, $g_r\not\equiv 0$, which is supported on $\partial B_r$. By multiplying $u_r$ by a positive constant, we may suppose that $u_r(x_0) = 1$. By the Harnack inequality and global H\"older estimates, we may select a subsequence $r_j \to 0$ such that $u_{r_j}$ converges locally uniformly in $\overline\Omega\setminus \{ 0 \}$ as $j\to \infty$ to a continuous function $u_0$, which satisfies, by the stability of viscosity solutions,
\begin{equation} \label{u0poiss}
\left\{ \begin{aligned}
& F(D^2u_0,Du_0,x) = 0 & \mbox{in} & \ \Omega \setminus \{ 0 \}, \\
& u_0 = 0 & \mbox{on} & \ \partial\Omega \setminus \{ 0 \}.
\end{aligned} \right.
\end{equation}
Since $u_0(x_0) = \lim_{j\to\infty} u_{r_j} (x_0) = 1$, the strong maximum principle implies that $u_0 > 0$ in $\Omega$. The alternative (i) in Theorem~\ref{ibs}  is excluded by the maximum principle, since $\Omega$ is bounded. Hence $u_0 (x)/ \Phi^+(\zeta(x)) \rightarrow t_0$ as $x \to 0$, for some $t_0> 0$.

Let $u\not\equiv 0$ be another nonnegative solution of \eqref{u0poiss}. By the strong maximum principle, $u > 0$ in $\Omega$. Theorem~\ref{ibs} and the maximum principle imply that $u(x) / \Phi^+(\zeta(x)) \rightarrow s$ as $x\to 0$, for some $s> 0$, hence $u(x) / u_0(x) \rightarrow s/t_0 : = t$ as $x \to 0$. Therefore for each $\ep>0$ and $r>0$ sufficiently small,
\begin{equation*}
(t-\ep)u_0\le u\le (t+\ep)u_0 \quad \mbox{on} \ \Omega \cap \bar B_r,
\end{equation*}
and $u=u_0=0$ on $\partial \Omega\setminus B_r$. By the maximum principle,
\begin{equation*}
(t-\ep)u_0\le u\le (t+\ep)u_0 \quad \mbox{in} \ \Omega \setminus B_r.
\end{equation*}
Sending $r\to0$ and then $\ep\to 0$ yields $u\equiv tu_0$.
\end{proof}

\subsection{Behavior of a solution near a conical boundary point}\label{behdir}

Simple modifications of the method we have used to prove Theorem~\ref{ibs} can be made to obtain Theorem \ref{exthopf} and the assertions in Remark \ref{remk}. We study the quantities
\begin{equation*}
q^-(r) : = \inf_{E(\omega,r,2r)} \frac{\max\{ 0, u \}}{\Psi^-} \quad \mbox{and} \quad Q^-(r) : = \sup_{E(\omega,r,2r)} \frac{\max\{ 0, u \}}{\Psi^-},
\end{equation*}
using similar ideas as above. The analysis turns out to be easier than that of $q^+(r)$ and $Q^+(r)$, since $q^-(r)$ and $Q^-(r)$ have the reverse monotonicity; that is, $q^-(r)$ is (nearly) nonincreasing, while  $Q^-(r)$ is (nearly) nondecreasing. Hence these quantities cannot tend to zero or to infinity, and so we do not need to use the global Harnack inequality.

The use of continuous dependence estimate can also be considerably simplified when we deal with a solution defined up to the boundary, in particular in the model cases. For instance if $F=F(D^2u)$ we solve the approximate problem $G_r(D^2\psi_r, D\psi_r,  x)=0$ in $\co\omega\cap B_1$, $\psi_r=\Psi^-$ on $\partial(\co\omega\cap B_1)$. Then an easier  argument than the one we gave in the proof of Proposition \ref{CDE} yields that $\Psi_r^-/\Psi^-\to 1$ on compact subsets of $\co\omega\cap B_{1}$, with an algebraic rate which we can estimate. In the general case an exact analogue of the continuous dependence estimate is obtained, with nearly the same proof, if in Proposition \ref{CDE} we replace  $\Psi^+$ by $\Psi^-$, $E(\omega, 1,r^{-\kappa})$ by $E(\omega, r^\kappa, 1)$ and $E(\omega, 2,4)$ by $E(\omega, 1/4, 1/2)$.

\begin{proof}[Proof of Theorem \ref{exthopf}] If $\psi_r$ is the function given by Proposition \ref{CDE} thus modified with $h=0$ on $\co\omega\cap\partial B_1$, $h=0$ on $\partial\co\omega$, $h=-\Psi^-$ on $\co\omega\cap B_{r^\kappa}$, and $u>0$ is as in Theorem~\ref{exthopf} (possibly with $F$ replaced by the more general $G$ as in Section \ref{ibsec1}), by flattening the boundary and by setting $q^-(r) = \inf_{\co\omega\cap(B_1\setminus B_{1/2})} (u_r/\Psi^-)$ where $u_r(x) := r^{\alpha^-}u(rx)$, exactly as in the proof of Lemma \ref{qnearlymo} we see that the maximum principle implies $u\ge q^-(r)\psi_r$ in $E(\omega, r^\kappa, 1)$, and hence
$$
q^-(r/2)\ge q^-(r)(1-r^\beta)
$$
for all sufficiently small $r>0$. Iterating this inequality we obtain, as before,
$$
q^-(r)\le e^{r^\beta}q^-(s),\quad\mbox{ for all }\; 0<s<r\le r_0,
$$
which implies that $q^-(s)\ge c_0q^-(r_0)>0$ for all $s>0$ small. Letting $s\to 0$ we obtain the statement of Theorem \ref{exthopf}. Note that $q^-(r_0)>0$ by the standard Lipschitz estimates (Theorem \ref{ggh}) and the classical  Hopf's lemma applied at the flat lateral boundary of the domain $E(\omega, r_0/4, 2r_0)$. \end{proof}

Let us also sketch the proof of the statement we mentioned in Remark \ref{remk}.  If $u>0$ is such that $u$ solves \eqref{isol} (possibly with $F$ replaced by  $G$) and $u(0)=0$, we proceed exactly like in the proof of Lemma \ref{Qnearlymo} to infer that
$$
u_r \le \max\{Q^-(r), r^{-\alpha^-\kappa/2}\}\psi_r$$
which after a iteration yields
$$
Q^-(s)\le e^{r^\beta}Q^-(r)+r^\beta,\quad\mbox{ for all }\; 0<s<r\le r_0,
$$
so $Q^-(s)$ is bounded and has a limit as $s\to 0$. By what we already proved for $q^-(s)$ we see that it also has a positive limit as $s\to 0$. These two limits have to coincide, by the same blow-up argument as the one we used in the proof of Theorem \ref{uniq}.

\section{The \pl principles}\label{sectpoisson}

As a further application of Theorem~\ref{ibs}, we prove the \pl principles stated in the introduction.

\begin{proof}[Proof of Theorem \ref{PLloc}]
In Section~ \ref{maxpri} we proved the result in the case that $\Omega = \coo$. We need only show how to modify the argument for a general domain $\Omega$ for which $0\in \partial \Omega$ and \eqref{zetaiden} holds, and for a subsolution of
$$
G(D^2u,Du,x)\le 0,$$ where \eqref{gr}-\eqref{closeness1} are verified for $G$ (resp. for $G^*$, if we have a condition at infinity), with $G_0=F$. We first flatten the boundary as in the previous section, so that the function $u_s(x)= s^{\alpha^+}u(sx)$ is a solution of $G_s(D^2u_s, Du_s, x)\le 0$ in $\co\omega\cap B_{1/s}$. We consider the function $\psi_s$ given by Proposition~\ref{CDE} with $h\equiv0$  (we apply the proposition with $E(\omega,1,r^{-\kappa})$ replaced by $E(\omega,1/2,r^{-\kappa})$ and $E(\omega, 2, 4)$ replaced by $E(\omega, 1, 2)$) and repeat the  argument given in Section~\ref{maxpri} with $\Psi^+$ replaced by $\psi_s$, observing that $\psi_s\ge \tfrac12 \Psi^+$ in $E(\omega, 1, 2)$ for  small $s> 0$.
\end{proof}

\begin{proof}[Proof of Theorem~\ref{PL}]
We may assume that $\mathcal{D}^\prime = \mathcal{D}$, by replacing $u$ by $u^+$ and then defining $u$ to be zero in $\mathcal{D}\setminus\mathcal{D}^\prime$. Theorem \ref{PLloc} implies that, for each $\ep>0$ and sufficiently small $r>0$, we have $u\le \ep$ on $\mathcal{D}\cap(\partial B_r\cup\partial B_{1/r})$. The maximum principle implies that $u\leq \ep$ in $\mathcal{D} \cap (B_{1/r} \setminus B_r)$ for small $0 < r <1$. Sending $r\to \infty$ and then $\ep \to 0$ yields the result.
\end{proof}

\begin{proof}[Proof of Theorem~\ref{PL2}]
The result follows at once from Theorem \ref{PLloc}, after we note that $\widetilde F(D^2u, Du, x)\ge 0$ is equivalent to $F(D^2(-u), D(-u), x)\le 0$.
\end{proof}

We can prove a sharper \pl maximum principle, of Nevanlinna-Heins type (see the references in \cite{Se}) in the case that the domain $\Omega$ has only one conical singularity (at zero or infinity), and $u\le 0$ on the whole boundary $\partial \Omega$ except possibly at the singularity. In this classical case stronger results are readily obtained, as we will see in Proposition \ref{PLNH} below. To prove this proposition we will combine the method of Gilbarg, Hopf and Serrin with the following lemma, in which  we construct a special barrier function,  with the help of Theorem \ref{ibs0}.

\begin{lem} \label{barrier}
Let $\Omega$ be a bounded Lipschitz domain such that $0\in \partial \Omega$ and \eqref{zetaiden} holds, and fix $x_0 \in \Omega$. Let $G$ be an operator as in Section~\ref{ibsec1}. For each $r > 0$ sufficiently small, there exists a function $\phi_r \in C(\overline\Omega\setminus B_r)$ and a constant $a_r$ which satisfies
\begin{equation}\label{boundi}
cr^{-\alpha^+} \leq a_r \leq Cr^{-\alpha^+},
\end{equation}
such that $\phi_r(x_0) = 1$ and
\begin{equation}\label{eqii}
\left\{ \begin{aligned}
& G(D^2\phi_r, D\phi_r,x) =0 & \mbox{in} & \ \Omega \setminus B_r, \\
& 0 \leq \phi_r \leq a_r & \mbox{on} & \ \partial \Omega,\\
& \phi_r = 0 & \mbox{on} & \ \partial\Omega \setminus B_{2r}, \\
& \phi_r = a_r & \mbox{on} & \ \Omega \cap \partial B_{r},
\end{aligned} \right.
\end{equation}
\end{lem}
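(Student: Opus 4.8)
The plan is to build $\phi_r$ by solving a Dirichlet problem on the annular domain $\Omega\setminus B_r$ with the prescribed boundary data, then to normalize so $\phi_r(x_0)=1$, and finally to extract the two-sided bound \eqref{boundi} on the normalizing constant $a_r$ using Theorem~\ref{ibs} (via Theorem~\ref{ibs0}) together with the global Harnack inequality.

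\begin{proof}
Fix $x_0 \in \Omega$. For $r > 0$ small, let $\eta_r \in C(\partial(\Omega\setminus B_r))$ be a fixed nonnegative function with $0 \le \eta_r \le 1$, $\eta_r \equiv 0$ on $\partial\Omega\setminus B_{2r}$, and $\eta_r \equiv 1$ on $\Omega \cap \partial B_r$ (such a function exists since $\Omega$ is Lipschitz, so $\partial\Omega$ is bounded and one can interpolate continuously between the two closed sets $\partial\Omega\setminus B_{2r}$ and $\Omega\cap\partial B_r$, which are disjoint for small $r$). Since $\Omega\setminus B_r$ is a bounded domain satisfying a uniform exterior cone condition and $G$ satisfies \eqref{ellip}--\eqref{regxh}, the Dirichlet problem
\begin{equation*}
\left\{ \begin{aligned}
& G(D^2\widetilde\phi_r, D\widetilde\phi_r, x) = 0 & \mbox{in} & \ \Omega\setminus B_r, \\
& \widetilde\phi_r = \eta_r & \mbox{on} & \ \partial(\Omega\setminus B_r),
\end{aligned} \right.
\end{equation*}
has a unique solution $\widetilde\phi_r \in C(\overline\Omega\setminus B_r)$, and by the comparison principle $0 \le \widetilde\phi_r \le 1$ in $\Omega\setminus B_r$. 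We then set $a_r := 1/\widetilde\phi_r(x_0)$ and $\phi_r := a_r \widetilde\phi_r$. Clearly $\phi_r(x_0) = 1$, and $\phi_r$ solves \eqref{eqii}: the equation is satisfied since $G$ is positively homogeneous in $(M,p)$ by \eqref{homogen}, the bound $0 \le \phi_r \le a_r$ on $\partial\Omega$ holds because $0 \le \eta_r \le 1$ there, and the two equalities $\phi_r = 0$ on $\partial\Omega\setminus B_{2r}$ and $\phi_r = a_r$ on $\Omega\cap\partial B_r$ follow from the corresponding properties of $\eta_r$. It remains to establish \eqref{boundi}, i.e.\ to show $\widetilde\phi_r(x_0) \asymp r^{\alpha^+}$.

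For the upper bound $\widetilde\phi_r(x_0) \le C r^{\alpha^+}$ (equivalently $a_r \ge c r^{-\alpha^+}$): consider the solution $w_r$ of $G(D^2 w_r, D w_r, x) = 0$ in $\Omega\cap B_1$ with $w_r = 0$ on $\partial\Omega\cap B_1\setminus\{0\}$ and $w_r = \sup_{\Omega} \widetilde\phi_r \le 1$ on $\Omega\cap\partial B_1$, extended beyond. Since $\widetilde\phi_r \le w_r$ on $\partial(\Omega\cap B_1)$ away from $B_r$ and $\widetilde\phi_r$ vanishes on the part of $\partial\Omega$ outside $B_{2r}$, a comparison argument on $(\Omega\cap B_1)\setminus B_{2r}$ combined with the barrier behavior near $0$ shows $\widetilde\phi_r$ is dominated near the origin by a multiple of $\Psi^+(\zeta(\cdot))$; more directly, one can compare $\widetilde\phi_r$ on $\Omega\cap B_{1/2}\setminus B_{2r}$ with $C r^{\alpha^+}$ times a fixed positive solution $u_0$ of $G=0$ in $\Omega\cap B_1$ vanishing on $\partial\Omega\setminus\{0\}$ (obtained as in the proof of Theorem~\ref{poisson}), which by Theorem~\ref{ibs0} satisfies $u_0(x)/\Psi^+(\zeta(x)) \to t_0 > 0$, hence $u_0 \ge c$ on $\Omega\cap\partial B_{2r}$ with $c \asymp r^{-\alpha^+}$; since $\widetilde\phi_r \le 1 \le C r^{\alpha^+} u_0$ on $\Omega\cap\partial B_{2r}$ and both vanish on $\partial\Omega\setminus B_{2r}$, the maximum principle gives $\widetilde\phi_r \le C r^{\alpha^+} u_0$ on $\Omega\setminus B_{2r}$, and evaluating at $x_0$ yields the bound. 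For the lower bound $\widetilde\phi_r(x_0) \ge c r^{\alpha^+}$ (equivalently $a_r \le C r^{-\alpha^+}$): one compares from below, using that $\widetilde\phi_r = 1$ on $\Omega\cap\partial B_r$ and constructing, via Proposition~\ref{bhq} (the global Harnack inequality for quotients) applied to $\widetilde\phi_r$ and to the fixed solution $u_0$ on an annular region like $\Omega\cap(B_{1/2}\setminus B_{2r})$, the inequality $\widetilde\phi_r \ge c\, (u_0/\sup_{\Omega\cap\partial B_{2r}} u_0)$, and then $\sup_{\Omega\cap\partial B_{2r}}u_0 \le C r^{-\alpha^+}$ again by Theorem~\ref{ibs0}; evaluating at $x_0$ gives $\widetilde\phi_r(x_0) \ge c\, u_0(x_0)/(C r^{-\alpha^+}) = c' r^{\alpha^+}$.

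The main obstacle is the comparison near the singular point $0$: one must transfer the known asymptotics of the fixed positive solution $u_0$ (namely $u_0(x) \asymp \Psi^+(\zeta(x)) \asymp |x|^{-\alpha^+}$ up to constants depending on $\operatorname{dist}$ to $\partial\Omega$) into sharp two-sided control of $\sup_{\Omega\cap\partial B_\rho} u_0$ and $\inf$ over compact angular subsets, and then to link $\widetilde\phi_r$ to $u_0$ by comparison on the annulus $\Omega\cap(B_{1/2}\setminus B_{2r})$ where the two functions have comparable boundary traces on $\Omega\cap\partial B_{2r}$ and both vanish on the lateral boundary $\partial\Omega\setminus B_{2r}$. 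The global Harnack inequality for quotients (Proposition~\ref{bhq}) is exactly the tool that makes the lower comparison work up to the lateral boundary, since both $\widetilde\phi_r$ and $u_0$ vanish there. Once these comparisons are in place, the bound \eqref{boundi} follows by evaluating at the fixed interior point $x_0$, whose distance to $\partial\Omega$ and to $0$ is bounded below independently of $r$ (for small $r$), so all the constants are uniform.
\end{proof}
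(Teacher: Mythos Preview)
Your overall architecture matches the paper's: solve a Dirichlet problem on $\Omega\setminus B_r$, normalize at $x_0$, and then compare with the fixed singular solution $u_0=\phi_0$ of \eqref{dirpoisson} whose asymptotic $u_0(x)/\Psi^+(\zeta(x))\to t_0>0$ is furnished by Theorem~\ref{ibs}. The construction of $\phi_r$ and the reduction to $\widetilde\phi_r(x_0)\asymp r^{\alpha^+}$ are fine.

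However, the two comparison arguments are switched, and as written each one fails. For the bound $\widetilde\phi_r(x_0)\le Cr^{\alpha^+}$ you assert ``$u_0\ge c$ on $\Omega\cap\partial B_{2r}$ with $c\asymp r^{-\alpha^+}$''; this is false, since $u_0$ vanishes on $\partial\Omega$ and hence on the portion of $\Omega\cap\partial B_{2r}$ near the lateral boundary the lower bound degenerates (only $u_0(x)\asymp\Psi^+(\zeta(x))$ holds, and $\Psi^+$ vanishes on $\partial\mathcal C_\omega$). A naked maximum-principle comparison $\widetilde\phi_r\le Cr^{\alpha^+}u_0$ therefore cannot be set up on this inner sphere. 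The paper handles this direction by first rescaling to a unit annulus, observing that both $r^{\alpha^+}\phi_r(r\cdot)$ and $r^{\alpha^+}u_0(r\cdot)$ solve the rescaled equation and vanish on the lateral boundary, and then applying the global Harnack inequality for quotients (Proposition~\ref{bhq}) on a \emph{fixed} annulus to get a uniform ratio bound; only afterwards is the comparison principle used to push the inequality out to $x_0$.

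Conversely, for the bound $\widetilde\phi_r(x_0)\ge cr^{\alpha^+}$ you invoke Proposition~\ref{bhq} on the long annulus $\Omega\cap(B_{1/2}\setminus B_{2r})$. The Harnack constant there depends on $\dist(\Omega',\partial\Omega\setminus\Sigma)\sim r$ (equivalently, on a chain of $\sim\log(1/r)$ balls), so it blows up like $r^{-C}$ and yields nothing. In fact this direction needs no Harnack at all: since $u_0(x)\le C\Psi^+(\zeta(x))\le C'|x|^{-\alpha^+}$ is a genuine \emph{upper} bound valid on the whole inner sphere $\Omega\cap\partial B_r$, the direct comparison $cr^{\alpha^+}u_0\le 1=\widetilde\phi_r$ holds there, both functions vanish on $\partial\Omega\setminus B_r$, and the maximum principle gives $\widetilde\phi_r\ge cr^{\alpha^+}u_0$ in $\Omega\setminus B_r$; evaluating at $x_0$ finishes. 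This is exactly the paper's argument for the second inequality in \eqref{boundi}. So the fix is simply to swap the tools: use the one-sided pointwise bound for $a_r\le Cr^{-\alpha^+}$, and use rescaling plus the boundary Harnack quotient estimate (on a fixed annulus) for $a_r\ge cr^{-\alpha^+}$.
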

\begin{proof}
The existence of a function $\phi_r$ for some $a_r$ is simple to obtain: we solve \eqref{eqii} with $a_r=1$ and then divide the function thus obtained by its value at $x_0$. Moreover, by the H\"older estimates and Theorem~\ref{poisson}, the function $\phi_r$ converges locally uniformly on $\overline \Omega \setminus \{ 0\}$ to the unique nonnegative function $\phi_0\in C(\overline \Omega \setminus \{ 0 \})$ which satisfies $\phi_0(x_0) = 1$ and
\begin{equation*}
\left\{ \begin{aligned}
& G(D^2\phi_0, D\phi_0,x) =0 & \mbox{in} & \ \Omega, \\
& \phi_0 = 0 & \mbox{on} & \ \partial \Omega \setminus \{ 0 \}.
\end{aligned} \right.
\end{equation*}
By Theorem~\ref{ibs}, there exists $t> 0$ such that
\begin{equation}\label{convi}
\phi_0(x)/\Psi^+(\zeta(x))\to t>0\quad\mbox{as}\ x\to 0, \,x\in \Omega.
\end{equation}

It remains to obtain the bounds on $a_r$.  Set $\delta_r= r^{\alpha^+}a_r$. By \eqref{convi}, for each $\ep>0$ and sufficiently small $r$ we have $\phi_0\le (t+\ep)r^{-\alpha^+}=(t+\ep)\delta_r^{-1}\phi_r$ on $\Omega\cap\partial B_r$ and $\phi_0=0$ on the rest of the boundary of $\Omega_r$, so by the maximum principle $\phi_0\le (t+\ep)\delta_r^{-1}\phi_r$ in $\Omega_r$. Evaluating at $x_0$ we get the second inequality in \eqref{boundi}.

Set $\tilde\phi_r = r^{\alpha^+}\phi_r(rx)$ and $\phi_{0r}=r^{\alpha^+}\phi_0(rx)$. Observe that $\tilde\phi_r\le \delta_r$ in $(\Omega/r)\cap(B_8\setminus B_1)$. Then  \eqref{convi} and the global Harnack inequality imply $\tilde\phi_r/\phi_{0r}\le C\delta_r$ in $(\Omega/r)\cap(B_4\setminus B_2)$. By the maximum principle $\tilde\phi_r\le C\delta_r\phi_{0r}$ in $(\Omega/r)\setminus B_2$. Evaluating at $x_0/r$ we obtain the first inequality in \eqref{boundi}.
\end{proof}

\begin{prop}\label{PLNH}
Suppose that $\Omega$ is  such that $0\in \partial \Omega$ and \eqref{zetaiden} holds.  Let $\mathcal{D}\subseteq\Omega$ be a bounded domain, $x_0\in \mathcal{D}$, and $\phi_0$  be the unique solution of \eqref{dirpoisson} in  $\Omega \cap B_R$ such that $\phi_0(x_0) = 1$,  where $R> 0$ is large enough that $\mathcal D \subset B_R$. Let  $u$ satisfy the inequality
\begin{equation*}
F(D^2u, Du, x) \leq 0 \; \mbox{ in }\;   \mathcal{D}, \quad u\le 0 \;\mbox{ on }\; \partial \mathcal{D}\setminus\{0\}\qquad (\mbox{resp. in }\; \Omega^*, \;\mbox{ on } \partial \Omega^*) .
\end{equation*}
Set $M(r) := \max_{ \mathcal{D}\cap \partial B_r}{u}$, and assume in addition that
\begin{equation}\label{condformpliminf}
\liminf_{r\to0}\; r^{\alpha^+}M(r) \le k \qquad \Big( \mbox{resp.}\quad  \liminf_{R\to\infty}\; R^{\alpha^-}M(R)  \le l\Big)
\end{equation}
for some $0\le k, l <\infty$. Then there exists a constant $A>0$, depending only on $F$, $\omega$, and $x_0$, such that
$$
u\le Ak\phi_0 \; \mbox{ in }\;   \mathcal{D}\qquad (\mbox{resp. }\; u\le Al\phi_0 \mbox{ in }\;\Omega^*).
$$
\end{prop}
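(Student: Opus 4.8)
The plan is to run a Gilbarg--Hopf--Serrin type comparison argument driven by the barrier $\phi_r$ constructed in Lemma~\ref{barrier}. We may assume $\sup_{\mathcal D}u>0$, since otherwise $u\le 0\le Ak\phi_0$ and there is nothing to prove; enlarging $R$ slightly we may also assume $\overline{\mathcal D}\subset B_R$. I will treat the singularity at the origin first and deduce the statement at infinity by inversion.

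First I would invoke Lemma~\ref{barrier} with the bounded Lipschitz domain $\Omega\cap B_R$ in place of $\Omega$ and with $G=F$ (admissible in the sense of Section~\ref{ibsec1}, since $F$ satisfies \eqref{ellip}--\eqref{scaling}), obtaining, for every sufficiently small $r>0$, a function $\phi_r$ with $\phi_r(x_0)=1$ solving $F(D^2\phi_r,D\phi_r,x)=0$ in $(\Omega\cap B_R)\setminus B_r$, with $\phi_r\ge 0$, $\phi_r=0$ on $\partial(\Omega\cap B_R)\setminus B_{2r}$, $\phi_r=a_r$ on $(\Omega\cap B_R)\cap\partial B_r$ where $cr^{-\alpha^+}\le a_r\le Cr^{-\alpha^+}$, and with $\phi_r\to\phi_0$ locally uniformly on $\overline{\Omega\cap B_R}\setminus\{0\}$. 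Next, fixing $\varepsilon>0$, by \eqref{condformpliminf} there is a sequence $r_j\downarrow 0$ with $M(r_j)\le(k+\varepsilon)r_j^{-\alpha^+}$. Setting $v_j:=\tfrac{k+\varepsilon}{c}\phi_{r_j}$, which by \eqref{homogen} is a nonnegative solution, hence supersolution, of $F=0$ in $(\Omega\cap B_R)\setminus B_{r_j}$, I would verify $u\le v_j$ on $\partial(\mathcal D\setminus\overline{B_{r_j}})$: on the part contained in $\partial\mathcal D\cap\{|x|\ge r_j\}$ we have $u\le 0\le v_j$ (such points lie in $\partial\mathcal D\setminus\{0\}$), while on $\overline{\mathcal D}\cap\partial B_{r_j}$ we have $u\le\max\{0,M(r_j)\}\le(k+\varepsilon)r_j^{-\alpha^+}\le\tfrac{k+\varepsilon}{c}a_{r_j}=v_j$, the decisive inequality being the lower bound $a_{r_j}\ge cr_j^{-\alpha^+}$ from \eqref{boundi}. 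The comparison principle (recall \eqref{ellip}--\eqref{regxh}), applied on each connected component of the bounded domain $\mathcal D\setminus\overline{B_{r_j}}$, then yields $u\le v_j=\tfrac{k+\varepsilon}{c}\phi_{r_j}$ throughout $\mathcal D\setminus\overline{B_{r_j}}$.

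To conclude, I would let $j\to\infty$: for each fixed $x\in\mathcal D$ one has $x\in\mathcal D\setminus\overline{B_{r_j}}$ for all large $j$, whence $u(x)\le\tfrac{k+\varepsilon}{c}\phi_{r_j}(x)\to\tfrac{k+\varepsilon}{c}\phi_0(x)$ by the convergence supplied by Lemma~\ref{barrier}; thus $u\le\tfrac{k+\varepsilon}{c}\phi_0$ in $\mathcal D$, and sending $\varepsilon\downarrow 0$ gives $u\le\tfrac1c\phi_0$ in $\mathcal D$, the claim with $A:=1/c$, which by Lemma~\ref{barrier} depends only on $F$, $\omega$ and $x_0$. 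For the statement at infinity I would pass to the inverted function $u^*(y)=u(|y|^{-2}y)$, which satisfies $F^*(D^2u^*,Du^*,y)\le 0$ on the inversion of $\mathcal D$ and vanishes on the corresponding boundary away from $0$; since $\co\omega^*=\co\omega$ and, by \eqref{dualalph}, $\alpha^-(F,\Omega)=-\alpha^+(F^*,\Omega)$, the relation $|x|=|y|^{-1}$ turns \eqref{condformpliminf} at infinity into the hypothesis at $0$ for $u^*$ with exponent $\alpha^+(F^*)$, while $\phi_0$ becomes the normalized positive solution of the inverted Dirichlet problem. The case already proved, applied to $F^*$ and $u^*$, then gives $u^*\le Al\,(\phi_0)^*$, i.e.\ $u\le Al\,\phi_0$ in $\Omega^*$.

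The main obstacle has in effect already been overcome inside Lemma~\ref{barrier}: the delicate point is producing a barrier whose height $a_r$ on $\partial B_r$ is comparable to $r^{-\alpha^+}$ from \emph{both} sides, and it is the lower bound $a_r\ge cr^{-\alpha^+}$ — itself a consequence of Theorem~\ref{ibs} and the global Harnack inequality — that allows the barrier to dominate $u$ on $\partial B_r$ with a constant independent of $r$. Granting Lemma~\ref{barrier}, the only remaining care is routine boundary bookkeeping for $\mathcal D\setminus\overline{B_{r_j}}$ (finiteness of $M(r_j)$ along the chosen sequence, which \eqref{condformpliminf} forces; the points of $\partial\mathcal D\cap\partial B_{r_j}$; and the case $M(r_j)<0$, in which $u\le 0\le v_j$ on $\partial B_{r_j}$ holds trivially) together with the limit passage.
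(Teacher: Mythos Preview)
Your argument is correct and is essentially the paper's own proof: both take a sequence $r_j\to 0$ along which $M(r_j)\le (k+\varepsilon)r_j^{-\alpha^+}$ (the paper uses $k_j\searrow k$ instead of a fixed $\varepsilon$), compare $u$ with the barrier $\phi_{r_j}$ from Lemma~\ref{barrier} via the lower bound in \eqref{boundi}, apply the comparison principle on $\mathcal D\setminus \overline{B_{r_j}}$, and pass to the limit using $\phi_{r_j}\to\phi_0$. One small slip: after sending $\varepsilon\downarrow 0$ you wrote $u\le\tfrac{1}{c}\phi_0$, but of course the limit is $u\le\tfrac{k}{c}\phi_0=Ak\phi_0$, consistent with your identification $A=1/c$.
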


\begin{proof} Take sequences $r_j\to 0$, $k_j\searrow k$ such that $u\le k_jr_j^{-\alpha^+}$ on $\partial B_{r_j}$. Let $\phi_j = \phi_{r_j}$ be the functions constructed in Lemma \ref{barrier}. By \eqref{boundi}, we have $u\le Ak_j \phi_j$ on $\partial B_{r_j}$ and $u\le 0 \le Ak_j\phi_j$ on $\partial \mathcal{D}\setminus B_{r_j}$. Hence, for each $x\in \mathcal{D}$,
\begin{equation*}
u(x)\le Ak_j \phi_j(x) \to Ak\phi_0(x)\quad\mbox{as} \quad j\to \infty. \qedhere
\end{equation*}
\end{proof}

\appendix

\section{Harnack inequalities up to the boundary} \label{BHQproof}

Here we prove Proposition~\ref{bhq}, the global Harnack inequality for quotients of positive solutions. First we recall the interior Harnack inequality, a proof of which can be found in \cite[Theorem~3.6]{QS} and the references therein.

\begin{prop} \label{hq}
Assume that $\Omega$ is a bounded domain and $u\in C(\Omega)$ is a positive solution of the inequalities
\begin{equation}\label{harcon1}
\mathcal{L}^+[u] \,\geq\, 0\, \geq\,   \mathcal{L}^-[u]\quad\mbox{in}\quad \Omega.
\end{equation}
Then for each $\Omega' \subset \subset \Omega$ we have the estimate
\begin{equation}\label{harq1}
\sup_{\Omega'} u \leq C \inf_{\Omega'} v.
\end{equation}
The constant $C>1$ depends only on $n$, $\lambda$, $\Lambda$, $\mu$, $\nu$, and the number of balls of radius $\frac{1}{2}\dist(\Omega',\partial \Omega)$ needed to cover $\Omega'$.
\end{prop}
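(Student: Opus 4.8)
The plan is to prove Proposition~\ref{hq} by the Krylov--Safonov argument in its viscosity form, following \cite{CC} and \cite{QS}. Since $u>0$, the inequalities \eqref{harcon1} assert precisely that $u$ is a viscosity supersolution of $\pucci(D^2u)-\mu|Du|-\nu u=0$ and a viscosity subsolution of $\Pucci(D^2u)+\mu|Du|+\nu u=0$ in $\Omega$. By a standard Harnack chain argument it suffices to establish $\sup_{B_{1/2}(z)}u\le C_0\inf_{B_{1/2}(z)}u$ whenever $B_1(z)\subseteq\Omega$, with $C_0$ depending only on $n,\lambda,\Lambda,\mu,\nu$; covering $\Omega'$ by the stated number of balls of radius $\tfrac12\dist(\Omega',\partial\Omega)$ and iterating then yields \eqref{harq1} with the claimed dependence of $C$. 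Thus I reduce to the case $\Omega=B_1$, $\Omega'=B_{1/2}$.

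The first half is the weak Harnack inequality: there exist $\varepsilon,C>0$ depending only on $n,\lambda,\Lambda,\mu,\nu$ such that every nonnegative viscosity supersolution $w$ of $\pucci(D^2w)-\mu|Dw|-\nu w\le0$ in $B_1$ satisfies
\[
\left( \int_{B_{1/2}} w^\varepsilon\,dx \right)^{1/\varepsilon} \le C\,\inf_{B_{1/4}} w.
\]
The mechanism is classical. After normalizing $\inf_{B_{1/4}}w\le1$, one applies the Alexandrov--Bakelman--Pucci estimate --- in the scaled viscosity form of Proposition~\ref{abp}, whose second statement is precisely what absorbs the zero-order term $\nu w$ --- to $w$ touched from below by paraboloids, obtaining a single measure estimate $\left| \{ w\le M_0 \}\cap B_{1/2} \right| \ge \delta > 0$. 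A Calder\'on--Zygmund cube-decomposition argument then upgrades this to the power decay $\left| \{ w>t \}\cap B_{1/2} \right| \le Ct^{-\varepsilon}$ for all $t\ge1$, and integrating in $t$ gives the displayed $L^\varepsilon$ bound. The gradient term $\mu|Dw|$ enters only through the constant $C_A$ of Proposition~\ref{abp}, which on a unit ball is controlled by $n,\lambda,\Lambda,\mu$.

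The second half is the local maximum principle: every viscosity subsolution $v$ of $\Pucci(D^2v)+\mu|Dv|+\nu v\ge0$ in $B_1$ satisfies $\sup_{B_{1/2}}v\le C\left( \int_{B_{3/4}}(v^+)^\varepsilon\,dx \right)^{1/\varepsilon}$ with the same $\varepsilon$, again via ABP applied to $v$ together with the usual iteration over shrinking radii to absorb the $(\tau-\rho)^{-n/\varepsilon}$ factor and the zero-order term. Applying the weak Harnack inequality to $w=u$ and the local maximum principle to $v=u$, and chaining once more over $B_{1/2}\supseteq B_{1/4}$, gives $\sup_{B_{1/2}}u\le C\inf_{B_{1/2}}u$, as required.

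The only genuine obstacle is bookkeeping rather than a new idea: since $u$ need not be twice differentiable, every use of ABP must be routed through the geometric contact set with paraboloids, and one must verify that the two-sided extremal inequalities persist under the sup- and inf-convolutions, translations, and dilations used in the covering steps, as well as track the dependence of constants on the lower-order data $\mu,\nu$. These points are standard and are carried out in detail in \cite{CC} and \cite{QS}, so we content ourselves with the reference.
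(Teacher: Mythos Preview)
Your proposal is correct and follows exactly the approach the paper has in mind: the paper does not actually prove Proposition~\ref{hq} but simply cites \cite[Theorem~3.6]{QS}, and your sketch of the Krylov--Safonov argument (weak Harnack via ABP and cube decomposition, local maximum principle, Harnack chain) is precisely what that reference contains. There is nothing to add.
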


Another preliminary result is the following boundary Harnack inequality.

\begin{prop}\label{bhq1} Assume $u,v$ are positive solutions of the inequalities
\begin{equation}\label{harcon}
  \mathcal{L}^+[u] \,\geq\, 0\, \geq\,   \mathcal{L}^-[u]\,, \qquad \mathcal{L}^+[v] \,\geq\, 0\, \geq\,   \mathcal{L}^-[v]\quad\mbox{in}\quad \Omega.
\end{equation}
Let  $Q_{2}= Q_2(z_0)\subset\Omega$ be a cube with side $2$ centered at $z_0=(1,0,\ldots,0)\in \Omega$, and assume $u=v=0$ on $Q_2\cap \{x_1=0\}$. Then there exists a constant $C=C(\lambda,\Lambda,\mu,\delta)>0$ such that for each $x\in Q_1 = Q_1(z_0/2)$ we have
\begin{equation}\label{harres}
   \frac{1}{C}\frac{u(x)}{u(z_0)}\le \frac{v(x)}{v(z_0)}\le C\frac{u(x)}{u(z_0)}\,.
\end{equation}
\end{prop}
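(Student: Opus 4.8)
The plan is to prove the classical \emph{boundary Harnack principle} for the extremal operators along the now-standard route (cf. Bauman \cite{Ba}): a Carleson-type estimate from above, a Hopf-type estimate from below, and a short comparison argument. Since every estimate below is unaffected by multiplying $u$ or $v$ by a positive constant, I normalize $u(z_0)=v(z_0)=1$; by the symmetry of the roles of $u$ and $v$ it then suffices to prove $u\le Cv$ throughout $Q_1$. Fix an intermediate cube $Q'$ with $\overline{Q_1}\subset Q'\cup(Q_2\cap\{x_1=0\})$ and $\overline{Q'}\setminus\{x_1=0\}\subset Q_2$, and a small height $h\in(0,1)$ to be chosen. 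Throughout, ``comparison principle'' is applied on slabs thin enough that the ABP inequality (Proposition~\ref{abp}, in the form \eqref{abp2}) absorbs the non-proper zeroth-order term $\nu|u|$ of $\mathcal{L}^\pm$, and the interior Harnack inequality is Proposition~\ref{hq}.

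\emph{Step 1 (Carleson estimate).} I would first show that, with a constant $C_1$ depending only on $n,\lambda,\Lambda,\mu,\nu$,
\[
u(x)\le C_1 u(z_0)\quad\text{for all }x\in Q',\qquad\text{and}\qquad u(x)\le C_1 u(z_0)\,\dist(x,\{x_1=0\})\ \text{ for }x\in Q',\ x_1<h.
\]
On $\{x\in Q':x_1\ge h\}$ the bound $u\le C_1 u(z_0)$ is immediate from the interior Harnack inequality via a Harnack chain of bounded length joining each such point to $z_0$. The behaviour near the flat face is the genuinely nontrivial point: here one runs the standard dyadic iteration towards a boundary point of $Q'$, playing the interior Harnack inequality against an $\mathcal{L}^+$-supersolution barrier of exponential type, $\overline w(x)=A(e^{\gamma x_1}-1)$ with $\gamma$ large (so that $\Pucci(D^2\overline w)+\mu|D\overline w|+\nu\overline w\le 0$), which moreover satisfies $\overline w(x)\le CA\,x_1$ for $x_1\le h$; comparison on thin slabs first yields boundedness of $u$ near the face, and one further barrier comparison (using that $u$ is now known bounded on the lateral sides of the slab) upgrades this to the linear rate.

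\emph{Steps 2--3 (Hopf lower bound and conclusion).} Symmetrically, I would show $v(x)\ge c_1\,\dist(x,\{x_1=0\})$ for all $x\in Q_1$, with $c_1>0$ depending only on the structural constants: away from the face this is again the interior Harnack inequality, and near the face one slides, along the flat part of $\partial Q_2$, a family of explicit interior subsolutions of $\mathcal{L}^-$ of ``quadratic bump'' type, $\underline w(x)=c\,h^{-2}(2hx_1-x_1^2)(\rho^2-|x'|^2)\rho^{-2}$ on $\{0<x_1<h,\ |x'|<\rho\}$ (which satisfy $\mathcal{L}^-[\underline w]\ge 0$ once $h$ is small), whose inner-boundary values are dominated by $v$ by a Harnack chain from $z_0$ and which vanish on the flat face and on $\{|x'|=\rho\}$; comparison then gives $v\ge\underline w\gtrsim c_1 x_1$. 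Combining Steps 1 and 2, for $x\in Q_1$ with $x_1<h$ one has $u(x)/v(x)\le (C_1 u(z_0)x_1)/(c_1 v(z_0)x_1)=(C_1/c_1)\,u(z_0)/v(z_0)$, while for $x\in Q_1$ with $x_1\ge h$ the bound $u(x)/v(x)\le C(h)\,u(z_0)/v(z_0)$ follows directly from Proposition~\ref{hq} and a Harnack chain. Hence $u\le Cv$ in $Q_1$, and by symmetry this yields \eqref{harres}.

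\emph{Main obstacle.} The crux is the Carleson estimate of Step 1: controlling $u$ near the flat face, with a rate linear in $\dist(x,\{x_1=0\})$ and a constant independent of $u$, which forces one to interleave the interior Harnack inequality with boundary barriers across a dyadic sequence of scales while carefully managing the lateral boundary --- handled by taking $Q'$ strictly larger than $Q_1$ so the barrier regions localize well inside $Q_2$. The only recurring minor technicality is the non-proper term $\nu|u|$ in $\mathcal{L}^\pm$, disposed of by performing every comparison on a sufficiently thin slab and invoking \eqref{abp2}.
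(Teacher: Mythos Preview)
Your strategy is the same as the paper's---Carleson estimate from above, Hopf-type estimate from below, then divide---and this is the correct route. The execution, however, differs in a way worth noting.

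The paper avoids explicit barriers entirely. After obtaining the Carleson bound $u\le Cu(z_0)$ on $Q_2$ (citing \cite{BCN1}), it introduces, following \cite{CFMS}, two auxiliary functions $w_1,w_2$ solving $\mathcal L^+[w_1]=0$, $\mathcal L^-[w_2]=0$ in $Q_2$ with boundary data $g_1,g_2\in[0,1]$ chosen so that $g_1$ vanishes near the flat face and equals $1$ away from it, while $g_2$ vanishes only on the flat face. Since $w_1,w_2$ depend solely on the structural constants, Hopf's lemma and the boundary Lipschitz estimate applied once to these \emph{fixed} functions give $w_2\le C\,\dist(\cdot,\partial Q_2)\le Cw_1$ in $Q_1$. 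The Carleson bound then yields $u\le Cu(z_0)w_2$ on $\partial Q_2$, hence in $Q_2$ by comparison; the interior Harnack inequality gives $v\ge cv(z_0)w_1$ on $\partial Q_2$, hence in $Q_2$; combining finishes. The point is that the Dirichlet solutions $w_1,w_2$ absorb all lateral-boundary difficulties automatically.

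Your hand-built barriers, by contrast, force you to manage lateral boundaries explicitly. In Step~1 the one-dimensional supersolution $A(e^{\gamma x_1}-1)$ is small near $\{x_1=0\}$ on the lateral faces of any slab, so it cannot dominate a merely bounded $u$ there; you would need either a boundary Lipschitz/Krylov argument after the Carleson step, or a barrier with a lateral cutoff. In Step~2 the product-of-quadratics $\underline w$ is not obviously an $\mathcal L^-$-subsolution (the mixed Hessian entries $f'(x_1)\nabla g(x')$ spoil the sign of $\pucci(D^2\underline w)$ near the corners of its support), and a Hopf-type exponential annular barrier, slid along the face, would be safer. None of this is a conceptual gap---the programme goes through---but the paper's CFMS device is considerably cleaner.
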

\begin{proof}
We can assume that the principal eigenvalues of the operators $\mathcal{L}^+[u]$, $\mathcal{L}^-[u]$ are positive in $Q_2$, that is, these operators satisfiy the comparison principle in each subdomain of $Q_2$ (see \cite{QS}). Indeed,  we can find $r_0=r_0(\lambda,\Lambda,\mu,\delta)$ sufficiently small that these eigenvalues  be positive in each cube with side $r_0$, then apply the result in $[2/r_0]$ cubes with base on $\{x_1=0\}$ included in $Q_2$, and conclude with the help of the interior Harnack inequality.

Repeating the proof of the  Harnack inequality in \cite[Section 2]{BCN1}, with obvious modifications (replacing the operator $M$ in theorems 1.3-1.5 in that paper by the Pucci extremal operators and so forth), we infer that
\begin{equation}\label{harbcn}
u(x)\le Cu(z_0)\qquad\mbox{ for }\mbox{ each }\;x\in Q_2.
\end{equation}

Next, using some ideas from \cite{CFMS}, we take two functions $g_1,g_2\in C(\partial Q_2)$  such that $0\le g_1,g_2\le 1$ on $\partial Q_2$; $g_1\equiv1$ on $\partial Q_2\cap\{x_1\ge 1/2\}$ and $g_1\equiv 0$ on $\partial Q_2\cap\{x_1\le 1/4\}$; and $g_2\equiv1$ on $\partial Q_2\cap\{x_1>0\}$, while $g_2\equiv0$ on $\partial Q_2\cap\{x_1=0\}\cap\{|x|\le \sqrt{2}\}$. Let $w_1,w_2\in W^{2,p}_\mathrm{loc}(Q_2)\cap C(\overline{Q_2})$, $p<\infty$, be the solutions of the Dirichlet problems
\begin{equation*}
\left\{
\begin{array}{rcl}
\mathcal{L}^+[w_1] &=&0 \quad\mbox{ in }Q_2\\
w_1&=&g_1\;\;\mbox{ on }\partial Q_2,
\end{array}
\right.
\qquad\qquad
\left\{
\begin{array}{rcl}
\mathcal{L}^-[w_2] &=&0 \quad\mbox{ in }Q_2\\
w_2&=&g_2\;\;\mbox{ on }\partial Q_2.
\end{array}
\right.
\end{equation*}

Hopf's lemma and the boundary Lipschitz estimates imply that
\begin{equation}\label{ineqww}
w_2(x)\le C\mathrm{dist}(x,\partial Q_2)\le C w_1(x)\qquad\mbox{for }\mbox{ each }\; x\in Q_1.
\end{equation}

On the other hand, we clearly have $u\le Cu(z_0)w_2$ on $\partial Q_2$, by \eqref{harbcn} and the definition of $w_2$. Hence the comparison principle implies
\begin{equation}\label{inequw}
u(x)\le  Cu(z_0) w_2(x)\qquad\mbox{for }\mbox{ each }\; x\in Q_2.
\end{equation}

Finally, the interior Harnack inequality implies that $v(x)\ge C v(z_0)$, for each $x\in \partial Q_2\cap\{x_1\ge 1/6\}$. This implies that $v\ge Cv(z_0)w_1$ on $\partial Q_2$, so by the comparison principle
\begin{equation}\label{ineqvw}
v(x)\ge  Cv(z_0) w_1(x)\qquad\mbox{for }\mbox{ each }\; x\in Q_2.
\end{equation}

Combining \eqref{ineqww}, \eqref{inequw}, \eqref{ineqvw} yields the statement of the proposition.  \end{proof}

\begin{proof}[{Proof of Proposition~\ref{bhq}}] 
We denote $\Omega_{0} : = \{x\in \Omega : \mathrm{dist}(x,\partial\Omega)>r_0\}$, and $\Omega'' : = \Omega' \cap \Omega_{0}$, where $r_0< \frac{1}{2}\dist(\Omega',\partial \Omega\setminus\Sigma)$ is fixed so small that $\partial \Omega_0$ is $C^2$-smooth. By straightening the boundary (see, e.g., the proof of \cite[Theorem~1.4]{BCN1}), we obtain, by Propositions \ref{hq} and \ref{bhq1},
\begin{equation*}
\max\left\{ \sup_{\Omega''}\frac{u}{v}, \; \sup_{\Omega'\setminus\Omega''}\frac{u}{v}\right\}  \leq \sup_{\Omega'\cap\partial\Omega_{0}}\frac{u}{v}  \leq \inf_{\Omega'\cap\partial\Omega_{0}}\frac{u}{v}\leq \min\left\{ \inf_{\Omega''}\frac{u}{v},\; \inf_{\Omega'\setminus\Omega''}\frac{u}{v}\right\}. \qedhere
\end{equation*}
\end{proof}

\section{Proofs of Lemma \ref{alphlowbnd} and \ref{alphuppbnd}} \label{hell}

We give the precise computations which establish that the functions defined in \eqref{super1} and \eqref{sub1} are respectively a supersolution and a subsolution of any fully nonlinear equation in a proper cone of $\rn$.

\begin{proof}[Proof of Lemma \ref{alphlowbnd}]
According to \eqref{alphmon}, we may suppose that $\omega = \pi$. Due to \eqref{ellip}, we may assume that
\begin{equation*}
F(M,p,x) = \pucci(M) - \mu|x|^{-1} |p|.
\end{equation*}
Since this operator is rotationally invariant, we may also assume without loss of generality that $\xi = e_n : = (0,\ldots, 0,1)$.

For constants $\alpha,\kappa>0$ to be selected, consider the function
\begin{equation*}
\varphi(x) : = |x|^{-\alpha} \left( \exp(\kappa) - \exp\left( \kappa |x|^{-1} x_n \right) \right),
\end{equation*}
which is smooth in $\R^n \setminus \{ 0 \}$ and positive on $\cob{\omega}$. We will show that if $0 < \alpha < 1$ is very small and $\kappa > 0$ is very large, then for every $x \in \co\omega$,
\begin{equation} \label{albwts}
\pucci\left(D^2\varphi (x) \right) - \mu|x|^{-1}|D\varphi(x)| \geq 0.
\end{equation}
Routine calculations give
\begin{align*}
D\varphi(x) & = -\alpha|x|^{-\alpha-2} \left( e^\kappa - e^{\kappa |x|^{-1} x_n} \right) x + \kappa |x|^{-\alpha-3} e^{\kappa|x|^{-1}x_n} \left( |x|^2 e_n - x_nx \right), \\
D^2\varphi(x) & = \alpha |x|^{-\alpha-4} \left( e^\kappa - e^{\kappa |x|^{-1}x_n} \right) \left( (\alpha+2) x\otimes x - |x|^2 \iden\right) \\
& \quad +2 \kappa(\alpha+1) |x|^{-\alpha-5} e^{\kappa |x|^{-1}x_n} x \otimes (|x|^2e_n - x_nx) \\
& \quad - \kappa |x|^{-\alpha-5}x_n e^{\kappa|x|^{-1}x_n} \left( x \otimes x - |x|^2 \iden \right) \\
& \quad -\kappa^2 |x|^{-\alpha-6} e^{\kappa |x|^{-1} x_n } \left( |x|^2 e_n - x_n x\right) \otimes\left( |x|^2 e_n - x_n x\right).
\end{align*}
We easily estimate
\begin{equation*}
|D\varphi(x)| \leq \alpha |x|^{-\alpha-1} e^\kappa + \kappa |x|^{-\alpha-2} e^{\kappa|x|^{-1}x_n}\left( |x|^2 - x_n^2 \right)^{\frac12},
\end{equation*}
where we have used the equality $\left| |x|^2e_n - x_n x \right|^2 = |x|^2 (|x|^2-x_n^2)$.

To estimate $\pucci(D^2\varphi)$, we apply $\pucci$ to each term on the right side of the expression for $D^2\varphi(x)$ above, and sum them using the superlinearity property \eqref{puccineq}. We recall that for $a,b\in \R^n$ the nonzero eigenvalues of the symmetric matrix $a\otimes b + b \otimes a$ are $a\cdot b \pm |a||b|$, each with multiplicity $1$. Thus to calculate $\pucci$ applied to the second term in the expression for $D^2\varphi$, for example, we notice that $x \cdot (|x|^2e_n - x_nx) = 0$ and use the identity $\left| |x|^2e_n - x_n x \right|^2 = |x|^2 (|x|^2-x_n^2)$. With this in mind, and using \eqref{puccform} and \eqref{puccineq} and our estimate for $|D\varphi|$, after some work we obtain
\begin{equation} \label{tpl}
\begin{aligned}
\lefteqn{\pucci(D^2\varphi(x)) + \mu|x|^{-1} |D\varphi(x)| } \qquad & \\
& \geq -\alpha \left( \Lambda(\alpha+1) -\lambda(n-1) + \mu \right)  |x|^{-\alpha-2} e^\kappa \\
& \quad -\kappa \left( (\alpha+1)(\Lambda-\lambda) + \mu \right) |x|^{-\alpha-3} e^{\kappa|x|^{-1}x_n} (|x|^2-x_n^2)^{\frac{1}{2}} \\
& \quad + \kappa |x|^{-\alpha-5} e^{\kappa |x|^{-1} x_n} \pucci(-x_n(x\otimes x - |x|^2\iden)) \\
& \quad + \kappa^2 |x|^{-\alpha-4} e^{\kappa|x|^{-1} x_n} \lambda (|x|^2-x_n^2).
\end{aligned}
\end{equation}
We estimate the second term on the right side of \eqref{tpl} using Cauchy's inequality:
\begin{multline*}
-\kappa \left( (\alpha+1)(\Lambda-\lambda) + \mu \right) |x|^{-\alpha-3} e^{\kappa|x|^{-1}x_n} (|x|^2-x_n^2)^{\frac{1}{2}}\\
\geq -\textstyle\frac{1}{2} \kappa^2 |x|^{-\alpha-4} e^{\kappa|x|^{-1} x_n} \lambda (|x|^2-x_n^2) - \textstyle\frac{1}{2\lambda} \left((\alpha+1)(\Lambda-\lambda) + \mu \right)^2 |x|^{-\alpha-2} e^{\kappa |x|^{-1} x_n} .
\end{multline*}
Inserting this into \eqref{tpl} and imposing the requirement $\alpha \leq 1$ produces
\begin{equation} \label{tpl2}
\begin{aligned}
\lefteqn{\pucci(D^2\varphi(x)) + \mu|x|^{-1} |D\varphi(x)|} \qquad & \\
& \geq -\alpha C_1 |x|^{-\alpha-2} e^\kappa -  C_2 |x|^{-\alpha-2} e^{\kappa|x|^{-1} x_n}\\
& \quad + \kappa |x|^{-\alpha-5} e^{\kappa |x|^{-1} x_n} \pucci(-x_n(x\otimes x - |x|^2\iden)) \\
& \quad + \frac{1}{2} \kappa^2 |x|^{-\alpha-4} e^{\kappa|x|^{-1} x_n} \lambda (|x|^2-x_n^2).
\end{aligned}
\end{equation}
where $C_1 :=\max\{ 0 ,2\Lambda + \mu - \lambda(n-1) \}$ and $C_2 : = (2\Lambda+\mu)^2/(2\lambda)$. The terms on the right side of \eqref{tpl2} must be estimated in the regions $\{ x_n \leq 0\}$ and $\{ 0\leq x_n \leq \sigma|x| \}$ separately. First, in the region $\{ x_n \leq 0\}$, we have
\begin{equation*}
\pucci(-x_n ( x\otimes x - |x|^2 \iden)) = \lambda(n-1) |x|^2 |x_n|.
\end{equation*}
Then in the region $\{ x_n \leq 0 \}$ we obtain
\begin{multline*}
\pucci(D^2\varphi(x)) - \mu|x|^{-1} |D\varphi(x)| \geq -\alpha C_1 |x|^{-\alpha-2}e^\kappa \\
 + |x|^{-\alpha-2} e^{\kappa|x|^{-1} x_n} \left( \kappa \lambda (n-1) |x|^{-1} x_n + \textstyle\frac{1}{2} \kappa^2 \lambda |x|^{-2} (|x|^2-x_n^2)  - C_2 \right).
\end{multline*}
Using  $\max\{ |x|^{-1} x_n ,1-x_n^2 / |x|^2\} \geq 1/2$ we obtain
\begin{equation}
\pucci(D^2\varphi(x) ) - \mu|x|^{-1} |D\varphi(x)| \geq  |x|^{-\alpha-2}  \left( -\alpha C_1e^\kappa + e^{-\kappa} \left( \textstyle\frac{1}{4}\lambda\kappa - C_2 \right) \right).
\end{equation}
provided $\kappa \geq 1$. We now impose the requirement $\kappa \geq 4(C_2 +1 )/ \lambda$ as well as $C_1\alpha \leq e^{-2\kappa}$, so that we have \eqref{albwts} in the region $\{ x_n \leq 0 \}$.

In $\{ 0 \leq x_n \leq \sigma |x| \}$, the factor in the middle term on the right side of \eqref{tpl2} is
\begin{equation*}
\pucci(-x_n ( x\otimes x - |x|^2 \iden)) = - \Lambda(n-1) |x|^2 x_n,
\end{equation*}
and we have, using also that $x_n^2 \leq\sigma^2|x|^2$,
\begin{multline*}
\pucci(D^2\varphi(x)) -\mu |x|^{-1} |D\varphi(x)| \geq -\alpha C_1 |x|^{-\alpha-2} e^\kappa  \\
+ |x|^{-\alpha-2} e^{\kappa|x|^{-1} x_n} \left( -\kappa \Lambda (n-1) + \textstyle\frac{1}{2} \kappa^2 \lambda (1-\sigma^2)  - C_2 \right).
\end{multline*}
If we require that $\kappa \geq 1 + C_2 + (2\Lambda(n-1)+1)/(\lambda(1-\sigma^2))$, then the term in the parentheses is at least 1. We then obtain \eqref{albwts} in the region $\{ 0 \leq x_n \leq \sigma|x|\}$ provided that $C_1\alpha\leq e^{-\kappa}$.

We have shown that \eqref{albwts} holds if we set
\begin{align*}
& \kappa : = 1+ C_2 + (2\Lambda(n-1)+1)/(\lambda(1-\sigma^2)) + 4(C_2+1) / \lambda, \\
& \alpha : = \min\{ 1 , e^{-2\kappa} / C_1 \}.
\end{align*}
It follows that $\alpha^+(\pucci(D^2\cdot) - \mu|x|^{-1} |D\cdot|,\omega) \geq \alpha$.
\end{proof}

\begin{proof}[Proof of Lemma \ref{alphuppbnd}]
It suffices to prove the lemma for the operator $F(M,p,x) = \Pucci(M) + \mu|x|^{-1} |p|$. Since $F$ is rotationally invariant, we may also suppose without loss of generality that $\xi = e_n := (0,\ldots,0,1)$. By \eqref{alphmon}, we may assume that $\omega = \pi$.

Define the functions
\begin{equation*}
w(x) : = |x|^{-\alpha-2} x_n^2 - \sigma^2 |x|^{-\alpha} \quad \mbox{and} \quad \varphi(x) : = \frac{1}{2}(w(x))^2.
\end{equation*}
It is clear that $\varphi$ is smooth on $\R^n \setminus \{ 0 \}$, $\varphi \in \hc{2\alpha}$, $\varphi > 0$ in $\co{\omega}$ and $\varphi$ vanishes on $\partial \co{\omega} \setminus \{ 0 \}$. We claim that for sufficiently large $\alpha > 0$, the function $\varphi$ satisfies
\begin{equation}\label{tupw}
\Pucci(D^2\varphi) + \mu|x|^{-1} |D\varphi(x)|< 0 \quad \mbox{in} \ \co\omega.
\end{equation}
Assuming for a moment that \eqref{tupw} is satisfied for all $\alpha\geq \bar \alpha > 0$, with $\bar\alpha$ depending on the appropriate constants, let us complete the proof by showing that $\alpha^+(F,\omega) \leq 2 \bar \alpha$. If on the contrary $\alpha^+(F,\omega) > 2\bar\alpha$, then there exists $2\bar\alpha < \alpha \leq \alpha^+(F,\omega)$  and a function $v \in \hc{\alpha}$ satisfying $F(D^2v,Dv,x) \geq 0$ and $v> 0$ in $\co\omega$. Thus according to Proposition~\ref{hcp}, $\varphi \equiv tv$ for some $t> 0$. This is impossible, since $\varphi$ is a \emph{strict} subsolution of \eqref{tupw}.

Commencing with the demonstration of \eqref{tupw}, routine calculations give
\begin{align*}
Dw(x) & = -\alpha |x|^{-2} w(x) x + 2 |x|^{-\alpha-4} x_n (  |x|^2 e_n - x_nx) , \\
D^2w(x) & = \alpha |x|^{-4} w(x) \left( (\alpha+2) x \otimes x - |x|^2\iden \right) - 2|x|^{-\alpha-4} \left( x_n^2\iden -|x|^2e_n\otimes e_n \right)\\
& \quad -4(\alpha+2) |x|^{-\alpha-6} x_n x\otimes (|x|^2e_n - x_nx).
\end{align*}
Further computations produce
\begin{align*}
D\varphi(x) & = w(x) Dw(x) \\
& = \alpha |x|^{-2} w(x)^2 x + 2 |x|^{-\alpha-4} w(x) x_n \left( |x|^2e_n - x_nx \right),\\
D^2\varphi(x) & = w(x) D^2w(x) + Dw(x) \otimes Dw(x) \\
& = 2\alpha (\alpha+1) |x|^{-4} w(x)^2 x\otimes x - \alpha |x|^{-2} w(x)^2 \iden - 2|x|^{-\alpha-4} x_n^2 w(x) \iden \\
& \quad -8|x|^{-\alpha-6} x_n w(x)  x\otimes (|x|^2e_n - x_nx) + 2|x|^{-\alpha-2} w(x) e_n \otimes e_n \\
& \quad +4|x|^{-2\alpha-8} x_n^2\left( |x|^2e_n - x_nx\right)\otimes \left( |x|^2e_n - x_nx\right).
\end{align*}
We estimate, using once again the identity $\left| |x|^2e_n - x_n x \right|^2 = |x|^2( |x|^2 - x_n^2)$,
\begin{equation*}
|D\varphi(x)| \leq \alpha |x|^{-1} w(x)^2 + 2|x|^{-\alpha-3} w(x) |x_n| \left( |x|^2-x_n^2\right)^{\frac12}, \quad x\in \co\omega.
\end{equation*}
We recall again that the matrix $a\otimes b + b\otimes a$ has two nonzero eigenvalues, namely $a\cdot b \pm |a||b|$. Using $x\cdot (|x|^2e_n - x_nx) =0$ and $\left| |x|^2e_n - x_n x \right|^2 = |x|^2( |x|^2 - x_n^2)$, the sublinearity and homogeneity of $\Pucci$, that $w > 0$ in $\co\omega$, and our estimate for $|D\varphi|$ above, we have after some calculation
\begin{equation} \label{tpu}
\begin{aligned}
\lefteqn{\Pucci(D^2\varphi(x)) + \mu|x|^{-1} |D\varphi(x)|} \qquad & \\
& \leq -\alpha \left( 2\lambda \alpha - \mu - (n-1)\Lambda \right)  |x|^{-2} w(x)^2 + 2n\Lambda |x|^{-\alpha-4} x_n^2 w(x) \\
& \quad + (4\Lambda+2\mu) |x|^{-\alpha-4} |x_n| \left(|x|^2-x_n^2\right)^{\frac12} w(x)  - 4\lambda |x|^{-2\alpha-6}x_n^2(|x|^2-x_n^2)
\end{aligned}
\end{equation}
for $x\in \co\omega$. We estimate the two middle terms using Cauchy's inequality:
\begin{equation*}
2n\Lambda |x|^{-\alpha-4} x_n^2w(x) \leq \alpha n^2\Lambda^2\lambda^{-1} |x|^{-2} w(x)^2 + \alpha^{-1} \lambda |x|^{-2\alpha-6} x_n^4,
\end{equation*}
\begin{multline*}
(4\Lambda +2\mu) |x|^{-\alpha-4} |x_n|\left(|x|^2-x_n^2\right)^{\frac12} w(x) \\
\leq \frac{1}{2}(2\Lambda+\mu)^2\lambda^{-1}|x|^{-2} w(x)^2 + 2\lambda |x|^{-2\alpha-6} x_n^2 (|x|^2-x_n^2),
\end{multline*}
and inserting these into \eqref{tpu} gives
\begin{multline*}
\Pucci(D^2\varphi(x)) + \mu|x|^{-1} |D\varphi(x)| \leq -\alpha C_3 |x|^{-2} w(x)^2 \\
-2\lambda |x|^{-2\alpha-6} x_n^2(|x|^2-x_n^2) + \alpha^{-1}\lambda |x|^{2\alpha-6} x_n^4
\end{multline*}
where $C_3: = 2\lambda \alpha - \mu- (n-1) \Lambda - n^2 \Lambda^2\lambda^{-1} - \textstyle\frac12(2\Lambda+\mu)^2 \lambda^{-1}\alpha^{-1}$. If we require
\begin{equation} \label{tupa1}
\alpha \geq 1 + (2\lambda)^{-1} \left( \mu + (n-1)\Lambda + n^2\Lambda^2 \lambda^{-1} + \textstyle\frac12 (2\Lambda+\mu)^2 \lambda^{-1} \right),
\end{equation}
then $C_3\geq 1$, and we obtain
\begin{multline} \label{tpu2}
\Pucci(D^2\varphi(x)) + \mu|x|^{-1} |D\varphi(x)|  \\ \leq -\alpha |x|^{-2} w(x)^2-2\lambda |x|^{-2\alpha-6} x_n^2(|x|^2-x_n^2) + \alpha^{-1}\lambda |x|^{-2\alpha-6} x_n^4.
\end{multline}
We may rewrite \eqref{tpu2}, using the definition of $w$ and rearranging terms, as
\begin{multline} \label{tpu3}
\Pucci(D^2\varphi(x)) + \mu|x|^{-1} |D\varphi(x)| \\ \leq -|x|^{-2\alpha-6} \left( \alpha \sigma^4 |x|^4 - (2\alpha\sigma^2-2\lambda) x_n^2|x|^2 + (\alpha-2\lambda - \alpha^{-1}\lambda) x_n^4 \right).
\end{multline}
By Cauchy's inequality,
\begin{equation*}
(2\alpha\sigma^2 - 2\lambda) x_n^2 |x|^2 \leq \alpha\sigma^4|x|^4  + \frac{(2\alpha\sigma^2-2\lambda)^2}{4\alpha\sigma^4} x_n^4,
\end{equation*}
and therefore the term in the parenthesis on the right side of \eqref{tpu3} is positive provided that
\begin{equation*}
(2\alpha\sigma^2 -2\lambda)^2 - 4\alpha\sigma^4(\alpha-2\lambda-\alpha^{-1}\lambda) < 0,
\end{equation*}
which is equivalent to
\begin{equation} \label{tupa2}
\alpha > \frac{\lambda + \sigma^4}{2\sigma^2(1-\sigma^2)}.
\end{equation}
We have shown that \eqref{tupw} is satisfied for $\alpha$ satisfying both \eqref{tupa1} and \eqref{tupa2}. It follows that
\begin{equation*}
\alpha^+(F,\omega) \leq 2 + \lambda^{-1} \left( \mu + (n-1)\Lambda + n^2\Lambda^2 \lambda^{-1} + \textstyle\frac12 (2\Lambda+\mu)^2 \lambda^{-1} \right) + \frac{\lambda + \sigma^4}{\sigma^2(1-\sigma^2)}. \qedhere
\end{equation*}
\end{proof}

\subsection*{Acknowledgements}
The research for this article was conducted in part while the first and third authors were visitors of Le Centre d'analyse et de math\'ematique sociales (CAMS) in Paris. The first author was partially supported by a CNRS visiting position and NSF Grant DMS-1004645. The third author was partially supported by NSF grant DMS-1004595.
\bibliographystyle{plain}
\bibliography{conesing}

\end{document}